\newtheorem{thm}{Theorem}[section]
\newtheorem*{thm*}{Theorem}
\newtheorem{lem}[thm]{Lemma}
\newtheorem{prop}[thm]{Proposition}
\newtheorem*{prop*}{Proposition}
\newtheorem{cor}[thm]{Corollary}
\theoremstyle{definition}
\newtheorem{defn}[thm]{Definition}
\newtheorem{notation}[thm]{Notation}
\newtheorem{remark}[thm]{Remark}
\newtheorem{question}[thm]{Question}
\newtheorem{example}[thm]{Example}
\newtheorem{claim}[thm]{Claim}
\newcommand{\dminus}{ 
\buildrel\textstyle\ .\over{\hbox{ 
\vrule height3pt depth0pt width0pt}{\smash-} 
}}
\def\e{\epsilon}
\def\al{\alpha}
\def\bb{\mathbb}
\def\ve{\varepsilon}
\def\vp{\varphi}
\def\de{\delta}
\def\Sg{\Sigma}
\def\Om{\Omega}
\def\bb{\mathbb}
\def\ff{\mathfrak}
\def\g{\gamma}
\def\sg{\sigma}
\def\G{\Gamma}
\def\cc{\mathcal}
\DeclareMathOperator{\supp}{supp}
\DeclareMathOperator{\Mod}{Mod}
\newcommand{\fk}{\mathfrak}
\numberwithin{equation}{section}
\begin{document}

\title{The model theory of metric lattices: pseudofinite partition lattices}

\author[Contreras-Mantilla]{Jose Contreras-Mantilla \orcidlink{0009-0004-2071-9925}}

\address{Mathematics Department, Purdue University, 150 N. University Street, West Lafayette, IN 47907-2067}
\email{contre56@purdue.edu}

\author[Sinclair]{Thomas Sinclair \orcidlink{0000-0003-0401-7232}}

\address{Mathematics Department, Purdue University, 150 N. University Street, West Lafayette, IN 47907-2067}
\email{tsincla@purdue.edu}
\urladdr{http://www.math.purdue.edu/~tsincla/}

\begin{abstract}
    We initiate the study of general metric lattices in the context of the model theory of metric structures. As an application we develop a theory of pseudofinite limits of partition lattices and connect this theory with the theory of continuous limits of partition lattices due to Bj\"orner and Lov\'asz.
\end{abstract}

\subjclass[2020]{03C66, 05B35, 06B23, 06B35, 06C10, 46L10}

\keywords{geometric lattice, partition lattice, model theory of metric structures, pseudofinite models}


\maketitle

\section{Introduction}

The development of theories of continuous limiting structures of combinatorial objects is a subject with a rich history and many actively developing directions \cites{elek, goldbring-towsner, kardos, lovasz-large, nesetril, razborov}. In many of these approaches, the interface of model-theoretic techniques, especially ultraproduct constructions and ultralimits, with analytic concepts such as measurability plays a central role. 
Though still a young subject, the model theory of metric structures as developed in the seminal work of Ben Yaacov, Berenstein, Henson, and Usvyatsov \cite{mtfms} has provided a powerful toolkit of model-theoretic methods which are well adapted for a wide range of analytic objects such as Banach spaces, dynamical systems, and operator algebras \cites{mtfms, farah, mtoa}. In fact, many of the widely studied classes of metric structures possess a lattice structure which is either crucial to their axiomatization, as in the case of Boolean algebras and $L^p$-spaces \cites{mtfms, henson-raynaud-07}, or is at least central to the general theory, such as the lattice of projections in the theory von Neumann algebras \cite{fhs-ii}.  The goal of the present manuscript is then to isolate and study metric lattices as structures in their own right, with a more explicitly combinatorial bent, and with the thesis that the framework of the model theory of metric structures can be fruitfully used to explore notions of analytic limiting structures. After developing the general theory, we provide applications to the special case of finite partition lattices, investigating which properties, from the model-theoretic point of view, limits of such lattices ought to have. Our analysis complements and clarifies earlier work on inductive limits of partition lattices by Bj\"orner \cite{Bjorner-part, Bjorner-continuous}.

A finite lattice $L$ with a minimal element $0$ comes equipped with a height function, where the height $|x|$ of an element $x$ is the smallest number of atoms (minimal nonzero elements) whose join is $x$. If this height function is an order-preserving map to $\bb Z$ and satisfies the submodularity condition $|x+y| + |xy|\leq |x| + |y|$, where $x+y$ and $xy$ are the meet and join, respectively, of $x$ and $y$, then lattice is said to be geometric \cite[Chapter IV]{birkhoff}. Wilcox and Smiley \cite{Wilcox} realized that the height function on a geometric lattice could be used to define a metric on the lattice and that algebraic properties of the lattice, such as modularity, could be phrased in terms of properties of the metric. This point of view was greatly expanded and developed in works of Sachs \cite{Sachs}, Bj\"orner \cite{Bjorner-part, Bjorner-continuous}, and Bj\"orner  and Lov\'asz  \cite{bjorner-lovasz} where a study of continuous inductive limits of certain classes of geometric lattices was initiated, inspired by von Neumann's theory of continuous geometries \cite{vN-geometry}. A very much related line of inquiry was recently launched by Lov\'asz on the limiting theory of submodular functions \cite{lovasz-submod-setfunction} and continuous limits of matroids \cites{lovasz-matroid}: See also \cites{berczi2024quotient, kardos}. We note that there is a correspondence between matroids and geometric lattices, though this correspondence is not functorial in the sense that many important constructions on either side do not carry over to the other. On the model-theoretic side, the model theory of probability spaces, that is, of metrized Boolean lattices, was fully developed in \cite{mtfms}: See also the expository paper \cite{Mtfps}.  These works form the point of departure for what follows.

After recalling some background in lattice theory and the model theory of metric structures in Section \ref{sec:prelim}, our first task is to describe a first order theory for metric lattices which is suitably general to capture all geometric lattices with metrics which come from their height (rank) functions. Immediately, we are confronted with the fact that while the join function is jointly contractive in the metric, the meet function does not possess a uniform modulus of continuity over all geometric lattices \cite[p. 23]{Bjorner-continuous}. From the perspective of model theory, this means that we are forbidden from using the meet operation in any expression. While Bj\"orner and Lov\'asz in \cite{bjorner-lovasz} address this issue by studying limits for a more restrictive class of lattices which possess a continuous ``pseudointersection'' operation, we take up the theory of metric lattices from scratch and show that most of the meaningful properties which contain meet operations in their definitions (for instance, submodularity) actually imply, and therefore can be rephrased in terms of, expressions only involving the join operation. The tradeoff is that one must modestly increase the quantifier complexity of these expressions by taking infima and/or suprema over additional variables, rather than simple inequalities. In the case of metrically complete metric lattices, we then show that the meet can be canonically and uniquely reconstructed from the join. This is the content of Section \ref{sec:metric-lattices}. 

We note that while the morphisms in our category of metric lattices need only preserve the join operation, this is likely the correct perspective, as this provides a more natural framework for studying quotients. For instance, sequences of ``sparse'' lattices might be suitably characterized as uniformly continuous quotients of Boolean or modular objects. A discussion of this, along with general constructions and examples, is found in Section \ref{sec:examples}. The section culminates in an interesting connection between lattice metrics and conditionally negative definite kernels induced by the join operation which was independently investigated by Lov\'asz \cite{lovasz-submod-setfunction} in the case of Boolean lattices. 

Section \ref{sec:model-theory} is the part of the paper most directly focused on model theory proper. The main result of the section is to show that in the case of (metrically) modular lattices, the meet operation is definable which essentially means that it is permissible to write formal expressions using both the meet and join operations for this class. While the utility of our theory mainly lies in handling non-modular lattices, this serves as an important check that our axioms recover the right framework in the ``tame'' case of modular lattices where the meet operation is also uniformly contractive in the metric. Subsequently, we provide additional axioms for recovering distributive and Boolean metric lattices. Moreover, we show that the model theory of probability spaces as defined in \cite{mtfms} can be recovered as a special case (that is, an elementary class) within our more general metric lattice framework.

In Section \ref{sec:partition} we apply our analysis to the set of finite partition lattices. From the model-theoretic perspective a good limiting object for this category ought to satisfy every formula satisfied by all finite partition lattices. We call such a complete metric lattice a \emph{pseudofinite partition lattice} and seek to understand their structure. As emphasized in the prior work of Bj\"orner \cite{Bjorner-part}, the modular elements form a core for the partition lattice which is crucial to understanding the limiting structure. Therefore, the first major result in this section, Proposition \ref{prop:distance-to-singular-controls-modularity}, demonstrates that the distance to the modular elements can be uniformly controlled by a first-order formula over all finite partition lattices, so the modular elements form a definable set of elements. While the modular elements of a finite partition lattice do not form a sublattice, they do so approximately as the size of the partition set tends to infinity. As a consequence, for any infinite pseudofinite partition lattice, the modular elements form a complete Boolean sublattice (Proposition \ref{prop:boolean-core}) which is consistent with the continuous partition lattice constructed by Bj\"orner. We then initiate a study of the full lattice relative to this Boolean core by assigning to every $x$ element its set of modular complements $\G(x)$ or ``selectors,'' which are so termed as they select a unique element of each block of $x$ in the case of a finite partition lattice. We then show, through a series of intricate combinatorial arguments culminating in Proposition \ref{prop:selectors-hausdorff}, that the selectors can effectively reconstruct any infinite pseudofinite partition lattice as a meet subsemilattice of the lattice of subsets of a complete Boolean lattice. In this way our investigations may provide a suitable framework for a continuous limiting theory of matroids.

\section{Model Theory of Metric Structures}\label{sec:prelim}

In this paper we will use the model-theoretical framework as detailed in \cite{hart} and \cite{mtfms} for the case of one-sorted structures. In particular, we define the following terms.
\begin{itemize}
    
    \item A language, or signature, $\fk L$ contains the usual sets of function, relation and constant symbols with their respective arity, as in first order logic. Additionally, the language will associate to each relation symbol $R$ a function $\Delta_R:[0,1]\rightarrow[0,1]$, such that $\lim_{\e\rightarrow 0^+}\Delta(\e)=0$, which we call the uniform continuity modulus of $R$, and similarly for the function symbols.
    
    \item The logical symbols used in first order logic of $x=y$, $\forall_x$, and $\exists_x$ are replaced by $d(x,y)=0$, $\sup_x$, and $\inf_x$, respectively. Connectives are now all continuous functions $u:[0,1]^n\to [0,1]$ for every $n$.
    
    \item Given a language $\fk L$ we say that $\cc M$ is an \emph{$\fk L$-structure} if $\cc M$ consists of a complete metric space of diameter one along with a collection of functions and distinguished elements satisfying the following. For every function symbol $f\in\fk L$ there is a corresponding uniformly continuous function $f^{\cc M}: M^{a(f)}\rightarrow M$ where $a(f)$ is the arity of $f$ and $\Delta_f$ is a witness of the uniform continuity of $f^{\cc M}$. For every relation symbol $R\in\cc L$ there is a corresponding uniformly continuous function $R^{\cc M}: M^{a(R)}\rightarrow [0,1]$ where $a(R)$ is the arity of $R$ and $\Delta_R$ is a witness of the uniform continuity of $R^{\cc M}$. For every constant symbol $c\in\fk L$ there is a corresponding distinguished element $c^\cc M\in M$.
    
    \item The notions of substructure and embedding between $\cc L$-structures are defined in an analogous way to first order logic.
    
    \item The $\fk L$-terms are constructed analogously to classical first order logic. Similarly, the $\fk L$-formulas are constructed following the same process as in first order logic. A formula with no unquantified variables is called a \emph{sentence}. A formula $\vp(x)$ is \emph{satisfied} for some tuple $a\in M$ for $\cc M$ an $\cc L$-structure if $\vp^{\cc M}(a)=0$.

    \item Let $\Sg$ be a set of $\cc L$-sentences. We say that an $\fk L$-structure \emph{models} $\Sg$, written $\cc M\models \Sg$ if $\sg^{\cc M}=0$ for all $\sg\in \Sg$. The set $\Sg$ is \emph{satisfiable} if it admits a model, in which case we write $\Mod(\Sg)$ for the class of all models of $\Sg$. 

    \item An $\fk L$-theory is a collection of satisfiable $\fk L$-sentences.

    \item A \emph{definable predicate} or \emph{$T$-formula} is the limit of a sequence of  formulas that converge uniformly over all models $\cc M$ of a $\fk L$-theory $T$.

    \item A homomorphism $\rho: \cc M\to \cc N$ of $\fk L$-structures is said to be \emph{elementary} if it preserves the values of all sentences. 
\end{itemize}

For a complete treatment of the model theory of metric structures the reader may consult \cite{farah}, \cite{hart}, or \cite{mtfms}. Besides the general notions mentioned before, we will also use the following model-theoretical results about definability.

Let $\fk L$ be a signature and $T$ an $\fk L$-theory. We say that a functor $\cc X:\Mod(T)\to {\sf Met}$, where ${\sf Met}$ is the category of metric spaces with isometric embeddings, is a \emph{$T$-functor} if $\cc X(\cc M)$ is a closed subspace of $M$ and if for every elementary map $\rho:\cc M\to\cc N$ of $T$-models, $\cc X(\rho)=\rho \upharpoonright_{\cc X(\cc M)}$. 

Roughly, a $T$-functor is said to be definable if the distance to $\cc X(\cc M)$ is computable in terms of a formula: see the first item in the theorem below. The following theorem found in \cite{hart} gives a full picture of the meaning of definability in the context of the model theory of metric structures.

\begin{thm}[\cite{hart}, Theorem 8.2]\label{thm:definability}
    Let's consider a signature $\cc L$, a $\cc L$-theory $T$ and a $T$-functor $\cc X$. The following are equivalent:
    \begin{enumerate}
        \item There is a $T$-formula $\vp(\bar{x})$ such that for any model $\cc M$ of $T$ and any $\bar{a}\in\cc M$,
        $$\vp^\cc M(\bar{a})=d(\bar{a},\cc X(\cc M)).$$
        \item For any $T$-formula $\psi(\bar{x},\bar{y})$, there is a $T$-formula $\sigma(\bar{y})$ such that for any model $\cc M$ of $T$ and any $\bar{a}\in\cc M$,
        $$\sigma^\cc M(\bar{a})=\inf_{\bar{x}\in\cc X(\cc M)}\psi^\cc M(\bar{x},\bar{a})$$
        (or equivalently the analogous statement replacing $\inf$ by $\sup$).
        
        \item For every $\varepsilon>0$, there is a $T$-formula $\vp(\bar{x})$ and $\delta>0$ such that, for all models $\cc M$ of $T$:
        \begin{itemize}
            \item $\cc X(\cc M)\subset \{\bar{a}\in \cc M:\vp^\cc M(\bar{a})=0\}$,
            \item For all $\bar{a}\in\cc M$, if $\vp^\cc M(\bar{a})<\delta$, then $d(\bar{a},\cc X(\cc M))\leq\varepsilon$.
       \end{itemize}
       
       \item For every $\varepsilon>0$, there is a basic $\fk L$-formula $\vp(\bar{x})$ and $\delta>0$ such that, for all models $\cc M$ of $T$:
        \begin{itemize}
            \item $\cc X(\cc M)\subset \{\bar{a}\in \cc M:\vp^\cc M(\bar{a})=0\}$,
            \item For all $\bar{a}\in\cc M$, if $\vp^\cc M(\bar{a})<\delta$, then $d(\bar{a},\cc X(\cc M))\leq\varepsilon$.
        \end{itemize}
        
        \item For any set $I$, family of $\fk L$-structures $\cc M_i$ for $i\in I$, and ultrafilter $\cc U$ on $I$,
        $$\cc X(\cc N)=\prod_\cc U \cc X(\cc M_i)\text{, where } \cc N=\prod_\cc U \cc M_i.$$
        \end{enumerate}
\end{thm}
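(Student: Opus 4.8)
The statement is a cycle of equivalences among five conditions, so the plan is to prove a loop that visits all of them: I would first establish the short equivalence (1) $\Leftrightarrow$ (2) and then close the loop (1) $\Rightarrow$ (3) $\Rightarrow$ (4) $\Rightarrow$ (5) $\Rightarrow$ (1). For (1) $\Rightarrow$ (2), with $\vp(\bar x) = d(\bar x, \cc X(\cc M))$ the distance formula furnished by (1), for an arbitrary $\psi(\bar x,\bar y)$ I would realize $\sigma(\bar y)$ as the uniform (over all models of $T$) limit, as $n \to \infty$, of the penalized formulas $\inf_{\bar x}\min\bigl(1,\ \psi(\bar x,\bar y) + n\,\vp(\bar x)\bigr)$, where $\min(1,s+nt)$ is read as a continuous connective on $[0,1]^2$. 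The term $n\,\vp(\bar x)$ forces any near-optimal witness $\bar x$ to lie within $O(1/n)$ of $\cc X(\cc M)$, and the uniform continuity modulus of $\psi$ in $\bar x$ (part of the data of $\fk L$) then guarantees that these values converge to $\inf_{\bar x \in \cc X(\cc M)}\psi(\bar x, \bar y)$ at a rate independent of the model; this uniformity is exactly what certifies the limit as a $T$-formula. The reverse implication (2) $\Rightarrow$ (1) is immediate upon applying (2) to $\psi(\bar x,\bar y) = d(\bar x,\bar y)$, whose relativized infimum is by definition the distance to $\cc X(\cc M)$.

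The segment (1) $\Rightarrow$ (3) $\Rightarrow$ (4) is essentially bookkeeping. For (1) $\Rightarrow$ (3) I take $\vp$ to be the distance formula and $\delta = \varepsilon$: it vanishes on the closed set $\cc X(\cc M)$, and $\vp^{\cc M}(\bar a) < \delta$ directly yields $d(\bar a, \cc X(\cc M)) < \varepsilon$. For (3) $\Rightarrow$ (4) I would invoke that every $T$-formula can be uniformly approximated, modulo $T$, by a basic $\fk L$-formula, so the witnessing formula may be replaced by a basic one within any prescribed error; composing with a continuous truncation restores the exact vanishing on $\cc X(\cc M)$ demanded by the first bullet, while a small shrinking of $\delta$ and a loosening of $\varepsilon$ absorb the approximation in the second bullet.

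For (4) $\Rightarrow$ (5) I would use {\L}o\'s's theorem for continuous logic, which computes the value of a basic formula in $\cc N = \prod_{\cc U}\cc M_i$ as the $\cc U$-limit of its coordinate values. For the inclusion $\prod_{\cc U}\cc X(\cc M_i) \subseteq \cc X(\cc N)$, given $\bar a = [\bar a_i]$ with $\bar a_i \in \cc X(\cc M_i)$ almost everywhere, the witnessing basic formula $\vp_\varepsilon$ vanishes at almost every $\bar a_i$, hence at $\bar a$ by {\L}o\'s, forcing $d(\bar a, \cc X(\cc N)) \leq \varepsilon$ for every $\varepsilon$ and thus $\bar a \in \cc X(\cc N)$ by closedness. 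For the reverse inclusion, $\bar a \in \cc X(\cc N)$ makes $\vp_\varepsilon$ vanish at $\bar a$, so by {\L}o\'s $\vp_\varepsilon^{\cc M_i}(\bar a_i) < \delta_\varepsilon$ for almost every $i$, whence $d(\bar a_i, \cc X(\cc M_i)) \leq \varepsilon$ almost everywhere; choosing near-optimal $\bar b_i \in \cc X(\cc M_i)$ coordinatewise produces a representative of $\bar a$ lying in $\prod_{\cc U}\cc X(\cc M_i)$.

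The final and hardest implication is (5) $\Rightarrow$ (1), which passes from a purely semantic ultraproduct-invariance to the syntactic existence of a distance formula. I would consider the predicate $P(\bar a) = d(\bar a, \cc X(\cc M))$ and aim to show it is a definable predicate, i.e. that it factors as a continuous function on the compact type space $S_{\bar x}(T)$; such functions are exactly the uniform limits of formulas, which is (1). The two things to establish are that $P$ depends only on $\operatorname{tp}(\bar a)$ and that it is continuous on $S_{\bar x}(T)$. Both follow from ultraproduct manipulations fed by (5): that $\cc X$ is a $T$-functor makes $P$ non-increasing under elementary maps (since such maps are isometric and carry $\cc X(\cc M)$ into $\cc X(\cc N)$), and ultrapower embeddings upgrade this to invariance on types, using that the diagonal embedding preserves the distance to $\cc X$ exactly because $\cc X$ commutes with ultrapowers; meanwhile any failure of continuity, witnessed by types $p_n \to p$ with $P(p_n)$ bounded away from $P(p)$, is contradicted by realizing the $p_n$ in an ultraproduct and applying (5), which forces $P$ of the limit to equal the ultralimit of the $P(p_n)$. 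I expect this step to be the main obstacle, as it is where compactness of the type space and the identification of definable predicates with continuous functions on it must be brought to bear; the earlier implications are either direct computations or routine applications of {\L}o\'s's theorem.
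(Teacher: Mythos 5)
The paper itself contains no proof of this statement: it is quoted, with attribution, as a background result from Hart's notes \cite{hart}, so there is no in-paper argument to compare yours against. Judged on its own terms, your outline is correct and follows the same route as the proofs in the cited literature (Hart; Ben Yaacov--Berenstein--Henson--Usvyatsov): the penalized formulas $\inf_{\bar x}\min\bigl(1,\psi(\bar x,\bar y)+n\,\vp(\bar x)\bigr)$ with convergence rate controlled by the modulus of $\psi$ for (1)$\Rightarrow$(2), specialization to $\psi=d(\bar x,\bar y)$ for the converse, uniform approximation plus truncated subtraction for (3)$\Rightarrow$(4), and \L{}o\'s's theorem together with closedness of metric ultraproducts of closed sets for (4)$\Rightarrow$(5) are all exactly right. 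Two caveats. First, in (4)$\Rightarrow$(5) the $\cc M_i$ must be models of $T$ (as the statement implicitly requires, since $\cc X$ is only defined on $\Mod(T)$), and your fixed-$\varepsilon$ construction yields a point of $\prod_{\cc U}\cc X(\cc M_i)$ within distance roughly $\varepsilon$ of $\bar a$ rather than a representative of $\bar a$; one must then let $\varepsilon\to 0$ and use closedness of $\prod_{\cc U}\cc X(\cc M_i)$ to conclude membership. Second, and more substantially, in (5)$\Rightarrow$(1) the assertion that $d(\bar a,\cc X(\cc M))$ depends only on $\operatorname{tp}(\bar a)$ needs a concrete mechanism: the standard one is the continuous Keisler--Shelah theorem (or a saturation and back-and-forth argument) producing an isomorphism of ultrapowers $(\cc M,\bar a)^{\cc U}\cong(\cc N,\bar b)^{\cc V}$, after which functoriality of $\cc X$ under isomorphisms, condition (5), and the identity $d(\Delta\bar a,\cc X(\cc M)^{\cc U})=d(\bar a,\cc X(\cc M))$ finish; the final passage from a continuous function on the compact type space $S_{\bar x}(T)$ back to a uniform limit of formulas also requires the lattice Stone--Weierstrass identification of definable predicates with $C(S_{\bar x}(T))$. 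Your sketch names the right mechanisms, but those two ingredients are where the real content of that implication lies, so as written it is a correct strategy rather than a complete proof.
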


The Beth Definability Theorem is a powerful tool in model theory since it offers a criterion to identify relations and predicates that can be expressed by formulas, and it establishes a connection between the explicit sense of definability, using formulas, to the implicit sense of definability, using models of a theory. The following formulation of the theorem appears in \cite{hart}.
\begin{defn}
    Let $\fk L$ be a continuous language, $T$ an $\fk L$-theory and $\vp(\bar{x}),\psi(\bar{x})$ two $T$-formulas. We say that $\vp(\bar{x})$ is $T$-equivalent to $\psi(\bar{x})$ if and only if for every model $\cc M$ of $T, \bar{a}\in M, \psi^{\cc M}(\bar{a})=\vp^{\cc M}(\bar{a})$, i.e., their interpretations for any model of the theory are the same function. 
\end{defn}

\begin{thm}[Beth Definability Theorem]
    Suppose that $\fk L'\subseteq\fk L$ are two continuous languages with the same sorts. Further, suppose $T$ is an $\fk L$-theory. If the forgetful functor $F$ from models of $T$ to $\fk L'$-structures given by restriction is an equivalence of categories onto the image of $F$, then every $\fk L$-formula is $T$-equivalent to an $\fk L'$-formula.
\end{thm}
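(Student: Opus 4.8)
The plan is to read the categorical hypothesis as a form of \emph{implicit definability} and then upgrade it to explicit definability by a type-space argument, in the spirit of the classical proof of Beth's theorem. Fix a tuple of variables $\bar x$. Let $S_{\bar x}(T)$ denote the space of complete $\fk L$-types in $\bar x$ consistent with $T$, and let $S'_{\bar x}(T) = \{\operatorname{tp}_{\fk L'}(\bar a) : \cc M \models T,\ \bar a \in \cc M\}$ (closed in the logic topology) be the space of $\fk L'$-types realized in $\fk L'$-reducts of $T$-models. Restriction of types gives a continuous surjection $\pi : S_{\bar x}(T) \to S'_{\bar x}(T)$ between compact Hausdorff spaces, and each $\fk L$-formula $\varphi(\bar x)$ induces a continuous value-function $\hat\varphi : S_{\bar x}(T) \to [0,1]$. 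In these terms the theorem asserts exactly that $\hat\varphi$ factors through $\pi$ via the value-function of some $\fk L'$-formula.

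First I would extract from the hypothesis the two consequences I actually use. Since $F$ is full and faithful onto its image, any $\fk L'$-isomorphism between the $\fk L'$-reducts of two $T$-models lifts (uniquely, as $F$ is restriction) to an $\fk L$-isomorphism. Applied to the identity this shows that a $T$-model is determined by its $\fk L'$-reduct; applied to automorphisms it shows that every $\fk L'$-automorphism of the reduct of a $T$-model extends to an $\fk L$-automorphism of the model. This last \emph{automorphism-lifting} property is the engine of the proof.

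The key step is to show that $\hat\varphi$ is constant on the fibers of $\pi$, i.e. that the value of $\varphi$ depends only on the $\fk L'$-type of its argument. Let $\cc M \models T$ be a suitably saturated, strongly homogeneous monster model; its $\fk L'$-reduct $A$ is again saturated, hence homogeneous as an $\fk L'$-structure. Given $\bar a, \bar b \in \cc M$ with $\operatorname{tp}_{\fk L'}(\bar a) = \operatorname{tp}_{\fk L'}(\bar b)$, homogeneity of $A$ yields an $\fk L'$-automorphism $\alpha$ of $A$ with $\alpha(\bar a) = \bar b$; by the automorphism-lifting property $\alpha$ is in fact an $\fk L$-automorphism of $\cc M$, whence $\varphi^{\cc M}(\bar a) = \varphi^{\cc M}(\alpha \bar a) = \varphi^{\cc M}(\bar b)$. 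Thus $\hat\varphi$ is constant on fibers of $\pi$. Because $\pi$ is a continuous surjection of compact Hausdorff spaces it is a quotient map, so $\hat\varphi$ descends to a continuous $g : S'_{\bar x}(T) \to [0,1]$ with $\hat\varphi = g \circ \pi$. The values of $\fk L'$-formulas form a point-separating family in $C(S'_{\bar x}(T))$ that contains the constants and is closed under the continuous connectives, so by Stone--Weierstrass they are uniformly dense; hence $g$ is a uniform limit of such values and, since formulas are closed under uniform limits, is computed by an $\fk L'$-formula $\psi(\bar x)$. Unwinding, $\varphi^{\cc M}$ and $\psi^{\cc M}$ agree on every tuple of every $T$-model, i.e. $T \models \sup_{\bar x}|\varphi - \psi| = 0$, which is the required $T$-equivalence.

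The main obstacle is the fiber-constancy step and the precise handling of the automorphism-lifting argument in the metric setting: one must verify that the $\fk L'$-reduct of a saturated $T$-model is itself saturated (so the needed homogeneity is genuinely available) and that the fullness hypothesis really lifts $\fk L'$-elementary maps to $\fk L$-elementary maps. A secondary subtlety is that Stone--Weierstrass a priori produces only a \emph{definable predicate} (a uniform limit of formulas) rather than a single basic formula; this is harmless, since the framework of \cite{hart} counts such uniform limits as formulas and the conclusion asks only for $T$-equivalence.
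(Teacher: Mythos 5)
The paper does not actually prove this theorem: it is quoted from \cite{hart} (and then used only through Corollary \ref{cor:beth-expanded-language}), so there is no in-paper argument to compare yours against; I can only assess your proposal on its own terms. Your skeleton --- the restriction map $\pi$ of type spaces, fiber-constancy of $\hat\varphi$, the quotient-map plus Stone--Weierstrass step, and the observation that fullness of $F$ forces every $\fk L'$-morphism between reducts to literally \emph{be} an $\fk L$-morphism (since $F$ acts as the identity on underlying maps) --- is the standard route, and the topological and approximation steps are sound as written.

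The genuine gap is in the fiber-constancy step. You realize both types in a single saturated, strongly homogeneous monster model $\cc M\models T$ and connect them by an automorphism. But $T$ is not assumed complete, and two types $p,q\in S_{\bar x}(T)$ with $\pi(p)=\pi(q)$ may a priori be realized only in models of \emph{different} completions of $T$; such realizations cannot be placed in a common model of $T$, so no automorphism argument is available. (Showing that distinct completions of $T$ cannot have the same $\fk L'$-restriction is essentially the statement you are trying to prove, so it cannot be assumed.) The repair is standard but changes the shape of the argument from one model with an automorphism to two models with an isomorphism: given $\bar a\in\cc M\models T$ and $\bar b\in\cc N\models T$ with the same $\fk L'$-type, pass to $\fk L$-elementary extensions $\cc M^*,\cc N^*$ that are saturated of the same density character; since a complete $\fk L'$-type determines the complete $\fk L'$-theory of the ambient structure, the reducts $F(\cc M^*)$ and $F(\cc N^*)$ are elementarily equivalent saturated $\fk L'$-structures of the same density character, hence $\fk L'$-isomorphic by an isomorphism carrying $\bar a$ to $\bar b$; fullness lifts this to an $\fk L$-isomorphism $\cc M^*\to\cc N^*$, whence $\varphi^{\cc M}(\bar a)=\varphi^{\cc N}(\bar b)$. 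Note that both reducts here are in the image of $F$, so fullness genuinely applies; this is also why one must saturate the $T$-models rather than their reducts, since an abstract $\fk L'$-elementary extension of a reduct need not lie in the image of $F$. The remaining issue you flag --- existence of genuinely saturated (not merely $\kappa$-saturated) models --- is a standard set-theoretic technicality (special models, or $\kappa=\kappa^{<\kappa}$), not a structural flaw.
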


As it is claimed in \cite{hart}, this result is even more important in the context of continuous logic given the challenge of identifying definable predicates as limits of explicit formulas. We will use this to prove the definability of the \textit{meet} operation under certain circumstances. Specifically, we will use the following corollary of the Beth Definability Theorem found in \cite[Corollary 4.2.3]{farah}.

\begin{cor}\label{cor:beth-expanded-language}
    Suppose that $\mathcal{C}$ is an elementary class of structures in a language $\fk L$ and, for every $A\in\mathcal{C}$, the structure $A$ is expanded by a predicate $P_A$ which is uniformly continuous with uniform continuity modulus independent of our choice of A. Let $\mathcal{C}' := \{(A,P_A): A \in \mathcal{C}\}$ be a class of structures for an expanded language $\mathcal{L}'$ with a predicate for $P$. If $\mathcal{C}'$ is an elementary class for a theory $T'$ in the language $\fk L'$, then $P$ is $T'$-equivalent to a definable predicate in $\fk L$.
\end{cor}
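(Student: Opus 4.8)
\emph{Proof proposal.} The plan is to invoke the Beth Definability Theorem, taking the smaller language in its statement to be our base language $\fk L$ and the larger language to be the expanded language $\fk L'$, with the theory there being our $T'$. Under this dictionary the conclusion of Beth reads: every $\fk L'$-formula is $T'$-equivalent to an $\fk L$-formula. Recalling that in this framework a formula is permitted to be a uniform limit of formulas, an $\fk L$-formula in this sense is exactly a definable predicate of $\fk L$. Applying Beth's conclusion to the atomic $\fk L'$-formula $P(\bar x)$ therefore yields precisely the assertion of the corollary, that $P$ is $T'$-equivalent to a definable predicate of $\fk L$. Thus everything reduces to verifying the hypothesis of Beth's theorem: that the reduct functor $F$ from $\Mod(T')$ to the category of $\fk L$-structures, which simply forgets the interpretation of $P$, is an equivalence onto its image.

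I would use the standard fact that a functor is an equivalence onto its image precisely when it is full and faithful. Faithfulness is immediate, since a morphism of $\fk L'$-structures is determined by its underlying map of metric spaces and $F$ leaves that underlying map unchanged; the content therefore lies entirely in fullness. To prepare for it I would first extract a uniqueness statement from the hypotheses: because $\mathcal{C}'$ is assumed to be the elementary class $\Mod(T')$ and, by its very definition, $\mathcal{C}' = \{(A,P_A) : A \in \mathcal{C}\}$, any $T'$-model with reduct $A$ must coincide with $(A,P_A)$; hence each $A \in \mathcal{C}$ admits a \emph{unique} expansion to a model of $T'$. Fullness then amounts to showing that for models $(A,P_A),(B,P_B)$ of $T'$, every morphism $h$ between the reducts $A$ and $B$ in fact respects $P$, i.e. $P_B(h\bar a) = P_A(\bar a)$. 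For an isomorphism $h$ this is immediate from uniqueness, since the pullback $h^{*}P_B$ exhibits $(A,h^{*}P_B)$ as a model of $T'$ and uniqueness forces $h^{*}P_B = P_A$; the general case reduces to transporting the canonical expansion along $h$ and again invoking uniqueness on the relevant substructure.

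The step I expect to be the main obstacle is ensuring that the reduct functor is genuinely well behaved, so that ``equivalence onto its image'' is meaningful and Beth applies — concretely, that morphisms of $\fk L$-structures lift to morphisms of the $P$-expansions and that the assignment $A \mapsto (A,P_A)$ is functorial rather than merely defined on objects. This is exactly where the hypothesis that the uniform continuity modulus of $P_A$ is independent of $A$ is indispensable: it guarantees that $P$ is a legitimate predicate symbol of $\fk L'$ carrying a single declared modulus, so that reducts and ultraproducts of the expanded structures are well defined and $\mathcal{C}'$ can indeed be an elementary class. Without a uniform modulus the expanded structures need not even share a common language, and the functorial setup underlying Beth's theorem would break down.
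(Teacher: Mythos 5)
Your overall strategy is the intended one: the paper does not prove this corollary itself but quotes it from \cite[Corollary 4.2.3]{farah}, where it is derived from the Beth Definability Theorem essentially as you propose. Several of your steps are correct and are the heart of the matter: faithfulness of the reduct functor is trivial; each $A\in\mathcal{C}$ admits a \emph{unique} expansion lying in $\mathcal{C}'$ (immediate from the definition of $\mathcal{C}'$ as the graph of $A\mapsto P_A$); and $\fk L$-isomorphisms lift, by your pullback argument: $(A,h^{*}P_B)\cong (B,P_B)\models T'$, and elementary classes are isomorphism-closed, so uniqueness forces $h^{*}P_B=P_A$. Your closing remark about the uniform modulus being what makes $\fk L'$ a legitimate language is also correct.

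The gap is in your treatment of fullness for morphisms that are not isomorphisms. You never fix what the morphisms of the two categories are, and the proposed reduction --- ``transporting the canonical expansion along $h$ and invoking uniqueness on the relevant substructure'' --- does not work: the image $h(A)\subseteq B$ is merely a substructure of $B$, and substructures of models of $T'$ need not be models of $T'$, so there is no uniqueness statement available on $h(A)$ to invoke. Worse, if the morphisms are taken to be embeddings, fullness is genuinely \emph{false} under the hypotheses of the corollary, so no argument can repair this step. For a counterexample, let $\fk L$ be the empty language, let $\mathcal{C}$ be the class of all complete metric spaces of diameter at most one, and set $P_A(x):=\sup_{y\in A} d(x,y)$; this has modulus $\Delta(\e)=\e$ uniformly in $A$, and $\mathcal{C}'$ is axiomatized by the single sentence $\sup_x\left|P(x)-\sup_y d(x,y)\right|$, yet the isometric embedding of a one-point space into a two-point space of diameter one sends a point with $P$-value $0$ to a point with $P$-value $1$. (Note $P$ is nonetheless definable --- it \emph{is} an $\fk L$-formula --- so the corollary holds while embedding-fullness fails.) The resolution, which your write-up is missing, is that the categorical hypothesis in Beth's theorem is only needed, and in its proof only used, at the level of isomorphisms: given $\bar a$, $\bar b$ in models of $T'$ whose reducts realize the same $\fk L$-type, one passes (Keisler--Shelah) to ultrapowers where the reducts become $\fk L$-isomorphic, lifts that isomorphism using your pullback-plus-uniqueness argument, and concludes that $P$ factors continuously through the $\fk L$-type space, hence is a uniform limit of $\fk L$-formulas. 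So your proof becomes correct once you either declare the morphisms to be isomorphisms (in which case your argument already establishes the hypothesis), or handle elementary maps by factoring them through ultrapower isomorphisms rather than through substructures.
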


\begin{remark}
    For the purposes of this paper, we will use ``formula'' to refer to either an $\fk L$-formula, $T$-formula, or definable predicate in general. For a more detailed explanation of each one and their differences we recommend the reader to check \cite{hart}.
\end{remark}

\section{Metric Lattices}\label{sec:metric-lattices}

A \emph{semilattice} is a set $L$ equipped with a commutative, associative, idempotent binary operation $+$. We can see that this operation defines a partial order $\leq$ on $L$ by $x\leq y$ if $x+y=y$, and a strict partial order by setting $x<y$ if and only if $x\leq y$ and $x\neq y$. This choice of partial order creates what is often referred to as an \emph{upper semilattice} in the literature: when we refer to a partial order on a semilattice, we will always mean this one. Further, we will always assume that semilattices are equipped with (unique) elements $0$ and $1$ so that $0+x=x$ and $1+x=1$ for all $x\in L$.

\begin{defn}\label{defn:lattice}
    A tuple $\cc L =(L, <, +, \cdot, 0, 1)$ a \emph{lattice} if:
    \begin{enumerate}
        \item $(L,<)$ is a partially ordered set with a maximal element $1$ and a minimal element $0$;
        \item $+,\cdot$ are commutative, associative, idempotent binary operations;
        \item $x\leq y$ if and only if $x+y=y$ if and only if $xy=x$.       
    \end{enumerate}
\end{defn}

Following convention, we will refer to $+$ as the \emph{join} and $\cdot$ as the \emph{meet}.
Note that the conditions imply that $\sum_{i=1}^k x_i$ is the least upper bound of $\{x_1,\dotsc,x_k\}$ and $\prod_{i=1}^k x_i$ is the greatest lower bound. Furthermore, if for any subset $S\subset L$ we can find its lowest upper bound and its greatest lower bound we say that the lattice is \emph{complete} and we will denote them by $\Sigma_{x\in S} x$ and $\prod_{x\in S}x$ respectively.

If $\cc L$ is a finite semilattice, more generally, if the partial order is complete, then the meet operation $xy$ can be defined in terms of this order as the maximal element simultaneously below both $x$ and $y$. When generalizing beyond finite lattices, this poses an interesting choice of category in terms of whether to consider lattice or semilattice morphisms.

\begin{defn}\label{defn:metric-lattice}
    A \emph{metric lattice} is a tuple $\cc L = (L, <, +, \cdot, 0, 1, d)$ so that $(L, <, +, \cdot, 0, 1)$ is a lattice and $d$ is a metric on $L$ satisfying:
    \begin{enumerate}
        \item $d(1,0) =1$;
        \item $d(x+z,y+z)\leq d(x,y)$;
        \item $d(x,y)\leq d(x+y,0) - d(xy,0)$
    \end{enumerate}
    for all $x,y,z,\in L$.
    If $(L,d)$ is a complete metric space, we say that $\cc L$ is a \emph{complete metric lattice}.
    We call a metric satisfying the last condition \emph{semi-modular}.
\end{defn}

\begin{remark}\label{rmk:uniform-continuity}
    Notice that condition (2) implies that 
    \begin{equation}\label{eq:uniform-continuity}
        d(x+z,y+w)\leq d(x,y) + d(z,w)
    \end{equation}
    by the triangle inequality, so $L\times L\ni (x,y)\mapsto x+y\in L$ is jointly uniformly continuous.
\end{remark}

Given a metric lattice $\cc L = (L, <, +, \cdot, 0, 1, d)$, we can define a \emph{rank function} $|\cdot|: L\to \bb R_{\geq 0}$ by
\begin{equation}
    |x| := d(x,0).
\end{equation}
The semi-modular condition on the metric then becomes
\begin{equation}\label{eq:semi-mod-metric}
    d(x,y)\leq |x+y| - |xy|.
\end{equation}   

\begin{prop}\label{prop:metric-norm}
    If $\cc L = (L, <, +, \cdot, 0, 1, d)$ is a metric lattice, then:
    \begin{enumerate}
        \item $x<y$ implies that $|x| < |y|$. 
        \item $d(x,y)\in [0,1]$ for all $x,y\in L$.
    \end{enumerate}
\end{prop}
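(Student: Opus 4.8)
The plan is to derive both claims directly from the three axioms defining a metric lattice, using the rank function $|x| = d(x,0)$ and the semi-modular inequality in the form \eqref{eq:semi-mod-metric}.

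For part (1), suppose $x < y$, so that $x + y = y$ and, by axiom (3) of Definition \ref{defn:lattice}, $xy = x$. I would apply the semi-modular condition \eqref{eq:semi-mod-metric} to the pair $x,y$: this gives
$$d(x,y) \leq |x+y| - |xy| = |y| - |x|.$$
Since $x \neq y$ (as we want strict inequality $<$ rather than $\leq$), the left-hand side $d(x,y)$ is strictly positive because $d$ is a metric. Hence $|y| - |x| \geq d(x,y) > 0$, which yields $|x| < |y|$ as desired. The only point requiring care is the interpretation of the strict order $x < y$: I would read it as $x \leq y$ together with $x \neq y$, so that positivity of the metric applies.

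For part (2), the lower bound $d(x,y) \geq 0$ is immediate from the metric axioms, so the content is the upper bound $d(x,y) \leq 1$. The natural approach is again the semi-modular inequality: $d(x,y) \leq |x+y| - |xy| \leq |x+y|$, since $|xy| = d(xy,0) \geq 0$. Thus it suffices to show $|z| \leq 1$ for every $z \in L$, i.e. that $d(z,0) \leq 1 = d(1,0)$. Since $z \leq 1$ for all $z$, monotonicity of the rank (part (1), in its non-strict form) gives $|z| \leq |1| = d(1,0) = 1$ by axiom (1) of Definition \ref{defn:metric-lattice}. Applying \eqref{eq:semi-mod-metric} to $x + y \leq 1$ then caps $|x+y|$ at $1$, giving $d(x,y) \leq 1$.

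The main obstacle, such as it is, lies in establishing the monotonicity used in part (2): the strict inequality from part (1) must be upgraded to a non-strict statement $x \leq y \Rightarrow |x| \leq |y|$ to handle the possibility $x = y$, but this follows from the same computation with the harmless observation that $d(x,y) \geq 0$ always. Beyond this bookkeeping, both parts are formal consequences of axiom (3); the semi-modular condition is doing essentially all the work, and no appeal to completeness or to the meet's continuity is needed.
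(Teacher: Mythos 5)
Your proof is correct and follows essentially the same route as the paper: part (1) comes from applying the semi-modular inequality to a comparable pair (where $xy = x$ and $x+y = y$) together with positivity of the metric, and part (2) combines $|xy| \geq 0$ with the monotonicity $|x+y| \leq |1| = 1$. The bookkeeping point you flag (upgrading strict monotonicity to the non-strict form via $d(x,y) \geq 0$) is handled implicitly in the paper in exactly the way you describe.
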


\begin{proof}
    If $x < y$, then $xy = x < y = x+y$, so $0 < d(x,y) \leq |y| - |x|$ by (\ref{eq:semi-mod-metric}). Since $0\leq x\leq 1$ for all $x\in L$, it follows that $0\leq |x|\leq 1$. As a consequence 
    \[d(x,y) \leq |x+y| - |xy|\leq |x+y|\leq 1.\qedhere\]
\end{proof}

\begin{remark}
    This definition differs from the definition given in \cite{Wilcox}. It can be seen that a metric lattice as defined therein is also a metric lattice in our sense. Indeed, the only difference is the insistence that $d(x,y) = 2|x+y| - |x| - |y|$. For a metric lattice we define 
    \begin{equation}
        d'(x,y) := d(x+y,x) + d(x+y,y).
    \end{equation}
\end{remark}

\begin{prop}\label{prop:metric-rank}
    For a metric lattice $\cc L$ the following are true.
    \begin{enumerate}
        \item $||x|-|y||\leq d(x,y)$.
        \item $d(x,y) = |y| - |x|$ for $x<y$.
        \item $d'(x,y) = 2|x+y| - |x| - |y|$.
        \item $|x+y|+|xy|\leq|x|+|y|$.
        \item If $z\leq x$, then $|z|+|y+x|\leq |z+y|+|x|$.
        \item $d'$ is a metric and $\cc L' = (L,d')$ is a metric lattice.
        \item $d(x,y)\leq d'(x,y)\leq 2\,d(x,y)$.
    \end{enumerate}
\end{prop}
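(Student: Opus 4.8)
The plan is to build the seven items in dependency order, treating items (1) and (2) as the engine that converts metric statements into statements about the rank $|\cdot|$; after that, almost everything reduces to the contraction axiom (2) of Definition \ref{defn:metric-lattice} together with the monotonicity of the rank from Proposition \ref{prop:metric-norm}. Item (1) is just the reverse triangle inequality applied to $|x| = d(x,0) \le d(x,y) + |y|$ and its symmetric counterpart. For item (2), when $x < y$ we have $x+y = y$ and $xy = x$, so the semi-modular inequality (\ref{eq:semi-mod-metric}) gives $d(x,y) \le |y| - |x|$, while item (1) together with Proposition \ref{prop:metric-norm}(1) gives the reverse inequality; hence equality (which trivially extends to $x \le y$). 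Item (3) is then immediate: since $x, y \le x+y$, applying item (2) to $d(x+y,x)$ and to $d(x+y,y)$ and summing yields $d'(x,y) = 2|x+y| - |x| - |y|$.

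Items (4) and (5) are the heart of the ``convert meet-inequalities into join-inequalities'' philosophy, and both follow the same recipe: start from a distance between comparable elements, add a fixed element using contraction axiom (2), then translate back via item (2). For (4): since $xy \le x$, item (2) gives $d(x, xy) = |x| - |xy|$; adding $y$ and using contraction gives $d(x+y, y) \le d(x, xy)$, and since $y \le x+y$ another application of item (2) yields $|x+y| - |y| \le |x| - |xy|$, which rearranges to submodularity. For (5), with $z \le x$, I would start from $d(z+y, z) = |z+y| - |z|$, add $x$, and use that $z \le x$ collapses $z+x = x$ and $z+y+x = x+y$; contraction then gives $|x+y| - |x| \le |z+y| - |z|$, which is the claim.

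Item (6) is where the real work lies, and I expect it to be the main obstacle. That $d'$ is symmetric and nonnegative is clear from the item-(3) formula and monotonicity (both $|x|,|y| \le |x+y|$), and $d'(x,y)=0$ forces $x = x+y = y$ by strict monotonicity. The delicate point is the triangle inequality: using item (3) it reduces, after cancellation, to the submodularity-type bound $|x+z| + |y| \le |x+y| + |y+z|$, which I would obtain by applying item (4) to $a = x+y$ and $b = y+z$ — noting $a+b = x+y+z \ge x+z$ and $ab = (x+y)(y+z) \ge y$ — followed by monotonicity. To see that $(L, d')$ is itself a metric lattice, the key simplifying observation is that the rank is unchanged, $d'(a,0) = 2|a| - |a| = |a|$; axiom (1) is then just $d'(1,0) = |1| = 1$; axiom (3) (semi-modularity for $d'$) becomes exactly item (4); and axiom (2) (contraction for $d'$) reduces, via the item-(3) formula, to summing the two instances of item (5) that compare adding $z$ to $x+y$ against adding $z$ to $x$ and to $y$.

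Finally, item (7) packages the contraction and triangle inequalities. The lower bound $d(x,y) \le d'(x,y)$ is the triangle inequality $d(x,y) \le d(x, x+y) + d(x+y, y)$. For the upper bound, contraction axiom (2) applied with the added element equal to $x$ and then to $y$ gives $d(x, x+y) \le d(x,y)$ and $d(y, x+y) \le d(x,y)$, whence $d'(x,y) \le 2\,d(x,y)$. The one principle to keep in mind throughout is that every appearance of the meet is either eliminated outright (items 4, 5, 7) or bounded below by a join-expression via monotonicity, so that no modulus of continuity for the meet is ever invoked.
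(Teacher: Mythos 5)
Your proof is correct, and its architecture coincides with the paper's: items (1)--(3) and (7) are argued the same way, and your (4) is the paper's contraction argument with $x$ and $y$ interchanged (the paper adds $x$ to the pair $(y,xy)$; you add $y$ to the pair $(x,xy)$). The only divergences are local. For item (5), the paper deduces the inequality from (4) applied to the pair $(z+y,x)$, using $z\leq(z+y)x$ and monotonicity of the rank; you instead prove it directly from the contraction axiom, which is slightly cleaner since it never mentions the meet. For the triangle inequality of $d'$, both arguments reduce to the strong subadditivity inequality $|x+y|+|z|\leq|x+z|+|y+z|$ (cf.\ Proposition \ref{prop:semimodular-upgrade}); the paper, taking $z$ as the midpoint, obtains it from item (5) with $z\leq x+z$, while you, taking $y$ as the midpoint, obtain it from item (4) applied to $x+y$ and $y+z$ together with $y\leq(x+y)(y+z)$ and monotonicity. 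Since the paper's own derivation of (5) from (4) uses exactly this meet-monotonicity trick, the two routes are the same computation packaged differently. Your verification that $(L,d')$ is a metric lattice (rank unchanged, semi-modularity from (4), contraction for $d'$ from two instances of (5)) likewise matches the paper's in content, as the paper's bound $d(x+y+z,x+y)\leq d(x+z,x)$ is precisely one of your instances of (5) after translation by item (2).
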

\begin{proof} We prove each item sequentially.
    \begin{enumerate}
        \item 
            Since $d$ is a metric, by the reverse triangle inequality, we have that 
            $$||x|-|y||=|d(x,0)-d(y,0)|\leq d(x,y).$$
            
        \item 
            If $x<y$, by the previous point and the semi-modular condition, we conclude that 
            \[||y|-|x||=|y|-|x|\leq d(x,y)\leq |x+y|-|xy|=|y|-|x|;\] thus, $d(x,y)=|y|-|x|$.
            
        \item 
            From item $(2)$, we have that 
            \begin{equation*}
                \begin{split}
                    d'(x,y)=d(x+y,x)+d(x+y,y) &= \left(|x+y|-|x|\right) + \left(|x+y|-|y|\right)\\
                     &=2|x+y|-|x|-|y|.
                \end{split}
            \end{equation*}
            
        \item 
            Again by item $(2)$, we have that $d(x+y,x)=|x+y|-|x|$ and $d(y,xy)=|y|-|xy|$. Also, from condition $(2)$ in the definition of a metric lattice, we have that 
            \[|x+y|-|x|=d(x+y,x)=d(x+y,x+xy)\leq d(y,xy)=|y|-|xy|;\] 
            thus, we get that $|x+y|+|xy|\leq|x|+|y|$.
            
        \item 
            By item $(4)$, we have that for $x,y,z\in L$, 
            \[|(z+y)x|+|z+y+x|\leq |z+y|+|x|.\]
            Since $z\leq z+y,x$, we have that $z\leq (z+y)x$ and $z+y+x=y+x$; thus, by Proposition \ref{prop:metric-norm}.1 we get that
            \[|z|+|y+x|\leq |(z+y)x|+|z+y+x|\leq |z+y|+|x|.\]
            
        \item 
            We first prove that $d'$ is a metric. Since $d$ is a metric, it is clear that $d'(x,y)\geq0$, $d'(x,x)=0$, and $d'(x,y)=d'(y,x)$. 
            
            Using our definition of a metric lattice and items $(2)$ and $(3)$, we can follow the strategy given in \cite[Theorem 1.1]{Wilcox} to prove the triangle inequality. Indeed, by item $(3)$ and Proposition \ref{prop:metric-norm}.1, we get that
            \begin{align*}
                d'(x,z)+d'(z,y)-d'(x,y)&=2(|x+z|+|z+y|-|x+y|-|z|)\\
                                      &\geq 2(|x+z|+|z+y|-|x+y+z|-|z|).
            \end{align*}
            Using inequality $(5)$ with $z\leq x+z$, we conclude that 
            \[2(|x+z|+|z+y|-|x+y+z|-|z|)\geq 0\; ;\]
            therefore, $d'(x,y)\leq d'(x,z)+d'(y,z)$.

            Now we need to check that $(L,d')$ satisfies the axioms of being a metric lattice. Note that $d'(x,0) = |x|$, so $d'$ induces the same rank function as $d$. We have that 
            \begin{equation*}
                \begin{split}
                    &d'(x,y) - d'(x+z,y+z)\\
                    &= (2|x+y| - |x| - |y|) - (2|x+y+z| - |x+z| - |y+z|)\\
                    &= (|x + z| - |x|) + (|y+z| - |y|) - 2(|x+y+z| - |x+y|)\\
                    &= d(x+z,x) + d(y+z,y) - 2\,d(x+y+z,x+y) \geq 0         
                \end{split}
            \end{equation*} by item (2) and Definition \ref{defn:metric-lattice}.2. It follows from item (4) that \[d'(x,y)\leq |x+y| - |xy|.\]

        \item 
            By Definition \ref{defn:metric-lattice}.2 and the triangle inequality, we have that
        \begin{align*}
            d(x,y)&\leq d(x,x+y)+d(y,x+y)&\\
            &=d(x+x,x+y)+d(x+y,y+y)&\\
            &\leq d(x,y)+d(x,y) =2\,d(x,y).
        \end{align*}
        Thus, by the definition of $d'$, we conclude $d(x,y)\leq d'(x,y)\leq 2\, d(x,y)$. \qedhere
    \end{enumerate}
\end{proof}

\begin{prop}\label{prop:semimodular-upgrade}
    If $\cc L$ is a metric lattice, then we have that
    \begin{equation}
        |x+y| + |z| \leq |x+z| + |y+z|
    \end{equation}
    for all $x,y,z\in L$.
    As a consequence, 
    \begin{equation}\label{eq: strg-sm-mod}
       d(x,y) \leq d'(x,y) \leq |x+y| + |x+z| + |y+z| - |x| - |y| - |z|
    \end{equation}
    for all $x,y,z\in L$.
\end{prop}

\begin{proof}
    Since $z\leq z+x$, we can apply Proposition \ref{prop:metric-rank}.5 to get
    $$|z|+|y+z+x|\leq|z+y|+|z+x|.$$
    By Proposition \ref{prop:metric-norm} we know that $|x+y|\leq|y+z+x|$, so we conclude that 
    $$|x+y|+|z|\leq|x+z|+|y+z|$$ for all $x,y,z,\in L$.
    This result, along with Proposition \ref{prop:metric-rank}.3 and Proposition \ref{prop:metric-rank}.7, gives us
    \begin{align*}
        d(x,y)\leq d'(x,y)&=2|x+y|-|x|-|y|\\
        &=|x+y|+(|x+y|-|x|-|y|)\\
        &\leq (|x+z|+|y+z|-|z|)+(|x+y|-|x|-|y|)\\
        &=|x+y| + |x+z| + |y+z| - |x| - |y| - |z|.
    \end{align*}
    for all $x,y,z\in L$.
\end{proof}

\begin{remark}
    From Proposition \ref{prop:metric-rank}.2 we see that the metrics $d$ and $d'$ agree on any chain (totally ordered subset) of $L$. By the proof of item (7) in the same, we have that if $d$ and $\tilde d$ are metrics that induce a metric lattice structure on the same lattice and for which the rank functions agree, then $d(x,y)\leq 2\tilde d(x,y)$. In general, the set of metrics on a lattice can be quite large. For example, take any Borel regular measure $\mu$ of full support on $[0,1]$. We say that two Borel sets $A,B$ are equivalent if $\mu(A\Delta B)=0$, and write $[A]$ for the equivalence class of $A$. For any measure $\nu$ in the same absolute continuity class as $\mu$, i.e., $\mu(A)=0 \Leftrightarrow \nu(A)=0$, the equivalence classes are the same, and each one induces a metric lattice structure on the same lattice of equivalence classes of Borel sets by $d_\nu([A],[B]) = \nu(A \cup B) - \nu(A\cap B)$.

\end{remark}

\begin{remark}\label{rmk:triple}
    Let's define
    \begin{equation}
        [x,y,z]_d := |x+y| + |x+z| + |y+z| - |x| - |y| - |z|
    \end{equation}
    and note this expression is symmetric in the variables $x,y,z$ and that $[x,y,z]_d = [x,y,z]_{d'}$. From the equation (\ref{eq: strg-sm-mod}) of Proposition \ref{prop:semimodular-upgrade} we see that for all $x,y,z\in L$
    \begin{equation*}
        \max\{d(x,y), d(y,z), d(x,z)\}\leq [x,y,z]_d
    \end{equation*}
    for any metric lattice. Further, by Proposition \ref{prop:metric-rank}.2 we have that
    \begin{equation}
        \begin{split}
            &d(x+y,0) - d(z,0) + d(x+z,x) + d(y+z,y)\\
            &= |x+y| - |z| + (|x+z| - |x|) + (|y+z| - |y|) = [x,y,z]_d.
        \end{split}  
    \end{equation}
\end{remark}

\noindent Thus, we have the following
\begin{cor}\label{cor:semimodular-upgrade}
    Every metric lattice $\cc L$ satisfies the following inequality:
    \begin{equation}
          d(x,y) \leq |x+y| +  d(x+z,x) + d(y+z,y) - |z|
    \end{equation}
    for all $x,y,z\in L$.
\end{cor}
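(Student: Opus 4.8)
The plan is to recognize that the claimed inequality is simply Proposition~\ref{prop:semimodular-upgrade} rewritten through the identity recorded in Remark~\ref{rmk:triple}, so that the entire argument reduces to a short bookkeeping computation. Concretely, the content to extract is that the right-hand side of the assertion coincides with the symmetric quantity $[x,y,z]_d$.

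First I would rewrite the two distance terms appearing on the right-hand side. Since $x \leq x+z$ and $y \leq y+z$, Proposition~\ref{prop:metric-rank}.2 yields $d(x+z,x) = |x+z| - |x|$ and $d(y+z,y) = |y+z| - |y|$. Substituting these and recalling $d(x+y,0) = |x+y|$ and $d(z,0) = |z|$ gives
\[
    |x+y| + d(x+z,x) + d(y+z,y) - |z| = |x+y| + |x+z| + |y+z| - |x| - |y| - |z| = [x,y,z]_d,
\]
which is exactly the rewriting already noted in Remark~\ref{rmk:triple}. The target inequality is therefore equivalent to $d(x,y) \leq [x,y,z]_d$.

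To finish I would simply invoke Proposition~\ref{prop:semimodular-upgrade}, whose conclusion contains precisely the bound $d(x,y) \leq [x,y,z]_d$. There is no genuine obstacle here: every inequality used is already in hand, and the only point worth stressing is interpretive, namely that this is truly a strengthening of condition (3) of Definition~\ref{defn:metric-lattice}. Indeed, taking $z = xy$ forces $x + z = x$ and $y + z = y$ (since $xy \leq x,y$), so that $d(x+z,x) = d(y+z,y) = 0$ and $|z| = |xy|$, whereupon the right-hand side collapses to $|x+y| - |xy|$, recovering the original semi-modular inequality.
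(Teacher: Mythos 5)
Your proof is correct and follows exactly the paper's route: the paper also rewrites the right-hand side as $[x,y,z]_d$ via Proposition \ref{prop:metric-rank}.2 (this is the identity recorded in Remark \ref{rmk:triple}) and then cites Proposition \ref{prop:semimodular-upgrade}. The closing observation that $z = xy$ recovers Definition \ref{defn:metric-lattice}(3) is a nice sanity check, though not required.
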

This last inequality is a strengthening of the semi-modular condition (3) from Definition \ref{defn:metric-lattice} since we can get back the semimodular condition by substituting $z=xy$. 

Note that $d'(x,y)\leq |x+y| + d'(x+z,x) + d'(y+z,y) - |z|$ is equivalent to $d'$ satisfying the triangle inequality.

\subsection{Modularity}

In a lattice $L$, a pair $(x,y)$ of elements is called a \emph{modular pair} if $xy + z = x(y+z)$ for all $z\leq x$. Note that this relation need not be symmetric: if it is, we will call the lattice \emph{semi-modular}. As will be seen below, in section \ref{Sec:mm-meet-def}, the detailed study of modularity from the metric lattice perspective occupies a prominent place in the theory as modularity is essentially equivalent to continuity of the meet operation.

\begin{defn}\label{defn:metric-mod-pair}
    Given a metric lattice $\cc L$, we say that $(x,y)\in L^2$ is a \emph{metrically modular pair} if 
    \[|x+y| + |xy| = |x| + |y|;\] equivalently, if $d'(x,y) = |x+y| - |xy|$. Note that, unlike modularity of a pair, metric modularity is a symmetric relation.
    We say that $x\in L$ is \emph{metrically modular} if $(x,y)$ is a metrically modular pair for all $y\in L$ and that $\cc L$ is \emph{metrically modular} if every pair $(x,y)\in L^2$ is a metrically modular pair.
\end{defn}

\begin{prop}
    For a metric lattice, every metrically modular pair $(x,y)$ is a modular pair.
\end{prop}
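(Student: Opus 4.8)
The plan is to reduce the whole statement to a single rank equality and then deploy submodularity twice. First I would recall the \emph{modular inequality}, valid in every lattice: if $z \le x$ then $xy + z \le x(y+z)$. Indeed $xy + z \le x$ (as $xy \le x$ and $z \le x$) and $xy + z \le y + z$ (as $xy \le y$ and $z \le y+z$), so $xy+z$ is a lower bound for $\{x, y+z\}$ and hence lies below their meet $x(y+z)$. By Proposition \ref{prop:metric-norm}.1 the rank is strictly monotone, so an element below another with the same rank must equal it; thus to prove that $(x,y)$ is a modular pair it suffices to show $|xy+z| = |x(y+z)|$ for every $z \le x$, and by the modular inequality this reduces to the single bound $|x(y+z)| \le |xy+z|$.

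Next I would sandwich both quantities between the common value $|xy| + (|y+z| - |y|)$. Applying submodularity (Proposition \ref{prop:metric-rank}.4) to the pair $(x, y+z)$ and using $x + y + z = x+y$ (since $z \le x$), I get $|x+y| + |x(y+z)| \le |x| + |y+z|$. This is where metric modularity enters: the hypothesis $|x|+|y| = |x+y| + |xy|$ rewrites $|x| - |x+y|$ as $|xy| - |y|$, turning the estimate into $|x(y+z)| \le |xy| + (|y+z| - |y|)$. For the reverse, I apply submodularity to the pair $(xy+z, y)$; since $xy \le y$ one has $(xy+z)+y = y+z$, and since $xy \le (xy+z)y$ one has $|xy| \le |(xy+z)y|$, so $|y+z| + |xy| \le |xy+z| + |y|$, i.e. $|xy+z| \ge |xy| + (|y+z| - |y|)$. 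Chaining the two estimates yields $|x(y+z)| \le |xy+z|$, which is exactly the bound needed, and the modular pair identity follows.

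The only genuinely load-bearing use of the hypothesis — and the step I expect to be the crux — is the conversion of $|x| - |x+y|$ into $|xy| - |y|$: with the mere submodular inequality this identity degrades to $|x| - |x+y| \ge |xy| - |y|$, which points the wrong way and collapses the sandwich. Everything else is general lattice theory (the modular inequality, the collapses $x+y+z = x+y$ and $xy + y = y$) together with submodularity and strict monotonicity of rank, so the remaining care is essentially bookkeeping: tracking that $z \le x$ simplifies the relevant joins, and that the automatic inequality $xy+z \le x(y+z)$ is precisely what lets equality of ranks upgrade to equality of elements.
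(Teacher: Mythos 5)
Your proof is correct and is essentially the paper's own argument: the paper likewise reduces to the rank inequality $|x(y+z)|\leq |xy+z|$ via the modular inequality and strict monotonicity of rank, applies submodularity to $(z+y,x)$ together with $z+y+x=y+x$ and the metric-modularity identity, and then closes with the same second application of submodularity to $(xy+z,y)$ that you spell out. Your ``sandwich'' around $|xy|+(|y+z|-|y|)$ is just the paper's single chain of inequalities written in two halves, with the final step made more explicit than in the paper's terse citation.
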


\begin{proof}
    Let's suppose that $(x,y)$ is a metrically modular pair. To prove that they are a modular pair we need to show that for all $z\leq x, z+yx=(z+y)x$. We already know that for all $z\leq x$ we have the inequalities $z+yx\leq (z+y), x$, hence $z+yx\leq (z+y)x$. Now using Proposition \ref{prop:metric-norm}.1 with $yx\leq y$ and Proposition \ref{prop:metric-rank}.4  we have that for all $z\leq x$
    \begin{equation*}
        \begin{split}
            |(z+y)x|&\leq |z+y|+|x|-|z+y+x|\\
            &=|z+y|+|x|-|y+x|\\
            &=|z+y|+|x|-(|x|+|y|-|xy|)\\
            &=|xy|+|z+y|-|y|\leq |z+yx|.
        \end{split}
    \end{equation*}
    We conclude that $|(z+y)x|=|z+yx|$; thus, as $|\cdot|$ is strictly increasing, it follows that $(z+y)x=z+yx$.
\end{proof}

\begin{remark}
    We say that a metric lattice $\cc L$ is \emph{strongly metrically semi-modular} if 
    \begin{equation}
        |x| + |y| + |z| \leq |x+z| + |y+z| + |xy|
    \end{equation}
    for all $x,y,z\in L$; equivalently, if
    \begin{equation}
        |x+y| - |xy|\leq [x,y,z]_d
    \end{equation}
    for all $x,y,z\in L$.
    It is clear that $|x+y| - |xy| \geq \inf_z [x,y,z]_d$; thus, strong semi-modularity is equivalent to 
    \begin{equation}
        |x+y| - |xy| = \inf_z [x,y,z]_d
    \end{equation}
    for all $x,y\in\cc L$.
    In fact, we have that a metric lattice $\cc L$ is strongly metrically semi-modular if and only if it is metrically modular.

    Indeed, assume $\cc L$ is strongly metrically semi-modular. Setting $y=z$, we get that $|x|+|y|\leq|x+y|+|xy|$ for all $x,y\in L$. By Proposition \ref{prop:metric-rank} we have the reverse inequality, so $\cc L$ is metrically modular.
    On the other hand, if $\cc L$ is metrically modular, we have $d'(x,y) = |x+y| - |xy|$; therefore, by Proposition \ref{prop:semimodular-upgrade} we have $|x+y|-|xy|\leq [x,y,z]_d$.
\end{remark}

\begin{defn}
    Let $\cc L$ be a lattice. We say that a function $\rho: L\times L\to \bb R_{\geq 0}$ is a \emph{quasi-metric} if:
    \begin{enumerate}
        \item $\rho(x,y) = \rho(y,x)$;
        \item $\rho(x,y)=0$ if and only if $x=y$;
        \item $\rho(x,y)\leq \rho(x,z)+\rho(y,z)$ when $z\leq x$ or $z\leq y$.
    \end{enumerate}
\end{defn}

\begin{prop}
    Let $\cc L$ be a metric lattice, and let 
    \[\de(x,y) := |x| + |y| - 2|xy|.\] The following are true.
    \begin{enumerate}
        \item $\de(x,y)$ is a quasi-metric.
        \item $\de(x,0) = |x|$ for all $x\in L$.
        \item $d'(x,y)\leq \de(x,y)$.
        \item $\de$ is a metric if and only if $\cc L$ is metrically modular.
    \end{enumerate}
\end{prop}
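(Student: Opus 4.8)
The plan is to verify the four items in order, reserving the genuinely interesting work for the second implication of item (4); items (1)--(3) are direct consequences of the rank-function inequalities already established. For item (2), I would note that $x\cdot 0 = 0$, so $|xy| = 0$ and $\delta(x,0) = |x| + 0 - 0 = |x|$. For item (3), substituting $d'(x,y) = 2|x+y| - |x| - |y|$ from Proposition \ref{prop:metric-rank}.3 reduces the claimed inequality to $|x+y| + |xy| \leq |x| + |y|$, which is exactly Proposition \ref{prop:metric-rank}.4.

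For item (1), symmetry is immediate from commutativity of the meet. For definiteness, if $\delta(x,y) = 0$ then $|x| + |y| = 2|xy|$; since $xy \leq x$ and $xy \leq y$ force $|xy| \leq |x|$ and $|xy| \leq |y|$ by Proposition \ref{prop:metric-norm}.1, equality forces $|xy| = |x| = |y|$, and the strict monotonicity of the rank then gives $xy = x = y$. For the restricted triangle inequality I would take $z \leq x$ (the case $z \leq y$ following by symmetry). Then $xz = z$, so $\delta(x,z) = |x| - |z|$, and after cancellation the desired inequality $\delta(x,y) \leq \delta(x,z) + \delta(y,z)$ collapses to $|yz| \leq |xy|$. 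This holds because $z \leq x$ implies $yz \leq yx$ (as $(yz)(yx) = y(zx) = yz$), whence $|yz| \leq |xy|$ again by Proposition \ref{prop:metric-norm}.1.

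The crux is item (4). For the forward direction, metric modularity gives $|xy| = |x| + |y| - |x+y|$, and substituting this into $\delta$ yields
\[
\delta(x,y) = |x| + |y| - 2(|x|+|y|-|x+y|) = 2|x+y| - |x| - |y| = d'(x,y),
\]
which is a metric by Proposition \ref{prop:metric-rank}.6. The converse carries the real content, and the key observation is that the quasi-metric triangle inequality of item (1) only licenses intermediate points lying \emph{below} an endpoint, whereas asserting that $\delta$ is a genuine metric permits the join $x+y$ as an intermediate point -- a point sitting \emph{above} both $x$ and $y$. Concretely, since $x, y \leq x+y$ I would compute $\delta(x, x+y) = |x+y| - |x|$ and $\delta(y, x+y) = |x+y| - |y|$, whose sum is precisely $d'(x,y)$. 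The full triangle inequality then forces $\delta(x,y) \leq d'(x,y)$; combined with item (3) this gives $\delta(x,y) = d'(x,y)$ for all $x,y$, and rearranging recovers $|x+y| + |xy| = |x| + |y|$, i.e.\ metric modularity.

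The main obstacle is precisely recognizing which instance of the full triangle inequality does the work: everything hinges on exploiting exactly the configuration -- intermediate point $x+y$ above both endpoints -- that the restricted quasi-metric inequality of item (1) excludes. Once that instance is identified, the argument is a one-line reduction against the already-proven inequality $d' \leq \delta$.
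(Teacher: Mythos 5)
Your proof is correct and follows essentially the same route as the paper's: items (1)--(3) via the rank inequalities $|xy|\leq|x|,|y|$ and Proposition \ref{prop:metric-rank}.4, and item (4) by noting that metric modularity forces $\de=d'$ in one direction, and in the other by applying the full triangle inequality through the intermediate point $z=x+y$ (exactly the specialization the paper makes) to get $\de\leq d'$, hence equality with item (3) and thus $|x+y|+|xy|=|x|+|y|$. The only cosmetic difference is in item (1), where you exploit $xz=z$ to reduce directly to $|yz|\leq|xy|$ rather than the paper's symmetric bound $|xz|+|yz|\leq|z|+|xy|$; both rest on the same monotonicity of the meet.
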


\begin{proof}
    We begin with the proofs of the numbered assertions.
\begin{enumerate}
    \item First note that since $xy\leq x,y$, we have that $|xy|\leq |x|,|y|$, so $\de(x,y)\geq 0$. It is clear that $\de(x,y)=\de(y,x)$ and that if $x=y$, then $\de(x,y)=0$. Now, if $x\neq y$, we have that $x\neq xy$ or $y\neq xy$; therefore, $|x|>|xy|$ or $|y|>|xy|$ and we get that $\de(x,y)>0$. Thus, we conclude that $\de(x,y)=0$ if and if $x=y$. Now to conclude that $\de$ is a quasi-metric we need check the last condition.

    If $z\in L$ is such that $z\leq x$ or $z\leq y$, then $|xz|\leq |xy|$ or $|yz|\leq|xy|$ and given that $|xz|,|yz|\leq |z|$ we conclude that $|xz|+|yz|\leq |z|+|xy|$. Using this inequality we get
    \[|x|+|y|+2|xz|+2|yz|\leq |x|+|z|+|y|+|z|+2|xy|,\]
    hence
    \begin{equation*}
        \begin{split}
            \de(x,y)&=|x|+|y|-2|xy|\\
            &\leq |x|+|z|-2|xz|+|y|+|z|-2|yz|=\de(x,z)+\de(y,z).
        \end{split}
    \end{equation*}
    Thus, $\de$ is a quasi-metric.
    
    \item It is clear that $\de(x,0)=|x|+|0|-2|0|=|x|-0=|x|$ for all $x\in L$.
    
    \item By Proposition \ref{prop:metric-rank}.4 we know that for all $x,y\in L$
    $$|x+y|+|xy|\leq |x|+|y|.$$
    We can reorder the terms to get
    \[2|x+y|-|x|-|y|\leq|x|+|y|-2|xy|;\]
    thus, $d'(x,y)\leq \de(x,y)$.

    \item For the last assertion, if $\de$ satisfies the triangle inequality, then for all $x,y,z\in L$ we have that
    \[|x|+|y|-2|xy|\leq |x|+|z|-2|xz|+|z|+|y|-2|yx|,\]
    which shows that
    \[|xz|+|yz|\leq|z|+|xy|.\]
    Taking $z=x+y$ we have that $|x|+|y|\leq|x+y|+|xy|$. We already know that $|x+y|+|xy|\leq|x|+|y|$, so we conclude that all pairs $x,y\in L$ are metrically modular pairs.
    
    In the other direction, if $\cc L$ is metrically modular, then $\de(x,y) = d'(x,y)$, so is a metric by Proposition \ref{prop:metric-rank}.6. \qedhere
    \end{enumerate}
\end{proof}

\subsection{Complete metric lattices}

Metric completeness is another crucial aspect of our study, as it allows to deduce algebraic structure from analytic arguments. The next result is the key observation which underpins the usage of metric completeness throughout.
\begin{lem}\label{lem:net-convergence}
    If $\cc L$ is a complete metric lattice, then any monotone net converges.
\end{lem}
\begin{proof}
  Let $\cc A$ be a directed set and suppose $(x_a)_{a\in\cc A}$ is an increasing net. Then $(|x_a|)_{a\in \cc A}$ is increasing and bounded by $1$, hence $\alpha := \lim_{\cc A} |x_a|$ exists. Given $\e>0$, if $\alpha-\e < |x_a| \leq |x_{b}|\leq \alpha$ for $a\leq b$, then $d(x_a,x_{b}) = |x_b| - |x_a|< \e$ by Proposition \ref{prop:metric-rank}.2. Thus, $(x_a)_{a\in \cc A}$ is a Cauchy net and $x^* = \lim_{\cc A} x_a$ exists by metric completeness. Similarly, if $(x_a)_{a\in A}$ is a decreasing net, $\lim_{\cc A} x_a$ exists.
\end{proof}

\begin{prop}
\label{lem:sequence}
    If $\cc L$ is a complete metric lattice and $S\subseteq L$ is closed under $+$ (respectively, closed under $\cdot$), then the least upper bound (resp., greatest lower bound) of $S$ exists and belongs to the closure of $S$.
\end{prop}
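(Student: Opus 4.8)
The plan is to construct, inside $S$, a monotone sequence whose ranks $|\cdot|$ converge to the extreme value of the rank function over $S$, to show this sequence is Cauchy using only the order--rank relation, and then to invoke metric completeness to place its limit in $\overline{S}$; the two tools doing the real work are the strict monotonicity of $|\cdot|$ (Proposition \ref{prop:metric-norm}) and the exact formula $d(x,y)=|y|-|x|$ for comparable $x<y$ (Proposition \ref{prop:metric-rank}.2).

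I would treat the join case ($S$ closed under $+$) first. Set $\alpha := \sup_{s\in S}|s|$, finite since $|s|\le 1$ by Proposition \ref{prop:metric-norm}. Choose $x_n\in S$ with $|x_n|\to\alpha$ and put $y_n := x_1+\cdots+x_n$, which lies in $S$ and is increasing, so $|y_n|$ is nondecreasing; being squeezed between $|x_n|$ and $\alpha$, it converges to $\alpha$. Since consecutive terms are comparable, Proposition \ref{prop:metric-rank}.2 gives $d(y_n,y_m)=\big|\,|y_n|-|y_m|\,\big|$, so $(y_n)$ is Cauchy and, by completeness, converges to some $y\in\overline{S}$. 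To see $y$ is an upper bound, fix $x\in S$: then $x+y_n\in S$, so $|y_n|\le|x+y_n|\le\alpha$ forces $|x+y_n|\to\alpha$, whence $d(x+y_n,y_n)=|x+y_n|-|y_n|\to 0$ and thus $x+y_n\to y$; on the other hand $x+y_n\to x+y$ by joint continuity of $+$ (Remark \ref{rmk:uniform-continuity}), so $x+y=y$, i.e.\ $x\le y$. Finally, if $z$ is any upper bound of $S$ then $y_n+z=z$ for all $n$, and passing to the limit with continuity of $+$ yields $y+z=z$, so $y$ is the least upper bound.

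The meet case is formally dual: with $\beta:=\inf_{s\in S}|s|$, pick $x_n\in S$ with $|x_n|\to\beta$ and set $w_n:=x_1\cdots x_n\in S$, a decreasing sequence with $|w_n|\to\beta$, Cauchy by the same rank computation, hence convergent to some $w\in\overline{S}$. Here lies the one genuine obstacle: unlike $+$, the meet $\cdot$ is \emph{not} jointly uniformly continuous, so I cannot simply push $\cdot$ through the limit. The fix is to avoid continuity of $\cdot$ entirely and argue through the triangle inequality: for $x\in S$ the element $x\cdot w_n\in S$ satisfies $\beta\le|x\cdot w_n|\le|w_n|\to\beta$, so $d(x\cdot w_n,w_n)=|w_n|-|x\cdot w_n|\to 0$, and therefore
\[
d(x\cdot w_n,\,w)\le d(x\cdot w_n,w_n)+d(w_n,w)\to 0,
\]
giving $x\cdot w_n\to w$ without any continuity hypothesis on $\cdot$. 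Since $x\cdot w_n\le x$ means $x\cdot w_n+x=x$, the continuity of $+$ (which \emph{is} available) lets me pass to the limit to obtain $w+x=x$, i.e.\ $w\le x$; thus $w$ is a lower bound. For greatestness, any lower bound $u$ satisfies $u+w_n=w_n$, and taking limits via continuity of $+$ gives $u+w=w$, so $u\le w$. Hence $w$ is the greatest lower bound and lies in $\overline{S}$. The upshot is that completeness plus the order--rank formula does all the work, and the asymmetry between the two cases is cosmetic: the triangle-inequality maneuver in the meet case simply substitutes for the continuity of $\cdot$ that the join case gets for free.
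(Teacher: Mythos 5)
Your proof is correct, and it reaches the same destination by a noticeably different implementation than the paper's. The paper runs the Cauchy-plus-rank argument over the \emph{net} $(x_F)$ of finite joins indexed by all finite subsets $F\subseteq S$; since every element of $S$ occurs as a term of that net, the limit $x^*$ dominates all of $S$ automatically (one just passes to the limit in $x_F+x_G=x_G$ for $F\subset G$), so nothing like your supremum-squeeze step is needed. Your sequential version sees only countably many elements of $S$, so the burden shifts to the rank-squeeze argument ($|y_n|\leq|x+y_n|\leq\alpha$, hence $x+y_n\to y$), which is the genuinely new ingredient and works precisely because the rank supremum over $S$ controls joins with arbitrary elements of $S$. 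Each route buys something: the paper's net makes the bounding property trivial but implicitly requires convergence of Cauchy nets, while yours uses only sequential completeness; your leastness argument (pass $y_n+z=z$ to the limit) is more direct than the paper's, which argues by contradiction using the meet $x^*y$ and strict monotonicity of the rank; and your meet case makes explicit a point the paper compresses into ``follows nearly identically'' --- the naive dual of the paper's limit passage would push $\cdot$ through a limit, which is not permitted since the meet has no uniform continuity, and your triangle-inequality maneuver (or, equivalently, rewriting the relevant inequalities as join identities before taking limits) is exactly the repair that the dualization silently requires.
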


\begin{proof}
    Let $\cc F$ be the collection of all finite subsets of $S$. Consider the net $(x_F)_{F\in \cc F}$ where $x_F := \sum_{x\in F} x$. We have that $x$ is an upper bound for $S$ if and only if $x_F\leq x$ for all $F\in \cc F$. As $(x_F)$ is increasing,  $x^* = \lim_{\cc F} x_F$ exists by Lemma \ref{lem:net-convergence}. Clearly $x_F + x_G = x_G$ if $F\subset G$, so passing to limits we have that $x_F + x^* = x^*$ which shows that $x^*$ is an upper bound for $S$. If $S$ is closed under $+$, then $x_F\in S$, so $x^*$ belongs to the closure of $S$.
    
    It now suffices to check that $x^*$ is the least upper bound. If $y$ is another upper bound for $S$, then so is $x^*y$, so without loss of generality we may only consider the case when $x_F\leq y < x^*$ for all $F\in\cc F$. However, we would then have that by Proposition \ref{prop:metric-rank}.2 that $d(x_F,x^*) = |x^*| - |x_F| \geq  |x^*| - |y|>0$, which is a contradiction. 

    The proof when $S$ is closed under $\cdot$ with the greatest lower bound follows nearly identically.
\end{proof}

The following consequence is readily apparent.

\begin{cor}
\label{prop:metric-to-lattice-complete}
    A complete metric lattice is complete as a lattice.
\end{cor}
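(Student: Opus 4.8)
The plan is to reduce the completeness of an arbitrary subset to the case already handled by Proposition \ref{lem:sequence}, which only applies to subsets closed under $+$ or under $\cdot$. Recall that a lattice is complete as a lattice precisely when every subset of $L$ admits both a least upper bound and a greatest lower bound. The point to keep in mind is that Proposition \ref{lem:sequence} cannot be applied to a subset $S$ directly, since an arbitrary $S$ need not be closed under either operation; the fix is to replace $S$ by the subsemilattice it generates and verify this does not alter the relevant set of bounds.

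First I would fix an arbitrary subset $S\subseteq L$ and pass to the collection $\widehat{S}$ of all finite joins $\sum_{x\in F} x$, where $F$ ranges over the finite subsets of $S$. By construction $\widehat{S}$ is closed under $+$, so Proposition \ref{lem:sequence} guarantees that $\widehat{S}$ has a least upper bound $x^{*}\in L$. The key observation is then that $S$ and $\widehat{S}$ have exactly the same set of upper bounds: any upper bound of $\widehat{S}$ is an upper bound of $S$ because $S\subseteq\widehat{S}$, while conversely if $y\geq x$ for every $x\in S$, then $y\geq\sum_{x\in F}x$ for every finite $F\subseteq S$, since $\sum_{x\in F}x$ is the least upper bound of $F$. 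Consequently $x^{*}$ is in fact the least upper bound of $S$.

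Dually, I would form the collection $\check{S}$ of all finite meets of elements of $S$, which is closed under $\cdot$; Proposition \ref{lem:sequence} then supplies its greatest lower bound, and the same argument applied to lower bounds shows this coincides with the greatest lower bound of $S$. The empty subset is covered directly by the distinguished elements: $\sup\emptyset = 0$ and $\inf\emptyset = 1$, both of which belong to $L$ by Definition \ref{defn:lattice}. Since every subset of $L$ thus admits both a supremum and an infimum, $\cc L$ is complete as a lattice.

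I do not anticipate a genuine obstacle here, as the substantive analytic content is already carried by Proposition \ref{lem:sequence}; the only thing requiring verification is the elementary but essential fact that replacing $S$ by the join- (respectively, meet-) subsemilattice it generates leaves the set of upper (respectively, lower) bounds unchanged, which is what lets the supremum of the generated set serve as the supremum of $S$ itself.
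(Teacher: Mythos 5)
Your proof is correct and takes essentially the approach the paper intends: the paper leaves the corollary as ``readily apparent'' from Proposition~\ref{lem:sequence}, and your reduction---replacing an arbitrary $S\subseteq L$ by the join-closure $\widehat{S}$ (resp.\ meet-closure $\check{S}$) and observing that the upper (resp.\ lower) bounds are unchanged---is precisely the elementary bridging step that makes the deduction rigorous. Nothing further is needed.
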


\begin{remark}
    An important consequence of the previous results is the existence of $\liminf$ and $\limsup$ for any sequence in a complete metric lattice, i.e., given a sequence $(x_n)$ there exist $\limsup x_n:=\lim_{n\to\infty}\sum_{k=n}^\infty x_k$ and $ \liminf x_n:=\lim_{n\to \infty} \prod_{k=n}^\infty x_k $. For more details of their construction see Proposition \ref{prop:metrically-modular-axiom} and Proposition \ref{prop:weak-complement}, respectively.
\end{remark}

\begin{lem}\label{lem:metric-sum}
    Let $\cc L$ be a complete metric lattice. If $(x_i)$ is a sequence in $L$, by Proposition \ref{lem:sequence} we can set $z = \sum_ix_i$ and we have that \[d(z + y, y)\leq \sum_i d(x_i+y,y).\]
\end{lem}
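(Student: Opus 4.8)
The plan is to reduce the countable sum to finite partial sums, apply a telescoping/triangle-inequality argument to the finite case, and then pass to the limit using metric completeness together with the joint contractivity of the join established in Remark \ref{rmk:uniform-continuity}. First I would set $z_n := \sum_{i=1}^n x_i$ and note that by Corollary \ref{prop:metric-to-lattice-complete} the complete metric lattice is complete as a lattice, and by Proposition \ref{lem:sequence} the element $z = \sum_i x_i$ exists as the least upper bound of $\{z_n\}$ and equals $\lim_n z_n$ in the metric (since the $z_n$ form an increasing, hence Cauchy, net as in the proof of Proposition \ref{lem:sequence}).

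The heart of the argument is the finite estimate $d(z_n + y, y) \leq \sum_{i=1}^n d(x_i + y, y)$. I would prove this by induction on $n$, the base case $n=1$ being trivial. For the inductive step, write $z_n + y = x_n + (z_{n-1} + y)$ and use the triangle inequality
\begin{equation*}
    d(z_n + y, y) \leq d\bigl(x_n + (z_{n-1}+y),\, z_{n-1}+y\bigr) + d(z_{n-1}+y, y).
\end{equation*}
The second term is controlled by the inductive hypothesis. For the first term, I would apply Definition \ref{defn:metric-lattice}.2 (the join is contractive: $d(a+c, b+c) \leq d(a,b)$) with the common summand $c = z_{n-1}$ to obtain
\begin{equation*}
    d\bigl(x_n + (z_{n-1}+y),\, z_{n-1}+y\bigr) = d\bigl((x_n+y)+z_{n-1},\, y + z_{n-1}\bigr) \leq d(x_n + y, y),
\end{equation*}
which closes the induction.

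Finally I would pass to the limit. Since $z_n \to z$, joint uniform continuity of the join (Remark \ref{rmk:uniform-continuity}) gives $z_n + y \to z + y$, so $d(z_n + y, y) \to d(z+y, y)$ by continuity of the metric. The right-hand sides $\sum_{i=1}^n d(x_i+y,y)$ are nondecreasing in $n$ and bounded by the (possibly infinite) series $\sum_i d(x_i+y,y)$, so taking $n\to\infty$ yields $d(z+y,y) \leq \sum_i d(x_i+y,y)$, as desired.

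I expect the only subtle point to be the careful bookkeeping in identifying $z$ with $\lim_n z_n$ and ensuring the least-upper-bound element produced by Proposition \ref{lem:sequence} is indeed the metric limit of the partial sums; this is essentially already contained in the Cauchy-net argument there, so the main work is simply the telescoping induction, which is routine once the contractivity of the join is invoked with the right common summand.
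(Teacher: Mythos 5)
Your proposal is correct and follows essentially the same argument as the paper: the same telescoping induction giving $d(z_n+y,y)\leq\sum_{i=1}^n d(x_i+y,y)$ via the triangle inequality and contractivity of the join (the paper merely contracts with common summand $x_n$ where you use $z_{n-1}$, a symmetric bookkeeping choice), followed by the same passage to the limit using $z_n\to z$ and continuity.
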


\begin{proof}
    Setting $z_n = \sum_{i=1}^n x_i$, we have that $( z_n)$ is an increasing sequence, so $(z_n)$ is Cauchy and $z_n\to z$ by Lemma \ref{lem:net-convergence}. We have inductively that
    \begin{align*}
        d(z_n+y,y) &= d(z_{n-1} + x_n + y, y)\\
        &\leq d(z_{n-1} + y + x_n, y + x_n) + d(x_n + y,y)\\
        &\leq d(z_{n-1} + y,y) + d(x_n + y, y) \leq \sum_{i=1}^n d(x_i+y,y);
    \end{align*}
    hence, the result obtains in the limit by continuity.
\end{proof}

\begin{prop}\label{prop:metrically-modular-axiom}
    Let $\cc L$ be a complete metric lattice. A pair $x,y\in L$ is metrically modular if and only if for every $n\in \bb N$ there is $z\in L$ so that
    \begin{equation}
        |x| + |y| \leq |x+y| + |z| + \frac{1}{n},\ d(x+z,x) < \frac{1}{n},\ d(y+z,y) < \frac{1}{n}.
    \end{equation}
\end{prop}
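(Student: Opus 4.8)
The plan is to prove the two implications separately; the forward direction is immediate, while the reverse is where metric completeness does the essential work. For the forward direction, suppose $(x,y)$ is metrically modular, so $|x|+|y| = |x+y|+|xy|$ by Definition \ref{defn:metric-mod-pair}. Then $z = xy$ witnesses the condition for \emph{every} $n$: the first inequality holds even without the $1/n$ slack, and since $xy \le x$ and $xy \le y$ we have $x + xy = x$ and $y + xy = y$, so $d(x+z,x) = d(y+z,y) = 0$. For the converse, write $\rho := |x| + |y| - |x+y|$. By Proposition \ref{prop:metric-rank}(4) one always has $|x+y| + |xy| \le |x| + |y|$, i.e. $|xy| \le \rho$, so metric modularity is equivalent to the reverse inequality $|xy| \ge \rho$, and this is what I will establish.

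The difficulty is that I cannot take a sequence of near-witnesses $z_n$ and simply meet them against $x$ and $y$ to reach $xy$: the meet is \emph{not} uniformly continuous, so approximate common lower bounds of $x$ and $y$ need not be anywhere near $xy$ in rank. The idea is instead to manufacture a genuine common lower bound of $x$ and $y$ using only the join, which \emph{is} contractive, together with completeness. Concretely, applying the hypothesis at the indices $n = 2^k$, I would choose for each $k$ an element $z_k$ with $d(x+z_k,x) < 2^{-k}$, $d(y+z_k,y) < 2^{-k}$, and $|z_k| \ge \rho - 2^{-k}$. By Corollary \ref{prop:metric-to-lattice-complete} the lattice is complete, so the tail joins $w_k := \sum_{j \ge k} z_j$ exist and form a decreasing chain. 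Applying Lemma \ref{lem:metric-sum} to the sequence $(z_j)_{j \ge k}$ (against the fixed element $x$, and again against $y$) gives $d(w_k + x, x) \le \sum_{j \ge k} d(z_j + x, x) \le 2^{-(k-1)}$, and likewise for $y$. Since $(w_k)$ is a decreasing chain, $d(w_k, w_{k'}) = |w_k| - |w_{k'}|$ by Proposition \ref{prop:metric-rank}(2), so $(w_k)$ is Cauchy and converges to some $w_\infty$. By joint continuity of the join (Remark \ref{rmk:uniform-continuity}), $d(w_\infty + x, x) = \lim_k d(w_k + x, x) = 0$, whence $w_\infty + x = x$, i.e. $w_\infty \le x$; similarly $w_\infty \le y$, so $w_\infty \le xy$. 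Finally, for every $m \ge k$ we have $z_m \le w_k$, so by monotonicity of the rank (Proposition \ref{prop:metric-norm}(1)) $|w_k| \ge \sup_{m \ge k} |z_m| \ge \rho$; hence $|xy| \ge |w_\infty| = \lim_k |w_k| \ge \rho$, giving metric modularity.

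The main obstacle, which this construction is designed to circumvent, is exactly the failure of continuity of the meet; this is why I pass through decreasing tail joins rather than through meets. Because the join is contractive, Lemma \ref{lem:metric-sum} controls the defect $d(w_k + x, x)$ of an infinite join by a summable series, and passing to the limit converts these vanishing defects into the \emph{exact} relations $w_\infty \le x$ and $w_\infty \le y$, producing an honest element below $xy$; monotonicity of the rank together with the chain identity $d(u,v) = |v|-|u|$ keeps $|w_k|$ pinned at or above $\rho$ throughout. I expect the only points requiring care in the full write-up to be the bookkeeping that $(w_k)$ is genuinely decreasing and Cauchy, and the verification that every $z_m$ with $m \ge k$ lies below $w_k$, which is what forces $|w_\infty| \ge \rho$.
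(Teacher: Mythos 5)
Your proposal is correct and follows essentially the same route as the paper's proof: take $z_k$ at $n=2^k$, form the decreasing tail joins $t_k=\sum_{j\ge k}z_j$, use Lemma \ref{lem:metric-sum} and completeness to produce a limit lying below $xy$, and conclude via Proposition \ref{prop:metric-rank}. The only differences are cosmetic (phrasing the goal as $|xy|\ge\rho$ and finishing with a supremum over $m\ge k$ instead of just $|t_k|\ge|z_k|$).
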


\begin{proof}
    The ``only if'' direction follows by choosing $z = xy$ and using Proposition \ref{prop:metric-rank}.4. For the converse, fix $x,y\in L$. Choose $z_i\in L$ for $n = 2^i$ as above, we will proceed to construct $t$ the $\limsup z_i$. Set $t_k = \sum_{i=k}^\infty z_i$ with $t_k^N = \sum_{i=k}^N z_i$, noting that $t_k^N\to t_k$ as $N\to\infty$ by Lemma \ref{lem:net-convergence}. For $k<l$ and $M> N$ we have that $d(t_k^M + t_l^N, t_k^M)=0$, so 
    \[d(t_k + t_l^N, t_k) = \lim_M d(t_k^M + t_l^N, t_k^M)=0\]
    when $k< l$. Taking the limit in $N$, we see that $d(t_k + t_l, t_k)=0$, or $t_l\leq t_k$ when $k< l$. Since $(t_k)$ is  therefore decreasing, it is Cauchy. Set $t = \lim_k t_k$. By Lemma \ref{lem:metric-sum}, we have that $d(t_k+x,x), d(t_k+y,y)\leq \sum_{i=k}^\infty 2^{-i}\to 0$ as $k\to \infty$; thus, $d(t+x,x)= d(t+y,y)=0$ or $t\leq xy$.

     Since $t_k\geq z_k$, we have that
    \[|x|+|y|\leq |x+y| + |t_k| + \frac{1}{2^k}\]
    for all $k\in \bb N$ which implies that 
     \[|x|+|y|\leq |x+y| + |t|\leq |x+y| + |xy|.\]
     The result is now apparent by Proposition \ref{prop:metric-rank}.4.
\end{proof}

\subsection{Metrically complete semilattices}\label{sec:alt-axiomatization}

In the presence of metric completeness we can describe a metric lattice using only uniformly continuous operations, a point which will be crucial for the purpose of axiomatization as metric structures. 

\begin{defn}
    We define a \emph{pseudo-metric semilattice} to be a structure $\cc P = (P,+,0,1,d)$ where the following hold:
    \begin{enumerate}
        \item $(P,d)$ is a pseudo-metric space;
        \item $0,1\in P$ are distinguished elements;
        \item $+: P\times P\to P$ is a commutative, associative, idempotent ($x+x = x$) operation; and
        \item $0+x = x$ and $1+x =1$ for all $x\in P$.
    \end{enumerate}
    Further, we require the following axioms:
    \begin{enumerate}
         \item $d(1,0) =1$;
        \item $d(x+z,y+z)\leq d(x,y)$; and
        \item $d(x,y)\leq d(x+y,0) + d(x+z,x) + d(y+z,y) - d(z,0)$
    \end{enumerate}
    for all $x,y,z\in P$.

    We say that $\cc P$ is a \emph{metric semilattice} if $(P,d)$ is a metric space and that $\cc P$ is \emph{complete} if $(P,d)$ is a complete metric space.
\end{defn}

\begin{remark}
    The property ``$d(x,y)\leq d(x+y,0) + d(x+z,x) + d(y+z,y) - d(z,0)$ for all $x,y,z\in P$'' implies ``$d(x,y)\leq d(x+y,0) - d(z,0)$ for all $x,y,z\in P$ with $z\leq x,y$.'' Inspection of the proofs of Propositions \ref{prop:metric-norm} and \ref{prop:metric-rank} will convince the reader that these properties are equivalent.
\end{remark}

\begin{prop}\label{prop:exchange-relation}
    Let $\cc P$ be a semilattice and let $f: P\to [0,1]$ be a strictly increasing function with $f(0)=0$ and $f(1)=1$. We have that $d_f(x,y) := 2\,f(x+y) - f(x) - f(y)$ defines a semilattice metric on $L$ if and only if $f$ satisfies the ``exchange relation'' 
    \[f(x+y) + f(z)\leq f(x) + f(y+z)\]
    whenever $z\leq x$.
\end{prop}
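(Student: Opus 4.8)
The plan is to show that each defining property of a metric semilattice for $d_f$ reduces to the single inequality $f(x+y) + f(z) \le f(x+z) + f(y+z)$ (call it \emph{submodularity}), and that submodularity is in turn equivalent to the exchange relation. First I would dispose of the properties that hold for \emph{any} strictly increasing $f$ with $f(0)=0$ and $f(1)=1$. Symmetry of $d_f$ is immediate from commutativity of $+$; writing $d_f(x,y) = (f(x+y)-f(x)) + (f(x+y)-f(y))$ and using $x,y \le x+y$ shows $d_f \ge 0$, while strict monotonicity forces $d_f(x,y)=0$ to imply $x+y = x = y$, hence $x=y$; and $d_f(1,0) = 2f(1)-f(1)-f(0) = 1$. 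So definiteness of the metric and axiom (1) are automatic, and the content lives in the triangle inequality and axioms (2)--(3).

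Next I would compute, using idempotence and $f(0)=0$, the reductions $d_f(x+y,0) = f(x+y)$, $d_f(x+z,x) = f(x+z)-f(x)$, $d_f(z,0)=f(z)$. Substituting these into axiom (3) and cancelling, it becomes exactly submodularity; a parallel direct expansion shows the triangle inequality $d_f(x,y)\le d_f(x,z)+d_f(z,y)$ is equivalent to the same inequality. (As a byproduct, taking $z=0$ in axiom (3) gives $d_f(x,y)\le f(x+y)\le 1$, so $d_f$ is automatically $[0,1]$-valued once submodularity holds.)

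The core is the equivalence of submodularity with the exchange relation. For submodularity $\Rightarrow$ exchange, if $z\le x$ then $x+z=x$, and submodularity reads $f(x+y)+f(z)\le f(x)+f(y+z)$. For the converse I would apply the exchange relation with $x+z$ in the role of $x$ (valid since $z\le x+z$) to get $f(x+y+z)+f(z)\le f(x+z)+f(y+z)$, then use $f(x+y)\le f(x+y+z)$ to recover submodularity in full. Finally, axiom (2), the contractivity $d_f(x+z,y+z)\le d_f(x,y)$, unwinds to $2f(x+y+z)+f(x)+f(y)\le 2f(x+y)+f(x+z)+f(y+z)$, which I would obtain by summing two instances of the exchange relation, both with $a=x+y$ and $b=z$: one with $c=x$ and one with $c=y$ (each comparison valid since $x,y\le x+y$), yielding $f(x+y+z)+f(x)\le f(x+y)+f(x+z)$ and $f(x+y+z)+f(y)\le f(x+y)+f(y+z)$.

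Putting it together: if $d_f$ is a semilattice metric then its triangle inequality is submodularity, hence the exchange relation holds; conversely, the exchange relation yields submodularity (hence the triangle inequality and axiom (3)) and, via the two-instance sum, contractivity (axiom (2)), while symmetry, definiteness, and axiom (1) come for free. I expect the contractivity direction to be the only non-mechanical point, since it is the single axiom that is not literally submodularity but instead requires the right pair of specializations of the exchange relation; the submodularity--exchange equivalence itself is a one-line monotonicity argument once one observes that $z\le x$ collapses $x+z$ to $x$.
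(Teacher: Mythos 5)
Your proposal is correct and takes essentially the same route as the paper: both reduce the triangle inequality and axiom (3) to the single inequality $f(x+y)+f(z)\le f(x+z)+f(y+z)$, derive that inequality from the exchange relation using $z\le x+z$ together with monotonicity ($f(x+y)\le f(x+y+z)$), and obtain the contractivity axiom (2) by summing two suitably specialized instances. Your forward direction (specializing the submodularity inequality at $z\le x$, where $x+z=x$) is more self-contained than the paper's, which defers to an adaptation of Proposition \ref{prop:metric-rank}.5, but the underlying computation is the same.
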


\begin{proof}
    If $d_f$ is a semilattice metric, then $f$ satisfies the exchange relation by adapting the proof of Proposition \ref{prop:metric-rank}.5. Conversely, $d_f$ satisfies the triangle inequality if and only if $f(x+y) + f(z) \leq f(x+z) + f(y+z)$. 
    Applying the exchange relation to $x+z$, $y$, and $z$, we obtain
    \[f(x+y+z) + f(z)\leq f(x+z) + f(y+z)\]
    for all $x,y,z\in L$.
    We have that $d_f(x+z,y+z)\leq d_f(x,y)$ if and only if
    \[2\,f(x+y+z) + f(x) + f(y) \leq 2\,f(x+y) + f(x+z) + f(y+z).\] This may be obtained from two applications of the previous inequality, permuting the variables $x,y,z$ appropriately.
\end{proof}

\begin{remark}\label{rmk:exchange-relation}
    For a semilattice $\cc P$ and a map $f:P\to \bb R$, the key inequality used in the proof of Proposition \ref{prop:exchange-relation} is that for all $x,y,z\in P$
    \[f(x+y+z) + f(z) \leq f(x+z) + f(y+z).\]
    This property is referred to as \emph{strong subadditivity} in the literature \cite{choquet, niculescu-survey}. We direct the reader to Proposition \ref{prop:cnd-implies-metric} below for a related result. For a lattice, the exchange relation and strong subadditivity are equivalent to $f$ being submodular, that is, $f(x+y) + f(xy)\leq f(x) + f(y)$.
\end{remark}

\begin{lem}\label{lem:metric-quotient}
    Let $\cc P$ be a pseudo-metric semilattice. Let $(P',d')$ be the metric quotient of $(P,d)$ and define $\cc P'= (P', +', [0], [1], d')$ where $[x]+'[y] := [x+y]$. Then $\cc P'$ is a metric semilattice.
\end{lem}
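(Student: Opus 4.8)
The plan is to realize $\cc P'$ as the standard Kolmogorov quotient of the pseudo-metric space $(P,d)$ and then check that both the semilattice structure and the three metric-semilattice axioms descend to it. First I would declare $x \sim y$ whenever $d(x,y) = 0$; symmetry and the triangle inequality (both part of being a pseudo-metric) make $\sim$ an equivalence relation, and I set $P' := P/{\sim}$ with $d'([x],[y]) := d(x,y)$. That $d'$ is well defined and is a genuine metric is immediate: if $x \sim x'$ and $y \sim y'$, then two applications of the triangle inequality give $|d(x,y) - d(x',y')| \le d(x,x') + d(y,y') = 0$, so the value $d(x,y)$ depends only on the classes; and $d'([x],[y]) = 0$ forces $d(x,y) = 0$, i.e.\ $[x] = [y]$, which is exactly the separation property that upgrades the pseudo-metric to a metric.

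The one step that genuinely uses the semilattice axioms rather than just the pseudo-metric structure is the well-definedness of $+'$. Here I would invoke the contraction axiom (2), $d(x+z,y+z) \le d(x,y)$, exactly as in Remark \ref{rmk:uniform-continuity}: combining it with the triangle inequality yields
\[
    d(x+y, x'+y') \le d(x+y, x'+y) + d(x'+y, x'+y') \le d(x,x') + d(y,y').
\]
Hence if $x \sim x'$ and $y \sim y'$, then $d(x+y, x'+y') = 0$, so $[x+y]$ depends only on $[x]$ and $[y]$, and $+'$ is a well-defined binary operation on $P'$. This is the crux of the lemma: contractivity of the join is precisely what guarantees that passing to the metric quotient does not destroy the algebraic operation. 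I do not anticipate any real obstacle beyond making sure this step is carried out before any of the transferred axioms are even meaningful.

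With $+'$ well defined, the remaining verifications are purely formal, as every required identity or inequality is a pointwise consequence of the corresponding fact for chosen representatives. Commutativity, associativity, and idempotence of $+'$, together with $[0]+'[x] = [x]$ and $[1]+'[x] = [1]$, follow by applying $+$ to representatives, so $\cc P'$ is a semilattice with distinguished elements $[0]$ and $[1]$. Finally the three axioms transfer verbatim: axiom (1) reads $d'([1],[0]) = d(1,0) = 1$; axiom (2) reads $d'([x]+'[z],[y]+'[z]) = d(x+z,y+z) \le d(x,y) = d'([x],[y])$; and axiom (3) reads
\[
    d'([x],[y]) \le d'([x]+'[y],[0]) + d'([x]+'[z],[x]) + d'([y]+'[z],[y]) - d'([z],[0]),
\]
which is literally axiom (3) for $\cc P$ read off representatives. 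Thus $\cc P'$ is a metric semilattice, as claimed; note that I do not claim completeness, consistent with the statement.
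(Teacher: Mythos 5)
Your proof is correct and follows essentially the same route as the paper's: the paper likewise reduces the whole lemma to the well-definedness of $+'$, which it establishes with exactly your inequality $d(x_1+y_1,x_2+y_2)\leq d(x_1,x_2)+d(y_1,y_2)$ via the contraction axiom and the triangle inequality. Your additional spelling-out of the Kolmogorov quotient construction and the transferred axioms is material the paper simply absorbs into the phrase ``metric quotient'' and ``it suffices to check.''
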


\begin{proof}
    It suffices to check that $+': P'\times P'\to P'$ is well defined. Letting $[x_1] = [x_2]$ and $[y_1] = [y_2]$, we have that
    \begin{align*}
        d'([x_1+y_1],[x_2+y_2]) &= d(x_1+y_1,x_2+y_2)\\
        &\leq d(x_1+y_1,x_2+y_1) + d(x_2+y_1,x_2+y_2)\\
        &\leq d(x_1,x_2) + d(y_1,y_2) = 0.
    \end{align*}
    Therefore, $[x_1+y_1] = [x_2+y_2]$ and the join operation is well defined.
\end{proof}

\begin{lem}\label{lem:join-ordering}
    If $\cc P$ is a pseudo-metric semilattice, then declaring $x\leq y$ if $y =x+y$ defines a partial order on $P$ where $0$ and $1$ are the minimal and maximal elements, respectively, and where $x+y$ is the least upper bound of $\{x,y\}$. Additionally, $d(x,y)\leq |y|- |x|$ when $x\leq y$. (We define $|x| = d(x,0)$ as above in the lattice case.)
\end{lem}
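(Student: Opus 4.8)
The plan is to separate the purely order-theoretic content, which follows from the algebra of $+$ alone, from the single metric estimate, which will come from one well-chosen substitution in the third semilattice axiom.

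First I would establish that $\leq$ is a partial order using only commutativity, associativity, and idempotency of $+$, never invoking the (pseudo-)metric. Reflexivity is immediate from idempotency, since $x = x+x$. For antisymmetry, if $x\leq y$ and $y\leq x$ then $y = x+y$ and $x = y+x$, so commutativity gives $x = y+x = x+y = y$; this yields genuine equality of elements in $P$, which is the point worth flagging, as $d$ is only a pseudo-metric and we must be sure the relation is antisymmetric at the level of elements rather than merely up to distance zero. Transitivity follows from associativity: if $y = x+y$ and $z = y+z$, then $x+z = x+(y+z) = (x+y)+z = y+z = z$. Next I would identify the distinguished elements and the join. The structural axioms $0+x = x$ and $1+x = 1$ give $0\leq x$ and $x\leq 1$ for all $x$, so $0$ and $1$ are the minimum and maximum. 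That $x+y$ is an upper bound of $\{x,y\}$ follows from idempotency and associativity, e.g. $x+(x+y) = (x+x)+y = x+y$ shows $x\leq x+y$, and symmetrically $y\leq x+y$; and if $z$ is any common upper bound, so that $x+z = z$ and $y+z = z$, then $(x+y)+z = x+(y+z) = x+z = z$, whence $x+y\leq z$. Thus $x+y$ is the least upper bound.

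Finally, for the estimate $d(x,y)\leq |y|-|x|$ when $x\leq y$, the key step is to apply the third semilattice axiom $d(x,y)\leq d(x+y,0)+d(x+z,x)+d(y+z,y)-d(z,0)$ with the choice $z = x$. Since $x\leq y$ means $x+y = y$, we have $d(x+y,0) = |y|$ and $d(z,0) = |x|$, while $d(x+x,x) = d(x,x) = 0$ by idempotency and $d(y+x,y) = d(y,y) = 0$ because $y+x = x+y = y$. The right-hand side therefore collapses to $|y| - |x|$, giving the claim. I do not anticipate a genuine obstacle: the only points needing care are the remark above that antisymmetry must be verified as an equality of elements, and the observation that the single substitution $z = x$ is exactly what makes the general metric axiom degenerate into the desired one-sided bound.
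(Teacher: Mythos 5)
Your proof is correct and follows essentially the same route as the paper's: the order-theoretic claims are verified directly from idempotency, commutativity, and associativity, and the metric estimate is obtained by the same substitution $z=x$ in the third semilattice axiom, which collapses to $|y|-|x|$ when $x+y=y$. Your added remark that antisymmetry yields genuine equality of elements (not merely distance zero) is a nice point of care, but the argument is otherwise the paper's proof verbatim in structure.
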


\begin{proof}
    First let's prove that $\leq$ is a partial order. It is clear that for all $x\in\cc P,x\leq x$ since $+$ is idempotent, $\leq$ is reflexive. If for a pair $x,y\in\cc P, x\leq y$ and $y\leq x$, then $y=x+y=y+x=x$, so $\leq$ is antisymmetric. Lastly, for all $x,y,z\in \cc P$ if $x\leq y$ and $y\leq z$, then $z=y+z$ and $y=x+y$; therefore, $z=y+z=(x+y)+z=x+(y+z)=x+z$, so $x\leq z$ and we conclude that $\leq$ is transitive. In conclusion, it is a partial order. 
    
    By definition 1.21 (4) it is clear that for all $x\in\cc P,0\leq x\leq 1$; thus, $0$ and $1$ are the minimal and maximal elements, respectively. By idempotency, commutativity and associativity of $+$, we get $x+y=x+(x+y)=y+(x+y)$, and thus $x,y\leq x+y$. If $x,y\leq z$, then $(x+y)+ z = x + (y+z) = x+ z = z$, so $x+y\leq z$. Finally, if $x\leq y$, we can use the axiom (3) about $d$ in Definition 1.21 to obtain
    \begin{align*}
        d(x,y) &\leq d(x+y,0) + d(x+x,x) + d(y+x,y) - d(x,0)\\
        &= d(y,0)+d(x,x)+d(y,y)-d(x,0)=|y|-|x|. \qedhere
    \end{align*}
\end{proof}

The next proposition follows in exactly the same way as Proposition \ref{lem:sequence}.

\begin{prop}\label{prop:pre-lattice-complete}
    If $\cc P$ is a complete metric semilattice, then every set $S\subset P$ has a least upper bound. If $S$ is closed under $+$, then the least upper bound belongs to the closure of $S$.
\end{prop}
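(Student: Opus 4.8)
The plan is to mimic the proof of Proposition \ref{lem:sequence} almost verbatim, replacing the two ingredients special to the lattice case: the rank identity of Proposition \ref{prop:metric-rank}.2 is replaced by the inequality $d(x,y)\leq |y|-|x|$ for $x\leq y$ furnished by Lemma \ref{lem:join-ordering}, and the meet-based argument for leastness (which is unavailable in a semilattice) is replaced by a direct appeal to the continuity of $+$.

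First I would set up the net of finite joins. Let $\cc F$ be the directed set (under inclusion) of finite subsets of $S$ and put $x_F := \sum_{x\in F} x$. For $F\subseteq F'$ we have $x_F\leq x_{F'}$, and since $d(x_F,x_{F'})\geq 0$, Lemma \ref{lem:join-ordering} gives $|x_F|\leq |x_{F'}|$; moreover $x+1 = 1$ shows $x_F\leq 1$, so $|x_F|\leq |1| = d(1,0) = 1$. Hence the increasing bounded net $(|x_F|)$ converges to some $\alpha\in[0,1]$. Given $\e>0$, once $|x_F|>\alpha-\e$ we have for all $F'\supseteq F$ that $d(x_F,x_{F'})\leq |x_{F'}|-|x_F|<\e$ by Lemma \ref{lem:join-ordering}, so $(x_F)$ is a Cauchy net, and completeness yields $x^* := \lim_{\cc F} x_F$.

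Next I would verify that $x^*$ is an upper bound and address the closure claim. Since $x_F + x_G = x_G$ whenever $F\subseteq G$, fixing $F$ and letting $G$ range to the limit, the joint uniform continuity of $+$ (which follows from the contraction axiom $d(x+z,y+z)\leq d(x,y)$ exactly as in Remark \ref{rmk:uniform-continuity}) gives $x_F + x^* = x^*$, i.e.\ $x_F\leq x^*$ for every $F$; in particular $s\leq x^*$ for each $s\in S$. If $S$ is closed under $+$, then each $x_F\in S$, so $x^*$ lies in the closure of $S$.

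It remains to see that $x^*$ is the \emph{least} upper bound, and this is the only place the argument genuinely departs from Proposition \ref{lem:sequence}. In the lattice case one replaced a competing upper bound $y$ by the meet $x^*y$; since a semilattice has no meet operation, I would instead argue directly: if $y$ is any upper bound for $S$, then $x_F\leq y$, that is $x_F + y = y$, for every $F\in\cc F$, and passing to the limit by continuity of $+$ gives $x^* + y = y$, i.e.\ $x^*\leq y$. Thus there is no real obstacle here; the absence of the meet simplifies rather than complicates the leastness step, and the whole argument is a routine adaptation of the earlier proof.
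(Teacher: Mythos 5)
Your proof is correct, and it follows the same outline as the paper, which disposes of this proposition by simply asserting that it ``follows in exactly the same way as Proposition \ref{lem:sequence}.'' The construction of the Cauchy net of finite joins, the identification of its limit $x^*$ as an upper bound, and the closure statement are identical in both treatments. The one place where you genuinely depart---and where the paper's one-line assertion is too glib---is the leastness step: the written proof of Proposition \ref{lem:sequence} handles a competing upper bound $y$ by passing to the meet $x^*y$, an operation that does not exist in a bare semilattice, so that step cannot literally be repeated here. Your substitute---observing that $x_F + y = y$ for every finite $F$ and passing to the limit via the contraction axiom to conclude $x^* + y = y$, i.e.\ $x^*\leq y$---is the natural repair, and it is in fact simpler than the meet-based contradiction argument it replaces (it would work verbatim in Proposition \ref{lem:sequence} as well). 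So your write-up is, if anything, more careful than the paper's.
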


By Corollary \ref{cor:semimodular-upgrade} we have that every metric lattice is an example of a metric semilattice. We now prove a partial converse.

\begin{prop}\label{prop:complete-is-lattice}
    Every complete metric semilattice $\cc P$ is a complete metric lattice with $0,1$ being the minimal and maximal element, respectively, and $+$ being the lattice join operation.
\end{prop}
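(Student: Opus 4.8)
The plan is to take the partial order and join structure already established for $\cc P$ and manufacture a meet operation out of it, then verify that the resulting tuple satisfies Definitions \ref{defn:lattice} and \ref{defn:metric-lattice}. By Lemma \ref{lem:join-ordering}, the relation $x\leq y \iff x+y=y$ is a partial order on $P$ with minimum $0$, maximum $1$, and with $x+y$ equal to the least upper bound of $\{x,y\}$; by Proposition \ref{prop:pre-lattice-complete} every subset of $P$ has a least upper bound. Thus, as a poset, $\cc P$ is already a complete join-semilattice with a top element, and the whole task reduces to producing binary meets and checking the one metric axiom involving them.

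First I would define the meet. Given $x,y\in P$, let $S_{x,y}=\{z\in P: z\leq x \text{ and } z\leq y\}$ be the (nonempty, since $0\in S_{x,y}$) set of common lower bounds, and set $xy := \bigvee S_{x,y}$, which exists by Proposition \ref{prop:pre-lattice-complete}. The key point is that this join is in fact the greatest lower bound of $\{x,y\}$: since $x$ is an upper bound of $S_{x,y}$ and $xy$ is by definition the \emph{least} upper bound, we get $xy\leq x$, and likewise $xy\leq y$, so $xy\in S_{x,y}$ is itself a common lower bound; being also an upper bound of $S_{x,y}$, it dominates every common lower bound and is therefore the greatest. This step is purely order-theoretic and uses only the existence of arbitrary joins, not the metric.

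Next I would verify the algebraic axioms of Definition \ref{defn:lattice}. Commutativity and idempotency of $\cdot$ are immediate from $S_{x,y}=S_{y,x}$ and $S_{x,x}=\{z:z\leq x\}$. For associativity I would use the glb characterization together with the observation that an element lies below $xy$ if and only if it lies below both $x$ and $y$; hence the common lower bounds of $\{xy,z\}$ coincide with those of $\{x,y,z\}$, and likewise for $\{x,yz\}$, so that $(xy)z$ and $x(yz)$ are both the greatest lower bound of $\{x,y,z\}$. The equivalences in Definition \ref{defn:lattice}(3) follow since $x\leq y$ makes $x$ the greatest element of $S_{x,y}$, giving $xy=x$, while $xy=x$ gives $x=xy\leq y$.

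The remaining, and only analytic, point is the semi-modular metric axiom $d(x,y)\leq |x+y|-|xy|$ of Definition \ref{defn:metric-lattice}(3). Here I would simply substitute $z=xy$ into the defining semilattice inequality
\[ d(x,y)\leq d(x+y,0) + d(x+z,x) + d(y+z,y) - d(z,0). \]
Since $xy\leq x$ forces $x+xy=x$ and $xy\leq y$ forces $y+xy=y$, the two middle terms vanish and we are left with exactly $d(x,y)\leq |x+y|-|xy|$. Axioms (1) and (2) of Definition \ref{defn:metric-lattice} are inherited verbatim from the semilattice axioms, and completeness is assumed, so $\cc L=(P,\leq,+,\cdot,0,1,d)$ is a complete metric lattice. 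I expect the existence of the meet to be the only place needing genuine care, while the substitution $z=xy$ is the conceptual heart: it shows that the apparently weaker semilattice form of semi-modularity collapses to the metric-lattice form precisely once the meet is available.
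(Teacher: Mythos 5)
Your proof is correct, and it reaches the meet by a genuinely different route than the paper at the one step where care is needed. The paper also defines $xy$ via the set of common lower bounds, but it gets the existence of a greatest element of that set topologically: it introduces $\phi_{x,y}(z) := d(z+x,x)+d(z+y,y)$, observes that the zero set of $\phi_{x,y}$ (the common lower bounds) is closed under $+$ and metrically closed by continuity, and then invokes the second clause of Proposition \ref{prop:pre-lattice-complete} (the least upper bound of a $+$-closed set lies in its closure) to conclude the least upper bound lies in the set itself. You bypass all of that: you only use the first clause of Proposition \ref{prop:pre-lattice-complete} (every subset has a join) and the purely order-theoretic observation that, since $x$ and $y$ are upper bounds of $S_{x,y}$, the least upper bound $\bigvee S_{x,y}$ sits below both, hence is itself a common lower bound and therefore the greatest one --- the classical fact that a complete join-semilattice is a complete lattice. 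Your argument is more elementary and needs neither continuity of $\phi_{x,y}$ nor closedness of the lower-bound set, while the paper's version has the side benefit of exhibiting the meet as an element of the metric closure of the finite joins of lower bounds, in the same spirit as its other completeness arguments. You also explicitly verify the semi-modular axiom of Definition \ref{defn:metric-lattice} by substituting $z=xy$ into the semilattice inequality; the paper's proof leaves this implicit (it is the content of the remark following the definition of pseudo-metric semilattices), so your writeup is if anything more complete on that point.
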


\begin{proof}
Let us consider
\begin{equation*}\label{eq:complete-is-lattice}
    \phi_{x,y}(z) := d(z+x,x) + d(z+y,y).
\end{equation*}
Then $\phi_{x,y}(z)=0$
if and only if $z$ is a lower bound for $\{x,y\}$.
The sum of two common lower bounds for $x$ and $y$ is again a common lower bound, as $z+x = z'+x =x$ implies that $z+z'+x = x$. Moreover, the set of all lower bounds is closed by continuity of $\phi_{x,y}$, so the set of common lower bounds for $x$ and $y$ must have a largest element by Proposition \ref{prop:pre-lattice-complete}. We define $xy$ as this element. The function $(x,y)\mapsto xy$ is clearly commutative and idempotent. We have that $t\leq (xy)z$ if and only if $t\leq xy,z$ if and only if $t\leq x,y,z$ which shows that it is also associative. The remaining parts of Definition \ref{defn:lattice} are easy to check.
\end{proof}

The following result was proved in \cite[Section 3]{Wilcox}. We present a more direct approach here.

\begin{cor}
    Let $\cc L$ be a metric lattice. We have that the metric completion $\overline{\cc L}$ is a metric lattice and the canonical inclusion $L\hookrightarrow \overline{L}$ is an isometric embedding of metric semilattices.
\end{cor}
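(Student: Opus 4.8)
The plan is to reduce the statement to Proposition \ref{prop:complete-is-lattice} by showing that the metric completion of $\cc L$, taken purely as a metric semilattice, is again a complete metric semilattice. By Corollary \ref{cor:semimodular-upgrade} the reduct $(L,+,0,1,d)$ is already a metric semilattice, so it suffices to complete this structure and then invoke Proposition \ref{prop:complete-is-lattice} to recover the meet on the completion.

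First I would extend the algebraic data to the completion $\overline{L}$. The constants $0,1$ already lie in $L\subseteq \overline{L}$. By Remark \ref{rmk:uniform-continuity} (equation \eqref{eq:uniform-continuity}), the join $+\colon L\times L\to L$ is jointly uniformly continuous, hence extends uniquely to a jointly uniformly continuous map $+\colon \overline{L}\times\overline{L}\to \overline{L}$; concretely, $\bar x+\bar y := \lim_n (x_n+y_n)$ for any sequences $x_n\to\bar x$ and $y_n\to\bar y$ in $L$, the limit existing by completeness and being independent of the chosen sequences by \eqref{eq:uniform-continuity}.

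Next I would verify the semilattice and metric-semilattice axioms on $\overline{L}$. Each semilattice identity—commutativity, associativity, idempotency, $0+x=x$, and $1+x=1$—is an equality between uniformly continuous functions of the relevant tuples that holds on the dense set $L$, and therefore holds on all of $\overline{L}$ by continuity. Likewise the metric axioms are closed conditions: $d(1,0)=1$ holds since $0,1$ are unchanged, while the contraction inequality $d(x+z,y+z)\leq d(x,y)$ and the semi-modular inequality $d(x,y)\leq d(x+y,0)+d(x+z,x)+d(y+z,y)-d(z,0)$ each persist under passage to limits because every term is continuous in its arguments. As $\overline{L}$ is complete by construction and $d$ remains a genuine metric on the completion, $\overline{\cc L} := (\overline{L},+,0,1,d)$ is a complete metric semilattice, and Proposition \ref{prop:complete-is-lattice} then endows it with a meet operation making it a complete metric lattice.

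Finally, the canonical map $L\hookrightarrow \overline{L}$ is an isometry onto a dense subset by the definition of completion, and it preserves $0$, $1$, and $+$ by construction, so it is an isometric embedding of metric semilattices, as claimed. The one point worth flagging is that I would assert preservation only of the semilattice structure, not of the meet: since the meet on $\overline{L}$ is produced abstractly by Proposition \ref{prop:complete-is-lattice} as a greatest lower bound computed in the larger space, there is no a priori reason for it to restrict to the original meet on $L$, which is precisely why the statement is phrased for semilattices rather than lattices. The only mild subtlety in the argument is confirming that the three-variable semi-modular inequality survives completion, but this is immediate once joint uniform continuity of $+$ is in hand, so I expect no genuine obstacle.
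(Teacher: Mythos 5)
Your proposal is correct and follows essentially the same route as the paper: pass from the metric lattice to its semilattice reduct via Corollary \ref{cor:semimodular-upgrade}, use uniform continuity of the join to see that the completion is again a (complete) metric semilattice, and then invoke Proposition \ref{prop:complete-is-lattice} to recover the lattice structure. Your closing remark about the meet on $\overline{L}$ not obviously restricting to the original meet is a sensible observation and matches why the statement is phrased at the level of semilattice embeddings.
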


\begin{proof}
    If $\cc L$ is a metric lattice, then it is also a metric semilattice. The metric completion of a semilattice is again a metric semilattice as the $+$ operation is uniformly continuous. \qedhere 
\end{proof}

\begin{prop}\label{prop:hom-closed}
    Let $\cc L$ and $\cc L'$ be complete metric lattices and $\phi: \cc L\to \cc L'$ a continuous semilattice homomorphism, that is $\phi(0)=0$, $\phi(1)=1$, and $\phi(x+y) = \phi(x)+\phi(y)$ for all $x,y\in L$. We have that the subsemilattice $\phi(L)\subset L'$ is a complete metric lattice.
\end{prop}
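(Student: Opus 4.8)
The plan is to verify that $\phi(L)$, with the operations and metric inherited from $\cc L'$, is a complete metric semilattice, and then to invoke Proposition \ref{prop:complete-is-lattice} to upgrade it to a complete metric lattice (whose meet is defined intrinsically, and need not agree with that of $\cc L'$). Since $\phi$ preserves joins and fixes $0$ and $1$, the image $\phi(L)$ is closed under $+$ and contains the distinguished elements, and every (universally quantified) metric semilattice axiom holds throughout $\cc L'$ and so is inherited by $\phi(L)$. I would first record two routine facts: $\phi$ is order-preserving, since $x \le x'$ forces $\phi(x) + \phi(x') = \phi(x')$; and the partial order is metrically closed, since $a_m \le b$ with $a_m \to a$ gives $a + b = \lim_m (a_m + b) = b$. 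The one genuinely nontrivial point is metric completeness, i.e.\ that $\phi(L)$ is closed in $\cc L'$.

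To prove closedness, I would fix $y$ in the closure of $\phi(L)$, choose $x_n \in L$ with $d(\phi(x_n), y) < 2^{-n}$, and form $s_n := \sup_{i \ge n} x_i$, the least upper bound in $L$, which exists by completeness of $\cc L$ and is the increasing limit $\lim_N \sum_{i=n}^N x_i$ (Proposition \ref{lem:sequence}). The sequence $(s_n)$ is decreasing, so $(|s_n|)$ decreases to a limit and $d(s_n, s_m) = |s_n| - |s_m|$ for $n < m$ (Proposition \ref{prop:metric-rank}.2); hence $(s_n)$ is Cauchy and converges to some $s \in L$. Everything reduces to showing $\phi(s) = y$, which then places $y$ in $\phi(L)$.

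The hard part will be a bound on $w_N := \sum_{i=n}^N \phi(x_i) = \phi\big(\sum_{i=n}^N x_i\big)$ that is uniform in $N$, since the naive triangle estimate accumulates infinitely many error terms and diverges. I would get around this in two moves. First, $d(\phi(x_i) + y, y) \le d(\phi(x_i), y) < 2^{-i}$ by axiom (2) of Definition \ref{defn:metric-lattice}, so the finite form of Lemma \ref{lem:metric-sum} gives $d(w_N + y, y) \le \sum_{i=n}^N d(\phi(x_i)+y,y) < 2^{-n+1}$ and thus $|w_N + y| < |y| + 2^{-n+1}$, using $y \le w_N + y$. Second, $w_N \ge \phi(x_n)$ yields $|w_N| \ge |\phi(x_n)| > |y| - 2^{-n}$ by Propositions \ref{prop:metric-norm}.1 and \ref{prop:metric-rank}.1, so $d(w_N, w_N + y) = |w_N + y| - |w_N| < 3 \cdot 2^{-n}$. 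Adding the two estimates yields $d(w_N, y) < 5 \cdot 2^{-n}$ for every $N \ge n$; letting $N \to \infty$ (so $w_N \to \phi(s_n)$ by continuity of $\phi$) gives $d(\phi(s_n), y) \le 5\cdot 2^{-n}$, hence $\phi(s_n) \to y$. Since also $\phi(s_n) \to \phi(s)$, we conclude $\phi(s) = y$. Thus $\phi(L)$ is closed, therefore complete, and Proposition \ref{prop:complete-is-lattice} finishes the proof.
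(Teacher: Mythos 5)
Your proof is correct and follows essentially the same route as the paper's: reduce to showing $\phi(L)$ is closed in $\cc L'$ (then conclude via Proposition \ref{prop:complete-is-lattice}), pass to a subsequence with $d(\phi(x_n),y)<2^{-n}$, form the decreasing tail joins $s_n=\sum_{i\geq n}x_i$ in $L$, and combine Lemma \ref{lem:metric-sum} with continuity of $\phi$ to identify $\phi(\lim_n s_n)$ with $y$. The only difference is in the last step, and it is cosmetic: where the paper pins down the limit by a two-sided order argument (showing $\phi(w)\leq z$ and $\phi(w)\geq z$), you instead derive the quantitative bound $d(\phi(s_n),y)\leq 5\cdot 2^{-n}$ from rank estimates --- both are valid.
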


\begin{proof}
    By Proposition \ref{prop:pre-lattice-complete} we need only show that $\phi(L)$ is closed.
    First suppose that $(\phi(x_n))$ is an increasing sequence, so that $\phi(x_n)\to z$ for some $z\in L'$ by Lemma \ref{lem:net-convergence}. Setting $y_n = \sum_{k=1}^n x_n$ and $y = \sum_{k=1}^\infty x_k$, we have that $y_n\to y$ and so $\phi(x_n)=\phi(y_n)\to \phi(y)$ by continuity. Therefore, $z = \phi(y)$. 

    Now let $\phi(x_n)\to z$ be arbitrary. By passing to a subsequence, we may assume that $d(\phi(x_n),z)< \frac{1}{2^n}$. For $n< L$, we set $y_{n,L} = \phi(\sum_{k=n}^L x_k) = \sum_{k=n}^L \phi(x_k)$ and $y_n = \phi(\sum_{k=n}^\infty x_k)$ so that by the previous paragraph $y_{n,L}\to y_n$ as $L\to\infty$ and by Lemma \ref{lem:metric-sum} we have that $d(y_n+z,z)\to 0$ as $n\to \infty$. 

    Setting $w_n = \sum_{k=n}^\infty x_k\in L$, we have that $(w_n)$ is a decreasing sequence, so has a limit $w\in L$. We have $y_n = \phi(w_n)\to \phi(w) =: y\in \phi(L)$ by continuity. Therefore, by the previous paragraph we have that $d(y+z,z)=0$ or $y\leq z$. On the other hand, $y_n \geq \phi(x_k)$ for all $k>n$, so $y_n = y_n + \phi(x_k)\to y_n+z$ as $k\to\infty$. Thus, $y = y+z$, which implies that $y\geq z$. Altogether this shows that $z\in \phi(L)$, which establishes the result. \qedhere
\end{proof}

\section{Examples}\label{sec:examples}

In this section we detail various constructions of lattice metrics. For the case of finite Boolean lattices, there is some overlap with results obtained by Lov\'asz in \cite[section 9]{lovasz-submod-setfunction} which the authors were unaware of during the completion of the manuscript. However, the perspective presented here is more general and in some ways complementary to those results. 

\begin{example}
    For a finite set $E$, let $\cc B(E)$ be the lattice of subsets of $E$ under the normalized Hamming metric $d(A,B) := \frac{|A\Delta B|}{|E|}$ for $A,B\in 2^E$.
\end{example}

\begin{example}\label{ex:partition-lattice}
    Let $E$ be a finite set. Given $x,y$ partitions of $E$ we can define a partial order by $x\leq y$ if and only if $x$ refines $y$, that is, if for all $B\in x$, exists a $B'\in y$ such that $B\subseteq B'$. Let $\cc P(E)$ be the lattice of partitions of $E$ with the metric \[d(x,y)=\frac{\#x+\#y-2\#(x+y)}{|E|-1},\]
    where for a partition $\# x$ denotes the number of blocks. When $E = [[n]] := \{1,.\dotsc,n\}$, we denote $P(E)$ by $P_n$.
\end{example}

\begin{example}
    Let $E$ be a finite set. We say that $r: 2^E\to \bb N_0$ is a \emph{rank function} if the following hold for all $A,B\in 2^E$:
    \begin{enumerate}
        \item $r(A)\leq |A|$;
        \item $A\subset B$ implies that $r(A)\leq r(B)$;
        \item $r(A\cup B) + r(A\cap B)\leq r(A) + r(B)$.
    \end{enumerate}
    There is a one-to-one correspondence between rank functions on a set $E$ and \emph{matroids} on the same set: See \cite[Section 1.3]{Oxley}.
\end{example}

For a rank function define the pseudo-metric $D_r(A,B) := 2\,r(A\cup B) - r(A) - r(B)$.

\begin{lem}
    If $r$ is a rank function on $E$, then $D_r(A,B)\leq |A \Delta B|$.
\end{lem}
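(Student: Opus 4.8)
The plan is to reduce everything to the standard \emph{incremental} rank estimate: whenever $X\subseteq Y$, one has $r(Y)-r(X)\leq |Y\setminus X|$. Granting this, the lemma is immediate. Applying the estimate to the pair $A\subseteq A\cup B$ gives $r(A\cup B)-r(A)\leq |(A\cup B)\setminus A| = |B\setminus A|$, and applying it to $B\subseteq A\cup B$ gives $r(A\cup B)-r(B)\leq |A\setminus B|$. Adding these two inequalities yields
\[
D_r(A,B) = 2\,r(A\cup B) - r(A) - r(B) \leq |B\setminus A| + |A\setminus B| = |A\Delta B|,
\]
which is exactly the claim. So the whole argument hinges on the incremental estimate.

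To establish the incremental estimate, I would first prove the single-element version: if $e\notin X$, then $r(X\cup\{e\})\leq r(X)+1$. This is where the axioms of a rank function are actually used. Applying submodularity (property $(3)$) to $X$ and $\{e\}$, and noting that $X\cap\{e\}=\emptyset$, gives
\[
r(X\cup\{e\}) + r(\emptyset) \leq r(X) + r(\{e\}).
\]
Since $r(\emptyset)\geq 0$ (the codomain is $\bb N_0$) and $r(\{e\})\leq |\{e\}| = 1$ by property $(1)$, this rearranges to $r(X\cup\{e\})\leq r(X)+1$. From here the general estimate follows by a telescoping argument: enumerate $Y\setminus X = \{e_1,\dots,e_k\}$ with $k=|Y\setminus X|$, set $X_0 = X$ and $X_j = X_{j-1}\cup\{e_j\}$ so that $X_k = Y$, and sum the single-element bounds $r(X_j)-r(X_{j-1})\leq 1$ over $j$.

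I do not expect a genuine obstacle here, as this is a routine matroid inequality; the only step requiring any care is the single-element bound, and specifically the use of $r(\emptyset)=0$ together with $r(\{e\})\leq 1$. It is worth remarking that only monotonicity and submodularity — not the full strength of property $(1)$ for all sets — enter beyond these two facts, so the estimate is quite robust. One could alternatively phrase the argument entirely in terms of the quantity $D_r$ and strong subadditivity as in Remark \ref{rmk:exchange-relation}, but the incremental approach above is the most transparent.
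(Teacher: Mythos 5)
Your proof is correct, but it takes a different route from the paper's. The paper argues in three lines: submodularity gives $D_r(A,B) = 2\,r(A\cup B)-r(A)-r(B) \leq r(A\cup B)-r(A\cap B)$; then subadditivity over the disjoint decomposition $A\cup B = (A\setminus B)\sqcup(A\cap B)\sqcup(B\setminus A)$ gives $r(A\cup B)\leq r(A\setminus B)+r(A\cap B)+r(B\setminus A)$; and finally property $(1)$ applied to the two sets $A\setminus B$ and $B\setminus A$ finishes the estimate. You instead never touch $r(A\cap B)$: you prove the standard unit-increment bound $r(X\cup\{e\})\leq r(X)+1$, telescope it to $r(Y)-r(X)\leq|Y\setminus X|$ for $X\subseteq Y$, and apply this to the two inclusions $A\subseteq A\cup B$ and $B\subseteq A\cup B$. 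Both arguments hinge on submodularity; the paper's is shorter and more direct, while yours isolates the incremental characterization of matroid rank and uses property $(1)$ only on singletons (together with $r(\emptyset)\geq 0$), so it applies verbatim to any submodular function whose singleton values are at most $1$. One small inaccuracy in your closing remark: monotonicity (property $(2)$) is in fact never used in your argument either --- the telescoping only needs each increment bounded above by $1$, not below by $0$ --- so the estimate is even more robust than you claim; the paper's proof likewise avoids monotonicity.
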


\begin{proof}
    It follows from submodularity that $D_r(A,B)\leq r(A\cup B) - r(A\cap B)$. Using submodularity and disjointness, we have that 
    \[r(A\cup B)\leq r(A\setminus B) + r(A\cap B) + r(B\setminus A),\]
    so altogether
    $D_r(A,B) \leq r(A\setminus B) + r(B\setminus A)\leq |A\setminus B| + |B\setminus A| = |A\Delta B|$.
\end{proof}

\begin{defn}
    We say that a rank function $r: 2^E\to \bb N_0$ is \emph{$k$-sparse} if 
    \[\# E\leq k\cdot r(E).\]
\end{defn}

\begin{example}
    If $G = (V,E)$ is a $k$-regular graph and $r: 2^E\to \bb N_0$ is given by $r(A)$ is the maximal cardinality of the edges of a forest in $(V, A)$, then $r$ is $k$-sparse.
\end{example}

For a rank function $r$ on $E$, let's define the normalized rank distance by $d_r(A,B) = D_r(A,B)/r(E)$. Let $\cc L_r(E) = (L_r(E), <,+,\cdot, d_r)$ be the quotient metric lattice associated to the pseudo-metric lattice $(2^E, d_r)$. Since $2^E$ is a finite lattice, the fact that $\cc L_r(E)$ is a metric lattice follows from Lemma \ref{lem:metric-quotient} and Proposition \ref{prop:complete-is-lattice}. The lattice $\cc L_r(E)$ is referred to as the \emph{lattice of flats} of the associated matroid. The following proposition is evident.

\begin{prop}
    For a rank function $r$ on $E$ which is $k$-sparse, we have that the quotient map $q: 2^E\to L_r(E)$ from $\cc B(E)$ with the normalized Hamming metric to $\cc L_r(E)$ induces a surjective, $k$-Lipschitz metric semilattice homomorphism $\cc B(E)\to \cc L_r(E)$.
\end{prop}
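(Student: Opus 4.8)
The plan is to verify the three required properties—that $q$ is a semilattice homomorphism, that it is surjective, and that it is $k$-Lipschitz—separately, with only the last requiring any computation.

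First, that $q$ is a semilattice homomorphism is essentially built into the definition of $\cc L_r(E)$. By construction the join on the quotient lattice is $[A] + [B] = [A\cup B]$, the bottom and top elements are $[\emptyset]$ and $[E]$, and the join on $\cc B(E)$ is set union; thus $q(A\cup B) = q(A) + q(B)$, $q(\emptyset) = 0$, and $q(E) = 1$ all hold automatically. Surjectivity is immediate, since $q$ is by definition the quotient map of $2^E$ onto the metric quotient $L_r(E)$, so every class $[A]$ is realized by its representative $A$.

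The only substantive point is the $k$-Lipschitz estimate, namely $d_r(q(A), q(B)) \leq k\, d(A,B)$ for all $A, B \in 2^E$. Here I would chain together two facts already at hand: the lemma giving $D_r(A,B) \leq |A\Delta B|$, and the $k$-sparsity hypothesis $\#E \leq k\cdot r(E)$, which rearranges to $1/r(E) \leq k/|E|$. Combining these,
\[
d_r(q(A),q(B)) = \frac{D_r(A,B)}{r(E)} \leq \frac{|A\Delta B|}{r(E)} \leq k\cdot\frac{|A\Delta B|}{|E|} = k\, d(A,B),
\]
which is exactly the asserted bound.

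There is no real obstacle here, which is why the proposition is stated as evident: once the earlier lemma $D_r(A,B)\leq |A\Delta B|$ is invoked, the role of $k$-sparsity is simply to convert the normalization by $r(E)$ in the rank distance into the normalization by $|E|$ appearing in the Hamming metric on $\cc B(E)$. The homomorphism and surjectivity statements are purely formal consequences of the quotient construction.
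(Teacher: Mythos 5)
Your proof is correct and is exactly the argument the paper intends: the paper states the proposition as ``evident,'' having placed the lemma $D_r(A,B)\leq |A\Delta B|$ immediately beforehand precisely so that it combines with the $k$-sparsity inequality $\#E\leq k\cdot r(E)$ to yield the Lipschitz bound, while the homomorphism and surjectivity claims follow formally from the quotient construction (Lemma \ref{lem:metric-quotient}). Nothing is missing.
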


\begin{defn}
    Let $\cc P$ be a semilattice. We say that a function $\rho: P\to \bb R$ is \emph{positive definite} (respectively, \emph{positive semidefinite}) if for all $x_1,\dotsc, x_n\in P$, the matrix 
    \[R_{ij} := [\rho(x_i + x_j)]\] is positive definite (resp., positive semidefinite).
\end{defn}

\begin{example}
    For $x\in P$, define $\rho_x: P\to \{0,1\}$ by $\rho_x(y) = 1$ if $y\leq x$ and $\rho_x(y)=0$ otherwise. Then $\rho_x$ is positive semidefinite.
\end{example}

The next lemma begins to show how positive (semi)definiteness imposes strong restrictions on the function $\rho$.

\begin{lem}\label{lem:psd-decreasing}
    If $\rho: P\to \bb R$ is positive semidefinite, then $\rho$ is positive and nonincreasing, with $\rho$ being strictly decreasing if $\rho$ is positive definite. 
\end{lem}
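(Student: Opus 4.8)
The plan is to read off every conclusion from the smallest principal submatrices of the defining matrix $[\rho(x_i+x_j)]$, using only idempotency ($x+x=x$) and the order characterization ($x\leq y$ iff $x+y=y$). For positivity, take $n=1$ and $x_1=x$: the $1\times 1$ matrix $[\rho(x+x)]=[\rho(x)]$ must be positive semidefinite, whence $\rho(x)\geq 0$, and in the positive definite case the same matrix is positive definite, so $\rho(x)>0$.

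To see that $\rho$ is nonincreasing, fix $x\leq y$ and apply the hypothesis to the pair $(x_1,x_2)=(x,y)$. Since $x+x=x$, $x+y=y$, and $y+y=y$, the associated matrix is
$$\begin{pmatrix}\rho(x) & \rho(y)\\ \rho(y) & \rho(y)\end{pmatrix}.$$
Positive semidefiniteness forces a nonnegative determinant, i.e.\ $\rho(y)\bigl(\rho(x)-\rho(y)\bigr)\geq 0$. When $\rho(y)>0$ this yields $\rho(x)\geq\rho(y)$ at once; when $\rho(y)=0$ the determinant condition is vacuous, but the positivity established in the first step still gives $\rho(x)\geq 0=\rho(y)$. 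Either way $\rho(x)\geq\rho(y)$, so $\rho$ is nonincreasing.

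For the strict statement, assume $\rho$ is positive definite and $x<y$, so that $x\neq y$ and the $2\times 2$ matrix above is genuinely positive definite. Its determinant is then strictly positive, $\rho(y)\bigl(\rho(x)-\rho(y)\bigr)>0$, and since $\rho(y)>0$ by the first step we conclude $\rho(x)>\rho(y)$; thus $\rho$ is strictly decreasing. The argument is essentially forced, so I expect no serious obstacle: the only point requiring care is the monotonicity conclusion in the degenerate positive semidefinite case $\rho(y)=0$, where the $2\times 2$ determinant is uninformative and one must fall back on the $1\times 1$ positivity bound.
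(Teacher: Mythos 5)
Your proof is correct and takes essentially the same approach as the paper: positivity from $\rho(x)=\rho(x+x)$ and monotonicity from positive (semi)definiteness of the same $2\times 2$ matrix $\begin{bmatrix}\rho(x) & \rho(y)\\ \rho(y) & \rho(y)\end{bmatrix}$. The only cosmetic difference is that you extract the inequality from the determinant (forcing the separate $\rho(y)=0$ case), whereas testing the quadratic form against the vector $(1,-1)$ gives $\rho(x)-\rho(y)\geq 0$ (strict in the definite case) in a single step with no case split.
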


\begin{proof}
    We have $\rho(x) = \rho(x+x)\geq 0$ by positive definiteness. If $y > x$, then the matrix
    \[\begin{bmatrix} \rho(x) & \rho(y)\\\rho(y) & \rho(y)\end{bmatrix}\]
    is positive semidefinite, which implies that $\rho(x)\geq \rho(y)$ with strict inequality in the positive definite case.
\end{proof}

Let $\cc P = (P,+,0,1)$ be a complete semilattice (thus, every subset of $P$ has a least upper bound). We define the \emph{order topology} on $P$ to be the topology generated by the collection of closed sets 
\[C_x := \{y\in P: x+y = y\} = \{y\in P: y\geq x\}\]
for each $x\in L$. Let $\cc C$ be the $\sg$-algebra generated by the sets $\{C_x: x\in P\}$.

\begin{example}\label{ex:psd-measure}
    If $\mu$ is a measure on $(P, \cc C)$, then $\rho(x) := \mu(C_x)$ is positive semidefinite since, using that $x+y$ is the least upper bound of $\{x,y\}$, 
    \[\rho(x+y) = \mu(C_{x+y}) = \mu(C_x\cap C_y) = \int 1_{C_x} 1_{C_y} d\mu.\]
    Indeed, choosing $x_1,\dotsc,x_n$ in $P$ and $g_1,\dotsc, g_n\in\bb R$. Setting $g = \sum_{i=1}^n g_i 1_{C_{x_i}}$, we have that 
    \[\sum_{i,j} g_jg_j\rho(x_i+x_j) = \int\sum_{i,j} g_ig_j 1_{C_x}1_{C_y} d\mu = \int g^2 d\mu \geq 0.\]
\end{example}

If $\cc P$ is a finite semilattice, then for every function $f: P\to \bb R$, there is a unique function $f^\mu: P\to \bb R$ so that
\[f(x) = \sum_{y\in C_x} f^\mu(y).\]
The function $f^\mu$ is known as the \emph{M\"obius inversion} of $f$. We refer the reader to \cite[Section IV.2]{aigner} or \cite{rota-i} for a detailed treatment of this topic. Setting $n = |P|$, we fix an enumeration $x_1,x_2,\dotsc,x_{n}$ of $P$ so that $x_i\leq x_j$ only if $i\leq j$. We define the $n\times n$ \emph{zeta matrix} of $\cc P$ by 
\[\zeta_{ij} := \delta_{x_j,x_i+x_j} \]
where $\de_{ab}$ is the Kronecker delta. That is, $\zeta_{ij} = 1$ if $x_i\leq x_j$ and $0$ otherwise. Since $\zeta$ is upper triangular with ones on the main diagonal, it is invertible. For a sequence $a =(a_1,\dotsc,a_{n})$, let $D_a$ be the diagonal matrix with entries $[D_a]_{ii} = a_i$. The following identity is due independently to Lindstr\"om \cite{lindstrom} and Wilf \cite{wilf}:
\begin{equation}\label{eq:wilf}
    [f(x_i+x_j)]_{i,j=1}^{n} = \zeta D_{f^\mu} \zeta^t.
\end{equation}

The following result which provides a converse to Example \ref{ex:psd-measure} was first observed by Mattila and Haukkanen in the positive definite case \cite{mattila}. We give a simple proof that extends to the positive semidefinite case as well. 

\begin{prop}\label{prop:measure-rep-psd}
    Let $\cc P$ be a finite semilattice. We have that $\rho: P\to \bb R$ is positive semidefinite (resp., positive definite) if and only if there is a measure $m_\rho$ on $P$ (resp., a measure of full support on P) so that
    \[\rho(x) = m_\rho(C_x).\]
\end{prop}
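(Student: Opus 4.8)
The plan is to produce the representing measure directly from the M\"obius inversion of $\rho$. The ``if'' direction is already contained in Example \ref{ex:psd-measure}, so all the work lies in the ``only if'' direction. Since a measure on the finite set $P$ amounts to an assignment of nonnegative weights to points, and since $m_\rho(C_x) = \sum_{y\geq x} m_\rho(\{y\})$, comparison with the defining identity $\rho(x) = \sum_{y\in C_x}\rho^\mu(y)$ of the M\"obius inversion shows that the only possible candidate is $m_\rho(\{y\}) := \rho^\mu(y)$. Thus the entire problem reduces to showing that positive semidefiniteness of $\rho$ forces $\rho^\mu(y)\geq 0$ for every $y\in P$, with strict positivity in the positive definite case.

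For this I would invoke the Lindstr\"om--Wilf identity (\ref{eq:wilf}), which expresses the matrix $R = [\rho(x_i+x_j)]_{i,j}$ as $R = \zeta D_{\rho^\mu}\zeta^t$. Because $\zeta$ is upper triangular with ones on the diagonal it is invertible, so $D_{\rho^\mu} = \zeta^{-1} R (\zeta^{-1})^t$. This exhibits the diagonal matrix $D_{\rho^\mu}$ as congruent to $R$. Congruence preserves positive semidefiniteness, since $v^t D_{\rho^\mu} v = ((\zeta^{-1})^t v)^t \, R \, ((\zeta^{-1})^t v)\geq 0$ for every vector $v$ once $R$ is positive semidefinite by hypothesis. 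A diagonal matrix is positive semidefinite precisely when all of its diagonal entries are nonnegative, hence $\rho^\mu(x_i)\geq 0$ for all $i$, which is exactly what is needed. In the positive definite case the same congruence shows $D_{\rho^\mu}$ is positive definite, forcing $\rho^\mu(x_i)>0$ for all $i$, so that the resulting measure has full support.

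It then remains to confirm that $m_\rho$, defined by $m_\rho(\{y\}) = \rho^\mu(y)$ and extended additively to the $\sigma$-algebra $\cc C$ (which contains every $C_x$), indeed represents $\rho$: by the very definition of the M\"obius inversion, $m_\rho(C_x) = \sum_{y\geq x}\rho^\mu(y) = \rho(x)$, which closes the argument. The main obstacle is the central observation that positive semidefiniteness of the Gram-type matrix $R$ descends, via the congruence encoded in Wilf's identity, to the diagonal matrix of M\"obius coefficients; once one recognizes that $\zeta$ plays the role of a change-of-basis matrix this step is immediate, so that the real content of the proof is identifying the right representation and the M\"obius inversion as the correct candidate measure.
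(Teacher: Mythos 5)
Your proof is correct and follows essentially the same route as the paper's: both use the Lindstr\"om--Wilf identity to exhibit $D_{\rho^\mu}$ as congruent to $[\rho(x_i+x_j)]$ via $\zeta^{-1}$, deduce nonnegativity (resp.\ strict positivity) of the M\"obius coefficients, and define $m_\rho$ by summing $\rho^\mu$ over points, concluding by M\"obius inversion. The only addition is your remark that this candidate measure is forced by uniqueness of M\"obius inversion, which is a fine observation but not needed for the argument.
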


\begin{proof}
    Example \ref{ex:psd-measure} verifies the `if' direction. For the converse, it is a standard fact that if $A\in M_n(\bb R)$ is positive semidefinite, then $BAB^t$ is as well for any $B\in M_n(\bb R)$, with the same holding for positive definiteness in the case that $B$ is invertible. Thus, $\zeta^{-1}[\rho(x_i+x_y)](\zeta^{-1})^t = D_{\rho^\mu}$ is a positive (semi)definite diagonal matrix by equation (\ref{eq:wilf}). This is equivalent to $\rho^\mu\geq 0$ in the positive semidefinite case and $\rho^\mu>0$ in the positive definite case. For $A\subseteq P$, we then define $m_\rho(A):= \sum_{x\in A} \rho^\mu(x)$. The result now follows by M\"obius inversion.
\end{proof}

\begin{remark}
    More generally, integral representation theorems for positive definite functions on semilattices give analogous measure representations under suitable topological regularity assumptions; see, for example, \cite{Berg}.
\end{remark}


\begin{example}
    Let $K\in M_n(\bb C)$ be a Hermitian matrix with spectrum in $[0,1]$. And let $L = 2^{[[n]]}$ be the lattice of subsets of $[[n]] = \{1,\dotsc,n\}$. By a result of Lyons \cite{Lyons} there is a measure $\mu_K$ on $L$ so that
    \[\mu_K(C_S) = \det(K_S)\]
    for all $S\subseteq [[n]]$ where $K_S$ is the principal submatrix of $K$ given by deleting all entries in rows or columns not indexed by $S$. Note that $1 = \mu(L) =\mu(C_{\emptyset}) = \det(K_{\emptyset})$ by convention.

    The existence of such a measure can be reduced to case where $P$ is an orthogonal projection. Indeed, any Hermitian, contractive $n\times n$ matrix can be realized as a principal submatrix of a $2n\times 2n$ orthogonal projection and positive semidefiniteness of kernels is closed under restriction. 
    
    Suppose $P$ is of rank~$r$, and define
    \[
        \rho(S):=\det(P_S), \qquad S\subseteq[[n]].
    \]
    By viewing $P = VV^*$ for a isometry $V: \mathbb C^r\to \mathbb C^n$ we have by the Cauchy--Binet formula that
    \[
        \det(P_{S}) = \sum_{R\supseteq S,\ |R|=r} \det(P_{R}),
    \]
    hence $\rho(S) = m(C_S)$ for
    \[
        m(R) =
        \begin{cases}
        \det(P_R), & |R|=r,\\
        0, & |R|\neq r.
        \end{cases}
    \]
\end{example}

\begin{defn}
    We say that a function $\eta: P\to \bb R$ is \emph{conditionally negative definite} if 
    \[\sum_{i,j} {c_ic_j}\cdot\eta(x_i + x_j) \leq 0\]
    when $\sum_i c_i = 0$ for all $x_1,\dotsc,x_n\in P$.
\end{defn}

\begin{remark} 
    By Schoenberg's theorem \cite[Proposition 11.2]{Roe}, $\eta$ is conditionally negative definite if and only if $x\mapsto \exp(-t\eta(x))$ is positive semidefinite for all $t\geq0$. Therefore, we have that every conditionally negative definite function is increasing.
\end{remark}

For $\eta$ conditionally negative definite, define
\[d_\eta(x,y) := 2\eta(x+y) - \eta(x) - \eta(y),\]
which is again conditionally negative definite with $d_\eta(x,x)=0$ and $d_\eta(x,y)\geq 0$ for all $x,y\in P$. We note that
\[|x|_\eta := d_\eta(x,0) = \eta(x) - \eta(0)\geq 0\]
is again conditionally negative definite and that
\[d_\eta(x,y) = 2|x+y|_\eta - |x|_\eta - |y|_\eta = d'_\eta(x,y).\]

\begin{prop}\label{prop:cnd-implies-metric}
    For $\eta$ conditionally negative definite we have that $(\cc P,d_\eta)$ is a pseudo-metric semilattice. Moreover, if $\exp(-\eta)$ is positive definite, then $d_\eta$ is a semilattice metric on $\cc P$.
\end{prop}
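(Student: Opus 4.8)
The plan is to prove the two assertions separately, using the structural results already established for pseudo-metric semilattices. For the first assertion, I must verify that $d_\eta$ satisfies the three axioms of a pseudo-metric semilattice from the relevant definition. Axiom (1), namely $d_\eta(1,0)=1$, is really a normalization and I would simply note that after scaling $\eta$ one may assume $|1|_\eta = \eta(1)-\eta(0)=1$ (or handle it by the diameter-one convention). Axiom (2), the contractivity $d_\eta(x+z,y+z)\leq d_\eta(x,y)$, and axiom (3), the strong subadditivity estimate $d_\eta(x,y)\leq d_\eta(x+y,0)+d_\eta(x+z,x)+d_\eta(y+z,y)-d_\eta(z,0)$, should both follow from translating the conditions into inequalities for $\eta$ and invoking the machinery around Proposition \ref{prop:exchange-relation} and Remark \ref{rmk:exchange-relation}. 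The crucial point is that a conditionally negative definite function is \emph{strongly subadditive}, i.e. $\eta(x+y+z)+\eta(z)\leq \eta(x+z)+\eta(y+z)$; once this is in hand, Proposition \ref{prop:exchange-relation} and its proof give exactly axioms (2) and (3).

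Thus the heart of the first assertion is establishing strong subadditivity of any conditionally negative definite $\eta$ on the semilattice. I would derive this directly from the definition: fix $x,y,z$ and choose coefficients $c$ supported on the four joins $\{x+y+z,\,z,\,x+z,\,y+z\}$ of the form $c_{x+y+z}=c_z=1$, $c_{x+z}=c_{y+z}=-1$ so that $\sum_i c_i = 0$. Evaluating the conditionally-negative-definite inequality $\sum_{i,j} c_i c_j\,\eta(x_i+x_j)\leq 0$ then requires computing all pairwise joins among these four elements. The key combinatorial observation is that joins of joins collapse: for instance $(x+y+z)+(x+z)=x+y+z$, $(x+z)+(y+z)=x+y+z$, and $(x+z)+z=x+z$, so the off-diagonal terms organize into precisely the strong-subadditivity expression. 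Expanding the quadratic form and canceling the $\eta(x+y+z)$ diagonal contributions should yield $\eta(x+y+z)+\eta(z)-\eta(x+z)-\eta(y+z)\leq 0$, which is what we want.

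For the second assertion, recall that $(\cc P,d_\eta)$ is already a pseudo-metric semilattice, so $d_\eta$ is a genuine metric precisely when $d_\eta(x,y)=0$ forces $x=y$. Since $d_\eta = d'_\eta$ and $d_\eta(x,x)=0$, the issue is strict positivity off the diagonal. By Schoenberg's theorem as recorded in the remark preceding the proposition, $\exp(-t\eta)$ is positive semidefinite for all $t\geq 0$; the strictly-positive-definite case of Schoenberg's correspondence matches strict positivity of $d_\eta$. Concretely, $\exp(-\eta)$ being positive \emph{definite} is equivalent, via Lemma \ref{lem:psd-decreasing}, to $\rho := \exp(-\eta)$ being strictly decreasing along the order, and I would show that $\rho$ strictly decreasing is in turn equivalent to $d_\eta$ separating points. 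For the forward direction: if $x\neq y$ then $x+y$ strictly dominates at least one of $x,y$ in the order, so strict monotonicity of $\rho$ gives $\eta(x+y)>\min(\eta(x),\eta(y))$, and combined with $\eta$ increasing this forces $d_\eta(x,y)=2\eta(x+y)-\eta(x)-\eta(y)>0$. For the converse, if $\exp(-\eta)$ is merely positive semidefinite but not definite, Lemma \ref{lem:psd-decreasing} permits $\rho(x)=\rho(y)$ for some $x<y$, whence $\eta(x)=\eta(x+y)=\eta(y)$ and $d_\eta(x,y)=0$ with $x\neq y$, so $d_\eta$ fails to be a metric.

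The main obstacle I anticipate is the bookkeeping in establishing strong subadditivity from the conditionally-negative-definite inequality: one must correctly enumerate the pairwise joins of the chosen four-element configuration and track signs when expanding $\sum_{i,j}c_ic_j\,\eta(x_i+x_j)$, being careful that repeated elements or accidental coincidences among the joins are handled consistently. A secondary subtlety is making the equivalence in the second assertion align exactly with the positive-\emph{definite} (rather than semidefinite) hypothesis, since strictness on both sides of the Schoenberg/Lemma \ref{lem:psd-decreasing} correspondence must be matched precisely; I would lean on the strictly-decreasing clause of Lemma \ref{lem:psd-decreasing} to close this gap cleanly rather than re-deriving Schoenberg's theorem.
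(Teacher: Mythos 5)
Your treatment of the first assertion and of the ``if'' direction of the second assertion is correct and is essentially the paper's own argument: the paper likewise reduces, via Proposition \ref{prop:exchange-relation} and Remark \ref{rmk:exchange-relation}, to strong subadditivity $\eta(x+y+z)+\eta(z)\leq \eta(x+z)+\eta(y+z)$, and proves it by applying the conditionally negative definite inequality to the four elements $x+z$, $y+z$, $z$, $x+y+z$ with sign vector $(1,1,-1,-1)$ (your coefficients are the negative of this, which gives the same quadratic form). Your enumeration of the pairwise joins is right and the form does collapse to exactly the strong subadditivity expression; your remark that axiom (1), $d_\eta(1,0)=1$, needs a normalization of $\eta$ is a detail the paper passes over silently, and rescaling handles it. The forward implication (positive definite $\Rightarrow$ metric) via Lemma \ref{lem:psd-decreasing} and strict monotonicity also matches the paper.

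The genuine gap is in your converse direction (``metric $\Rightarrow$ positive definite''). You assert that, given positive semidefiniteness, positive definiteness of $\rho=\exp(-\eta)$ is \emph{equivalent}, ``via Lemma \ref{lem:psd-decreasing},'' to $\rho$ being strictly decreasing. The lemma gives only one implication (positive definite $\Rightarrow$ strictly decreasing); its converse is false. On the Boolean lattice $\{0,a,b,1\}$ take the measure $m$ with mass $1$ at each of $a$, $b$, $1$ and mass $0$ at $0$, and set $\rho(x)=m(C_x)$, so $\rho(0)=3$, $\rho(a)=\rho(b)=2$, $\rho(1)=1$: this $\rho$ is positive semidefinite and strictly decreasing, yet the $4\times 4$ matrix $[\rho(x_i+x_j)]$ is singular, because $1_{C_0}=1_{C_a}+1_{C_b}-1_{C_1}$ on the support of $m$. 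So from ``psd but not pd'' you cannot conclude that $\rho(x)=\rho(y)$ for some $x<y$, and your argument for the ``only if'' direction breaks down at that step. (This particular $\rho$ is not of the form $\exp(-\eta)$ with $\eta$ conditionally negative definite --- positive semidefiniteness of $\exp(-t\eta)$ fails for small $t$ by concavity of $s\mapsto s^t$ --- so the converse in the proposition may well be true, but any proof must exploit conditional negative definiteness at all scales $t$, not just Lemma \ref{lem:psd-decreasing}. Note also that the paper's own written proof establishes only the ``if'' direction and is silent on this converse, so your proposal attempts strictly more than the paper proves, but the attempt is not valid as written.)
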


\begin{proof}
    We write $|x|$ for $|x|_\eta$ and $d(x,y)$ for $d_\eta(x,y)$ for brevity. If $\exp(-\eta)$ is positive definite, then $|x|$ is strictly increasing, from which it easily follows that $d(x,y)>0$ if $x\not= y$.
    
    By Proposition \ref{prop:exchange-relation} and Remark \ref{rmk:exchange-relation}, we need only check that strong subadditivity holds, that is,
    \[|x+y+z|+|z|\leq |x+z|+|y+z|.\]

    Consider the matrix
    \[K := \begin{bmatrix}
        |x+z| & |x+y+z| & |x+z| & |x+y+z|\\
        |x+y+z| & |y+z| & |y+z| & |x+y+z|\\
        |x+z| & |y +z| & |z| & |x+y+z|\\
        |x+y+z| & |x+y+z| & |x+y+z| & |x+y+z|
    \end{bmatrix}
    \]
    which is $|a_i+a_j|$ where $a_1=x+z$, $a_2 =y+z$, $a_3= z$, and $a_4 = x+y+z$. Since $|x|$ is conditionally negative definite, we have $\xi K \xi^t\leq 0$ where $\xi = [1,1,-1,-1]$. We calculate
    \begin{equation*}
        \begin{split}
            \xi K\xi^t &=\\ 
           &=|x+z| + |x+y+z| - |x+z| - |x+y+z|\\
        &+ |x+y+z|  + |y+z| - |y+z| - |x+y+z|\\
         &- |x+z| -  |y +z|  + |z|  + |x+y+z|\\
         &- |x+y+z| - |x+y+z| + |x+y+z| + |x+y+z|\\
            &= -|x+z| -|y+z| + |z| + |x+y+z|,
        \end{split}
    \end{equation*}
    which proves the assertion. \qedhere

\end{proof}

\begin{example}
    If $\rho:P\to \bb R$ is positive semidefinite, then $x\mapsto a - \rho(x)$ for $a\in \bb R$ is clearly conditionally negative definite.
    Thus, if $(\Om, \cc B, \mu)$ is a probability space, then $X\mapsto \mu(X^c)$ is positive semidefinite, and $X\mapsto \mu(X) = 1 - \mu(X^c)$ is conditionally negative definite.
    
    Accordingly, let $(\Om,\cc B)$ be a Borel space, and let $\Phi$ be a finite measure on $(\Om\times \Om, \cc B\otimes \cc B)$. The map $\rho_\Phi: \cc B\ni X\mapsto \Phi(X^c\times X^c)$ is positive semidefinite, since 
    \[\rho_\Phi(X\cup Y) = \Phi\left((X\cup Y)^c\times (X\cup Y)^c\right) = \Phi((X^c\times X^c)\cap (Y^c\times Y^c)).\] Therefore,
    \[\eta_\Phi: \cc B\ni X\mapsto \Phi(\Om\times \Om) - \rho_\Phi(X) = \Phi(\Om\times X) + \Phi(X\times X^c)\]
    is conditionally negative definite and $\exp(-t\eta_\Phi)$ is positive semidefinite for $t\geq 0$.
    Moreover, we can see that $d_\Phi(X,Y) = \eta_\Phi(X\cup Y) - \eta_\Phi(X) - \eta_\Phi(Y)$ gives the lattice of Borel sets the structure of a pseudo-metric pre-lattice.
\end{example}

\begin{question}
    In \cite[section 9]{lovasz-submod-setfunction} it is shown that for finite Boolean lattices conditional negative definiteness coincides with a sequence of inequalities which strengthen submodularity first considered by Choquet \cite{choquet}. These inequalities already imply, in the metric case, that elements of the lattice are uniquely complemented: see Remark \ref{rem:submod-choquet} below. It would be interesting to determine if there is a similar characterization of conditionally negative definite functions on semilattices in general.
\end{question}

Let $\cc P$ be a finite semilattice and let $x_1,\dotsc,x_n$ be an enumeration of the elements of $P$ such that $x_i \leq x_j$ only if $i\leq j$. Given a function $f: P\to \bb R$, we say that a subset $S\subseteq [[n]]$ is a \emph{chain} if $\{x_i: i\in S\}$ is totally ordered. We say that an $n\times n$ matrix $A$ is \emph{totally $\cc P$-non-negative} if for all chains $S,T$ with $|S| = |T|$ we have that $\det(A_{S,T}) \geq 0$, where $A_{S,T}$ is the submatrix of $A$ consisting of the rows $S$ and columns $T$. If $\cc P = [[n]]$ with $i+j = \max\{i,j\}$, then total $\cc P$-positivity corresponds with the usual notion of $k$-total non-negativity of matrices \cite{ando-tp}.

We say that $f: P\to \bb R_{\geq 0}$ is \emph{totally $\cc P$-non-negative} if the associated matrix \[[f(x_i+x_j)]_{i.j=1}^n\] is totally $\cc P$-non-negative. We note that even though such a matrix is symmetric, it is not clear that a variant of Sylvester's criterion would apply to show that a totally $\cc P$-non-negative matrix is positive semidefinite. However, considering the $3\times 3$ principal submatrix corresponding to a triple $z\leq x,y$, 
\[\begin{bmatrix}
    f(z) & f(x) & f(y)\\
    f(x) & f(x) & f(x+y)\\
    f(y) & f(x+y) & f(y)
\end{bmatrix},\]
We see that total $\cc P$-non-negativity implies that $f$ is non-increasing and consequently $f$ satisfies the celebrated ``FKG inequality'' \cite{FKG}
\[f(x+y)f(z) \geq f(x)f(y)\]
whenever $z\leq x,y$. Note that the determinant of the $2\times 2$-upper right block is necessarily non-positive if $f$ is non-increasing, but is not considered unless $x <  y$, in which case it is $0$. Evidently, if $f$ is the characteristic function of $P\setminus C_x$ for any $x\in P$, then $f$ is totally non-negative as the non-zero entries are constant and form a rectangle in the upper left corner of the associated matrix. 

It would be interesting to completely characterize such matrices. For $\cc P = [[n]]$ with $i+j = \max\{i,j\}$ this is true for all $f: [[n]]\to \bb R_{\geq 0}$ which are non-increasing \cite[example 7.I(f)]{ando-tp}. However, the example of $2^n\ni S\mapsto \det(K_S)$ above shows that this is false in general as $\det(K_{S\cup T})\det(K_{S\cap T})\leq \det(K_S)\det(K_T)$ for all matrices $K$ that are positive semidefinite by Kotelyanskii's inequality \cite[Theorem 7.8.9]{horn}.

\section{Model Theoretic Aspects}\label{sec:model-theory}

We now turn to studying the model theory of metric lattices. We define a language $\ff L$ as follows.

\subsection{Language and axioms for metric lattices}

\begin{enumerate}
    \item There is a single sort $(X,d)$ consisting of a complete metric space of diameter $1$.
    \item There are constant symbols $0$ and $1$.
    \item A function symbol $+:X\times X\to X$ which is a contraction where $X\times X$ is equipped with the $\ell^1$-metric, $d_1((x,y),(w,z)) = d(x,w) + d(y,z)$.
\end{enumerate}

For axioms we require the following collection of sentences $T_{ML}$:
\begin{enumerate}
    \item $d(0,1)-1$;
    \item $\sup_x d(x+0,x) + d(x+1,1)$;
    \item $\sup_x d(x+x,x)$;
    \item $\sup_{x,y} d(x+y,y+x)$;
    \item $\sup_{x,y,z} d((x+y)+z,x+(y+z))$;
    \item $\sup_{x,y,z} (d(x+z,y+z)\dminus d(x,y))$;
    \item $\sup_{x,y,z} \left([d(x,y) + d(z,0)] \dminus [d(x+y,0) + d(x+z,x) + d(y+z,y)]\right)$.
\end{enumerate}

Clearly, every model of $T_{ML}$ is complete metric semilattice. The following proposition summarizes the main results of section \ref{sec:alt-axiomatization}.

\begin{prop}
    We have that $\ff L$-structures $\cc M$ and $\cc N$ are models of $T_{ML}$ if and only if $\cc M$ and $\cc N$ are complete metric lattices. Furthermore, the $\ff L$-structure homomorphisms $\phi:\cc M\to \cc N$ are exactly the contractive maps so that $\phi(0)=0$, $\phi(1)=1$, and $\phi(x+y) = \phi(x)+\phi(y)$ for all $x,y\in\cc M$ and $\phi(\cc M)\subseteq\cc N$ is a model of $T_{ML}$.
\end{prop}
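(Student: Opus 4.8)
The plan is to prove the claimed equivalence by unwinding the definitions and invoking the structural results already established in Section \ref{sec:alt-axiomatization}. The key observation is that the seven axioms of $T_{ML}$ are exactly the axioms of a \emph{complete metric semilattice} written as $\ff L$-sentences in the syntax of continuous logic. First I would verify the translation: a model $\cc M \models T_{ML}$ means each sentence evaluates to $0$. Axiom (1) gives $d(0,1)=1$; axioms (2)--(5) encode that $+$ has $0,1$ as identity/absorbing elements and is idempotent, commutative, and associative; axiom (6) is the contraction property $d(x+z,y+z)\leq d(x,y)$; and axiom (7), written with the truncated subtraction $\dminus$, forces $d(x,y)+d(z,0)\leq d(x+y,0)+d(x+z,x)+d(y+z,y)$ for all $x,y,z$, which is precisely condition (3) in the definition of a pseudo-metric semilattice. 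Since the sort $(X,d)$ is by stipulation a complete metric space of diameter $1$, a model of $T_{ML}$ is exactly a complete metric semilattice.

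With that dictionary in place, the forward direction is immediate: every model of $T_{ML}$ is a complete metric semilattice, hence by Proposition \ref{prop:complete-is-lattice} it is a complete metric lattice, where $0,1$ are the minimal and maximal elements and $+$ is the join. Conversely, every complete metric lattice is a complete metric semilattice, since Corollary \ref{cor:semimodular-upgrade} shows axiom (7) holds (this is the strengthened semi-modular inequality), and the remaining axioms are just the semilattice and contraction conditions from Definition \ref{defn:metric-lattice}; thus a complete metric lattice satisfies all of $T_{ML}$. The mild subtlety to flag is that the ``same $0,1$'' clause requires the uniqueness of the identity and absorbing elements, which follows from the fact that $|\cdot|$ is strictly increasing (Proposition \ref{prop:metric-norm}.1), and that the meet operation recovered in Proposition \ref{prop:complete-is-lattice} is the canonical one, so the lattice structure produced from a $T_{ML}$-model is genuinely unique.

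For the characterization of homomorphisms, I would argue in both directions. An $\ff L$-structure homomorphism is by definition a map preserving the interpretations of all function and constant symbols and contracting the metric, so it is exactly a contractive map $\phi$ with $\phi(0)=0$, $\phi(1)=1$, and $\phi(x+y)=\phi(x)+\phi(y)$; this is the content of being a continuous (indeed $1$-Lipschitz) semilattice homomorphism. The one nontrivial assertion is that $\phi(\cc M)\subseteq \cc N$ is again a model of $T_{ML}$, equivalently a complete metric lattice. The main obstacle is precisely this closure-and-completeness claim: the image $\phi(\cc M)$ is a subsemilattice of $\cc N$, but one must check it is metrically closed so that it is itself complete and hence a complete metric lattice. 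This is exactly what Proposition \ref{prop:hom-closed} supplies, as $\cc M$ and $\cc N$ are complete metric lattices and $\phi$ is a continuous semilattice homomorphism; that proposition shows $\phi(L)$ is closed and therefore a complete metric lattice. I would therefore cite Proposition \ref{prop:hom-closed} directly to close this step, after which the equivalence of the two descriptions of homomorphisms is complete.
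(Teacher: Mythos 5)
Your proof is correct and follows essentially the same route as the paper, whose proof is simply a direct citation of Propositions \ref{prop:complete-is-lattice} and \ref{prop:hom-closed}; you have merely made explicit the dictionary between the axioms of $T_{ML}$ and the definition of a complete metric semilattice (including the role of Corollary \ref{cor:semimodular-upgrade} for the converse direction), which the paper leaves implicit in Section \ref{sec:alt-axiomatization}. No gaps.
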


\begin{proof}
    This follows directly from Propositions \ref{prop:complete-is-lattice} and \ref{prop:hom-closed}.
\end{proof}

\subsection{Axioms for metrically modular lattices}\label{Sec:mm-meet-def}

Consider the sentence
\begin{equation}
    \sg_{\text{mod}} := \sup_{x,y}\inf_z\max\{(|x| + |y|) \dminus (|x+y| + |z|), d(x+z,x), d(y+z,y)\}.
\end{equation}
and the theory 
\begin{equation}\label{eq:t-mml}
    T_{MML} := T_{ML}\cup\{\sg_{\text{mod}}\}.
\end{equation}

\begin{prop}\label{prop:t-mml-language}
    We have that an $\ff L$-structure $\cc M$ models $T_{MML}$ if and only if it is an metrically modular complete metric lattice.
\end{prop}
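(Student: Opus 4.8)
The plan is to show the equivalence $\cc M\models T_{MML}$ iff $\cc M$ is a metrically modular complete metric lattice by unpacking the meaning of the single extra sentence $\sg_{mod}$ over the base theory $T_{ML}$. Since every model of $T_{ML}$ is already a complete metric lattice by the preceding proposition, the entire content lies in verifying that, for a fixed complete metric lattice $\cc M$, the condition $\sg_{mod}^{\cc M}=0$ holds if and only if every pair $(x,y)\in M^2$ is a metrically modular pair in the sense of Definition \ref{defn:metric-mod-pair}. The key observation is that $\sg_{mod}^{\cc M}=0$ unwinds exactly to the statement that for all $x,y$ and all $\eps>0$ there exists $z$ with $(|x|+|y|)-(|x+y|+|z|)<\eps$, $d(x+z,x)<\eps$, and $d(y+z,y)<\eps$, which is precisely the $\eps=1/n$ (for all $n$) criterion appearing in Proposition \ref{prop:metrically-modular-axiom}.

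First I would note that since $\cc M\models T_{ML}$, the rank function $|x|=d(x,0)$ and all the structure of Section \ref{sec:metric-lattices} are available; in particular the inequality $|x+y|+|xy|\leq|x|+|y|$ of Proposition \ref{prop:metric-rank}.4 always holds, so the only possible failure of metric modularity is the reverse inequality. Next I would carefully translate the truncated-subtraction expression: because the outer operators are $\sup_{x,y}$ and $\inf_z$ and the connective is $\max$, the value $\sg_{mod}^{\cc M}$ equals $\sup_{x,y}\inf_z\max\{\,\cdot\,,\cdot,\cdot\}$, and this supremum is zero exactly when for every $x,y$ the infimum over $z$ is zero, i.e. when the three quantities can be made simultaneously arbitrarily small. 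Here I would be careful that $a\dminus b$ denotes the nonnegative part $\max\{a-b,0\}$, so each of the three entries of the $\max$ is already nonnegative and the whole expression vanishes iff each entry can be driven below any prescribed $\eps$.

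With this reformulation in hand, the proof reduces to a direct citation of Proposition \ref{prop:metrically-modular-axiom}. For the forward direction, $\sg_{mod}^{\cc M}=0$ gives, for each $n$, an element $z$ satisfying the three inequalities with $1/n$ in place of $\eps$, which is exactly the hypothesis of that proposition, hence each pair $(x,y)$ is metrically modular and $\cc M$ is metrically modular. For the converse, if $\cc M$ is metrically modular then for any $x,y$ the choice $z=xy$ already makes all three entries vanish ($d(x+z,x)=d(y+z,y)=0$ since $xy\leq x,y$, and $(|x|+|y|)-(|x+y|+|xy|)=0$ by metric modularity), so the inner infimum is zero for every $x,y$ and thus $\sg_{mod}^{\cc M}=0$. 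I would then conclude that $\cc M\models T_{MML}=T_{ML}\cup\{\sg_{mod}\}$ iff $\cc M$ is a complete metric lattice that is metrically modular.

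The main obstacle, and the step deserving the most care, is the faithful translation between the syntactic metric-structure sentence $\sg_{mod}$ and the quantitative ``for all $n$ there is $z$'' condition of Proposition \ref{prop:metrically-modular-axiom}; in particular one must justify that a zero value of a $\sup_x\inf_z$ expression corresponds to an $\eps$–$\de$ (here $\eps=1/n$) approximate witness statement rather than an exact one, and that completeness of $\cc M$ is genuinely used—it is invoked inside Proposition \ref{prop:metrically-modular-axiom} to pass from the approximate witnesses $z_n$ to the exact witness $xy$ via the summation argument of Lemma \ref{lem:metric-sum}. Everything else is routine bookkeeping with the truncated subtraction and the observation that the base theory already supplies one direction of the modularity inequality.
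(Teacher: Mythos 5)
Your proof is correct and follows essentially the same route as the paper: the paper's proof is a one-line citation of Proposition \ref{prop:metrically-modular-axiom}, and your argument is simply a careful spelling-out of that citation, including the (correct) translation of $\sg_{mod}^{\cc M}=0$ into the approximate-witness criterion and the choice $z=xy$ for the converse.
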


\begin{proof}
    This follows directly from Proposition \ref{prop:metrically-modular-axiom}.
\end{proof}

Our objective will be to show that the \textit{meet} operation is definable in $T_{MML}$, which in our context means that we need to prove that the predicate $P(x,y,z):=d(xy,z)$ is definable.

\begin{lem}\label{lem:metric-mod-contraction}
    If $\cc M$ is a metric lattice and $(x,z),(y,z)$ are metrically modular pairs, then $d'(xz,yz)\leq d'(x,y)$ and $d(xz,yz)\leq 2\,d(x,y)$.
\end{lem}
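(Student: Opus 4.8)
The plan is to deduce the second inequality from the first: by Proposition \ref{prop:metric-rank}.7 we have $d(xz,yz)\le d'(xz,yz)$ and $d'(x,y)\le 2\,d(x,y)$, so once $d'(xz,yz)\le d'(x,y)$ is established, the chain $d(xz,yz)\le d'(xz,yz)\le d'(x,y)\le 2\,d(x,y)$ gives the rest. Thus everything reduces to proving the $d'$-contraction $d'(xz,yz)\le d'(x,y)$.

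The difficulty is that $d'(xz,yz)=2|xz+yz|-|xz|-|yz|$ involves the join of two meets, which is awkward to control directly. The key manoeuvre is to route through the common upper bound $(x+y)z$. Since $xz\le x\le x+y$ and $xz\le z$ we have $xz\le (x+y)z$, and likewise $yz\le (x+y)z$; hence by Proposition \ref{prop:metric-rank}.3 (which specializes to $d'(a,b)=|b|-|a|$ whenever $a\le b$, since then $a+b=b$) together with the triangle inequality for $d'$ (a metric by Proposition \ref{prop:metric-rank}.6),
\[
d'(xz,yz)\le d'(xz,(x+y)z)+d'((x+y)z,yz)=2|(x+y)z|-|xz|-|yz|.
\]
It now suffices to prove the purely rank-theoretic inequality $2|(x+y)z|-|xz|-|yz|\le 2|x+y|-|x|-|y|=d'(x,y)$.

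This is where metric modularity enters. From Definition \ref{defn:metric-mod-pair} the hypotheses give the \emph{equalities} $|xz|=|x|+|z|-|x+z|$ and $|yz|=|y|+|z|-|y+z|$; note that submodularity (Proposition \ref{prop:metric-rank}.4) only supplies the inequality $\le$ here, so it is precisely the reverse bound furnished by modularity that we need. Substituting these, the desired inequality becomes equivalent to
\[
2|(x+y)z|+|x+z|+|y+z|\le 2|x+y|+2|z|.
\]
To close this I will bound $|(x+y)z|$ by submodularity of the rank at the pair $(x+y,z)$, namely $|(x+y)z|\le |x+y|+|z|-|x+y+z|$ (Proposition \ref{prop:metric-rank}.4), and then use monotonicity (Proposition \ref{prop:metric-norm}.1) in the form $|x+z|+|y+z|\le 2|x+y+z|$, which holds because $x+z,\,y+z\le x+y+z$. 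Adding these two facts yields exactly the displayed inequality, completing the argument.

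The main obstacle is conceptual rather than computational: meet is not uniformly continuous in a general metric lattice, so no contraction estimate can hold without using the hypothesis essentially. A natural but \emph{failed} attempt is to compare $d'(xz,yz)$ with $d'(x+z,y+z)$ and invoke the contractivity of the map $w\mapsto w+z$; a small example in a non-distributive modular lattice (such as $M_3$) already shows that $d'(xz,yz)\le d'(x+z,y+z)$ is false. Routing instead through $(x+y)z$ is the correct intermediate step, and the crucial---and easily overlooked---point is that metric modularity is used exactly to upgrade the one-sided submodular bound on $|xz|$ and $|yz|$ to the equalities that make the final rank inequality collapse to submodularity of $|\cdot|$ at $(x+y,z)$ together with monotonicity.
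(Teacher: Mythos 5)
Your proof is correct and is essentially the paper's argument (which follows Sachs): both route through the intermediate element $(x+y)z$ and close the estimate using the metric-modularity equalities for $|xz|,|yz|$, submodularity of the rank at $(x+y,z)$, and monotonicity $|x+z|,|y+z|\leq|x+y+z|$, then deduce the $d$-inequality from $d\leq d'\leq 2d$. The only cosmetic difference is that you bound $d'(xz,yz)$ by the triangle inequality through $(x+y)z$, whereas the paper uses $xz+yz\leq(x+y)z$ directly, and you rearrange the rank computation into one inequality rather than two.
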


\begin{proof}
    We follow the proof given in \cite[Lemma 21]{Sachs}. Applying Proposition \ref{prop:metric-rank}.4 to $x+y$ and $z$, we get $|z(x+y)|\leq|z|+|x+y|-|z+x+y|$. Since $(z,y)$ is a metrically modular pair, i.e. $|zy|=|z|+|y|-|z+y|$, it follows that
    \begin{align*}
        |z(x+y)|-|zy|&\leq |z|+|x+y|-|z+x+y|-|z|-|y|+|z+y|\\
                    &= |x+y|-|y|+|z+y|-|z+x+y|
                    \leq |x+y|-|y|.
    \end{align*}
    Using a similar argument, we can show that $|z(x+y)|-|zx|\leq|x+y|-|x|$; therefore,
    \[2|z(x+y)|-|zx|-|zy|\leq 2|x+y|-|x|-|y|=d'(x,y).\]
    Now observe that as $zx,zy\leq z(x+y)$, we have $zx+zy\leq z(x+y)$, so $|zx+zy|\leq|z(x+y)|$ and we can conclude $d'(zy,zx)=2|zx+zy|-|zx|-|zy|\leq d'(x,y)$. This proves the first part of the lemma.

    For the second part, we use Proposition \ref{prop:metric-rank}.7:
    \[d(xz,yz)\leq d'(xz,yz)\leq d'(x,y)\leq 2\,d(x,y).\qedhere\]
\end{proof}

\begin{remark}\label{rmk:meet-ctrl-fails}
    Let 
    \begin{equation}\label{eq:modular-pair}
        \vp(x,y) := \inf_z\max\{(|x| + |y|) \dminus (|x+y| + |z|), d(x+z,x), d(y+z,y)\},
    \end{equation}
    noting that for complete metric lattices $\vp(x,y)=0$ is equivalent to $(x,y)$ being a metrically modular pair by Proposition \ref{prop:metrically-modular-axiom}.  In light of the previous lemma, it is natural to ask whether $\vp(x,z),\vp(y,z)<\e$ implies that $d(xz,yz)\leq 2d(x,y)+K\e$ for some fixed constant $K$. The answer turns out to be `no'.
    Indeed, we will show that for every $K$ there is a finite partition lattice $P_\bullet$, a real number $\varepsilon>0$, and partitions $x,y,z\in P_\bullet$ such that $\varphi(x,z),\varphi(y,z)<\varepsilon$ and $d(xz,yz)>2d(x,y)+K\varepsilon$.
    
    Given $K$, take a natural number $n> K+2$, set $\varepsilon=\frac{1}{3n-2}$, and consider the finite partition lattice of the set $\{1,2,3,...,3n\}$, which we will note as $P_{3n}$. Here the norm of a partition $x$ is $|x|:=\frac{3n-\#x}{3n-1}$ where $\#x$ is the number of blocks in the partition $x$, and the distance of two partitions is $d(x,y):=2|x+y|-|x|-|y| = d'(x,y)$.

    Consider the following partitions
    \begin{align*}
        x&:=\{\{1,2,3,...,2n\},\{2n+1,...,3n\}\}\\
        y&:=\{\{1,2,3,...,n\},\{n+1,n+2,...,3n\}\\
        z&:=\{\{i,i+n,i+2n\}\}_{i=1}^n.
    \end{align*}
    We compute the partitions $x+z,y+z, x+y, xz, yz$ and $xz+yz$ which we will need for later.
    \begin{align*}
        x+z&=\{\{1,2,...,3n\}\}=y+z=x+y\\
        xz&=\{\{i,i+n\}\}_{i=1}^n\cup\{\{j\}\}_{j=2n+1}^{3n}\\
        yz&=\{i\}_{i=1}^n\cup\{\{j,j+n\}\}_{j=n+1}^{2n}\\
        xz+yz&=z.
    \end{align*}
    It is clear that $|x+z|=|y+z|=|x+y|=1$. Further,
    \[|x|=|y|=\frac{3n-2}{3n-1},\quad
        |z| =\frac{2n}{3n-1},\quad
        |xz|=|yz|=\frac{n}{3n-1}.\]
    Now, let's check that $\varphi(x,z),\varphi(y,z)<\varepsilon$. Observe that $|x|+|z|\leq |x+z|+|z|$, \[ |x+z|-|x|=1-\frac{3n-2}{3n-1}=\frac{1}{3n-1},\] and $|z+z|-|z|=0$, so $\vp(x,z)< \frac{1}{3n-1}<\e$.
    Similarly $\varphi(y,z)<\varepsilon$.
    However, we have that
    $$d(xz,yz)=2|xz+yz|-|xz|-|yz|=\frac{4n}{3n-1}-\frac{n}{3n-1}-\frac{n}{3n-1}=\frac{2n}{3n-1}$$
    and 
    \begin{equation*}
        \begin{split}
            2d(x,y)+K\varepsilon &=2(2|x+y|-|x|-|y|)+K\varepsilon\\
            &=2\left(2-\frac{3n-2}{3n-1}-\frac{3n-2}{3n-1}\right)+K\varepsilon\\
            &=\frac{4}{3n-1}+\frac{K}{3n-2}.
        \end{split} 
    \end{equation*}
    Since $n>K+2$, we conclude that
    $$d(xz,yz)=\frac{2n}{3n-1}>\frac{4}{3n-1}+\frac{2K}{3n-1}>\frac{2}{3n-1}+\frac{K}{3n-2}=2d(x,y)+K\varepsilon.$$

    In general picking $n>K+S$ we can use this counterexample to get that $\varphi(x,z)<\varepsilon, \varphi(y,z)<\varepsilon$ and $d(xz,yz)>Sd(x,y)+K\varepsilon$.
\end{remark}

\begin{lem}\label{lem:meet-ultrp}
    Let $\{\cc M_i: i\in I\}$ be a collection of metrically modular complete metric lattices, and $\cc U$ a non-principal ultrafilter on $I$. Let $\cc M = \prod_{i\in \cc U} \cc M_i$ be the ultraproduct. For $x,y\in \cc M$ with representatives $x = (x_i)$ and $y = (y_i)$ we have that $xy = (x_iy_i)$.
\end{lem}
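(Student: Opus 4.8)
The plan is to identify the coordinatewise meet $w := (x_iy_i)$ with the lattice meet $xy$ computed inside $\cc M$, which exists because $\cc M$ is itself a complete metric lattice. The first observation is that $\cc M \models T_{MML}$ by {\L}o\'s's theorem, since $T_{MML}$ is a set of sentences (Proposition \ref{prop:t-mml-language}); thus $\cc M$ is a metrically modular complete metric lattice, its join is computed coordinatewise because $+$ is a function symbol of $\ff L$, and the rank satisfies $|z| = \lim_U |z_i|$ for any representative $z = (z_i)$, since $|z| = d(z,0)$ and the constant $0$ is represented by $(0)$. Note also that $w$ is a well-defined element of $\cc M$: in each $\cc M_i$ every pair is metrically modular, so Lemma \ref{lem:metric-mod-contraction} shows that $z'\mapsto zz'$ is $2$-Lipschitz, whence altering the representatives of $x$ and $y$ within their $U$-classes leaves the $U$-class of $(x_iy_i)$ unchanged.

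Next I would show that $w$ is a common lower bound of $x$ and $y$. In each coordinate $x_iy_i\leq x_i$ and $x_iy_i\leq y_i$, so $x_iy_i + x_i = x_i$ and $x_iy_i + y_i = y_i$ identically. Passing to the ultralimit and using that the join is coordinatewise, we get $d(w+x,x) = \lim_U d(x_iy_i + x_i, x_i) = 0$ and likewise $d(w+y,y)=0$; hence $w\leq x$ and $w\leq y$. Since $xy$ is by construction the greatest lower bound of $\{x,y\}$ in the complete metric lattice $\cc M$ (Proposition \ref{prop:complete-is-lattice}), it follows that $w\leq xy$.

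It then remains to rule out $w < xy$, and this is where metric modularity does the essential work. In each $\cc M_i$ the pair $(x_i,y_i)$ is metrically modular, so $|x_iy_i| = |x_i| + |y_i| - |x_i+y_i|$ by Definition \ref{defn:metric-mod-pair}. Taking the ultralimit, using $|z| = \lim_U|z_i|$ and the coordinatewise join, yields $|w| = |x| + |y| - |x+y|$. But $\cc M$ is itself metrically modular, so the same identity gives $|xy| = |x| + |y| - |x+y|$, and therefore $|w| = |xy|$. Combining this with $w\leq xy$: if $w\neq xy$ then $w < xy$, which by Proposition \ref{prop:metric-norm}.1 forces $|w| < |xy|$, a contradiction. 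Hence $w = xy$, as claimed.

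The conceptual point worth flagging is that the meet is \emph{not} a symbol of the language $\ff L$; it is reconstructed from the join via Proposition \ref{prop:complete-is-lattice}, so it is not automatic --- and is in fact false without modularity, where the meet need not even be uniformly continuous --- that it is computed coordinatewise. The main obstacle is therefore precisely to control this reconstructed meet, and the argument sidesteps any direct manipulation of the meet operation by reducing everything to the rank function: metric modularity rewrites the rank of a meet as an expression purely in ranks of joins, a quantity {\L}o\'s's theorem transports to the ultralimit without difficulty, after which the strict monotonicity of $|\cdot|$ upgrades the resulting equality of ranks to an equality of elements.
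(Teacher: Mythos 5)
Your proof is correct, and it takes a genuinely different route from the paper's. The paper never invokes \L{}os' theorem: it establishes the easy inclusion $(x_iy_i)\leq xy$ exactly as you do, but then proves the reverse inequality by hand, taking an arbitrary lower bound $z\leq x,y$ with representative $(z_i)$ --- so the $z_i$ are only \emph{approximately} below $x_i,y_i$ --- using coordinatewise metric modularity to convert $d_i(x_i+z_i,x_i)\to 0$ into $d_i(x_iz_i,z_i)\to 0$, and then applying the $2$-Lipschitz estimate of Lemma \ref{lem:metric-mod-contraction} and the triangle inequality to get $d_i(x_iy_iz_i,z_i)\to 0$, i.e.\ $z\leq (x_iy_i)$; this verifies directly that the coordinatewise meet has the universal property of the greatest lower bound. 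You instead transfer the whole theory $T_{MML}$ to $\cc M$ via \L{}os' theorem and Proposition \ref{prop:t-mml-language}, so that $\cc M$ itself is metrically modular, and then reduce everything to ranks: both $|xy|$ and $|(x_iy_i)|$ equal $|x|+|y|-|x+y|$, and strict monotonicity of the rank (Proposition \ref{prop:metric-norm}.1) upgrades the inequality $(x_iy_i)\leq xy$ together with equality of ranks to equality of elements. Your argument is shorter and leans on the model-theoretic machinery already in place, avoiding any manipulation of representatives of arbitrary lower bounds; the paper's argument is more elementary and self-contained, needing only the coordinatewise description of the metric and join in the ultraproduct plus Lemma \ref{lem:metric-mod-contraction}, and it yields the greatest-lower-bound property explicitly rather than inferring it. One small remark: your well-definedness preamble for $w=(x_iy_i)$ is harmless but dispensable, since proving the identity $xy=(x_iy_i)$ for arbitrary fixed representatives already gives independence of the choice as a corollary.
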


\begin{proof}
    Clearly $x_iy_i\leq x_i,y_i$, hence $(x_iy_i)\leq x,y$ and $(x_iy_i)\leq xy$. In the other direction, suppose that $z\leq x,y$ with representative $z = (z_i)$. We therefore have that $d_i(x_i+z_i,x_i)\to 0$ and $d(y_i+z_i,y_i)\to 0$ as $i\to \cc U$. By metric modularity, we have that 
    \[d_i(x_iz_i,z_i) = |z_i| - |x_iz_i| = |x_i+z_i| - |x_i| = d_i(x_i+z_i, z_i)\]
    with the corresponding equations holding for $d_i(y_iz_i,z_i)$. By Lemma \ref{lem:metric-mod-contraction} and the triangle inequality 
    \begin{equation*}
        d_i(x_iy_iz_i,z_i)\leq d_i(x_iy_iz_i,y_iz_i) + d_i(y_iz_i,z_i)\leq 2(d_i(x_iz_i,z_i) + d_i(y_iz_i,z_i))\to 0
    \end{equation*}
    as $i\to \cc U$. Therefore, $z= (x_iy_iz_i)\leq (x_iy_i)$ so $xy\leq (x_iy_i)$.
\end{proof}

 In order to show the next result we will invoke the Beth Definability Theorem which we discussed in the preliminaries.

\begin{prop}\label{prop:meet-uc}
    The predicate $P(x,y,z) = d(xy,z)$ is definable for the elementary class $\Mod(T_{MML})$ of metrically modular complete metric lattices.
\end{prop}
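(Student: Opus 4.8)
The plan is to invoke Corollary \ref{cor:beth-expanded-language}, the expanded-language form of the Beth Definability Theorem, taking $\mathcal{C} = \Mod(T_{MML})$ to be the elementary class of metrically modular complete metric lattices in the language $\ff L$, and expanding each such $A$ by the ternary predicate $P_A(x,y,z) = d(xy,z)$, where $xy$ denotes the canonical meet guaranteed by Proposition \ref{prop:complete-is-lattice}. Writing $\mathcal{C}' = \{(A,P_A) : A\in\mathcal{C}\}$ for the expanded class in $\ff L' = \ff L\cup\{P\}$, whose $\ff L$-reduct is exactly $\mathcal{C}$ (the meet being intrinsic), the corollary will yield that $P$ is $T_{MML}$-equivalent to an $\ff L$-definable predicate, provided its two hypotheses hold: (i) the $P_A$ are uniformly continuous with a modulus independent of $A$, and (ii) $\mathcal{C}'$ is elementary.

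For (i), I would read off a uniform modulus from Lemma \ref{lem:metric-mod-contraction}. Since every pair is metrically modular in a member of $\mathcal{C}$, that lemma applies with the common factor held fixed to give $d(xy,x'y)\leq 2\,d(x,x')$ and $d(x'y,x'y')\leq 2\,d(y,y')$, so that $d(xy,x'y')\leq 2\,d(x,x') + 2\,d(y,y')$ and hence $|P_A(x,y,z) - P_A(x',y',z')| \leq 2\,d(x,x') + 2\,d(y,y') + d(z,z')$ for every $A\in\mathcal{C}$. This is a fixed linear modulus of uniform continuity, independent of $A$, and $P_A$ takes values in $[0,1]$ by Proposition \ref{prop:metric-norm}.2.

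The substance is (ii). I would use the standard criterion that a class closed under isomorphism is elementary exactly when it is closed under ultraproducts and ultraroots. Closure under ultraproducts is immediate from the Lemma preceding this proposition: if each $(\cc M_i,P_{\cc M_i})\in\mathcal{C}'$ and $\cc M = \prod_U \cc M_i$, then $xy = (x_iy_i)$ in $\cc M$, so $P_{\cc M}(x,y,z) = \lim_U d_i(x_iy_i,z_i) = \lim_U P_{\cc M_i}(x_i,y_i,z_i)$, which is precisely the value assigned by the ultraproduct predicate; thus the ultraproduct lies in $\mathcal{C}'$. For ultraroots, suppose an $\ff L'$-structure $\cc N$ has an ultrapower $\cc N^U\in\mathcal{C}'$. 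Then $\cc N\upharpoonright\ff L\in\mathcal{C}$, since $\mathcal{C}$ is elementary, so it has a meet. Writing $\delta\colon\cc N\to\cc N^U$ for the diagonal embedding, I would show $\delta$ sends $\text{meet}^{\cc N}(a,b)$ to $\text{meet}^{\cc N^U}(\delta a,\delta b)$: the image $\delta(\text{meet}^{\cc N}(a,b))$ is a common lower bound of $\delta a,\delta b$, and by metric modularity its rank equals $|a| + |b| - |a+b| = |\delta a| + |\delta b| - |\delta a + \delta b|$, which is the rank of $\text{meet}^{\cc N^U}(\delta a,\delta b)$; since rank is strictly increasing (Proposition \ref{prop:metric-norm}.1) the two elements coincide. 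As $\delta$ is an $\ff L'$-elementary embedding it preserves $P$, so $P^{\cc N}(a,b,c) = d^{\cc N^U}(\delta\,\text{meet}^{\cc N}(a,b),\delta c) = d(\text{meet}^{\cc N}(a,b),c)$, giving $\cc N\in\mathcal{C}'$.

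I expect the ultraroot step to be the main obstacle, as this is where one must confirm that the meet, and therefore $P$, is genuinely determined by the $\ff L$-reduct rather than being extra data that could behave anomalously in a larger model; the metric-modularity rank computation is exactly what prevents the meet from ``shrinking'' inside the ambient ultrapower. With (i) and (ii) established, Corollary \ref{cor:beth-expanded-language} produces an $\ff L$-definable predicate that is $T_{MML}$-equivalent to $P(x,y,z) = d(xy,z)$, which is the claim.
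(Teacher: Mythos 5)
Your proof is correct, and it shares the paper's outer frame: both arguments verify the two hypotheses of Corollary \ref{cor:beth-expanded-language}, and both extract the uniform modulus $|P(x,y,z)-P(x',y',z')|\leq 2\,d(x,x')+2\,d(y,y')+d(z,z')$ from Lemma \ref{lem:metric-mod-contraction} in exactly the same way. Where you genuinely diverge is at the essential step, elementarity of the expanded class $\mathcal{C}'$. The paper proceeds syntactically: it exhibits the explicit $\ff L'$-theory $T'=T_{ML}\cup\{\sup_x\sup_y(d(0,x+y)+P(x,y,0)-d(x,0)-d(y,0))\}$ and asserts that this axiomatizes $\mathcal{C}'$. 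You proceed semantically, via the characterization of elementary classes as those closed under isomorphism, ultraproducts, and ultraroots: \L{}os' theorem together with the unnamed lemma preceding the proposition (meets in ultraproducts of metrically modular lattices are computed coordinatewise) gives closure under ultraproducts, and your diagonal-embedding argument---$\delta(ab)$ is a common lower bound of $\delta a,\delta b$ whose rank equals $|\delta a|+|\delta b|-|\delta a+\delta b|$ by metric modularity, hence it coincides with the meet in the ultrapower because rank is strictly increasing (Proposition \ref{prop:metric-norm}.1)---gives closure under ultraroots. What each route buys: the paper's is shorter and yields a concrete theory, but its verification is very terse; indeed the displayed axiom constrains only the values $P(x,y,0)$, so checking that every model of $T'$ actually lies in $\mathcal{C}'$ requires further argument not spelled out there. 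Your route is longer but self-contained, it puts the coordinatewise-meet lemma to its natural use (the paper proves that lemma but never invokes it in its own proof of this proposition), and your ultraroot step isolates the real content, namely that the meet---hence $P$---is determined by the $\ff L$-reduct. One point you use tacitly and should state: the semantic criterion also requires closure of $\mathcal{C}'$ under isomorphism, which holds because the meet is intrinsic to the $\ff L$-structure, so any $\ff L'$-isomorphism onto a member of $\mathcal{C}'$ transports $P_A$ correctly.
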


\begin{proof}
    First, we prove that for all $\mathcal{M}\in \Mod(T_{MML})$ the predicate $P^\mathcal{M}(x,y,x):=d(xy,z)$ satisfies the conditions of Corollary \ref{cor:beth-expanded-language}. Observe that given $x,y,z,w\in \mathcal{M}$ we can apply the Lemma \ref{lem:metric-mod-contraction} plus the triangle inequality to get
    \[d(xy,zw)\leq d(xy,yz)+d(yz,zw)\leq 2d(x,z)+2d(y,w).\]
    Therefore, the meet operation is uniformly continuous. Now note that if we take 
    \[x_1,x_2,y_1,y_2,z_1,z_2\in\mathcal{M},\] using the triangle inequality we obtain
    $$d(x_1y_1,z_1)\leq d(x_1y_1,z_2)+d(z_1,z_2)\leq d(x_1y_1,x_2y_2)+d(x_2y_2,z_2)+d(z_1,z_2)$$
    and 
    $$d(x_2y_2,z_2)\leq d(x_2y_2,z_1)+d(z_1,z_2)\leq d(x_1y_1,x_2y_2)+d(x_1y_1,z_1)+d(z_1,z_2);$$
    therefore, 
    $$|d(x_1y_1,z_1)-d(x_2y_2,z_2)|\leq d(x_1y_1,x_2y_2)+d(z_1,z_2).$$
    Finally, we have that
    $$|P^\mathcal{M}(x_1,y_1,z_1)-P^\mathcal{M}(x_2,y_2,z_2)|\leq 2\left(d(x_1,x_2)+d(y_1,y_2)\right)+d(z_1,z_2).$$
    Since the predicates $P^\mathcal{M}$ are thus uniformly continuous with common modulus of continuity for every $\mathcal{M}\in \Mod(T_{MML})$, we can take the class 
    \[\mathcal{C}':=\{(\mathcal{M},P^\mathcal{M}):\mathcal{M}\in \Mod(T_{MML})\}\]
    of structures for our expanded language with a predicate for $P$, which is interpreted in each $\mathcal{M}\in \Mod(T_{MML})$ as $P^\mathcal{M}$. The class $\mathcal{C}'$ is axiomatizable by the following theory in our expanded language
    \[T':=T_{ML}\cup\left\{\sup_x\sup_y\left(d(0,x+y)+P(x,y,0)-d(x,0)-d(y,0)\right)\right\},\] so by Corollary \ref{cor:beth-expanded-language} we conclude that the meet function is axiomatizable in the theory of metrically modular complete metric lattices. \qedhere
\end{proof}

    In fact, we can can show that $d(z,xy)$ is bounded above by an explicit formula in $(x,y,z)$, which directly shows that $d(z,xy)$ is definable.
    We thank the reviewer for the next result and its proof.
    
\begin{prop}
    If $\cc M$ is a metrically modular complete metric lattice, then for all $x,y,z\in M$ we have 
    $$d(z, xy)\leq 4d(x, x+z)+4d(y,y+z)+(|x|+|y|\dminus (|x+y|+|z|)).$$
\end{prop}

    \begin{proof}
        By the proof of Lemma \ref{lem:meet-ultrp} and Proposition \ref{prop:meet-uc} we note that
        $$d(xyz,z)\leq 2d(xz,z)+d(yz,z)=2d(x+z,x)+2d(y+z,y),$$
        also
        $$d(xyz,xy)=|xy|-|xyz|=|xy|-|z|+|z|-|xyz|=(|xy|\dminus|z|)+d(z,xyz).$$
        Thus,
    \begin{align*}
        d(z,xy)&\leq d(xyz,z)+d(xyz,xy) \\
        &\leq 2d(xyz,z)+(|xy|\dminus|z|)\\
        &\leq 4d(x,x+z)+4d(y,y+z)+(|x|+|y|\dminus(|x+y)+|z|)). \qedhere
    \end{align*}

    \end{proof}

\subsection{Axioms for distributive and complemented lattices}

Since the predicate $d(xy,z)$ is $T_{MML}$-equivalent to a definable predicate or formula in $\ff L$ (due to the general definition of formula we are using), we know there is formula $\phi_m$ such that for every model $\cc M$ of $T_{MML}$ and all $x,y,z\in M, \phi_m(x,y,z)=d(xy,z)$. Using this we can axiomatize the theory of distributive metrically modular complete metric lattices.

To do so, we define the sentence
\begin{equation}
    \sigma_{\text{dist}}:=\sup_{x,y,z}\inf_{t,w} \max\{\phi_m(x,y,t),\phi_m(x,z,w),\phi_m(x,y+z,t+w)\}
\end{equation}
and consider the theory 
\begin{equation}\label{eq:t-dml}
    T_{DML}:=T_{MML}\cup\{\sigma_{\text{dist}}\} = T_{ML} \cup\{\sg_{\text{mod}},\sg_{\text{dist}}\}.
\end{equation}

\begin{prop}\label{prop:t-dml-language}
    Let $\cc M$ be a $\ff L$-structure, then $\cc M$ models $T_{DML}$ if and only if it is a distributive metrically modular complete metric lattice.
\end{prop}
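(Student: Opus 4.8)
The plan is to reduce everything to Proposition \ref{prop:t-mml-language} together with the definability of the meet established in Proposition \ref{prop:meet-uc}. Since $T_{DML} = T_{MML}\cup\{\sigma_{dist}\}$ and, by Proposition \ref{prop:t-mml-language}, an $\ff L$-structure models $T_{MML}$ exactly when it is a metrically modular complete metric lattice, it suffices to show that for such a lattice $\cc M$ the single extra sentence $\sigma_{dist}$ vanishes if and only if $\cc M$ is distributive, i.e. $x(y+z) = xy + xz$ for all $x,y,z\in M$. In any model of $T_{MML}$ the formula $\phi_m$ computes $d(xy,z)$ exactly by the discussion following Proposition \ref{prop:meet-uc}, and the meet is genuinely available because such a structure is complete as a lattice by Corollary \ref{prop:metric-to-lattice-complete}; this is what lets us read off $\sigma_{dist}^{\cc M}$ as an assertion about honest meets and joins.

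Unwinding the definition, $\sigma_{dist}^{\cc M}=0$ says precisely that for every $x,y,z$ and every $\eps>0$ there exist $t,w\in M$ with $d(xy,t)<\eps$, $d(xz,w)<\eps$, and $d(x(y+z),t+w)<\eps$. For the easy direction I would assume $\cc M$ is distributive and simply plug in $t=xy$ and $w=xz$: the first two terms vanish and the third is $d(x(y+z),xy+xz)=0$ by distributivity, so the inner infimum is $0$ and hence $\sigma_{dist}^{\cc M}=0$. For the converse, suppose $\sigma_{dist}^{\cc M}=0$ and fix $x,y,z$; for each $\eps$ choose witnesses $t_\eps,w_\eps$ as above. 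Because $+$ is a contraction for the $\ell^1$-metric (axiom (3) of $\ff L$, cf. Remark \ref{rmk:uniform-continuity}), we have $d(t_\eps+w_\eps,xy+xz)\leq d(t_\eps,xy)+d(w_\eps,xz)<2\eps$, whence $d(x(y+z),xy+xz)\leq d(x(y+z),t_\eps+w_\eps)+d(t_\eps+w_\eps,xy+xz)<3\eps$ by the triangle inequality. Letting $\eps\to 0$ forces $x(y+z)=xy+xz$, so the meet distributes over the join; since in any lattice this distributive law is equivalent to its order-dual, $\cc M$ is distributive. Combining both directions with Proposition \ref{prop:t-mml-language} gives the stated biconditional.

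The only genuine subtlety, which I would flag explicitly, is that the infimum over $t,w$ in $\sigma_{dist}$ a priori allows the witnesses to differ from $xy$ and $xz$. However, since the terms $d(xy,t)$ and $d(xz,w)$ appear inside the same maximum, any near-optimal witnesses are forced to be close to $xy$ and $xz$, respectively, and the uniform contractivity of $+$ then propagates this closeness to control $d(x(y+z),t+w)$. This is exactly where completeness (guaranteeing the meet exists) and the contractivity of the join both enter, so no real difficulty remains once the definability of $\phi_m$ is in hand; the proof is essentially a matter of reading off the semantics of the sentence and invoking the already-proven facts.
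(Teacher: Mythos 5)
Your proposal is correct and follows essentially the same route as the paper: both directions reduce via Proposition \ref{prop:t-mml-language} to analyzing $\sigma_{dist}$, the easy direction plugs in $t=xy$, $w=xz$, and the converse extracts near-optimal witnesses, uses contractivity of the join to show $t+w$ approximates $xy+xz$, and concludes $x(y+z)=xy+xz$ by the triangle inequality (the paper phrases this with sequences $t_n,w_n$ rather than $\eps$-witnesses, which is an immaterial difference). The final appeal to the self-duality of the distributive law also matches the paper's closing step.
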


\begin{proof}
    It is clear that if $\cc M$ is a distributive metrically modular complete metric lattice, then it is a model of $T_{DML}$. Now let's suppose that $\cc M$ is a model of $T_{DML}$. By Proposition \ref{prop:t-mml-language} we know that $\cc M$ is a metrically modular complete metric lattice, so we just need to prove that $\cc M$ has the distributive property. 
    
    Let us fix $x,y,z\in \cc M$. Since $\cc M$ models $T_{DML}$, it satisfies $\sigma_{\text{dist}}$, so for every $n\in \bb N$ there are a $t_n$ and a $w_n$ such that 
    \[\max\{ d(xy,t_n), d(xz,w_n),d(x(y+z),t_n+w_n)\}<\frac{1}{n}.\]
    Therefore, $t_n\rightarrow xy$, $w_n\rightarrow xz$, and $t_n+w_n\rightarrow xy+xz$ as $n\rightarrow\infty$, where the first two limits are immediate and the last follows from these by uniform continuity of the join operation (Remark \ref{rmk:uniform-continuity}).
    Finally we observe that for every $n\in\bb N$ 
    \[d(x(y+z),xz+xz)\leq d(x(y+z),t_n+w_n)+d(t_n+w_n,xy+xz);\]
    thus, $d(x(y+z),xy+xz)=0$,
    or equivalently $x(y+z)=xy+xz$.
    This implies that $x+yz=(x+y)(x+z)$; thus, $\cc M$ is a distributive metrically modular complete metric lattice. 
\end{proof}

\begin{defn}
    For $\cc L$ a lattice, we say that $x\in\cc L$ is \emph{complemented} if there exist a $y\in\cc L$ such that $x+y=1$ and $xy=0$. We refer to $y$ as a complement of $x$. We say that $\cc L$ is a \emph{complemented lattice} if every element of $\cc L$ is complemented.
\end{defn}

\begin{lem}\label{lem:metric-complement}
    Let $\cc L$ be a metric lattice, and fix $x\in L$. If $d'(x,y)=1$, then $y$ is a complement of $x$. If $y\in L$ is metrically modular, then the converse is also true. 
\end{lem}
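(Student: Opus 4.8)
The plan is to reduce both directions to computations with the rank function $|\cdot|$ via the identity $d'(x,y) = 2|x+y| - |x| - |y|$ from Proposition \ref{prop:metric-rank}.3, and then to promote rank equalities to honest lattice identities using strict monotonicity of $|\cdot|$ along chains (Proposition \ref{prop:metric-norm}.1) together with the fact that $d$ is a genuine metric.

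For the forward implication, I would start from $d'(x,y) = 1$, that is $2|x+y| - |x| - |y| = 1$, which rearranges to $|x| + |y| = 2|x+y| - 1$. Feeding this into the submodular inequality $|x+y| + |xy| \leq |x| + |y|$ of Proposition \ref{prop:metric-rank}.4 gives $|xy| \leq |x+y| - 1$. Since $x + y \leq 1$ forces $|x+y| \leq |1| = 1$, the right-hand side is $\leq 0$; but $|xy| = d(xy,0) \geq 0$, so $|xy| = 0$ and simultaneously $|x+y| = 1$. Now $|xy| = 0$ forces $xy = 0$ because $d$ is a metric, and $|x+y| = 1 = |1|$ together with $x+y \leq 1$ and Proposition \ref{prop:metric-norm}.1 (which forbids $x+y < 1$) forces $x + y = 1$. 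Hence $y$ is a complement of $x$.

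For the converse, I assume $y$ is metrically modular and that $y$ is a complement of $x$, so $x + y = 1$ and $xy = 0$, whence $|x+y| = |1| = 1$ and $|xy| = |0| = 0$. Metric modularity of $y$ means the pair $(x,y)$ is a metrically modular pair in the sense of Definition \ref{defn:metric-mod-pair}, giving the \emph{equality} $|x+y| + |xy| = |x| + |y|$; substituting the two computed values yields $|x| + |y| = 1$. Plugging this into $d'(x,y) = 2|x+y| - |x| - |y| = 2 - 1 = 1$ concludes the argument.

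Each direction is essentially a single bookkeeping step, so there is no serious obstacle; the only place needing care — and the only place where "metric" rather than "pseudo-metric" and the strictness of the rank are genuinely used — is the passage from the rank equalities $|xy| = 0$ and $|x+y| = 1$ to the lattice identities $xy = 0$ and $x+y = 1$. It is worth flagging precisely where the metric-modularity hypothesis enters in the converse: without it, submodularity (Proposition \ref{prop:metric-rank}.4) only yields $|x+y| + |xy| \leq |x| + |y|$, so for a complement one obtains merely $|x| + |y| \geq 1$ and hence $d'(x,y) \leq 1$, and the value $d'(x,y) = 1$ can genuinely fail. Metric modularity is exactly what upgrades this inequality to the equality $|x| + |y| = 1$.
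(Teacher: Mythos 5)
Your proposal is correct and follows essentially the same route as the paper's proof: both directions reduce to the identity $d'(x,y)=2|x+y|-|x|-|y|$, with the forward implication obtained by combining this with submodularity (Proposition \ref{prop:metric-rank}.4) to force $|x+y|=1$ and $|xy|=0$, and the converse by substituting the complement relations into the metric-modularity equality. Your additional care in passing from the rank equalities to the lattice identities $xy=0$ and $x+y=1$ (via the metric axiom and Proposition \ref{prop:metric-norm}.1) makes explicit a step the paper leaves implicit, but it is the same argument.
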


\begin{proof}
    If $d'(x,y)=1$, then $1=d'(x,y)=2|x+y|-|x|-|y|\leq|x+y|-|xy|$; thus, $1+|xy|\leq|x+y|\leq 1$. We conclude that $|x+y|=1$ and $|xy|=0$,\; therefore, $x+y=1$ and $xy=0$.

    Since $y$ is metrically modular, then $d'(x,y)=|x+y|-|xy|$. Thus, if $y$ is a complement of $x, d'(x,y)=|x+y|-|xy|=1-0=1$. 
\end{proof}

\begin{remark}
    For the converse statement in the previous result, the assumption of metric modularity is necessary. 
    To see this, consider the finite partition lattice of a set with $4$ elements, $P_4$, and the partitions 
    \[x:=\{\{1,2\},\{3,4\}\}, \quad y:=\{\{1,3\},\{2,4\}\}.\]
    Since $x+y=1,xy=0, |x|,|y|=\frac{4-2}{3}=\frac{2}{3},$ and $|x+y|+|xy|=1<\frac{4}{3}=|x|+|y|$, we observe that $P_4$ is not metrically modular and $y$ is a complement of $x$. However, $d(x,y)=2|x+y|-|x|-|y|=2-\frac{2}{3}-\frac{2}{3}=\frac{2}{3}<1$.
\end{remark}

For the following result we need this lemma.

\begin{lem}\label{lem:distance-product-estimate}
    Let $\cc M$ be a distributive metrically modular complete metric lattice. If $(x_i)$ is a sequence in $\cc M$, then setting $z=\Pi_i x_i$ we have that
    $$d(y+z,1)\leq 2\sum_i d(y+x_i,1)$$
    and 
    $$d'(y+z,1)\leq \sum_i d'(y+x_i,1)$$
    if the sum converges.
\end{lem}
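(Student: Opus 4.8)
The plan is to establish the $d'$-inequality first and then read off the $d$-inequality from the comparison $d \le d' \le 2d$ of Proposition \ref{prop:metric-rank}.7. The starting observation is that $d'(w,1) = 2|w+1| - |w| - |1| = 1 - |w|$ for every $w$, since $w+1 = 1$ and $|1| = 1$; thus the claim $d'(y+z,1) \le \sum_i d'(y+x_i,1)$ is just the additive statement $1 - |y+z| \le \sum_i (1 - |y+x_i|)$.

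The key finite ingredient is a subadditivity estimate: for any $a,b\in\cc M$ one has $d'(ab,1) \le d'(a,1) + d'(b,1)$, equivalently $|a| + |b| \le 1 + |ab|$. This is immediate from metric modularity, which gives $|a+b| + |ab| = |a| + |b|$, together with $|a+b| \le |1| = 1$. Here the modularity hypothesis is essential: submodularity (Proposition \ref{prop:metric-rank}.4) only bounds $|ab|$ from above, whereas we need the matching lower bound that modularity supplies, so this is the one place where metric modularity genuinely enters.

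Next I would run an induction. Writing $z_n = \prod_{i=1}^n x_i$, I claim $d'(y+z_n,1) \le \sum_{i=1}^n d'(y+x_i,1)$. The case $n=1$ is trivial. For the inductive step, distributivity in the form $u + vw = (u+v)(u+w)$ (valid in a distributive lattice, as used in the proof of Proposition \ref{prop:t-dml-language}) gives $y + z_n = y + z_{n-1}x_n = (y+z_{n-1})(y+x_n)$; applying the subadditivity estimate with $a = y+z_{n-1}$ and $b = y+x_n$, followed by the induction hypothesis, closes the step.

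Finally, I would pass to the limit. The sequence $(z_n)$ is decreasing, hence Cauchy and convergent to $z = \prod_i x_i$, exactly as in the proofs of Propositions \ref{lem:sequence} and \ref{prop:metrically-modular-axiom}. Since $+$ is jointly continuous and $|\cdot| = d(\cdot,0)$ is continuous, $|y+z_n| \to |y+z|$, so letting $n\to\infty$ in the finite inequality and bounding the partial sums by the (assumed convergent) full sum yields $d'(y+z,1) \le \sum_i d'(y+x_i,1)$. The $d$-inequality then follows since $d(y+z,1) \le d'(y+z,1) \le \sum_i d'(y+x_i,1) \le 2\sum_i d(y+x_i,1)$ by Proposition \ref{prop:metric-rank}.7. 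I do not anticipate a serious obstacle: the only points needing care are the justification that $z_n \to z$ and the recognition that metric modularity, not mere submodularity, is what orients the subadditivity inequality in the required direction.
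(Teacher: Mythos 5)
Your proof is correct, and it shares the paper's overall skeleton --- the same induction on the partial meets $z_n=\prod_{i=1}^n x_i$, the same distributivity step $y+z_{n-1}x_n=(y+z_{n-1})(y+x_n)$, and the same passage to the limit along the decreasing Cauchy sequence $z_n\to z$ --- but the key one-step estimate is justified by a genuinely different route. The paper bounds $d(z_n+y,1)$ via the triangle inequality through $z_{n-1}+y$ and then invokes the meet-contraction Lemma \ref{lem:metric-mod-contraction}, namely $d\bigl((z_{n-1}+y)(x_n+y),\,z_{n-1}+y\bigr)\le 2\,d(x_n+y,1)$, running the argument once for $d$ (with the contraction constant $2$) and once for $d'$ (with constant $1$). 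You instead observe that $d'(w,1)=1-|w|$ and prove the subadditivity $|a|+|b|\le 1+|ab|$ directly from the modular identity $|a+b|+|ab|=|a|+|b|$ together with $|a+b|\le 1$; this bypasses Lemma \ref{lem:metric-mod-contraction} entirely and isolates exactly where metric modularity enters. You then recover the $d$-inequality from $d\le d'\le 2\,d$ (Proposition \ref{prop:metric-rank}.7) rather than proving it in parallel, so your factor of $2$ comes from the metric comparison while the paper's comes from the contraction constant of the meet. Both arguments are sound and of comparable length: yours is more self-contained and arguably more transparent about the role of modularity, while the paper's reuses a lemma that is needed elsewhere in the section anyway.
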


\begin{proof}
    Setting $z_n=\Pi_{i=1}^n x_i$, we have that $(z_n)$ is a decreasing sequence, so $(z_n)$ is Cauchy and $z_n\rightarrow z$ by Lemma \ref{lem:net-convergence}. Using Lemma \ref{lem:metric-mod-contraction} and the distributive property we have inductively that
    \begin{align*}
        d(z_n+y,1) &= d(z_{n-1}x_n+y,1) \\
        &= d((z_{n-1}+y)(x_n+y),1)\\
        &\leq d((z_{n-1}+y)(x_n+y),z_{n-1}+y)+d(z_{n-1}+y,1)\\
        &\leq d(z_{n-1}+y,1)+2d(x_n+y,1)\leq 2\sum_{i=1}^n d(x_i+y,1)\, ;
    \end{align*}
    hence, the result obtains in the limit by continuity. The argument works the same with $d'$ replacing $d$.
%
\end{proof}

\begin{defn}
    Let $\cc L$ be a metric lattice. We say that $x\in L$ is \emph{weakly complemented} if $\sup_y d'(x,y)=1$. We say that $\cc L$ is \emph{weakly complemented} if $\inf_x\sup_y d'(x,y)=1$.
\end{defn}

\begin{prop}\label{prop:weak-complement}
    Let $\cc M$ be a distributive, metrically modular complete metric lattice. We have that $x\in\cc M$ is weakly complemented if and only if $x$ is complemented, and that complement is unique.
\end{prop}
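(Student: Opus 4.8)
The plan is to prove the two equivalences separately and then read off uniqueness from distributivity. The easy direction — complemented implies weakly complemented — is essentially immediate from Lemma \ref{lem:metric-complement}: if $y$ is a complement of $x$, then since every element of $\cc M$ is metrically modular, that lemma gives $d'(x,y)=1$, while $d'(x,y')\le |x+y'|-|xy'|\le 1$ holds for every $y'$ by Proposition \ref{prop:metric-rank}.6, so $\sup_{y'} d'(x,y')=1$. Uniqueness is the standard distributive-lattice computation: if $y,y'$ are both complements of $x$, then $y=y\cdot 1=y(x+y')=yx+yy'=yy'$, and symmetrically $y'=yy'$, whence $y=y'$.

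For the substantive direction, suppose $x$ is weakly complemented and pick $y_n$ with $d'(x,y_n)\to 1$. Using $d'(x,y_n)\le |x+y_n|-|xy_n|\le 1$ again, this forces $|x+y_n|\to 1$ and $|xy_n|\to 0$, i.e. $d(x+y_n,1)\to 0$ and $d(xy_n,0)\to 0$ by Proposition \ref{prop:metric-rank}.2. Passing to a subsequence I may assume $d(x+y_n,1)<2^{-n}$ and $|xy_n|<2^{-n}$. The aim is then to manufacture from the sequence a single honest complement of $x$.

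The main obstacle is the competition between join and meet: enlarging a candidate pushes $x+y$ toward $1$ (good) but tends to enlarge $xy$ (bad), and shrinking does the reverse. Distributivity is exactly what reconciles these. For each $M$ I set $v^{(M)}:=\sum_{n\ge M} y_n$, the join of the tail, which exists by completeness (Proposition \ref{lem:sequence}). On the join side, $x+y_M\le x+v^{(M)}\le 1$, so Proposition \ref{prop:metric-rank}.2 gives $d(x+v^{(M)},1)\le d(x+y_M,1)<2^{-M}$. On the meet side, finite distributivity gives $x\cdot\sum_{n=M}^N y_n=\sum_{n=M}^N xy_n$, so Lemma \ref{lem:metric-sum} applied with second argument $0$ yields $|x\cdot\sum_{n=M}^N y_n|\le \sum_{n=M}^N |xy_n|\le 2^{-M+1}$; letting $N\to\infty$ and invoking continuity of the meet (Lemma \ref{lem:metric-mod-contraction}) and of the rank function produces $|xv^{(M)}|\le 2^{-M+1}$. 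The delicate point here is precisely this passage from the finite distributive identity to the infinite join: it is handled purely through the metric continuity of the meet, without appealing to any infinitary distributive law.

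Finally, $(v^{(M)})_M$ is decreasing, so $d(v^{(M)},v^{(M')})=|v^{(M)}|-|v^{(M')}|$ for $M\le M'$ by Proposition \ref{prop:metric-rank}.2, making it Cauchy (as in the proof of Proposition \ref{prop:metrically-modular-axiom}); by completeness it converges to some $y^*\in L$. Continuity of the join (Remark \ref{rmk:uniform-continuity}) gives $x+y^*=\lim_M(x+v^{(M)})=1$ since $d(x+v^{(M)},1)\to 0$, and continuity of the meet gives $xy^*=\lim_M xv^{(M)}=0$ since $|xv^{(M)}|\to 0$. Thus $y^*$ is a complement of $x$, so $x$ is complemented, which closes the equivalence.
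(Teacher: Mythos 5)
Your proof is correct, and it takes a genuinely different route from the paper's: the two constructions are dual to one another. The paper forms the \emph{meets} of tails of the approximating sequence, $t_k = \prod_{i\geq k} z_i$, obtaining an increasing sequence whose limit is the complement; there the meet side is controlled for free (since $t_k \leq z_k$ forces $|xt_k|\leq |xz_k|$), and all the work goes into the join side, which is handled by the bespoke Lemma \ref{lem:distance-product-estimate} (whose proof is where distributivity and Lemma \ref{lem:metric-mod-contraction} enter). You instead form the \emph{joins} of tails, $v^{(M)} = \sum_{n\geq M} y_n$, obtaining a decreasing sequence; now the join side is controlled for free (since $v^{(M)}\geq y_M$ gives $d(x+v^{(M)},1)\leq d(x+y_M,1)$), and the work shifts to the meet side, which you handle by finite distributivity, the subadditivity estimate of Lemma \ref{lem:metric-sum} with $y=0$, and uniform continuity of the meet (Lemma \ref{lem:metric-mod-contraction}) to pass from $v^{(M)}_N$ to $v^{(M)}$ — and you are right to flag that this last passage is the point where an illegitimate appeal to an infinitary distributive law must be avoided. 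A modest advantage of your version is economy: it reuses the general-purpose Lemma \ref{lem:metric-sum} already proved in Section \ref{sec:metric-lattices} and makes Lemma \ref{lem:distance-product-estimate} unnecessary, whereas the paper must prove that auxiliary estimate specifically to support this proposition. Both arguments use distributivity in an essential way, just on opposite sides of the complementation conditions.
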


\begin{proof}
    We know that complemented implies weakly complemented by Lemma \ref{lem:metric-complement}, so we only need to prove the converse.

    Since $x\in \cc M$ is weakly complemented, we can choose for every $i\in \bb N$ a $z_i\in \cc M$ such that $1-2^{-i}<d'(x,z_i)\leq 1$. Given that for every metric lattice the inequality $d'(x,y)\leq |x+y|-|xy|\leq 1$ holds, we can conclude that for every $i\in \bb N, 1-2^{-i}<|x+z_i|-|xz_i|$, and thus, 
    \begin{equation*}
        0\leq d'(1,x+z_i)=1-|x+z_i|<2^{-i}-|xz_i|=2^{-i}-d'(xz_i,0)\leq 2^{-i}.
    \end{equation*} 
    Therefore, for every $n\in\bb N$, we have that $d'(x+z_n,1),\, d'(xz_n,0)<2^{-n}$. Let's construct $t=\liminf z_i$.

    We define $t_k^N:=\Pi_{i=k}^N z_i$ and $t_k:=\Pi_{i=k}^\infty z_i$, noting that $t_k^N\rightarrow t_k$ as $N\rightarrow \infty$ by Lemma \ref{lem:net-convergence}. For $k<l$ and $M>N$ we have that $d'(t_k^Mt_l^N, t_k^M)=0$, so $\lim_{M\rightarrow\infty}d'(t_k^Mt_l^N,t_k^M)=d'(t_kt_l^N,t_k)=0$ when $k<l$ by Lemma \ref{lem:metric-mod-contraction}. Now, taking the limit as $N$ tends to $\infty$, we have that $d'(t_kt_l,t_k)=0$ when $k<l$, so $(t_k)$ is increasing, therefore Cauchy. Setting $t=\lim_k t_k$, from $t\geq t_k$ and Lemma \ref{lem:distance-product-estimate}, we observe that for every $k\in\bb N$ we have that 
    \begin{equation*}
        d'(x+t,1)\leq d'(x+t_k,1)\leq \sum_{i=k}^\infty d'(x+z_i,1)<\sum_{i=k}^\infty 2^{-i}.
    \end{equation*}
    Using that $t_k\leq z_k$ for every $k$ we get
    \[d'(xt,0)\leq d'(xt,xt_k)+d'(xt_k,0)\leq d'(t,t_k)+d'(xz_k,0).\]
    In either case the right side of the inequality tends to $0$ as $k$ tends to $\infty$, therefore $d(x+t,1)=d(xt,0)=0$. In other words $x+t=1$ and $xt=0$, which shows that $t$ is a complement of $x$. To prove uniqueness suppose $t'$ is another complement of $x$. Using the distributive property we have that
    \[t=1t=(x+t')t = xt+t't = t't = t't+xt' = t'(x+t) = 1t' =t',\]
    and the result obtains.
\end{proof}

\begin{remark}
    There are cases where complements \emph{do not} exist. For example, consider the lattice $L_n:=B_n\cup\{1^+\}$ where $B_n$ is the boolean algebra with $n$ atoms, with minimum $0$, maximum $1$ and $1^+$ is an additional element above $1$. By setting, $|\cdot|: L_n\to[0,1]$ with 
    \[
        |a|:=\frac{\#\text{ number of atoms below } a}{n+1}
    \]
    for all $a\in B_n$, and $|1^+|=1$, we have that $L_n$ is a distributive, metrically modular complete metric lattice where the only elements with complements are $0$ and $1^+$. Also note that in this example $d=d'$ and $\inf_x\sup_y d'(x,y)=\frac{n}{n-1}<1$.
\end{remark}


\begin{remark}\label{rem:submod-choquet}
    Consider the inequality 
    \[|x+y+z| + |x| + |y| + |z| \geq |x+y| + |x+z| + |y+z|,\] which we may rephrase as $2|x+y+z|\geq d'(x,z) + d'(y,z) + d'(x,y)$. It is easy to see that this implies that if both $x$ and $y$ complement $z$, then $x=y$. 
    
    Uniqueness of complements does not hold for metric lattices in general (see Lemma \ref{lem:partitions-have-selectors}), though by \cite{dilworth} every lattice is a sublattice of a uniquely complemented lattice. Consider the partition lattice $P_3$, which is not uniquely complemented. In this case the inequality also fails to hold. Consider $x_1:=\{\{1\},\{2,3\}\}, x_2:=\{\{2\},\{1,3\}\}$ and $x_3:=\{\{3\},\{1,2\}\}$, observe that $|x_1|=|x_2|=|x_3|=\frac{1}{2}$ and $|x_1+|x_2|=|x_1+x_3|=|x_2+x_3|=|x_1+x_2+x_3|=1$. Therefore, \[|x_1+x_2+x_3|+|x_1|+|x_2|+|x_3|=\frac{5}{2}<3=|x_1+x_2|+|x_2+x_3|+|x_3+x_1|.\]
    
\end{remark}

\begin{question}
      For each $S\subseteq [[n]]$, we write $x_S := \sum_{i\in S} x_i$, with $x_\emptyset := \prod_i x_i$. For which lattices does the ``inclusion/exclusion'' inequality hold that
    \[\sum_{S\in 2^{[[n]]}} (-1)^{|S|} |x_S|\geq 0?\]
    This condition for an $n$-tuple is witnessed by 
    \[
        \sup_{\bar x}\inf_z\left(\sum_i d(x_i+z,x_i) + \left(-|z| - \sum_{\emptyset\not=S\in 2^{[[n]]}} (-1)^{|S|} |x_S|\right)_+\right) =0,
    \]
    so defines an elementary class. This condition is formally dual, exchanging meets and joins, with Hrushovski's notion of ``flatness'' for pregeometries \cite{hrushovski}.
\end{question}

\subsection{Metric lattices and Boolean algebras}

    A lattice which is distributive and complemented is referred to as a \emph{Boolean algebra}. Thus, it follows from the last result that a distributive metrically modular metrically complemented complete metric lattice is a Boolean algebra.

\begin{notation}\label{not:complement}
    For a Boolean algebra $\cc A$ we will denote the (unique) complement of $x\in\cc A$ by $x^c$.
\end{notation}

\begin{lem}\label{lem:complement-isometry}
    Let $\cc M$ be a distributive, metrically modular complete metric lattice. For every $x,y\in\cc M$ we have that $d'(x^c,y^c) = d'(x,y)$.
\end{lem}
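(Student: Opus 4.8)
The plan is to reduce the claim to an identity about the rank function $|\cdot|$ alone, and then to track how $|\cdot|$ transforms under complementation. The lever is metric modularity, which lets us rewrite $d'$ on the pair of interest without invoking its meet or join: from Definition \ref{defn:metric-mod-pair} we have $|u+v| = |u| + |v| - |uv|$ for every pair in a metrically modular lattice, so
\[
d'(u,v) = 2|u+v| - |u| - |v| = |u| + |v| - 2|uv|
\]
for all $u,v\in \cc M$. Consequently it is enough to compute $|x^c|$, $|y^c|$, and $|x^c y^c|$ in terms of $|x|$, $|y|$, and $|x+y|$, and then substitute.

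First I would record that complementation sends rank $r$ to $1-r$: applying metric modularity to the complementary pair $(x,x^c)$, together with $|1| = d(1,0) = 1$ from Definition \ref{defn:metric-lattice}, gives $1 = |x + x^c| + |x\,x^c| = |x| + |x^c|$, hence $|x^c| = 1 - |x|$, and likewise $|y^c| = 1 - |y|$. Next I would invoke the De Morgan law that is available in the distributive (hence Boolean, by Remark \ref{rem:boolean-algebra}) setting: using distributivity one checks directly that $(x+y)(x^c y^c) = 0$ and $(x+y) + x^c y^c = 1$, so $x+y$ is complemented with $x^c y^c = (x+y)^c$. Applying the complement-rank identity to $x+y$ then yields $|x^c y^c| = |(x+y)^c| = 1 - |x+y|$.

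Substituting the three computations into the reduced formula for $d'$ gives
\[
d'(x^c, y^c) = |x^c| + |y^c| - 2|x^c y^c| = (1 - |x|) + (1 - |y|) - 2(1 - |x+y|) = 2|x+y| - |x| - |y| = d'(x,y),
\]
which is exactly the assertion, since the final expression is the definition of $d'(x,y)$. The only genuine obstacle is ensuring that the two structural facts are truly at our disposal: the complement-rank identity $|x^c| = 1-|x|$ is precisely where metric modularity is indispensable—it fails in a general metric lattice, as the $P_4$ example preceding the lemma illustrates—while the De Morgan identity $x^c y^c = (x+y)^c$ needs the distributive law to guarantee both that this element is a complement of $x+y$ and that $x+y$ is complemented at all. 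Once these are in place, the remaining manipulation is purely arithmetic.
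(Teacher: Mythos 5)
Your proof is correct and follows essentially the same route as the paper's: both establish the rank--complement identity $|x^c| = 1 - |x|$ from metric modularity applied to the pair $(x,x^c)$, invoke a De Morgan law supplied by distributivity, and then conclude by arithmetic on the modular expression for $d'$. The only cosmetic difference is that you work with $d'(u,v) = |u|+|v|-2|uv|$ and the single identity $(x+y)^c = x^c y^c$, whereas the paper works with $d'(u,v)=|u+v|-|uv|$ and both De Morgan laws; these manipulations are interchangeable.
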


\begin{proof}
    From the definition, we know that $\cc M$ is a Boolean algebra. For $x\in\c M$ we observe by metric modularity that $1=|x+x^c|-|xx^c|=|x|+|x^c|$. Therefore, by metric modularity and De Morgan's laws we get that
    \begin{align*}
        d'(x,y)&= |x+y|-|xy|= (1-|xy|)-(1-|x+y|)=\\
        &=|(xy)^c|-|(x+y)^c|=|x^c+y^c|-|x^cy^c|=d'(x^c,y^c). \qedhere
    \end{align*}
\end{proof}

Now let's consider the sentences
\begin{equation}
    \sg_{\text{wcom}} := \inf_x\sup_y (1-d(x,y))
\end{equation}
and
\begin{equation}
    \sigma_{d=d'}:=\sup_{x,y}|d(x,y)-d'(x,y)|
\end{equation}
and the theory 
\begin{equation}
    T_{BML}:=T_{DML}\cup\{\sigma_{\text{wcom}}, \sigma_{d=d'}\}.
\end{equation}

Next, we will prove that this theory and the following $L^{pr}$-theory $\mathsf{PR}$ found in \cite[Section 4]{Mtfps} are equivalent.

\begin{defn}
    The continuous signature $L^{pr}$ is defined as follows.
    \begin{itemize}
        \item $\mu$ as an unitary predicate symbol with modulus of uniform continuity $\Delta_\mu(\varepsilon)=\varepsilon$.
        \item $\cap$ and $\cup$ as binary operations with modulus of uniform continuity $\Delta_\cap(\varepsilon)=\Delta_\cup(\varepsilon) =\frac{\varepsilon}{2}$. $\cdot^c$ as an unitary operation symbol with modulus of uniform continuity $\Delta_c(\varepsilon)=\varepsilon$.
        \item $1$ and $0$ as constant symbols.
    \end{itemize}
    The $L^{pr}$-theory is formed by the following $L^{pr}$-conditions:
    \begin{itemize}
        \item Boolean algebra axioms: As stated in \cite[Section 4]{Mtfps}, each one of these is the $\forall$-closure of an equation between terms, so it can be expressed in our setting as a condition. Example: From the axiom $\forall x,y (x\cap y= y\cap x)$ we get the condition $\sup_{x,y}d(x\cap y,y\cap x)$.
        \item Measure axioms:
        \begin{itemize}
            \item $\mu(0)=0$ and $\mu(1)=1$
            \item $\sup_{x,y}(\mu(x\cap y)\dminus \mu(x))$
            \item $\sup_{x,y}(\mu(x)\dminus\mu(x\cup y))$
            \item $\sup_{x,y}|(\mu(x)\dminus\mu(x\cap y))-(\mu(x\cup y)\dminus\mu(y))|$           
        \end{itemize}
        \item Identification of $d$ and $\mu$: $\sup_{x,y}|d(x,y)-\mu(x\Delta y)|$
    \end{itemize}
\end{defn}
We are considering $\ff L\subset L^{pr}$ where the constant symbols are the same and $+$ corresponds to the $\cup$ symbol.

\begin{prop}\label{prop:boolean-elementary}
    If $\cc M$ is a $\ff L$-structure, then $\cc M$ is a model of $T_{BML}$ if and only if $M$ (viewed as a $L^{pr}$-structure) is a model of $\mathsf{PR}$.
\end{prop}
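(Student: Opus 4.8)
The plan is to show that the two conditions describe literally the same structures, with the rank function $|x|:=d(x,0)$ of the metric lattice playing the role of the measure $\mu$, under the dictionary $\cup\leftrightarrow +$, $\cap\leftrightarrow\cdot$, $\mu\leftrightarrow|\cdot|$. I would treat the two implications separately and reduce everything to the single identity $d=d'=\mu(\,\cdot\,\Delta\,\cdot\,)$, which will follow from metric modularity together with $\sigma_{d=d'}$.

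\emph{From $T_{BML}$ to $\mathsf{PR}$.} Suppose $\cc M\models T_{BML}$. By Proposition \ref{prop:t-dml-language}, $\cc M$ is a distributive metrically modular complete metric lattice; the sentence $\sigma_{wcom}$ makes it weakly complemented, so by Proposition \ref{prop:weak-complement} it is complemented with unique complements, hence a Boolean algebra by Remark \ref{rem:boolean-algebra}. I would then expand $\cc M$ to an $L^{pr}$-structure by interpreting $\cup:=+$, $\cap$ as the lattice meet (which exists by Corollary \ref{prop:metric-to-lattice-complete} and is uniformly continuous by Lemma \ref{lem:metric-mod-contraction} combined with $\sigma_{d=d'}$), $x^c$ as the unique complement (an isometry by Lemma \ref{lem:complement-isometry} and $\sigma_{d=d'}$), and $\mu(x):=|x|$, which is $1$-Lipschitz by Proposition \ref{prop:metric-rank}.1; one checks these interpretations have exactly the continuity moduli prescribed in $L^{pr}$. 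The Boolean algebra axioms then hold automatically, and for the measure axioms: $\mu(0)=0$ and $\mu(1)=d(1,0)=1$; the monotonicity conditions $\mu(x\cap y)\le\mu(x)\le\mu(x\cup y)$ come from $xy\le x\le x+y$ and Proposition \ref{prop:metric-norm}.1; and the modularity axiom $\mu(x)\dminus\mu(x\cap y)=\mu(x\cup y)\dminus\mu(y)$ is precisely metric modularity $|x|+|y|=|x+y|+|xy|$. Finally, using $\sigma_{d=d'}$ and Proposition \ref{prop:metric-rank}.3 I compute $d(x,y)=d'(x,y)=2|x+y|-|x|-|y|=|x|+|y|-2|xy|=\mu(x\Delta y)$, the last equality again via metric modularity and the complement identities, which gives the final $\mathsf{PR}$ axiom identifying $d$ with $\mu(x\Delta y)$.

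\emph{From $\mathsf{PR}$ to $T_{BML}$.} Conversely, let $M\models\mathsf{PR}$, whose underlying metric space is complete and for which $\mu$ is faithful (since $d(x,y)=\mu(x\Delta y)$ is a genuine metric). Writing $|x|:=\mu(x)$, measure modularity yields $|x+y|=|x|+|y|-|xy|$, so $d(x,y)=\mu(x\Delta y)=2|x+y|-|x|-|y|=d'(x,y)$, establishing $\sigma_{d=d'}$. Since $\mu$ is a faithful, hence strictly increasing, function with $\mu(0)=0$, $\mu(1)=1$ satisfying the exchange relation (which is submodularity/strong subadditivity of a measure: for $z\le x$ one has $\mu(y\setminus x)\le\mu(y\setminus z)$), Proposition \ref{prop:exchange-relation} shows $d=d_\mu$ is a semilattice metric; being complete it is a complete metric semilattice, hence a complete metric lattice by Proposition \ref{prop:complete-is-lattice}, so $M\models T_{ML}$. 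Metric modularity gives $\sigma_{mod}$; distributivity of the Boolean algebra gives $\sigma_{dist}$; and weak complementedness holds because $d(x,x^c)=\mu(x\Delta x^c)=\mu(1)=1$ forces $\sup_y d(x,y)=1$ for every $x$, so $\inf_x\sup_y d(x,y)=1$. Hence $M\models T_{BML}$.

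\emph{Main difficulty.} The delicate point is the consistent collapse of the three a priori distinct distances, namely the abstract metric $d$, the join-rank distance $d'=2|x+y|-|x|-|y|$, and the symmetric-difference measure $\mu(x\Delta y)$, all onto $|x|+|y|-2|xy|$; this requires both metric modularity and $\sigma_{d=d'}$ to be available simultaneously, since without metric modularity the symmetric difference is not recoverable from the join and rank alone. A secondary check is that the lattice meet reconstructed from the complete semilattice structure via Proposition \ref{prop:complete-is-lattice} coincides with the Boolean $\cap$, which holds because both are the greatest lower bound for the common order, so the definable meet appearing implicitly in $\sigma_{mod}$ and $\sigma_{dist}$ is interpreted correctly on each side.
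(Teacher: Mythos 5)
Your proposal is correct, and in the forward direction ($T_{BML}\Rightarrow\mathsf{PR}$) it coincides with the paper's proof almost verbatim: both obtain the Boolean algebra structure from Propositions \ref{prop:t-dml-language}, \ref{prop:weak-complement} and Remark \ref{rem:boolean-algebra}, both use Proposition \ref{prop:metric-rank}.1, Lemma \ref{lem:metric-mod-contraction}, and Lemma \ref{lem:complement-isometry} together with $\sigma_{d=d'}$ to check the $L^{pr}$ continuity moduli, and both reduce the identification axiom to $d=d'=|x+y|-|xy|=\mu(x\Delta y)$ via metric modularity. The one genuine difference is in the converse: the paper verifies the seven axioms of $T_{ML}$ directly, handling axiom (6) through the Boolean inequality $(x+z)\Delta(y+z)\leq x\Delta y$ and axiom (7) through measure modularity plus Proposition \ref{prop:semimodular-upgrade}, whereas you route everything through Proposition \ref{prop:exchange-relation}: since $\mu$ is strictly increasing (faithfulness follows from $d(x,y)=\mu(x\Delta y)$ being a genuine metric plus additivity) and satisfies the exchange relation, $d=d_\mu$ is a semilattice metric, and completeness plus Proposition \ref{prop:complete-is-lattice} then yields $T_{ML}$ in one stroke. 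Your packaging is arguably cleaner, as it replaces two ad hoc axiom verifications with a single citation of already-proved machinery, and your direct computation $d(x,x^c)=\mu(1)=1$ for weak complementedness is more elementary than the paper's appeal to Proposition \ref{prop:weak-complement}; the paper's version, on the other hand, makes explicit exactly which $\mathsf{PR}$ axioms feed into which $T_{ML}$ axioms, which is useful bookkeeping but mathematically equivalent.
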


\begin{proof}
    Let's suppose that $\cc M\models T_{BML}$. First let's check that $\cc M$ can be seen as a $L^{pr}$-structure. It is clear that $\cc M$ has a single sort, diameter $1$, a couple of distinguished elements $0,1$ and a function $+$ with modulus of uniform continuity $\Delta(\varepsilon)=\varepsilon/2$, which is the interpretation of $\cup$. We need to find interpretations for $\cap,\cdot ^c$ and the measure $\mu$. Using Proposition \ref{prop:metric-rank}.1, Lemma \ref{lem:metric-mod-contraction}, and Lemma \ref{lem:complement-isometry} it is easy to check that $|\cdot|$, $\cdot$, and $\cdot ^c$ each have the modulus of continuity necessary to take them as interpretations of $\mu$, $\cap$, and $\cdot^c$ respectively. Thus, we can view $\cc M$ as a $L^{pr}$ structure.

    We now need to check that $\cc M$ interpreted as an $L^{pr}$ structure satisfies the axioms of $\mathsf{PR}$. Since $\cc M$ is a Boolean algebra, it is clear that it satisfies the axioms about Boolean algebras, it also satisfies the axiom about the connection between $\mu$ and $d$ because by $\sigma_{d'}$ we get that for every $x,y\in\cc M$
        \[d(x,y)=d'(x,y)=|x+y|-|xy|=|x\Delta y|,\]
        where $x\Delta y:= xy^c+x^cy$
        
        For the measure axioms, we can check that $|0|=0$ and $|1|=1$. By Proposition 2.4, the sentence $\sigma_{\text{mod}}$ and the behavior defined for $+$ and $\cdot^c$ it follows that $\cc M$ satisfies the axioms of measure. Therefore, it is a model of $\mathsf{PR}$.\\

        Now let's suppose that $\cc M$ is a $L^{pr}$-structure. It is clear that we can view $\cc M$ as a $\ff L$ structure, when we forget about the interpretations of $\cap,\cdot^c$ and $\mu$. (Note, however, that from the axiom $(3)$ of $\mathsf{PR}$ it follows that $\mu=|\cdot|$.) Now let's check that it satisfies the axioms of $T_{BML}$. We proceed sequentially.
        \begin{itemize}
            \item [$T_{ML}$)] The first five axioms follow easily from the axioms of $\mathsf{PR}$, the sixth axiom follows from the inequality $(x+z)\Delta(y+z)\leq x\Delta y$ in Boolean algebras plus the measure and metric axioms of $\mathsf{PR}$ theory. The last axiom follows from the modularity expressed by the measure axioms and Proposition \ref{prop:semimodular-upgrade}. Hence $\cc M$ satisfies $T_{ML}$.
            \item [$T_{MML})$] From the measure axioms it is clear that $\cc M$ is metrically modular; thus, by Proposition \ref{prop:t-mml-language} it models $T_{MML}$
            \item [$T_{DML}$)] This follows from Proposition \ref{prop:t-dml-language} and the Boolean algebra axioms.
            \item [$T_{BML}$)] This follows from metric modularity, the Boolean algebra axioms, and Proposition \ref{prop:weak-complement}. \qedhere

        \end{itemize}

\end{proof}

\begin{prop}\label{Prop:clousure of modularity}
    Let $\cc M$ be a complete metric lattice. If $\cc N\subseteq \cc M$ is a sublattice which is either: (1) metrically modular, distributive, and metrically complemented; or (2) Boolean, then $\overline{\cc N}$ is a sublattice with the same properties.
\end{prop}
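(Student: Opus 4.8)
The plan is to show that each defining property survives passage to the closure, the crucial point being that metric modularity makes the meet operation uniformly continuous on $\cc N$, so that $\overline{\cc N}$ is closed under meet as well as join. First I reduce case (2) to case (1): in case (2), $\cc N$ being Boolean means it is distributive, metrically modular, and complemented, and metric modularity then gives $d'(x,x^c)=|x+x^c|-|xx^c|=1$ for every $x$, so $\cc N$ is metrically complemented. Thus in both cases I may assume $\cc N$ is a metrically modular, distributive, metrically complemented sublattice, and it suffices to prove $\overline{\cc N}$ has these three properties: being a closed subset of the complete space $\cc M$ it is then complete, and by Remark \ref{rem:boolean-algebra} it is automatically a Boolean algebra, which yields the conclusion in case (2) as well. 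Closure of $\overline{\cc N}$ under $+$ is immediate from the joint uniform continuity of the join (Remark \ref{rmk:uniform-continuity}). For the meet, since every pair of elements of $\cc N$ is metrically modular, Lemma \ref{lem:metric-mod-contraction} and the estimate in the proof of Proposition \ref{prop:meet-uc} give $d(xy,x'y')\leq 2\,(d(x,x')+d(y,y'))$ for all $x,y,x',y'\in\cc N$, so the meet is $\ell^1$-Lipschitz, hence uniformly continuous, on $\cc N$.

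The heart of the argument is to identify the continuous extension of this meet with the genuine meet inherited from $\cc M$. Fix $x,y\in\overline{\cc N}$ and choose $x_n,y_n\in\cc N$ with $x_n\to x$ and $y_n\to y$. By the Lipschitz estimate $(x_ny_n)$ is Cauchy, so $x_ny_n\to w$ for some $w\in\overline{\cc N}$. Passing to the limit in $x_ny_n+x_n=x_n$ using continuity of $+$ gives $w+x=x$, and likewise $w+y=y$, so $w$ is a common lower bound of $x,y$; in particular $w\leq xy$, where $xy$ denotes the meet computed in $\cc M$ (which exists by Proposition \ref{prop:complete-is-lattice}). Passing to the limit in the metric modularity relation $|x_n+y_n|+|x_ny_n|=|x_n|+|y_n|$, using continuity of $|\cdot|$ and of $+$, yields $|x+y|+|w|=|x|+|y|$. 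On the other hand submodularity (Proposition \ref{prop:metric-rank}.4) gives $|x+y|+|xy|\leq|x|+|y|=|x+y|+|w|$, so $|xy|\leq|w|$; combined with $w\leq xy$ and the strict monotonicity of $|\cdot|$ (Proposition \ref{prop:metric-norm}.1) this forces $w=xy$. Hence $xy\in\overline{\cc N}$ and $|x+y|+|xy|=|x|+|y|$, so $\overline{\cc N}$ is closed under meet and every pair in $\overline{\cc N}$ is metrically modular. As a closed subset of $\cc M$ containing $0,1$ and closed under both operations, $\overline{\cc N}$ is a complete metric lattice that is metrically modular. I expect this identification of $w$ with $xy$ to be the main obstacle, since it is the only place where the lattice-theoretic meet and the purely analytic limit must be reconciled.

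It remains to transfer distributivity and metric complementedness. Since $\overline{\cc N}$ is metrically modular, its meet is again $2$-Lipschitz by Lemma \ref{lem:metric-mod-contraction}, and by the previous paragraph it agrees with the limit of the meets of approximants. Thus for $x,y,z\in\overline{\cc N}$ with approximating sequences in $\cc N$, the distributive identity $x_n(y_n+z_n)=x_ny_n+x_nz_n$ passes to the limit by continuity of $+$ and $\cdot$, giving $x(y+z)=xy+xz$. For complementation, fix $x\in\overline{\cc N}$ and $\eps>0$, choose $x_n\in\cc N$ with $d'(x,x_n)<\eps/2$, and use weak complementedness of $x_n$ to find $y\in\cc N\subseteq\overline{\cc N}$ with $d'(x_n,y)>1-\eps/2$; since $d'$ is a metric (Proposition \ref{prop:metric-rank}.6), $d'(x,y)\geq d'(x_n,y)-d'(x,x_n)>1-\eps$, and as $d'\leq 1$ we conclude $\sup_{y\in\overline{\cc N}}d'(x,y)=1$, so $\overline{\cc N}$ is metrically complemented. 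This establishes properties (1) for $\overline{\cc N}$; by Proposition \ref{prop:weak-complement} its weak complements are genuine complements, and by Remark \ref{rem:boolean-algebra} it is a Boolean algebra, completing both cases.
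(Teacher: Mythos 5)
Your proof is correct, and its central step takes a genuinely different route from the paper's. The paper proceeds model-theoretically: it first shows $\overline{\cc N}$ is metrically modular by observing that the formula $\vp$ of \eqref{eq:modular-pair} is uniformly continuous and vanishes on $\cc N\times\cc N$, hence on $\overline{\cc N}\times\overline{\cc N}$ (using Proposition \ref{prop:metrically-modular-axiom} to translate $\vp=0$ back into metric modularity), and only then invokes Proposition \ref{prop:meet-uc} to get uniform continuity of the meet on $\overline{\cc N}$ and conclude $x_ny_n\to xy\in\overline{\cc N}$. You instead obtain closure under meet and metric modularity of $\overline{\cc N}$ in one stroke by a direct rank argument: the Lipschitz estimate on $\cc N$ makes $(x_ny_n)$ Cauchy with limit $w$, you get $w\leq xy$ by passing $x_ny_n+x_n=x_n$ to the limit, and you identify $w=xy$ by combining the limiting rank identity $|x+y|+|w|=|x|+|y|$ with submodularity (Proposition \ref{prop:metric-rank}.4) and strict monotonicity of the rank (Proposition \ref{prop:metric-norm}.1). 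This avoids the formula machinery and Proposition \ref{prop:metrically-modular-axiom} entirely, is more self-contained, and delivers metric modularity of the closure as a byproduct; the paper's route, by contrast, is a ready-made template for transferring any property expressed by a uniformly continuous formula to closures. Your remaining steps (distributivity by passing the identity to the limit, metric complementedness via a $d'$-triangle-inequality perturbation, and the Boolean conclusion via Proposition \ref{prop:weak-complement} and Remark \ref{rem:boolean-algebra}) match the paper's, with your treatment of complementation slightly more explicit. One caveat, which you share with the paper rather than introduce: in case (2) you read ``Boolean'' as including metric modularity of $\cc N$; a sublattice that is Boolean in the purely order-theoretic sense need not be metrically modular (e.g.\ $\{0,x,y,1\}\subset P_4$ with $x=\{\{1,2\},\{3,4\}\}$, $y=\{\{1,3\},\{2,4\}\}$), so this reading is genuinely needed --- but the paper's own proof makes the same implicit assumption, so this is not a gap relative to it.
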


\begin{proof}
    First, let's prove that every $\overline{\cc N}$ is metrically modular. Given $x,y\in\overline{\cc N}$, we know by uniform continuity of the $\ff L$-formula $\vp$ defined in equation (\ref{eq:modular-pair}), that for every $\ve>0$ there is a $\delta>0$ such that if $d(x,x'),d(y,y')<\delta$, then $|\vp(x,y)-\vp(x',y')|<\ve$. Since $x,y\in \overline{\cc N}$ and $\cc N$ is metrically modular, for each $\ve>0$ there are some $x_\ve,y_\ve\in\cc N$ such that $d(x,x_\ve),d(y,y_\ve)<\delta$ and $\vp(x_\ve,y_\ve)=0$. It follows from this that $|\vp(x,y)-\vp(x_\ve,y_\ve)|=|\vp(x,y)|<\ve$ for every $\ve>0$, so we conclude that $\vp(x,y)=0$ for any $x,y\in\overline{\cc N}$, as desired.

    Now observe that for any $x,y\in \overline{\cc N}$, there are Cauchy sequences $(x_n),(y_n)$ such that $x_n\to x$ and  $y_n\to y$. From the uniform continuity of the join operation on $\cc M$ it follows that $x_n+y_n\to x+y$ on $M$, in addition $x_n+y_n\in \cc N$ for each $n$ since $\cc N$ is a sublattice. So we can conclude that $x+y\in\overline{\cc N}$. Though the meet operation isn't necessarily uniformly continuous, by \ref{prop:meet-uc} we know that it is uniformly continuous on metrically modular sets. Since $\overline{\cc N}$ is metrically modular and $x,y,x_n,y_n\in\overline{\cc N}$, we have that $x_ny_n\to xy$ and $x_ny_n\in \cc N$ for each $n$, concluding that $xy\in\overline{\cc N}$. So $\overline{\cc N}$ is a sublattice.

    Finally, let's prove that it is distributive and metrically complemented. Taking $x,y,z\in\overline{\cc N}$ and sequences $(x_n),(y_n),(z_n)$ on $\cc N$ such that $x_n\to x, y_n\to y$ and $z_n\to z$ it is clear by the uniform continuity of the join and meet operations that $x_n(y_n+z_n)\to x(y+z)$ and $z_ny_n+x_nz_n\to xy+xz$. However, as $\cc N$ is boolean and $x_n,y_n,z_n\in\cc N$ for each $n$, so it follows that $x_n(y_n+z_n)=x_ny_n+x_nz_n$ and we conclude that $x(y+z)=xy+xz$ for any $x,y,z\in\overline{\cc N}$, or in other words that $\overline{\cc N}$ is distributive. On the other hand, we know that $\sup_y d(x,y)=1$ for every $x\in\cc N$, as that expression is uniformly continuous, we get that $\sup_y d(x,y)=1$ for every $x\in\overline{\cc N}$. We conclude that $\overline{\cc N}$ is also metrically complemented, and therefore the proposition is proved.  \qedhere

\end{proof}


\section{Partition Lattices}\label{sec:partition}
\subsection{Finite partition lattices}
We begin by fixing some notation and terminology which will be crucial for this section.

\begin{defn}
    Let $P_n$ denote the lattice of partitions of the set $[[n]] := \{1,\dotsc,n\}$ with the following order: Given $x,y$ partitions of $[[n]]$, $x\leq y$ if and only if for all $A\in x$ there is a $B\in y$ such that $A\subset B$. Given a partition $x\in P_n$ we define
    \begin{enumerate}
        \item $\langle x\rangle$ as the number of blocks of $x$ that are singletons,
        
        \item $[x]$ as the number of blocks of $x$ that have at least two elements,

        \item $\#x = \langle x\rangle + [x]$ as the number of blocks of $x$,
        
        \item $|x|=\frac{n-\#x}{n-1}$ as the norm of $P_n$, and

        \item for $S\subseteq [[n]]$, $i(x,S)$ as the number of blocks of $x$ which have non-empty intersection with $S$. (Note $i(x,[[n]]) = \#x$.)
    \end{enumerate}
        
    \noindent If $[x]\leq 1$, then $x$ is called a \emph{singular partition}. 
    Every non-zero singular partition is defined by its unique block with more than two elements, which we will call the \emph{basic block} of $x$, in which case we will define $B_x\subseteq [[n]]$ to be the basic block of $x$. By convention $B_0 = \{1\}$ is the basic block of $0$. We will denote the set of singular partitions in $P_n$ by $\Sg_n$.
\end{defn}

Recall that $P_n$ can be viewed as a metric lattice when equipped with the metric
\[d(x,y) := \frac{\#x + \#y - 2\#(x+y)}{n-1}=d'(x,y).\]
We will implicitly assume that $P_n$ is equipped with this metric.

The following formulas are readily apparent.
\begin{lem}\label{lem:singular-partition-lemma}
    For $x,y\in P_n$ with $y$ singular, we have that
    \begin{equation*}
        \#(x+y)=\#x-i(x,B_y)+1
    \end{equation*}
    and 
    \begin{equation*}
        \#xy = \#y +i(x,B_y) -1.
    \end{equation*}
\end{lem}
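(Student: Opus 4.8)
The plan is to compute both quantities directly from the combinatorial descriptions of the join and the meet in $P_n$, exploiting the special shape of a singular partition. First I would record that a singular partition $y$ consists, apart from its basic block $B_y$, entirely of singletons: that is, $y = \{B_y\}\cup\{\{j\}: j\in [[n]]\setminus B_y\}$ when $y\neq 0$, while the convention $B_0=\{1\}$ folds the remaining case $y=0$ (all singletons) into the same bookkeeping. In particular $\#y = 1 + (n-|B_y|)$, so that $\#y - 1 = n - |B_y|$; this identity is what matches the count below to the second formula.

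For the join, I would use that in the partition lattice $x+y$ is the partition whose blocks are the connected components of the graph on $[[n]]$ in which two points are adjacent precisely when they lie in a common block of $x$ or of $y$. Since the only block of $y$ with more than one element is $B_y$, the only merging forced by $y$ is among the blocks of $x$ that meet $B_y$: any two such blocks $A_i, A_j$ are linked through a pair of points of $B_y$, so all $i(x,B_y)$ of them coalesce into a single block of $x+y$ (which contains all of $B_y$, as every element of $B_y$ lies in some block of $x$ meeting $B_y$). The remaining $\#x - i(x,B_y)$ blocks of $x$ are left untouched. Counting gives $\#(x+y) = (\#x - i(x,B_y)) + 1$, as claimed.

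For the meet, I would use that $xy$ is the common refinement, whose blocks are exactly the nonempty intersections $A\cap C$ with $A\in x$ and $C\in y$. Each singleton $\{j\}$ of $y$ (with $j\notin B_y$) meets exactly one block of $x$ and contributes the single block $\{j\}$, yielding $n-|B_y|$ blocks; the basic block $B_y$ contributes one block $A\cap B_y$ for each of the $i(x,B_y)$ blocks of $x$ that meet it, and no others. Hence $\#xy = (n-|B_y|) + i(x,B_y) = \#y + i(x,B_y) - 1$ by the identity recorded above.

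Since each step is an elementary count, there is no serious obstacle; the only points requiring care are (i) fixing the correct orientation of the refinement order, so that the join is the component/coarsening construction and the meet is the common refinement, and (ii) checking that the stated conventions, in particular $B_0=\{1\}$ and the all-singletons partition $y=0$, are consistent with both formulas, which a quick verification confirms since $\#(x+0)=\#x$ and $\#(x\cdot 0)=n$ as they must. As a further sanity check one may observe that adding the two formulas gives $\#(x+y)+\#xy = \#x + \#y$, reflecting the fact that a singular partition forms a (metrically) modular pair with every element.
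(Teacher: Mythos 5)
Your proof is correct: the paper offers no argument at all for this lemma (it is introduced with ``The following formulas are readily apparent''), and your direct count of merged blocks for the join and of nonempty intersections for the meet is precisely the verification the authors left implicit. Your checks of the order convention and of the $y=0$ convention $B_0=\{1\}$, together with the closing observation that the two formulas sum to $\#(x+y)+\#xy=\#x+\#y$ (the modularity used later in Corollary \ref{cor:metrically-modular-equals-singular}), are all consistent with how the lemma is used in the paper.
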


\begin{lem}\label{lem:distance-to-singular}
    For any partition $x\in P_n$ we have that
    \[d(x,\Sg_n) := \inf_{y\in \Sg_n} d(x,y) = \frac{[x]-1}{n-1}.\]
\end{lem}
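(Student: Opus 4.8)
The plan is to prove the two inequalities $d(x,\Sg_n) \le \tfrac{[x]-1}{n-1}$ and $d(x,\Sg_n) \ge \tfrac{[x]-1}{n-1}$ separately. For the upper bound I would exhibit an explicit nearby singular partition: let $y^\ast$ be obtained from $x$ by fusing all of its blocks of size $\ge 2$ into a single block while keeping the singletons of $x$ untouched. Then $y^\ast$ is coarser than $x$, so $x \le y^\ast$, and it has at most one non-singleton block, hence $y^\ast \in \Sg_n$. Because $x \le y^\ast$, Proposition \ref{prop:metric-rank}.2 gives $d(x,y^\ast) = |y^\ast| - |x| = \tfrac{\#x - \#y^\ast}{n-1}$; since $\#x = \langle x\rangle + [x]$ and $\#y^\ast = \langle x\rangle + 1$, this equals exactly $\tfrac{[x]-1}{n-1}$, establishing the upper bound. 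The cases $[x] \le 1$ are immediate, as then $x$ is already singular.

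For the lower bound I would take an arbitrary $y \in \Sg_n$ and show $d(x,y) \ge \tfrac{[x]-1}{n-1}$. The degenerate case $y = 0$ is handled directly: $d(x,0) = |x| = \tfrac{n-\#x}{n-1}$, and since every big block of $x$ has size $\ge 2$ one has $\langle x\rangle + 2[x] \le n$, whence $\#x \le n - [x]$ and $|x| \ge \tfrac{[x]}{n-1} > \tfrac{[x]-1}{n-1}$. For nonzero singular $y$ with basic block $B := B_y$, I would feed the two identities of Lemma \ref{lem:singular-partition-lemma}, namely $\#(x+y) = \#x - i(x,B) + 1$ and $\#y = n - |B| + 1$, into $d(x,y) = \tfrac{\#x + \#y - 2\#(x+y)}{n-1}$ to obtain
\[
d(x,y) = \frac{\bigl(n - |B| - \#x + 2\,i(x,B)\bigr) - 1}{n-1}.
\]
Thus the desired inequality reduces to the purely combinatorial claim $n - |B| - \#x + 2\,i(x,B) \ge [x]$.

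The crux, and the step I expect to require the most care, is this combinatorial inequality; my plan is to prove it blockwise. Writing its left-hand side as $\sum_C \bigl(|C| - |C\cap B| - 1 + 2\chi(C)\bigr)$, where the sum ranges over the blocks $C$ of $x$ and $\chi(C) = 1$ when $C \cap B \ne \emptyset$ and $0$ otherwise (this uses $\sum_C |C| = n$, $\sum_C |C\cap B| = |B|$, $\sum_C 1 = \#x$, and $\sum_C \chi(C) = i(x,B)$), I would check that each summand is nonnegative and that every block of size $\ge 2$ contributes at least $1$: when $C \cap B = \emptyset$ the summand is $|C|-1$, while when $C \cap B \ne \emptyset$ it is $|C\setminus B| + 1$, so it is $\ge 1$ for any big block and lies in $\{0,1\}$ for a singleton. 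Summing over the $[x]$ big blocks then yields $\sum_C(\cdots) \ge [x]$. Combining the two bounds gives the stated equality; moreover, tracing the construction of $y^\ast$ through this estimate (for which no singleton of $x$ meets $B$ and all $[x]$ big blocks lie inside $B$) shows the lower bound is attained there, so the extremal partition is recovered without a separate sharpness argument.
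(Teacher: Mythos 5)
Your proof is correct, and it splits into two halves of unequal novelty. The upper bound is exactly the paper's: both you and the authors take the singular partition $\pi_x$ whose basic block is the union of all non-singleton blocks of $x$ and compute $d(x,\pi_x)=\tfrac{[x]-1}{n-1}$ from Proposition \ref{prop:metric-rank}.2. The lower bound, however, is where you genuinely diverge. The paper proves minimality of $\pi_x$ by a comparison/exchange argument: for an arbitrary singular $y$ it sets $k=|B_{\pi_x}\setminus B_y|$, $l=|B_y\setminus B_{\pi_x}|$, argues that deleting the $k$ elements can lower $i(x,\cdot)$ by at most $\lfloor k/2\rfloor$ while adding the $l$ elements raises it by exactly $l$ (each such element lying in a singleton block of $x$), and then verifies $d(x,y)\geq d(x,\pi_x)$ after some floor-function bookkeeping. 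You instead prove the absolute inequality $d(x,y)\geq\tfrac{[x]-1}{n-1}$ directly, by rewriting $n-|B|-\#x+2\,i(x,B)$ as the blockwise sum $\sum_C\bigl(|C|-|C\cap B|-1+2\chi(C)\bigr)$ and checking each summand is nonnegative and at least $1$ on big blocks. This avoids any reference to the extremizer in the lower-bound step, eliminates the $\lfloor k/2\rfloor$ counting (which in the paper requires the slightly delicate observation about how blocks of size two split), and as a bonus identifies the equality case transparently. The one blemish, shared equally by the paper, is the degenerate case $[x]=0$ (i.e.\ $x=0$), where the stated formula gives $-\tfrac{1}{n-1}$ rather than $0$; your remark that ``$[x]\leq 1$ is immediate since $x$ is singular'' correctly disposes of $[x]=1$ but silently passes over this discrepancy, which is really a defect of the lemma's statement rather than of either proof.
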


\begin{proof}
    We fix $x\in P_n$. If $y$ is a singular partition, then by Lemma \ref{lem:singular-partition-lemma} we get that
    \[d(x,y)=\frac{\#y-\#x+2i(x,B_y) -2}{n-1}.\] We will use this formula to show that a closest singular partition to $x$ is the one whose basic block is the union of all of the non-singleton blocks of the partition $x$. 
    
    Let's denote this partition by $\pi_x$. Since $x\leq \pi_x$, it is clear that \[d(x,\pi_x)=\frac{\#x-\#\pi_x}{n-1}=\frac{[x]-1}{n-1}.\] 
    Setting $B_{\pi_x}$ to be the basic block of $\pi_x$, for another singular partition $y$ with basic block $B_y$, let $k$ be the number of elements in $B_{\pi_x}\setminus B_y$ and $l$ be the number of elements in $B_y\setminus B_{\pi_x}$. Since $B_{\pi_x}$ is the disjoint union of blocks of $x$ of size at least two, the removal of the elements $B_{\pi_x}\setminus B_y$ decreases $i(x,B_{\pi_x})$ by at most $\lfloor k/2 \rfloor$, while the addition of elements from $B_y\setminus B_{\pi_x}$ increases $i(x,B_{\pi_x})$ by exactly $l$ since every block of $x$ intersecting $B_y\setminus B_{\pi_x}$ is a singleton. Therefore, we have that
    \[i(x,B_y) \geq i(x,B_{\pi_x}) + l - \lfloor k/2 \rfloor.\]
    It follows that 
    \begin{equation*}
        \begin{split}
            d(x,y) &= \frac{\#y-\#x+2i(x,B_y) -2}{n-1}\\
            &= \frac{(\#\pi_x +k -l) - \#x + 2i(x,B_y) -2}{n-1}\\
            &\geq \frac{\#\pi_x + (k-l) - \#x + 2i(x,B_{\pi_x}) + 2(l - \lfloor k/2 \rfloor) -2}{n-1}\\
            &\geq \frac{\#\pi_x - \#x + 2i(x,B_{\pi_x})-2}{n-1} = d(x,\pi_x). 
        \end{split}
    \end{equation*}
    In this way the result obtains. \qedhere
\end{proof}

The next results will show that the set of modular partitions is exactly $\Sigma_n$ and that it is a definable set with respect to the ``theory of finite partition lattices'', $T_{FPL}$; see Definition \ref{def-T_FPL}.

\begin{lem}\label{lem:singleton-difference}
    For all partitions $x,y\in P_n$ with $x\leq y$ we have that 
    \begin{equation*}
        \langle x\rangle - \langle y\rangle\leq 2\left(\#x - \#y\right)
    \end{equation*}
\end{lem}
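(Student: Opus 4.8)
The plan is to exploit the refinement structure directly. Since $x\le y$, every block $C$ of $y$ is the disjoint union of the blocks of $x$ that it contains. For each block $C$ of $y$, let $k(C)$ denote the number of blocks of $x$ contained in $C$, and let $s(C)$ denote how many of those blocks are singletons of $x$. First I would record the three bookkeeping identities
\[ \#x=\sum_{C\in y}k(C),\qquad \langle x\rangle=\sum_{C\in y}s(C),\qquad \langle y\rangle=\#\{C\in y:\ |C|=1\}, \]
each of which is immediate: a singleton block of $x$ lies in a unique block of $y$, and a block of $y$ is a singleton exactly when it is a single block of $x$ of size one. In particular $\#x-\#y=\sum_{C\in y}\bigl(k(C)-1\bigr)$.

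With these identities in hand, the inequality reduces to a single term-by-term estimate: it suffices to prove that for every block $C$ of $y$,
\[ s(C)-\mathbf 1_{\{|C|=1\}}\le 2\bigl(k(C)-1\bigr). \]
Summing this over all $C\in y$ and invoking the identities above yields exactly $\langle x\rangle-\langle y\rangle\le 2(\#x-\#y)$, as desired.

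The verification of the term-by-term bound is a short case analysis on $C$. If $|C|=1$, then $k(C)=s(C)=1$ and both sides vanish. If $|C|\ge 2$ but $k(C)=1$, then the unique block of $x$ inside $C$ has size at least two, so it is not a singleton, giving $s(C)=0$ and again both sides equal $0$. Finally, if $k(C)\ge 2$, then $s(C)\le k(C)\le 2(k(C)-1)$, the last inequality being precisely the hypothesis $k(C)\ge 2$. This exhausts all cases. I do not expect a genuine obstacle here; the only point requiring care is locating where the factor of $2$ is actually needed, namely the case $k(C)\ge 2$, which corresponds to the tight scenario of merging two singletons of $x$ into one block. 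An equivalent and perhaps more transparent route would be to realize $x\le y$ as a finite chain of elementary merges (each combining two blocks into one, so that $\#$ drops by exactly $1$) and to check that a single such merge changes $\langle\cdot\rangle$ by at most $2$ (with the value $2$ attained only when both merged blocks are singletons); telescoping along the chain then gives the claim.
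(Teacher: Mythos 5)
Your proof is correct. It rests on the same combinatorial insight as the paper's --- the factor of $2$ arises because a non-singleton block of $y$ formed by merging singletons of $x$ must absorb at least two of them --- but the bookkeeping is organized genuinely differently. The paper argues globally: it first establishes the single inequality $[y] \leq [x] + \tfrac{1}{2}\left(\langle x\rangle - \langle y\rangle\right)$ (each block of $y$ of size at least two either contains a non-singleton block of $x$ or consumes at least two of the $\langle x\rangle - \langle y\rangle$ lost singletons), and then rearranges this together with $\#x = \langle x\rangle + [x]$ to obtain the claim. You instead localize: you decompose over the blocks $C$ of $y$, reduce to the term-by-term estimate $s(C) - \mathbf{1}_{\{|C|=1\}} \leq 2\left(k(C)-1\right)$, and verify that estimate by a three-case analysis. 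Your version buys a more mechanical verification --- each case is a one-line check and the summation identities are immediate --- whereas the paper's global step asks the reader to see why the merged blocks collectively consume at most $\langle x\rangle - \langle y\rangle$ singletons; conversely, the paper's argument is shorter on the page and isolates the one inequality that matters. Your telescoping variant (factoring $x\leq y$ into elementary merges, each dropping $\#$ by exactly $1$ and $\langle\cdot\rangle$ by at most $2$) is also valid, and is perhaps the cleanest way to see that the constant $2$ is sharp, attained exactly when two singletons merge.
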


\begin{proof}
    We begin by noting that $x\leq y$ implies that $\#x\geq \#y$ and $\langle x\rangle \geq \langle y\rangle$, however $[x] - [y]$ can be negative. Next, observe that
    \begin{equation}
        [y] \leq [x] + \frac{\langle x\rangle - \langle y\rangle}{2}
    \end{equation}
    since a block of size at least two of $y$ either contains a block of size at least two of $x$ or is obtained by merging at least two singleton blocks of $x$. As $\langle y\rangle$ singleton blocks must remain, there are at most $\langle x\rangle - \langle y\rangle$ blocks which can be merged.
    Rearranging this equation we get that
    \begin{equation*}
        - \left(\frac{\langle x\rangle  - \langle y\rangle}{2}\right)\leq  [x] - [y],
    \end{equation*}
    hence
    \begin{equation*}
        \frac{\langle x\rangle - \langle y\rangle}{2}\leq \bigl(\langle x\rangle - \langle y\rangle\bigr) + \bigl([x] - [y]\bigr) = \#x - \#y.
    \end{equation*}
    The result thus obtains. \qedhere
\end{proof}


Recall the formula 
\begin{equation*}
    \vp(x,y) := \inf_z\max\{(|x| + |y|) \dminus (|x+y| + |z|), d(x+z,x), d(y+z,y)\},
\end{equation*}
which measures how far $(x,y)$ is from being a modular pair. 

\begin{prop}\label{prop:distance-to-singular-controls-modularity}
    For any  $x\in P_n$, we have that 
    \[d(x,\Sg_n)\leq 48\sup_{y\in P_n}\vp(x,y).\]
where $\vp$ is the metric-modular pair equation defined in (\ref{eq:modular-pair})
\end{prop}

\begin{proof}
    Given a partition $x\in P_n$, by choosing two distinct elements from any block of size at least two, we can write
    \[x=\{\{a_i,b_i\}\sqcup R_i\}_{i=1}^{[x]}\cup\{\{c_i\}\}_{i=1}^{\langle x\rangle}.\] Using this notation, we define a partition $x^*$ in the following way. If $[x]$ is even, then
        \[x^*:=\{\{a_{2i-1},b_{2i}\}\sqcup R_{2i},\{a_{2i},b_{2i-1}\}\sqcup R_{2i-1}\}_{i=1}^{[x]/2}\cup\{\{c_i\}\}_{i=1}^{\langle x\rangle}.\]
    If $[x]$ is odd, then
    \[x^*:=\{\{a_{2i-1},b_{2i}\}\sqcup R_{2i},\{a_{2i},b_{2i-1}\}\sqcup R_{2i-1}\}_{i=1}^{([x]-1)/2}\cup\{\{a_{[x]},b_{[x]}\}\sqcup R_{[x]}\}\cup\{\{c_i\}\}_{i=1}^{\langle x\rangle}.\]
    
    We observe that 
    $$x+x^*=\{\{a_{2i-1}, a_{2i}, b_{2i-1},b_{2i}\}\sqcup R_{2i-1}\sqcup R_{2i}\}_{i=1}^{[x]/2}\cup\{\{c_i\}\}_{i=1}^{\langle x\rangle},$$
    if $[x]$ is even, and
    $$x+x^*=\{\{a_{2i-1}, a_{2i}, b_{2i-1},b_{2i}\}\sqcup R_{2i-1}\sqcup R_{2i}\}_{i=1}^{([x]-1)/2}\cup\{\{a_{[x]},b_{[x]}\}\sqcup R_{[x]}\}\cup\{\{c_i\}\}_{i=1}^{\langle x\rangle},$$
    if $[x]$ is odd. Thus, we can compute and note that
    \begin{equation}\label{eq:distance-to-singular-1}
        \langle x^*\rangle=\langle x\rangle,\quad \langle x+x^*\rangle=\langle x\rangle,\quad [x^*]=[x],\quad [x+x^*]=\lceil[x]/2\rceil.
    \end{equation} 
    Using this notation we will write 
    \[C_i := \{\{a_{2i-1},b_{2i-1}\}\sqcup R_{2i-1},\{a_{2i},b_{2i}\}\sqcup R_{2i}\}\] and
    \[C_i^* := \{\{a_{2i-1},b_{2i}\}\sqcup R_{2i},\{a_{2i},b_{2i-1}\}\sqcup R_{2i-1}\}\]
    and refer to these as the \emph{i-th pair} in $x$ and $x^*$, respectively.

    We claim that
    \begin{equation}
        \vp(x,x^*) \geq \frac{1}{48} \frac{[x]-1}{n-1},
    \end{equation}
    which establishes the result by Lemma \ref{lem:distance-to-singular}.

    Since \[|p| = d(p,0) = \frac{n-\#p}{n-1}\] for any partition $p\in P_n$, and by the definition of $\vp$ (line \ref{eq:modular-pair}), the previous condition is equivalent to showing that for all $t\in P_n$, we have that
    \begin{equation}\label{eq: def-mm-eq}
        \max\{\#(x+x^*)+\#t-\#x-\#x^*,\quad \#x-\#(x+t),\quad \#x^*-\#(x^*+t)\}\geq \frac{[x]-1}{48}
    \end{equation}

    We assume that $\#x-\frac{[x]-1}{48}<\#(x+t)$ and $\#x^*-\frac{[x]-1}{48}<\#(x^*+t)$.
    Setting \[m := \left\lceil \frac{[x]-1}{48}\right\rceil\geq 1\] we have that 
    \begin{equation}\label{eq:distance-to-singular-10}
        \#x-m\leq\#(x+t), \quad \#x^*-m\leq\#(x^*+t)
    \end{equation} as all other terms are integers.

    Since $\langle p \rangle \geq \langle q\rangle$ whenever $p\leq q$, we have that
    \begin{equation*}
        \langle x\rangle + [x] - m = \#x-m \leq \#(x+t) =\langle x+t\rangle + [x+t]\leq \langle x\rangle + [x+t].
    \end{equation*}
    By similar reasoning for $x^*$, we conclude using line (\ref{eq:distance-to-singular-1}) that
    \begin{equation}\label{eq:distance-to-singular-2}
        [x]-m = [x^*]-m \leq [x+t],\  [x^*+t].
    \end{equation}

    Applying Lemma \ref{lem:singleton-difference} to the pairs $x\leq x+t$ and $x^*\leq x^*+t$ and using (\ref{eq:distance-to-singular-1}) and (\ref{eq:distance-to-singular-10}), we have that 
    \begin{equation}\label{eq:distance-to-singular-3}
        \langle x\rangle - 2m = \langle x^*\rangle - 2m \leq \langle x+t\rangle,\ \langle x^*+t\rangle.
    \end{equation}
    
    Next, note that by the inequalities (\ref{eq:distance-to-singular-3}) that there are at least $\langle x\rangle - 2m$ singleton blocks of $x$ which still appear as singleton blocks of $x+t$. It follows that these must also be singleton blocks of $t$, so 
    \begin{equation}
        \langle t\rangle \geq \langle x \rangle -2m = \langle x^*\rangle - 2m.
    \end{equation}

    Finally, we note that from line (\ref{eq:distance-to-singular-2}) that there are $[x]-m$ blocks of $x$ of size at least two which belong to pairwise distinct blocks of $x+t$. We denote an analogous set of blocks of $x$ by $D$. Similarly, we denote an analogous set of blocks of $x^*$ by $D^*$. For each index $i\in \{1,\dotsc,\lfloor [x]/2\rfloor\}$ such that both blocks of the pair $C_i$ of $x$ are elements of $D$ and both blocks of the pair $C_i^*$ are elements of $D^*$, the elements $\{a_{2i-1}, b_{2i-1}, a_{2i}, b_{2i}\}$ must belong to distinct blocks of $t$ which aren't among the $\langle x\rangle-2m$ singleton blocks chosen in the previous paragraph. We will call these pairs the ``repeated pairs''.
    
    For $x$, at most $m$ blocks of size at least two do not belong to $D$, so there are at least 
    \[\left\lfloor\frac{[x] - 2m}{2}\right\rfloor = \lfloor [x]/2\rfloor - m\]
    pairs $C_i$ among the blocks of $D$. Similarly there are at least $\lfloor [x^*]/2\rfloor - m =\lfloor [x]/2\rfloor - m$ pairs $C_i^*$ among the blocks of $D^*$. Therefore, the number of repeated pairs is at least $\lfloor [x]/2\rfloor - 2m$. Using this along with the estimate (\ref{eq:distance-to-singular-3}) we conclude that
    \begin{equation}\label{eq:distance-to-singular-4}
        \begin{split}
            \#t &\geq 4\left(\frac{[x]-1}{2} - 2m\right)+\langle x\rangle -2m\\ 
            &= 2[x] + \langle x\rangle - 10m - 2\geq 2[x] + \langle x\rangle - 12m,
        \end{split}   
    \end{equation}
    since $m\geq 1$.
    
    It follows from lines (\ref{eq:distance-to-singular-1}) and (\ref{eq:distance-to-singular-4}) that
    \begin{align*}
        &\#(x+x^*)+\#t-\#x-\#x^*\\
        &\geq\frac{[x]-1}{2}+\langle x\rangle +\#t-2([x]+\langle x\rangle)\\
        &\geq \frac{[x]-1}{2}+\langle x\rangle +\biggl[ 2[x] + \langle x\rangle - 12m\biggr] -2([x]+\langle x\rangle)\\
        &=\frac{[x]-1}{2}- 12m\\
        &\geq \frac{[x]-1}{2}-12\,\frac{[x]-1}{48} =\frac{[x]-1}{4} >\frac{[x]-1}{48},
    \end{align*}
    which suffices to show that the equation \ref{eq: def-mm-eq} is satisfied, proving the result. \qedhere
\end{proof}

\begin{cor}\label{cor:metrically-modular-equals-singular}
    If $P_n$ is a finite partition lattice, then $x\in P_n$ is metrically modular if and only if $x$ is singular.
\end{cor}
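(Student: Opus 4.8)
The plan is to prove the two implications separately. The forward direction, that a metrically modular $x$ must be singular, will be an immediate consequence of the quantitative estimate in Proposition \ref{prop:distance-to-singular-controls-modularity}, while the converse, that every singular $x$ is metrically modular, will fall out of a one-line block count using Lemma \ref{lem:singular-partition-lemma}.

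For the forward direction, I would start from Definition \ref{defn:metric-mod-pair}: if $x\in P_n$ is metrically modular then $(x,y)$ is a metrically modular pair for every $y\in P_n$. Since $P_n$ is finite, hence a complete metric lattice, Proposition \ref{prop:metrically-modular-axiom} identifies metric modularity of the pair $(x,y)$ with the condition $\vp(x,y)=0$; therefore $\sup_{y\in P_n}\vp(x,y)=0$. Feeding this into Proposition \ref{prop:distance-to-singular-controls-modularity} yields $d(x,\Sg_n)\leq 48\cdot 0=0$. Because $\Sg_n$ is finite, and thus closed, vanishing distance forces $x\in\Sg_n$, i.e.\ $x$ is singular. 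One could instead read $[x]=1$ off Lemma \ref{lem:distance-to-singular}, but appealing to closedness of $\Sg_n$ avoids the mild edge-case bookkeeping at $x=0$.

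For the converse, I would fix a singular $x$ and an arbitrary $y\in P_n$ and verify directly that $(x,y)$ is a metrically modular pair, namely that $|x+y|+|xy|=|x|+|y|$. Since $|p|=\tfrac{n-\#p}{n-1}$ for every partition, this is equivalent to $\#(x+y)+\#(xy)=\#x+\#y$. Here it is $x$ that is the singular partition, so---using commutativity of $+$ and $\cdot$---I apply Lemma \ref{lem:singular-partition-lemma} with the two arguments interchanged to obtain $\#(x+y)=\#y-i(y,B_x)+1$ and $\#(xy)=\#x+i(y,B_x)-1$, where $B_x$ is the basic block of $x$ (with the convention $B_0=\{1\}$ covering the case $x=0$). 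Adding the two identities, the terms $\pm i(y,B_x)$ and $\pm 1$ cancel and leave exactly $\#x+\#y$, as required. Hence $(x,y)$ is metrically modular for all $y$, so $x$ is metrically modular.

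At the level of the corollary there is no serious obstacle: essentially all of the difficulty is already absorbed into Proposition \ref{prop:distance-to-singular-controls-modularity}. The only two points I would be careful about are the translation of the universally quantified modularity condition into the single scalar $\sup_{y}\vp(x,y)$ (so that the proposition actually applies), and the correct orientation of Lemma \ref{lem:singular-partition-lemma} in the converse, where the singular argument is $x$ rather than $y$.
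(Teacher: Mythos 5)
Your proposal is correct and takes essentially the same route as the paper: metrically modular implies singular via Proposition \ref{prop:distance-to-singular-controls-modularity}, and singular implies metrically modular by adding the two block-count identities of Lemma \ref{lem:singular-partition-lemma} (with the roles of the arguments interchanged). The extra care you take---justifying the $\vp$-reformulation through Proposition \ref{prop:metrically-modular-axiom} and sidestepping the $x=0$ edge case by closedness of $\Sg_n$---only makes explicit what the paper's terse proof leaves implicit.
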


\begin{proof}
    The `` only if'' direction follows directly from Proposition \ref{prop:distance-to-singular-controls-modularity}. For the ``if'' direction, we fix a singular partition $x\in P_n$ and an arbitrary partition $y$. By Lemma \ref{lem:singular-partition-lemma} we have that $\#(x+y) + \#xy = \#x + \#y$, hence $|x+y| + |xy| = |x| + |y|$. \qedhere
   
\end{proof}

\begin{cor}\label{cor:metric-modular-ultraproduct-singular}
    Let $\cc U$ be a non-principal ultrafilter on $\bb N$ and $P = \prod_{\cc U} P_n$ be an ultraproduct of finite partition lattices. We have that $x\in P$ is metrically modular if and only if $x$ is an ultraproduct of singular partitions.
\end{cor}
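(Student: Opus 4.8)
The plan is to read the statement off from the quantitative estimate of Proposition~\ref{prop:distance-to-singular-controls-modularity} together with \L o\'s's theorem for continuous logic. First I would record the two tools that make everything go through. On the one hand, $\vp(x,y)$ is a genuine $\ff L$-formula (an $\inf_z$ of a continuous combination of the basic predicates), so $\Phi(x) := \sup_y \vp(x,y)$ is again an $\ff L$-formula. On the other hand, by Proposition~\ref{prop:metrically-modular-axiom} (and the remark following Lemma~\ref{lem:metric-mod-contraction}) an element $w$ of \emph{any} complete metric lattice is metrically modular if and only if $\vp(w,y)=0$ for all $y$, i.e.\ if and only if $\Phi(w)=0$. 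Since each finite lattice $P_n$ is a complete metric lattice and the ultraproduct $P=\prod_{\cc U}P_n$ is a model of $T_{ML}$, hence also a complete metric lattice, \L o\'s's theorem gives, for $x=(x_n)\in P$,
\[
\Phi^{P}(x)=\lim_{\cc U}\Phi^{P_n}(x_n)=\lim_{\cc U}\,\sup_{y\in P_n}\vp(x_n,y).
\]

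For the direction that an ultraproduct of singular partitions is metrically modular, suppose $x$ has a representative $(x_n)$ with $x_n\in\Sg_n$ for $\cc U$-almost every $n$. By Corollary~\ref{cor:metrically-modular-equals-singular} each such $x_n$ is metrically modular in $P_n$, so $\Phi^{P_n}(x_n)=\sup_y\vp(x_n,y)=0$; the displayed identity then yields $\Phi^{P}(x)=0$, so $x$ is metrically modular. Conversely, suppose $x$ is metrically modular, so $\Phi^{P}(x)=0$ and hence $\lim_{\cc U}\sup_y\vp(x_n,y)=0$. Proposition~\ref{prop:distance-to-singular-controls-modularity} gives $d(x_n,\Sg_n)\le 48\,\sup_y\vp(x_n,y)$, whence $\lim_{\cc U}d(x_n,\Sg_n)=0$. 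Because each $P_n$ is finite, the infimum defining $d(x_n,\Sg_n)$ is attained at some $x_n'\in\Sg_n$, and then $\lim_{\cc U}d(x_n,x_n')=\lim_{\cc U}d(x_n,\Sg_n)=0$. Thus $(x_n')$ is a second representative of the \emph{same} element $x$, now consisting entirely of singular partitions, so $x$ is an ultraproduct of singular partitions.

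The only genuinely delicate point is the last step of the converse: the estimate of Proposition~\ref{prop:distance-to-singular-controls-modularity} only tells us that $x$ is \emph{approximated} (in the ultralimit sense) by singular partitions, i.e.\ $d(x_n,\Sg_n)\to 0$ along $\cc U$, whereas the conclusion demands an honest representative lying in $\prod_{\cc U}\Sg_n$. This gap is closed precisely by the finiteness of each $P_n$ (so that the infimum is realized rather than merely approached) together with the defining fact that two sequences at ultralimit distance zero name the same point of $P$. Everything else is a routine application of \L o\'s's theorem, which is legitimate here because $\vp$, and therefore $\Phi$, are bona fide formulas in the continuous language $\ff L$.
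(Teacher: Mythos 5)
Your proof is correct and follows essentially the same route as the paper's: both directions rest on \L{}o\'s's theorem applied to the formula $\sup_y\vp(x,y)$, with Proposition~\ref{prop:distance-to-singular-controls-modularity} supplying the quantitative bound $d(x_n,\Sg_n)\le 48\sup_y\vp(x_n,y)$ for the forward direction and Corollary~\ref{cor:metrically-modular-equals-singular} handling the converse. Your extra care about replacing the representative $(x_n)$ by a genuinely singular one $(x_n')$ is exactly the step the paper performs (and, as you note, finiteness of $P_n$ is a convenient but not essential way to realize it).
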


\begin{proof}
    Suppose $x\in P$ is metrically modular, and let $(x_n)\in \prod_{n\in \bb N} P_n$ be a representative sequence for $x$. By \L{}os' Theorem we have that $\lim_{n\to\cc U} \sup_{y\in P_n}\vp(x_n,y)=0$; hence, by Proposition \ref{prop:distance-to-singular-controls-modularity} we have that $\lim_{n\to\cc U} d(x_n, \Sg_n)=0$. Thus, we can find $x_n'\in \Sg_n$ so that $\lim_{n\to\cc U} d(x_n,x_n')= 0$.
    
    In the other direction, by Corollary \ref{cor:metrically-modular-equals-singular} every singular element of $\Sg_n$ is metrically modular, hence so is every element of $\prod_{n\in\cc U} \Sg_n$ by Proposition \ref{prop:distance-to-singular-controls-modularity} and \L{}os' Theorem.
\end{proof}

\begin{defn}
    Let $P_n$ be the finite partition lattice of a set with $n$ elements. A sublattice $A$ is called \emph{singular} if every $x\in A$ is a singular partition.
\end{defn}
\begin{defn}
    Let $\cc L=(L,<,+,\cdot,0,1)$ be a lattice. An element $x\in L$ is called and atom if and only if $x\neq 0$ and for all $y\in L$, if $0\leq y\leq x$, then $y=0$ or $y=x$.
\end{defn}

\begin{lem}\label{lem:maximal-singular-boolean}
    Let $A$ be a maximal singular distributive sublattice of $P_n$.
    The following claims are true:
    \begin{enumerate}
        \item There must exist an element $a^*\in [[n]]$ such that $a^*$ belongs to the basic block of every non-zero partition of $A$. 
        
        \item The atoms of $A$ are the singular partitions $x_e$ for $e\in [[n]]-\{a^*\}$, where $x_e$ is the singular partition with basic block $\{a^*,e\}$. 

        \item $A\models T_{BML}$.
        
        \item $A$ is a maximal boolean sublattice of $P_n$.
        
        \item For every singular partition $x\in P_n$, we have that $d(x,A) \leq \frac{1}{n-1}$.
    \end{enumerate}
\end{lem}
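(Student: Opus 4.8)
The plan is to show that the maximality forces $A$ to coincide with the set
\[A_{a^*} := \{0\}\cup\{x\in\Sg_n : a^*\in B_x\}\]
for a suitable point $a^*\in[[n]]$, and then to read off all five assertions from this identification. The basic computational input, immediate from Lemma \ref{lem:singular-partition-lemma}, is that for singular $x,y$ the meet $xy$ is singular with basic block $B_x\cap B_y$ (and $xy=0$ when $|B_x\cap B_y|\le 1$), while $x+y$ is singular with basic block $B_x\cup B_y$ precisely when $B_x\cap B_y\neq\emptyset$, and is non-singular otherwise. Consequently, for any fixed $c\in[[n]]$ the assignment $x\mapsto B_x\setminus\{c\}$ (with $0\mapsto\emptyset$) is a lattice isomorphism $A_c\to 2^{[[n]]\setminus\{c\}}$ carrying $+$ to union and $\cdot$ to intersection; in particular each $A_c$ is a singular, Boolean (hence distributive) sublattice of $P_n$, and $1_{P_n}\in A_c$.

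For assertion (1), I would argue through the atoms $a_1,\dots,a_k$ of $A$ (these exist since $A\supsetneq\{0\}$ by maximality). Distinct atoms satisfy $a_ia_j=0$, so $|B_i\cap B_j|\le 1$, while singularity of $A$ forces $B_i\cap B_j\neq\emptyset$; hence $|B_i\cap B_j|=1$. If $k=1$, every nonzero element lies above $a_1$, so any point of $B_1$ is common to all basic blocks. If $k\ge 2$, set $a^*:=B_1\cap B_2$; for a third atom $a_l$, distributivity gives $a_1(a_2+a_l)=a_1a_2+a_1a_l=0$, and since $a_2+a_l$ has basic block $B_2\cup B_l$ this forces $B_1\cap(B_2\cup B_l)$ to be a single point, i.e.\ $B_1\cap B_l=B_1\cap B_2=a^*$. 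Thus $a^*\in B_i$ for every atom, and since every nonzero $x\in A$ lies above some atom $a_i$ in the finite lattice $A$, we get $a^*\in B_i\subseteq B_x$, proving (1). It follows that $A\subseteq A_{a^*}$; as $A_{a^*}$ is itself a singular distributive sublattice, maximality gives $A=A_{a^*}$. Transporting the atoms of $2^{[[n]]\setminus\{a^*\}}$ (the singletons) across the isomorphism yields (2): the atoms of $A$ are exactly the $x_e$ with $B_{x_e}=\{a^*,e\}$.

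For (3) I would check the defining axioms of $T_{BML}=T_{ML}\cup\{\sigma_{mod},\sigma_{dist},\sigma_{wcom},\sigma_{d=d'}\}$ directly. Being a finite sublattice of $P_n$ closed under both operations, $A$ is a complete metric lattice with $d=d'$, so $T_{ML}$ and $\sigma_{d=d'}$ hold. Every element of $A$ is singular, hence metrically modular by Corollary \ref{cor:metrically-modular-equals-singular}, giving $\sigma_{mod}$; distributivity ($\sigma_{dist}$) comes from the Boolean structure of $A_{a^*}$; and for weak complementation the Boolean complement $x^c\in A$ is singular, so Lemma \ref{lem:metric-complement} gives $d'(x,x^c)=1$, whence $\sup_y d(x,y)=1$ and $\sigma_{wcom}$ holds (alternatively one invokes Remark \ref{rem:boolean-algebra} and Proposition \ref{prop:boolean-elementary}). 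Assertion (4) is the most delicate step, since a Boolean sublattice $B\supseteq A$ might a priori contain non-singular partitions; the key observation is that each atom $x_e$ of $A$ has basic block of size two and is therefore an atom of $P_n$ itself, hence an atom of $B$. Since $B$ is Boolean with $1_B=1_{P_n}=\sum_{e\neq a^*}x_e$, any atom $\beta$ of $B$ satisfies $\beta=\beta\cdot 1_B=\sum_e\beta x_e$ with each $\beta x_e\in\{0,x_e\}$; as $\beta\neq 0$ this forces $x_e\le\beta$ for some $e$, so $\beta=x_e$. Thus the atoms of $B$ are exactly those of $A$, and a finite Boolean algebra is the join-closure of its atoms, so $B=A$. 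Finally, (5) is a short computation: when $a^*\in B_x$ we have $x\in A$ and $d(x,A)=0$, while when $a^*\notin B_x$ the partition $x'$ with basic block $B_x\cup\{a^*\}$ lies in $A$ and satisfies $x\le x'$ with $\#x-\#x'=1$, so $d(x,x')=\tfrac{1}{n-1}$. The main obstacle is (4): ruling out possibly non-singular competitors is precisely what requires the remark that the atoms $x_e$ are atoms of the ambient lattice $P_n$, which pins $B$ down completely.
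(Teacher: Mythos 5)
Your proof is correct, and parts (1) and (4) take genuinely different routes from the paper's own argument. For (1), the paper works with arbitrary triples: if $x,y,z\in A$ had pairwise intersecting basic blocks with empty triple intersection, then $x(y+z)\neq xy+xz$, contradicting distributivity, and this Helly-type property yields the common point $a^*$. You instead localize to the atoms of $A$: pairwise intersections of atomic basic blocks are singletons, and the identity $a_1(a_2+a_l)=a_1a_2+a_1a_l=0$ forces all of these singletons to coincide, after which you propagate upward to all of $A$. Both arguments hinge on the same distributivity mechanism, so the difference here is mostly organizational. For (4) the divergence is more substantial: the paper shows directly that adjoining any $x\notin A$ destroys distributivity, choosing a pair $\{e,e'\}$ inside a block of $x$ and checking $x(x_e+x_{e'})\neq xx_e+xx_{e'}$ (note that this computation requires the chosen block to avoid $a^*$ --- if $a^*$ lay in the same block as $e,e'$, both sides would equal $x_e+x_{e'}$ --- a point the paper's wording glosses over but which is always arrangeable); you instead take an arbitrary Boolean sublattice $B\supseteq A$, observe that each $x_e$ is an atom of the ambient $P_n$ and hence of $B$, use distributivity of $B$ against $1=\sum_{e\neq a^*} x_e$ to show every atom of $B$ is some $x_e$, and conclude $B=A$ because a finite Boolean algebra is the join-closure of its atoms. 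Your structural argument buys a maximality proof that sidesteps the block-selection subtlety, while the paper's buys an explicit witness of distributivity failure, showing that $A\cup\{x\}$ does not even generate a distributive sublattice. Parts (2) and (5) coincide with the paper's arguments, and for (3) the paper proceeds slightly more slickly by noting that the identification $x\mapsto B_x\setminus\{a^*\}$ is an isometric isomorphism onto $2^{[[n]]-\{a^*\}}$ with the normalized Hamming metric and then invoking Proposition \ref{prop:boolean-elementary}, whereas you verify the axioms of $T_{BML}$ one by one; both verifications are valid.
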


\begin{proof}
    We prove each claim in turn.
    
        \begin{enumerate}
            \item Let us suppose the claim is false, then $\bigcap_{x\in A,x\neq0} B_x=\emptyset$. Since for all $x\in A$, there is an atom $x'\leq x$ such that $B_{x'}\subset B_x$ we have 
            \[
            \bigcap_{x\in A, x \text{ atom}} B_x=\bigcap_{x\in A,x\neq0} B_x=\emptyset.
            \]
            Now, given atoms $x,y\in A, x\neq y, B_x\cap B_y\neq\emptyset$, otherwise $x+y$ would not be a singular partition, and as $xy=0$, then $|B_x\cap B_y|=1$. Thus, as $\bigcap_{x\in A, x \text{ atom}} B_x=\emptyset$ there must be different atoms $x,y,z\in A$ with basic blocks $B_x$, $B_y$, and $B_z$, respectively, such that $B_x\cap B_y=\{a\}, B_x\cap B_z=\{b\}, B_y\cap B_x=\{c\}$ and $B_x\cap B_y\cap B_z=\emptyset$.  Observe that $x(y+z)\neq0$ because $\{a,b\}\subset B_x\cap(B_y\cup B_z)=B_{x(y+z)}$, but as $x,y,z$ are atoms we have that $xy+xz=0+0=0\neq x(y+z)$. Hence $A$ wouldn't be distributive.\\

            \item Let $A^*$ be the sublattice generated by all such $x_e$. It's clear that $A^*$ is Boolean as it can be seen to be isomorphic to the lattice $2^{[[n]]-\{a^*\}}$ by associating to each $X\subseteq [[n]]-\{a^*\}$ the singular partition with basic block $X\cup\{a^*\}$.  By the previous claim there is an $a^*$ which belongs to the basic block of every non-zero singular partition in $A$, hence $A\subseteq A^*$. Equality then follows by maximality of $A$.
            \\

            \item It is clear that the isomorphism of $A^*$ with the Boolean lattice $2^{[[n]]-\{a^*\}}$ given in the previous item is isometric with respect to the canonical lattice metric on $2^{[[n]]-\{a^*\}}$ induced by the counting measure.\\
            
            \item For any partition $x$ not belonging to $A$ there is a block which contains a pair of elements $\{e,e'\}\subseteq [[n]]-\{a^*\}$.
            We have that $x(x_e+x_{e'})\neq xx_e+xx_{e'}$ because the pair $\{e,e'\}$ is included in a block of $x(x_e+x_{e'})$ but $e$ and $e'$ are in different blocks of the partition $xx_e+xx_{e'}$. Therefore, $A\cup \{x\}$ cannot generate a Boolean sublattice by failure of distributivity; hence, $A$ is maximal Boolean.\\
            \item Let $x\in P_n$ be a singular partition with basic block $B_x$. We choose $y\in A$ to be the singular partition with basic block $B_x\cup\{a^*\}$ and observe that 
            \[d(x,y)=\frac{\#x-\#y}{n-1}\leq\frac{\#x-(\#x-1)}{n-1}=\frac{1}{n-1}. \qedhere\]
        \end{enumerate}
    \end{proof}
    
Now, we can use this lemma to prove our result.

\begin{prop}\label{prop:modular-ultraproduct}
    Let $\cc U$ be a non-principal ultrafilter on $\bb N$. For each $n\in\bb N$ choose $A_n\subseteq P_n$ a maximal singular sublattice. We have that $\prod_{n\in\cc U} A_n$ is a Boolean sublattice of $\prod_{n\in\cc U} P_n$ and is equal to the set of metrically modular elements of $\prod_{n\in\cc U} P_n$.
\end{prop}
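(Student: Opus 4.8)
The plan is to prove the two assertions—that $\prod_{n\in\cc U} A_n$ coincides with the set of metrically modular elements of $P:=\prod_{n\in\cc U} P_n$, and that it is a Boolean sublattice—by combining the identification of metrically modular elements with ultraproducts of singular partitions (Corollary \ref{cor:metric-modular-ultraproduct-singular}), the approximation estimate of Lemma \ref{lem:maximal-singular-boolean}(5), a direct meet computation, and a \L{}os-style transfer of the Boolean structure. Throughout, each $A_n$ is taken as in Lemma \ref{lem:maximal-singular-boolean}, so that $A_n$ is a (maximal) Boolean sublattice of $P_n$ with $A_n\models T_{BML}$.

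First I would identify the underlying set. Since $A_n\subseteq \Sg_n$ we have $\prod_{n\in\cc U} A_n\subseteq \prod_{n\in\cc U}\Sg_n$, which by Corollary \ref{cor:metric-modular-ultraproduct-singular} is exactly the set of metrically modular elements of $P$. For the reverse inclusion, take a metrically modular $x\in P$; by the same corollary it has a representative $(x_n)$ with each $x_n\in\Sg_n$. By Lemma \ref{lem:maximal-singular-boolean}(5) we may choose $a_n\in A_n$ with $d(x_n,a_n)\leq \tfrac{1}{n-1}$, so $\lim_{n\to\cc U} d(x_n,a_n)=0$ and hence $x=(a_n)\in\prod_{n\in\cc U} A_n$. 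This shows $\prod_{n\in\cc U} A_n$ equals the set of metrically modular elements of $P$.

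Next I would verify closure under the lattice operations of $P$, which is the technical crux. Closure under $+$ is immediate, as the join is coordinatewise and each $A_n$ is join-closed. For the meet, fix $x=(a_n)$ and $y=(b_n)$ with $a_n,b_n\in A_n$; I claim the meet in $P$ satisfies $xy=(a_nb_n)$, which lies in $\prod_{n\in\cc U} A_n$ since each $A_n$ is meet-closed. The element $(a_nb_n)$ is clearly a lower bound of $x,y$, so it remains to show every lower bound $z=(z_n)$ satisfies $z\leq (a_nb_n)$. The key input is Corollary \ref{cor:metrically-modular-equals-singular}: each $a_n,b_n$ is singular, hence metrically modular in $P_n$, so $(a_n,z_n)$, $(b_n,z_n)$, and $(a_n,b_nz_n)$ are metrically modular pairs. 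From $z\leq x$ we have $\lim_{n\to\cc U} d(a_n+z_n,a_n)=0$, and metric modularity of $a_n$ with Proposition \ref{prop:metric-rank}.2 gives
\[
d(a_nz_n,z_n)=|z_n|-|a_nz_n|=|a_n+z_n|-|a_n|=d(a_n+z_n,a_n)\to 0,
\]
and likewise $d(b_nz_n,z_n)\to 0$ as $n\to\cc U$. Applying metric modularity of $a_n$ to the pair $(a_n,b_nz_n)$ together with the chain $a_n\leq a_n+b_nz_n\leq a_n+z_n$ yields
\[
d\bigl(a_n(b_nz_n),b_nz_n\bigr)=|a_n+b_nz_n|-|a_n|\leq |a_n+z_n|-|a_n|=d(a_n+z_n,a_n)\to 0.
\]
Combining these via the triangle inequality gives $d(a_nb_nz_n,z_n)\to 0$, so $z=(a_nb_nz_n)\leq (a_nb_n)$. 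Hence $xy=(a_nb_n)\in\prod_{n\in\cc U} A_n$, and $\prod_{n\in\cc U} A_n$ is a sublattice of $P$.

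Finally, the Boolean structure follows by transfer: by Lemma \ref{lem:maximal-singular-boolean}(3) each $A_n$ models $T_{BML}$, so by \L{}os' Theorem $\prod_{n\in\cc U} A_n\models T_{BML}$ and is therefore a Boolean algebra (Remark \ref{rem:boolean-algebra}, via Proposition \ref{prop:weak-complement}). Its intrinsic meet is the greatest lower bound taken within $\prod_{n\in\cc U} A_n$; since the previous paragraph shows this set is a sublattice of $P$ containing the $P$-meet $(a_nb_n)$ of any two of its elements, the intrinsic meet coincides with the meet computed in $P$. Thus $\prod_{n\in\cc U} A_n$ is a Boolean sublattice of $P$ equal to its set of metrically modular elements. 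The main obstacle is precisely the meet computation above: because $P_n$ is \emph{not} metrically modular, the general ultraproduct meet lemma cannot be applied to $P$ directly, and one must instead exploit the metric modularity of the \emph{individual} singular elements $a_n,b_n$ to force every lower bound of $x,y$ down below $(a_nb_n)$.
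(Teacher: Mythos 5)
Your proof is correct, and on the two points the paper's own proof actually carries out---identifying $\prod_{n\in\cc U} A_n$ with the set of metrically modular elements via Corollary \ref{cor:metric-modular-ultraproduct-singular} together with Lemma \ref{lem:maximal-singular-boolean}.5, and obtaining the Boolean structure from $A_n\models T_{BML}$ by \L{}os' Theorem---you follow exactly the same route. Where you genuinely differ is your middle step: you verify that $\prod_{n\in\cc U} A_n$ is closed under the meet of the ambient lattice $P=\prod_{n\in\cc U} P_n$, namely that the $P$-meet of $(a_n)$ and $(b_n)$ is the coordinatewise meet $(a_nb_n)$. The paper's proof passes over this: it concludes ``Boolean sublattice'' directly from $\prod_{n\in\cc U}A_n\models T_{BML}$, which by itself only makes the ultraproduct a Boolean algebra with respect to its \emph{intrinsic} meet, and the paper's earlier lemma computing meets coordinatewise in ultraproducts (the lemma preceding Proposition \ref{prop:meet-uc}) is not applicable here, since the ambient lattices $P_n$ are not metrically modular. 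Your workaround---using that the individual elements $a_n,b_n$ are metrically modular (Corollary \ref{cor:metrically-modular-equals-singular}) and the rank identity of Proposition \ref{prop:metric-rank}.2 to show every $P$-lower bound $z=(z_n)$ satisfies $d(a_nb_nz_n,z_n)\to 0$, hence $z\leq (a_nb_n)$---is sound, and it is precisely the adaptation of that lemma's proof needed to justify the sublattice claim in full. In short, your argument is, if anything, more complete than the paper's on this point; everything else coincides.
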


\begin{proof}
    We begin by showing that every metrically modular element of $\prod_{n\in\cc U} P_n$ belongs to $\prod_{n\in\cc U} A_n$. Indeed, if $x$ is metrically modular, by Corollary \ref{cor:metric-modular-ultraproduct-singular} there is representative sequence $(x_n)$ with $x_n\in P_n$ singular for $\cc U$-generic $n$. Hence, by Lemma \ref{lem:maximal-singular-boolean}.5 we have that $x$ has a representative sequence $(x_n')$ with $x_n'\in A_n$ for $\cc U$-generic $n$, and $x\in\prod_{n\in \cc U} A_n$. On the other hand it is clear that if $x\in\prod_{n\in\cc U}A_n$, then $x$ is singular again by Lemma \ref{cor:metric-modular-ultraproduct-singular}. Finally, $\prod_{n\in \cc U} A_n$ is a Boolean algebra because by Lemma \ref{lem:maximal-singular-boolean} we know that each $A_n\models T_{BML}$ so $\prod_{n\in\cc U}A_n\models T_{BML}$, and therefore it is a boolean sublattice of $\prod_{n\in\cc U} P_n$.
\end{proof}

\subsection{Pseudofinite partition lattices}

\begin{defn}\label{def-T_FPL}
    We write $T_{FPL}$ as the theory of the set $\{P_n :n\in\bb N\}$, that is $\cc M\models T_{FPL}$ if and only if $\sg^{\cc M} =0$ for any sentence so that $\sup_n \sg^{P_n} =0$. We then say that $\cc M$ is a \emph{pseudofinite partition lattice}. Note that any ultraproduct of finite partition lattices is a pseudofinite partition lattice.
\end{defn}
\noindent 

In this section we investigate the properties of $T_{FPL}$. Let $\cc M\models T_{ML}$, and consider the functor $\cc M\mapsto \mu(\cc M)$ which sends $\cc M$ to its collection of metrically modular elements. (Note that $\mu(\cc M)$ is always non-empty since $0$ and $1$ are modular.)  We begin with the following observation which is a direct consequence of Proposition \ref{prop:distance-to-singular-controls-modularity} and Corollary \ref{cor:metrically-modular-equals-singular}.

\begin{lem}\label{lem:mu-definable}
    We have that $\cc M\mapsto \mu(\cc M)$ is a definable functor in $T_{FPL}$.
\end{lem}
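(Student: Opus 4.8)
The plan is to exhibit an explicit formula controlling the distance to $\mu(\cc M)$ and then invoke the characterization of definable functors in Theorem \ref{thm:definability}. First I would record that every model of $T_{FPL}$ is a complete metric lattice: each $P_n$ satisfies $T_{ML}$, so $T_{ML}\subseteq T_{FPL}$. Set $\Psi(x) := \sup_y \vp(x,y)$, which is a genuine $T_{FPL}$-formula. By Proposition \ref{prop:metrically-modular-axiom}, in any complete metric lattice $\vp(x,y)=0$ precisely when $(x,y)$ is a metrically modular pair, so $\mu(\cc M) = \{x\in M : \Psi^{\cc M}(x)=0\}$. In particular $\mu(\cc M)$ is closed, being the zero set of a continuous predicate, and since an elementary map $\rho$ preserves the value of $\Psi$ it carries $\mu(\cc M)$ into $\mu(\cc N)$ and is isometric there; this shows $\mu$ is a $T$-functor, so it remains only to prove definability.

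The heart of the argument is to transfer the finite-lattice estimate. Combining Proposition \ref{prop:distance-to-singular-controls-modularity} with Corollary \ref{cor:metrically-modular-equals-singular}, in each $P_n$ the set $\Sg_n$ of singular partitions coincides with $\mu(P_n)=\{\Psi=0\}$, and for every $x$ there is a singular $z$ with $d(x,z)\le 48\,\Psi(x)$ and $\Psi(z)=0$. I would encode the self-improving ``one step'' version of this as the sentence
\[
\sigma := \sup_w \inf_z \max\{\,d(w,z)\dminus 48\,\Psi(w),\ \Psi(z)\dminus \tfrac12\Psi(w)\,\}
\]
(truncating $48\,\Psi(w)$ at $1$ so the connective is admissible). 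Taking $z$ singular as above makes both arguments of the $\max$ nonpositive, so $\sigma^{P_n}=0$ for every $n$; hence $\sigma\in T_{FPL}$ and $\sigma^{\cc M}=0$ in every model $\cc M$. Concretely, in any $\cc M\models T_{FPL}$, for any $w$ and any $\eta>0$ there is $z$ with $d(w,z)\le 48\,\Psi(w)+\eta$ and $\Psi(z)\le \tfrac12\Psi(w)+\eta$.

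Finally I would run an iteration and use metric completeness to reach an \emph{exact} zero of $\Psi$. Starting from $z_0=x$ and applying the consequence of $\sigma$ repeatedly with errors $\eta_k$ summable and comparable to $2^{-k}\Psi(x)$, I obtain $z_k$ with $\Psi(z_k)\lesssim 2^{-k}\Psi(x)$ and $d(z_k,z_{k+1})\lesssim 48\cdot 2^{-k}\Psi(x)$. Then $(z_k)$ is Cauchy, so by completeness it converges to some $z^\ast$; continuity of $\Psi$ gives $\Psi(z^\ast)=\lim_k\Psi(z_k)=0$, i.e. $z^\ast\in\mu(\cc M)$, while $d(x,z^\ast)\le\sum_k d(z_k,z_{k+1})\le C\,\Psi(x)$ for an absolute constant $C$. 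Thus $d(x,\mu(\cc M))\le C\,\Psi(x)$ in every model, and taking $\vp=\Psi$, $\delta=\eps/C$ verifies condition (3) of Theorem \ref{thm:definability}, so $\mu$ is definable. I expect the main obstacle to be precisely this last upgrade from ``$\Psi$ arbitrarily small'' to ``$\Psi$ exactly zero'': the transferred inequality only supplies near-modular elements, and producing an actual metrically modular element within distance $O(\Psi(x))$ of $x$ requires both the completeness of models of $T_{FPL}$ and the halving form of $\sigma$. (Equivalently, one could verify condition (5) by checking $\mu(\prod_{\cc U}\cc M_i)=\prod_{\cc U}\mu(\cc M_i)$ via \L{}os' theorem, where the same completeness argument supplies the nontrivial inclusion.)
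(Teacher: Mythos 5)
Your proposal is correct and follows essentially the same route as the paper: Proposition \ref{prop:distance-to-singular-controls-modularity} together with Corollary \ref{cor:metrically-modular-equals-singular} is fed into condition (3) of Theorem \ref{thm:definability}, with $\Psi(x)=\sup_y\vp(x,y)$ as the controlling formula. The paper's proof is a one-line appeal to those two results that leaves implicit the transfer from the finite lattices $P_n$ to arbitrary models of $T_{FPL}$; your halving sentence $\sigma$ and the completeness iteration upgrading ``$\Psi$ small'' to ``$\Psi$ exactly zero'' supply precisely the detail the paper elides, and they are carried out correctly.
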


\begin{proof}
    Indeed, the quoted results show that every ``approximately modular'' element of any finite partition lattice is uniformly close, independent of the size of the partition lattice, to a singular partition and that singular partitions are modular. 
\end{proof}

\begin{remark}
    By Theorem \ref{thm:definability}, it follows that we may quantify over modular elements in $T_{FPL}$. 
\end{remark}

\begin{prop}\label{prop:boolean-core}
    If $\cc M\models T_{FPL}$ and $\cc M$ is infinite, then $\mu(\cc M)$ is a Boolean sublattice. 
\end{prop}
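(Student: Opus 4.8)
The plan is to reduce the statement to the ultraproduct case already settled in Proposition \ref{prop:modular-ultraproduct} and then transport the Boolean structure to an arbitrary infinite model along elementary equivalence, using the definability of $\mu$ throughout. By Lemma \ref{lem:mu-definable} the functor $\mu$ is definable in $T_{FPL}$, so by Theorem \ref{thm:definability} three things hold uniformly across models of $T_{FPL}$: $\mu(\cc M)$ is a closed subset of $\cc M$, the distance predicate $P_\mu(z):=d(z,\mu(\cc M))$ is given by a fixed $\ff L$-formula, and the relativized quantifiers $\inf_{x\in\mu}$ and $\sup_{x\in\mu}$ may be used to build genuine $\ff L$-formulas. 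The same formulas compute $\mu$ in every model, which is what will make the transfer work.

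First I would record the Boolean-algebra properties of $\mu(\cc M)$ as the vanishing of a short list of sentences phrased \emph{without} the meet symbol, using only $+$, the rank $|z|=d(z,0)$, and quantification over $\mu$. The key device is the meet-free selector
\[
\chi(w;x,y):=\max\{\,d(w+x,x),\,d(w+y,y),\,\big||w|-|x|-|y|+|x+y|\big|\,\},
\]
whose zero set pins down the ambient meet of a modular pair: if $\chi(w;x,y)=0$ then $w\le x,y$, so $w\le xy$, while $|w|=|x|+|y|-|x+y|=|xy|$ (the last equality by modularity together with Proposition \ref{prop:metric-rank}.4), whence $w=xy$ by strict monotonicity (Proposition \ref{prop:metric-norm}.1). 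I then set $\sigma_+:=\sup_{x,y\in\mu}P_\mu(x+y)$, $\sigma_{\cdot}:=\sup_{x,y\in\mu}\inf_{w\in\mu}\chi(w;x,y)$, and
\[
\sigma_{\mathrm{dist}}:=\sup_{x,y,z\in\mu}\ \inf_{p,q,r\in\mu}\ \max\{\chi(p;x,y),\,\chi(q;x,z),\,\chi(r;x,y+z),\,d(r,p+q)\},
\]
together with the weak-complementation sentence $\sigma_{\mathrm{wc}}:=\sup_{x\in\mu}\inf_{y\in\mu}(1-d(x,y))$. Each of these is a bona fide $\ff L$-sentence, hence its value is preserved under elementary equivalence.

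Next I would pass to an ultraproduct. Since $\cc M\models T_{FPL}$ has density character at least $\aleph_0$ it is infinite, so it cannot be elementarily equivalent to any single finite $P_n$; a standard compactness/ultraproduct argument then yields a non-principal ultraproduct $\cc N=\prod_{\cc U}P_n$ with $\cc M\equiv\cc N$. By Proposition \ref{prop:modular-ultraproduct}, $\mu(\cc N)$ is a Boolean sublattice of $\cc N$, and in a Boolean (hence modular) sublattice the selector $\chi$ correctly identifies the ambient meet; thus $\sigma_+^{\cc N}=\sigma_{\cdot}^{\cc N}=\sigma_{\mathrm{dist}}^{\cc N}=\sigma_{\mathrm{wc}}^{\cc N}=0$ (for $\sigma_{\mathrm{wc}}$ use that each $x\in\mu(\cc N)$ has a complement at distance $1$ by Lemma \ref{lem:metric-complement}, with $d=d'$ on the modular set). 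Elementary equivalence gives the same values in $\cc M$.

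Finally I would decode these vanishings in $\cc M$. From $\sigma_+^{\cc M}=0$ and closedness of $\mu(\cc M)$ we get $x+y\in\mu(\cc M)$, so with $0,1\in\mu(\cc M)$ the set $\mu(\cc M)$ is a closed sub-semilattice, hence a complete metric lattice by Propositions \ref{prop:pre-lattice-complete} and \ref{prop:complete-is-lattice}. From $\sigma_{\cdot}^{\cc M}=0$ we obtain, for $x,y\in\mu(\cc M)$, elements $w_\eps\in\mu(\cc M)$ with $\chi(w_\eps;x,y)<\eps$; these converge to the ambient meet $xy$ (by Proposition \ref{prop:metric-rank}.2, $d(w_\eps,xy)\approx|xy|-|w_\eps|\to0$), so $xy\in\mu(\cc M)$ and the intrinsic and ambient meets agree, making $\mu(\cc M)$ a genuine sublattice. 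The sentence $\sigma_{\mathrm{dist}}^{\cc M}=0$ then forces $x(y+z)=xy+xz$, and $\sigma_{\mathrm{wc}}^{\cc M}=0$ together with Proposition \ref{prop:weak-complement} (now applicable, as $\mu(\cc M)$ is a distributive, metrically modular complete metric lattice) upgrades weak complementation to honest complementation; by Remark \ref{rem:boolean-algebra}, $\mu(\cc M)$ is a Boolean algebra, and completeness of $\cc M$ makes it a complete Boolean sublattice. I expect the two delicate points to be (i) expressing closure under meet and distributivity as honest $\ff L$-sentences that do not presuppose any global definability of meet on $\cc M$ — resolved by the rank-based selector $\chi$ — and (ii) producing the elementarily equivalent non-principal ultraproduct, where the density hypothesis is exactly what excludes the degenerate finite case $\cc M=P_n$ (for which $\mu=\Sg_n$ is not a sublattice).
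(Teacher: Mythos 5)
Your proposal is correct and, at its core, follows the same route as the paper: both arguments use definability of $\mu$ (Lemma \ref{lem:mu-definable}, Theorem \ref{thm:definability}) to make properties of $\mu(\cc M)$ expressible by $\ff L$-sentences, pass to an ultraproduct of finite partition lattices with sizes tending to infinity along the ultrafilter (the paper via an elementary embedding from \cite{goldbring}, you via elementary equivalence obtained from a finite-intersection-property argument --- either suffices, since only sentence values need to be transferred, and in both cases the infiniteness hypothesis is what rules out the degenerate case), and then invoke Proposition \ref{prop:modular-ultraproduct}. Where you differ is in implementation: the paper dispatches the expressibility of ``$\mu$ is a Boolean sublattice'' by citing Proposition \ref{prop:boolean-elementary}, whereas you hand-code meet-free relativized sentences via the selector $\chi$. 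This is a genuine service rather than redundancy: a model of $T_{FPL}$ is not a model of $T_{MML}$, so the definable meet predicate $\phi_m$ is not available ambiently, and relativizing the $T_{BML}$ axioms to $\mu$ really does require something like your rank-based encoding; your write-up makes explicit a point the paper leaves implicit.

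One step, however, is justified incorrectly and needs repair. You assert $w_\eps\to xy$ ``by Proposition \ref{prop:metric-rank}.2, $d(w_\eps,xy)\approx|xy|-|w_\eps|$,'' but that proposition applies only to comparable pairs, and $\chi(w_\eps;x,y)<\eps$ gives only that $w_\eps$ is \emph{approximately} below $x$ and $y$; in a general metric lattice approximate common lower bounds of near-maximal rank are not obviously close to the meet --- this is precisely the discontinuity of the meet that the whole paper is organized around. The gap is fillable using metric modularity of $x$ and $y$ themselves. Since $x\in\mu(\cc M)$, one has $|xw_\eps|=|x|+|w_\eps|-|x+w_\eps|>|w_\eps|-\eps$; then, since $y\in\mu(\cc M)$ and $|y+xw_\eps|\leq|y+w_\eps|<|y|+\eps$, one gets $|xyw_\eps|=|y|+|xw_\eps|-|y+xw_\eps|>|w_\eps|-2\eps$. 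Now $xyw_\eps\leq w_\eps$ and $xyw_\eps\leq xy$, so Proposition \ref{prop:metric-rank}.2 does apply and yields $d(w_\eps,xyw_\eps)<2\eps$ and $d(xy,xyw_\eps)=|xy|-|xyw_\eps|<3\eps$, where the last bound uses $|w_\eps|>|xy|-\eps$ (the rank clause of $\chi$ together with $|xy|=|x|+|y|-|x+y|$, valid because $(x,y)$ is a metrically modular pair). The triangle inequality gives $d(w_\eps,xy)<5\eps$. With this patch your decoding of $\sigma_{\cdot}$ --- and of $\sigma_{\mathrm{dist}}$, which relies on the same convergence --- is complete, and the remainder of your argument (closure under join, distributivity, and the upgrade from weak complementation via Proposition \ref{prop:weak-complement} and Remark \ref{rem:boolean-algebra}) stands as written.
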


\begin{proof}
    It is well known (see, for instance, \cite[Lemma 16.2.4]{goldbring}) that if $\cc M\models T_{FPL}$, then it admits an elementary embedding into an ultraproduct $\prod_{i\in U} P_{n_i}$. As $\cc M$ has density character at least $\aleph_0$, we must have that $\lim_{i\in U} n_i =\infty$. Since modularity and being a Boolean sublattice are definable properties by Lemma \ref{lem:mu-definable} and Proposition \ref{prop:boolean-elementary}, respectively, the result follows by Proposition \ref{prop:modular-ultraproduct}.
\end{proof}

\begin{question}
    Is $\mu(\cc M)$ definable for an arbitrary model of $T_{ML}$?
\end{question}

    Regarding this question, note that the formula $\vp$ (equation (\ref{eq: def-mm-eq})) is not enough to define or control the meet in $T_{ML}$, as it was showed in Remark \ref{rmk:meet-ctrl-fails}, but it is enough to show the definability of $\mu(\cc M)$ in models of $T_{FPL}$.

\subsection{Sets of selectors}\label{sec: set-sel}
\begin{defn}
    Let $\cc L$ be a metric lattice, and let us fix $x\in L$. We say that an element $y\in\mu(\cc L)$ is a \emph{selector} for $x$ if $y$ complements $x$. 
\end{defn}

As we will see, the set of selectors play an interesting role in the context of $T_{FPL}$. In the following results, we show that the set of selectors of a partition is a non-empty definable set and that such sets allows us to recover the partitions by comparing the usual distance $d(x,y)$ with the Hausdorff distance between sets of selectors. 

The following is a straightforward consequence of Lemma \ref{lem:metric-complement}.

\begin{lem}\label{lem:selector}
    We have that $y$ is a selector for $x$ if and only if $x+y=1$ and $|x| + |y| = 1$.
\end{lem}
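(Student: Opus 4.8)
The plan is to unwind the definition of a selector and reduce both directions to the characterization of complements in Lemma \ref{lem:metric-complement}, using the identity $d'(x,y) = 2|x+y|-|x|-|y|$ from Proposition \ref{prop:metric-rank}.3. First I would note that, by definition, a selector for $x$ is an element $y\in\mu(\cc L)$---that is, a metrically modular $y$---that complements $x$; so metric modularity of $y$ may be assumed on both sides of the equivalence, and the real content is that, for modular $y$, complementation is equivalent to the two numerical conditions $x+y=1$ and $|x|+|y|=1$.

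For the forward implication I would assume $y$ is a selector and exploit that $(x,y)$ is then a metrically modular pair, so that $|x+y|+|xy|=|x|+|y|$ by Definition \ref{defn:metric-mod-pair}. Substituting the complement relations $x+y=1$ and $xy=0$ (hence $|x+y|=|1|=1$ and $|xy|=0$) immediately yields $|x|+|y|=1$. Equivalently, one can invoke Lemma \ref{lem:metric-complement} to get $d'(x,y)=1$ and then read off $|x|+|y|=2|x+y|-d'(x,y)=2-1=1$.

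For the converse I would start from a metrically modular $y$ with $x+y=1$ and $|x|+|y|=1$. The cleanest route is to compute $d'(x,y)=2|x+y|-|x|-|y|=2-1=1$ and apply the first part of Lemma \ref{lem:metric-complement}, which gives that $y$ complements $x$ with no further hypothesis needed; since $y$ is already modular, it is a selector. Alternatively, metric modularity gives $|xy|=|x|+|y|-|x+y|=0$, so $xy=0$ because $|xy|=d(xy,0)$ and $d$ is a metric, and again $y$ complements $x$.

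I do not expect a genuine obstacle: the statement is bookkeeping around metric modularity and the definition of $d'$. The only point meriting care is tracking where modularity is actually used---it enters the backward direction to convert the rank identity into $xy=0$ (when arguing via Definition \ref{defn:metric-mod-pair}), and it is built into the notion of a selector on both sides of the equivalence.
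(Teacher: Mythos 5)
Your proof is correct and takes essentially the same route as the paper, which gives no details beyond citing Lemma \ref{lem:metric-complement}: the forward direction follows from metric modularity of $y$ (equivalently, the second half of that lemma), and the backward direction from computing $d'(x,y)=2|x+y|-|x|-|y|=2-1=1$ and applying the first half. Your reading that $y$ ranges over $\mu(\cc L)$ on both sides is the intended one and is exactly how the paper applies the lemma (e.g., in Lemma \ref{rem:partitions-have-selectors} and Proposition \ref{prop:distance-from-almost-selector-to-selector}); indeed, without that reading the right-to-left implication would fail, since a non-modular complement can satisfy both numerical conditions.
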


\begin{lem}\label{lem:partitions-have-selectors}
    Let $P_n$ be a finite partition lattice. We have that every element $x\in P_n$ admits a selector. The set of selectors is the set of all singular partitions whose main block intersects each block of $x$ at exactly one element.
\end{lem}

\begin{proof}
    Given a partition $x$ and a singular partition $y$ of $P_n$ whose main block $B_y$ intersects each block of $x$ at exactly one element we observe that $B_y$ contains all singletons of $x$ and since it intersects every other block of $x$, it is clear that $x+y=\{[[n]]\}=1$. On the other hand, we observe that $|B_y|=\langle x\rangle+[x]=\#x$; therefore, $\#y= n-\#x+1$. Consequently,
    \[|x|+|y|=\frac{n-\#x}{n-1}+\frac{n-(n-\#x+1)}{n-1}=\frac{n-\#x+\#x-1}{n-1}=\frac{n-1}{n-1}=1,\]
    so this direction follows by Lemma \ref{lem:selector}.

    Now suppose $y$ is a selector of $x$. As, by Corollary \ref{cor:metrically-modular-equals-singular}, $y$ is singular, and $xy=0, x+y=1$, we know that $$1=\#(x+y)=\#x-i(x,B_y)+1$$ 
    and 
    $$n=\#xy=\#y+i(x,B_y)-1=n-|B_y|+1+i(x, B_y)-1$$

    From the first equality it follows that $\#x=i(x,B_y)$, that is the main block intersects each block of $x$, and from the second that $\#x=i(x, B_y)=|B_y|$, so the main block of $y$ has exactly $\#x$ elements. This concludes the proof.
\end{proof}

The main result of this section is that the same is true for any pseudofinite partition lattice.

\begin{prop}\label{prop:tfpl-selector}
    If $\cc M\models T_{FPL}$, then every element $x\in M$ admits a selector. 
\end{prop}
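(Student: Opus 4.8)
The plan is to transfer the finite statement (Lemma \ref{rem:partitions-have-selectors}) to $\cc M$ and then upgrade the resulting approximate selectors to an exact one using metric completeness. Using Lemma \ref{lem:selector}, for $x\in M$ and a modular $z\in\mu(\cc M)$ set
\[\mathrm{def}(x,z) := \max\{d(x+z,1),\ (|x|+|z|)\dminus 1\},\]
so that $z$ is a selector for $x$ exactly when $z\in\mu(\cc M)$ and $\mathrm{def}(x,z)=0$. Since $\mu$ is a definable functor (Lemma \ref{lem:mu-definable}), Theorem \ref{thm:definability} permits quantification over modular elements, so $\inf_{z\in\mu}\mathrm{def}(x,z)$ and formulas built from it are $T_{FPL}$-formulas. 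As every element of a finite partition lattice has a selector, the sentence $\sup_x\inf_{z\in\mu}\mathrm{def}(x,z)$ evaluates to $0$ on each $P_n$, hence lies in $T_{FPL}$ and evaluates to $0$ on $\cc M$. Thus every $x\in M$ has modular approximate selectors of arbitrarily small defect, and the remaining task is to produce an exact one.

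The crux is a quantitative ``repair'' estimate, uniform in $n$: there is a constant $C$ so that for every $x\in P_n$ and every singular $y$ there is a \emph{selector} $y'$ of $x$ with $d(y,y')\le C\,\mathrm{def}(x,y)$. The idea is to read off from $\mathrm{def}(x,y)$ the number of blocks of $x$ that the basic block $B_y$ either misses or meets at least twice: by Lemma \ref{lem:singular-partition-lemma} the covering term $d(x+y,1)=\frac{\#(x+y)-1}{n-1}$ counts, up to the normalization $n-1$, the blocks of $x$ not connected by $B_y$, while $(|x|+|y|)\dminus 1$ counts the excess elements of $B_y$ responsible for multiple hits. One then repairs $B_y$ by adjoining one element from each missed block and deleting one element from each over-hit block; each single-element modification changes the partition by $O(1/(n-1))$ in the metric, and the number of modifications is $O\big((n-1)\,\mathrm{def}(x,y)\big)$, giving $d(y,y')=O(\mathrm{def}(x,y))$. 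Carrying out this bookkeeping, in the style of Lemmas \ref{lem:distance-to-singular} and \ref{lem:singleton-difference}, is where the real work lies, and I expect it to be the main obstacle: the estimate must be fully uniform in $n$ and must control the covering and measure defects \emph{simultaneously} while keeping $y'$ singular.

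Granting the repair estimate, it yields for each $m$ that the sentence
\[\sigma_m := \sup_x\sup_{y\in\mu}\inf_{z\in\mu}\max\{\mathrm{def}(x,z),\ d(y,z)\dminus C\,\mathrm{def}(x,y)\}\]
takes the value $0$ on every $P_n$ (choose $z$ to be the repaired selector $y'$), hence $\sigma_m^{\cc M}=0$. Unwinding this, $\sigma_m^{\cc M}=0$ says exactly that for all $x$, all modular $y$, and every $\eta>0$ there is a modular $z$ with $\mathrm{def}(x,z)<\eta$ and $d(y,z)<C\,\mathrm{def}(x,y)+\eta$: approximate selectors can be improved while moving a controlled amount.

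Finally I would run a completeness upgrade. Starting from an approximate selector $y_1$, iterate the previous statement to obtain modular $y_k$ with $\mathrm{def}(x,y_{k+1})\le\tfrac12\mathrm{def}(x,y_k)$ and $d(y_k,y_{k+1})\le C\,\mathrm{def}(x,y_k)+2^{-k}$. Then $\mathrm{def}(x,y_k)\to 0$ geometrically and $\sum_k d(y_k,y_{k+1})<\infty$, so $(y_k)$ is Cauchy and converges to some $y_\infty$ by completeness of $\cc M$. Since $\mu(\cc M)$ is closed (being the value of a definable functor) we get $y_\infty\in\mu(\cc M)$, and since $\mathrm{def}(x,\cdot)$ is continuous, $\mathrm{def}(x,y_\infty)=0$. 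By Lemma \ref{lem:selector}, $y_\infty$ is a selector for $x$, as desired.
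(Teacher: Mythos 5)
Your proposal is correct, and its engine is the same as the paper's: prove a uniform combinatorial ``repair'' statement in the finite lattices $P_n$, transfer it to $\cc M$ as a $T_{FPL}$-sentence quantifying over the definable set $\mu(\cc M)$ (Lemma \ref{lem:mu-definable} together with Theorem \ref{thm:definability}), then iterate the resulting approximate statement into an exact selector using completeness. The difference is in the decomposition. The paper splits the defect in two and repairs sequentially: Lemma \ref{lem:almost-selector-to-selector} and Corollary \ref{cor:z-covers-y} first produce a modular $z$ with $x+z=1$ \emph{exactly}, and then Lemma \ref{lem:axiom-behavior-almost-selectors} and Proposition \ref{prop:distance-from-almost-selector-to-selector} shrink the rank defect $|1-|x|-|z||$ while maintaining exact covering as an invariant of the iteration, with Cauchyness supplied by Lemma \ref{lem:almost-decreasing-is-cauchy}. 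You collapse both defects into the single quantity $\mathrm{def}(x,z)$, perform one repair, and let both defects decay geometrically in a standard telescoping Cauchy argument. Your version is leaner if bare existence is the goal; the paper's two-stage version produces the quantitative estimate $d(z,w)\leq 4|1-|y|-|z||$ of Proposition \ref{prop:distance-from-almost-selector-to-selector}, which is precisely what is reused afterwards to prove definability of the selector sets $\G(x)$ (Corollary \ref{cor:def-of-selectors}), so the extra structure is not wasted.

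Two technical remarks. First, the bookkeeping you flag as the main obstacle does close, by exactly the computation in the paper's proof of Lemma \ref{lem:axiom-behavior-almost-selectors}: with $k=\#x-i(x,B_y)$ (missed blocks) and $l=|B_y|-i(x,B_y)$ (excess elements), one has $d(x+y,1)=k/(n-1)$ but $(|x|+|y|)\dminus 1=(l-k)^+/(n-1)$ --- it counts the excess \emph{minus} the misses, not the excess itself, and the deletions are ``all but one element from each over-hit block,'' so $l$ in total --- yet the repaired selector $y'$ (keep one element per hit block, adjoin one per missed block) still satisfies $d(y,y')=(k+l)/(n-1)\leq 3\,\mathrm{def}(x,y)$, so $C=3$ works. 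Second, your $\mathrm{def}$ penalizes only $|x|+|z|>1$; this one-sided truncation is harmless, but it deserves a line: when $x+y_\infty=1$, Proposition \ref{prop:metric-rank}.4 gives $|x|+|y_\infty|\geq |x+y_\infty|+|xy_\infty|\geq 1$, so $\mathrm{def}(x,y_\infty)=0$ really does yield $|x|+|y_\infty|=1$ and hence a selector via Lemma \ref{lem:selector}.
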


In order to prove this proposition, we will first require a few intermediate results.

\begin{lem}\label{lem:almost-selector-to-selector}
    Let $\cc M\models T_{FPL}$, and let $x\in\cc M$ and $y\in\mu(\cc M)$. We have that
     \[\inf_{z\in\mu(\cc M)} \max\left\{d(z,y)\dminus d(x+y,1), d(x+z,1)\right\}=0.\]
\end{lem}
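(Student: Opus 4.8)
The plan is to reduce the statement to a finite combinatorial fact about the lattices $P_n$ and then exhibit the required modular element explicitly. By Lemma \ref{lem:mu-definable} the set $\mu(\cc M)$ is definable in $T_{FPL}$, so Theorem \ref{thm:definability} lets us form the definable predicate $\Psi(x,y) := \inf_{z\in\mu}\max\{d(z,y)\dminus d(x+y,1),\, d(x+z,1)\dminus\tfrac14 d(x+y,1)\}$ and hence the sentence $\sg := \sup_x\sup_{y\in\mu}\Psi(x,y)$. Since $\sg\geq 0$, by the definition of $T_{FPL}$ it suffices to show $\sg^{P_n}=0$ for every $n$; as $\mu(P_n)=\Sg_n$ by Corollary \ref{cor:metrically-modular-equals-singular}, this amounts to producing, for each $x\in P_n$ and each singular $y$, a singular $z$ for which both truncated differences vanish.

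Fix such $x$ and $y$, and let $B_y$ be the basic block of $y$. Write $k := \#x - i(x,B_y)$ for the number of blocks of $x$ disjoint from $B_y$. By Lemma \ref{lem:singular-partition-lemma} we have $\#(x+y)=\#x-i(x,B_y)+1=k+1$, so $d(x+y,1)=\frac{\#(x+y)-1}{n-1}=\frac{k}{n-1}$. I would then define $z$ to be the singular partition whose basic block is $B_z := B_y\cup\{e_1,\dots,e_k\}$, where $e_j$ is a chosen representative of the $j$-th block of $x$ disjoint from $B_y$; the $e_j$ are distinct and lie outside $B_y$, so $|B_z|=|B_y|+k$. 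By construction $B_z$ meets every block of $x$, i.e. $i(x,B_z)=\#x$, whence $x+z=1$ and the second coordinate $d(x+z,1)=0$ lies below $\tfrac14 d(x+y,1)$.

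For the first coordinate, note that $B_y\subseteq B_z$ gives $y\leq z$, so Proposition \ref{prop:metric-rank}.2 yields $d(z,y)=|z|-|y|=\frac{\#y-\#z}{n-1}$. Since $\#y=n-|B_y|+1$ and $\#z=n-|B_z|+1=n-|B_y|-k+1$, we get $d(z,y)=\frac{k}{n-1}=d(x+y,1)$, so $d(z,y)\dminus d(x+y,1)=0$ as well. Hence $\Psi^{P_n}(x,y)=0$ (the infimum is in fact attained), so $\sg^{P_n}=0$ for all $n$, $\sg\in T_{FPL}$, and the lemma follows.

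The argument is not deep, and the points requiring care are logical rather than combinatorial: one must verify that the displayed quantity is genuinely a definable predicate (so that the reduction to the finite $P_n$ via the definition of $T_{FPL}$ is legitimate), and one must treat the degenerate case $y=0$, where $B_0=\{1\}$ is a singleton, separately—there the same recipe applies with $k=\#x-1$. I note that the factor $\tfrac14$ plays no role in the finite verification, where both terms vanish exactly; it is retained as slack for the iteration in Proposition \ref{prop:tfpl-selector}, where geometric decay of $d(x+z,1)$ against the controlled displacement $d(z,y)$ forces a Cauchy sequence of modular elements converging to an honest selector.
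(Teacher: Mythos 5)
Your proof is correct and takes essentially the same route as the paper's: reduce to the finite lattices $P_n$ (the paper leaves this reduction as ``the result easily follows,'' while you justify it explicitly via definability of $\mu$ and the definition of $T_{FPL}$), then construct $z$ by enlarging $B_y$ with one representative from each of the $k$ blocks of $x$ disjoint from $B_y$, giving $x+z=1$ and $d(z,y)=d(x+y,1)$ exactly. The paper's computation is identical up to the indexing convention ($P_{n+1}$ with denominator $n$ versus your $P_n$ with denominator $n-1$).
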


\begin{proof}
    We claim that for any finite partition lattice $P$ and any $x\in P$ and $y\in \mu(P)$, there is $z\in \mu(P)$ so that $d(y,z)\leq d(x+y,1)$ and $d(x+z,1)=0$. From this, the result easily follows.

    Let $P_{n+1}$ be the finite partition lattice on $n+1$ elements, and let us fix a partition $x$ and a singular partition $y$. By Corollary \ref{cor:metrically-modular-equals-singular} we know that every element of $\mu(P_{n+1})$ is a singular partition. Now, using Lemma \ref{lem:singular-partition-lemma} we can compute
     \[d(x+y,1)=1-|x+y|=1-\frac{n+1-\#(x+y)}{n}=\frac{\#x-i(x, B_y)}{n}.\] 
    Defining $k := \#x- i(x,B_y)$, we have that $d(x+y,1)=\frac{k}{n}$. Observe that from Lemma \ref{lem:singular-partition-lemma} it also follows that $x+y=1$ if and only if $i(x,B_y)=\#x$. By the definition of $i(x,B_y)$ there are $k$ blocks of $x$ that do not intersect $B_y$; denote them by $C_1,..., C_k$. Taking $c_i\in C_i$ for $1\leq i\leq k$ we define a new singular partition $z$ where $B_z:= B_y\cup\{c_i\}_{i=1}^k$. We see that $i(x,B_z)=\#x, y\leq z$, and $\#z=\#y-k$, so we conclude that $x+z=1$ and 
     \[d(z,y)=|z|-|y|=\frac{n+1-\#z}{n}-\frac{n+1-\#y}{n}=\frac{\#y-(\#y-k)}{n}=\frac{k}{n}\leq d(x+y,1),\]
     finishing the proof.
\end{proof}

\begin{cor}\label{cor:z-covers-y}
    Let $\cc M\models T_{FPL}$. For any $x\in M$, $y\in\mu(\cc M)$ and $\epsilon>0$, there is $z\in \mu(M)$ so that $x+z=1$ and $d(y,z)\leq (1+\epsilon)\,d(x+y,1)$.
\end{cor}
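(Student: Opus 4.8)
The plan is to drive the ``defect'' $d(x+y,1)$ of $y$ down to zero by iterating Lemma \ref{lem:almost-selector-to-selector}, producing a Cauchy sequence of metrically modular elements whose limit is the desired selector. The two features that make this work are that every model of $T_{FPL}$ is a complete metric lattice (it models $T_{ML}$, since each $P_n$ does, so $\cc M$ is complete and the join is uniformly continuous), and that $\mu(\cc M)$ is closed, being the zero set of a formula by Lemma \ref{lem:mu-definable}. The contraction factor $\tfrac14$ in the lemma is exactly what will make both the defect and the total displacement geometrically summable.

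Write $b := d(x+y,1)$. If $b=0$ then $x+y=1$ and $z:=y$ already works, so assume $b>0$. Fix a small parameter $\eta>0$ and set $\e_n := \eta\,4^{-n}$. Put $z_{-1}:=y$ and $a_{-1}:=b$. Applying Lemma \ref{lem:almost-selector-to-selector} to the pair $(x,z_{n-1})$ with tolerance $\e_n$ — legitimate at each stage since $z_{n-1}\in\mu(\cc M)$ — yields $z_n\in\mu(\cc M)$ with
\[ d(z_n,z_{n-1}) < d(x+z_{n-1},1)+\e_n, \qquad d(x+z_n,1) < \tfrac14\, d(x+z_{n-1},1)+\e_n. \]
Setting $a_n := d(x+z_n,1)$, the second inequality is the recursion $a_n < \tfrac14 a_{n-1}+\e_n$, which unrolls to $a_n < 4^{-(n+1)}b + O(\eta)$, so $a_n\to 0$; the first inequality is the step bound $d(z_n,z_{n-1}) < a_{n-1}+\e_n$.

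Since $\sum_n(a_{n-1}+\e_n)<\infty$ (two convergent geometric-type series), $(z_n)$ is Cauchy, so $z:=\lim_n z_n$ exists by completeness and lies in $\mu(\cc M)$ by closedness. Continuity of the join and the metric gives $d(x+z,1)=\lim_n a_n=0$, that is $x+z=1$; note this uses that the $a_n$ genuinely vanish, so $z$ is an \emph{exact} complement rather than an approximate one. Summing the step bounds and telescoping to the limit,
\[ d(y,z) \leq \sum_{n=0}^{\infty} d(z_{n-1},z_n) \leq b + \sum_{n=0}^{\infty} a_n + \sum_{n=0}^{\infty}\e_n. \]
Solving the recursion bounds $\sum_n a_n$ by $\tfrac13 b + O(\eta)$, whence $d(y,z)\leq \tfrac43 b + O(\eta)$. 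As $\tfrac43 b < 4b$, taking $\eta$ small enough gives $d(y,z)\leq 4\,d(x+y,1)$.

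The main point requiring care is the bookkeeping of the tolerances $\e_n$: they must be chosen summable and small relative to $b$ so that simultaneously $(z_n)$ converges, the defects $a_n$ actually reach $0$, and the accumulated travel distance stays under the target. The generous constant $4$ in the statement (against the $\tfrac43$ actually produced) leaves ample room to absorb these errors.
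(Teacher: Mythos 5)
Your proposal is correct and takes essentially the same approach as the paper: both iterate Lemma \ref{lem:almost-selector-to-selector} to build a Cauchy sequence in $\mu(\cc M)$, then use completeness of $\cc M$ and closedness of $\mu(\cc M)$ to pass to a limit $z$ which is an exact complement ($x+z=1$) within the required distance. The only difference is bookkeeping: the paper folds the tolerances into the induction, demanding $d(z_{n+1},z_n)\leq 2^{1-n}\,d(x+y,1)$ and $d(x+z_{n+1},1)\leq 2^{-(n+1)}\,d(x+y,1)$ so that the travel sums to exactly $4\,d(x+y,1)$, whereas you track explicit tolerances $\e_n=\eta\,4^{-n}$ and obtain the sharper bound $\tfrac{4}{3}\,d(x+y,1)+O(\eta)$, absorbing the error by choosing $\eta$ small relative to $d(x+y,1)>0$.
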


\begin{proof}
    Given such $x,y\in M$, if $x+y=1$, the claim is immediately true, so let's suppose $x+y\neq1$, i.e., $d(x+y,1)>0$, and pick $\e>0$ by Lemma \ref{lem:almost-selector-to-selector} we may inductively construct a sequence $(z_n)$ in $\mu(\cc M)$ in the following way.
    \begin{itemize}
        \item Let $z_0 = y$.
        \item Given $z_n$, take $z_{n+1}$ such that $d(z_{n+1},z_n)\leq d(x+z_n,1)+2^{-(n+1)}\epsilon d(x+y,1)$
    and $d(x+z_{n+1},1)\leq 2^{-(n+2)}\epsilon d(x+y,1)$.
    \end{itemize}
    By construction we have that $d(z_{n+1},z_n)\leq2 \cdot2^{-(n+1)}\epsilon d(x+y,1)=2^{-n}\e d(x+y,1)$ for $n>1$. Thus, by the triangle inequality, $(z_n)$ is Cauchy and converges to $z\in\mu(\cc M)$ with $x+z=1$ and $d(z,y)\leq d(x+y,1)+\epsilon d(x+y,1)$, proving the result.
\end{proof}

We define the following $\ff L$-formula for models of $T_{FPL}$:
    \begin{equation*}
        \begin{split}
             \chi_z(y):=\inf_{w\in\mu(\cc M)} \max\{d(z,w)\dminus &|1-|y|-|z||,\:  d(z+w,z),\\
                &d(w+y,1),\:|1-|w|-|y||\}.
        \end{split}
    \end{equation*}

\begin{lem}\label{lem:axiom-behavior-almost-selectors}
    The following formula holds in $T_{FPL}$:
     \[\sup_y\sup_{z\in\mu(\cc M)}\chi_z(y)\dminus 2d(y+z,1)=0.\]
\end{lem}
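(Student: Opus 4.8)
The plan is to reduce the identity to an explicit computation in each finite partition lattice $P_n$ and then transfer it to all of $T_{FPL}$. Since $\mu$ is a definable functor in $T_{FPL}$ by Lemma \ref{lem:mu-definable}, Theorem \ref{thm:definability}(2) lets us replace the bound quantifiers $\inf_{w\in\mu}$ and $\sup_{z\in\mu}$ by genuine $T_{FPL}$-formulas; hence the left-hand side is $T_{FPL}$-equivalent to an honest $\ff L$-sentence $\sigma$. Because the body $\chi_z(y)\dminus 2d(y+z,1)$ is nonnegative by the definition of $\dminus$, it suffices to prove $\sigma^{P_n}=0$ for every $n$: by the definition of $T_{FPL}$, a sentence vanishing on every $P_n$ vanishes in every model. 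Recall that $\mu(P_n)=\Sg_n$ by Corollary \ref{cor:metrically-modular-equals-singular}, so in $P_n$ these quantifiers range over singular partitions and the computation reduces to a purely combinatorial claim.

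Fix $n$, a partition $y\in P_n$, and a singular $z\in\Sg_n$ with basic block $B_z$; write $b=|B_z|$, $i=i(y,B_z)$, and $k=\#y-i$. Using $\#z=n-b+1$ and Lemma \ref{lem:singular-partition-lemma}, one computes $d(y+z,1)=\frac{k}{n-1}$ and $\alpha:=\big|1-|y|-|z|\big|=\frac{|\#y-b|}{n-1}$. Since $\chi_z(y)$ is an infimum over $w\in\Sg_n$, it is enough to exhibit a single witness $w$ with $\max\{\cdots\}\le 2d(y+z,1)$. I would take $w$ to be the singular partition whose basic block $B_w$ is obtained by choosing one element of $B_z$ from each of the $i$ blocks of $y$ that meet $B_z$; then $B_w\subseteq B_z$, $|B_w|=i$, and $B_w$ meets each of those $i$ blocks of $y$ exactly once.

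With this $w$ the four terms are checked directly from Lemma \ref{lem:singular-partition-lemma}. Since $B_w\subseteq B_z$ we have $w\le z$, so $d(z+w,z)=0$. As $B_w$ meets exactly $i$ blocks of $y$ once each, $\#(w+y)=\#y-i+1$, giving $d(w+y,1)=\frac{k}{n-1}=d(y+z,1)$ and $\big|1-|w|-|y|\big|=\frac{k}{n-1}$, so the third term equals $d(y+z,1)$ and the fourth is $\frac{k}{n-1}\dminus\frac14\alpha\le d(y+z,1)$. For the first term, $w\le z$ gives $d(z,w)=|z|-|w|=\frac{b-i}{n-1}$, and after clearing denominators the required bound $d(z,w)\dminus\alpha\le 2d(y+z,1)$ reduces to the integer inequality $b-i\le|\#y-b|+2(\#y-i)$; this holds since $b-i=(b-\#y)+(\#y-i)\le|\#y-b|+2(\#y-i)$, using $b-\#y\le|\#y-b|$ and $\#y-i\ge0$. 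Thus every term of the $\max$ is at most $2d(y+z,1)$, whence $\chi_z(y)\le 2d(y+z,1)$ and $\sigma^{P_n}=0$.

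This is essentially an explicit construction plus four short estimates, so the only real work is the bookkeeping for the first term and confirming that the degenerate cases remain consistent with the formulas (namely $k=0$, i.e.\ $y+z=1$ already, and $i=1$, i.e.\ $w=0$); in both the displayed identities stay valid. The conceptual step --- that quantification over $\mu$ is legitimate and that checking the finite lattices $P_n$ suffices --- is handled uniformly by definability of $\mu$ together with the definition of $T_{FPL}$.
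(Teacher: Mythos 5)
Your proof is correct, and its skeleton matches the paper's: use definability of $\mu$ (Lemma \ref{lem:mu-definable}, Theorem \ref{thm:definability}) together with the definition of $T_{FPL}$ to reduce to checking each finite $P_n$, where $\mu(P_n)=\Sg_n$, and then exhibit an explicit singular witness $w$. But your witness is genuinely different from the paper's. The paper sets $B_w=(B_z\setminus\{x_1,\dotsc,x_l\})\cup\{c_1,\dotsc,c_k\}$, deleting from $B_z$ the $l$ elements that share a block of $y$ with another element of $B_z$ and adding one element from each of the $k$ blocks of $y$ missed by $B_z$; this makes $w$ an exact selector of $y$, so the third and fourth terms of the $\max$ vanish identically, while $d(z+w,z)=d(y+z,1)$ and $d(z,w)=(k+l)/n\le 2\,d(y+z,1)+|1-|y|-|z||$ in the paper's normalization. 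You instead shrink $B_z$ to one representative per block of $y$ meeting it, so that $w\le z$: then the second term vanishes, the third and fourth terms equal $d(y+z,1)$ exactly, and only the first term needs work, reducing to your (correct) integer inequality $b-i\le|\#y-b|+2(\#y-i)$. Both verifications are complete, and your bookkeeping---including the formula $d(z,w)=|z|-|w|$ for $w\le z$ and the degenerate cases $k=0$, $i=1$---is sound. What the paper's choice buys is a stronger intermediate fact, namely an honest selector of $y$ at controlled distance from $z$, which foreshadows the iteration in Proposition \ref{prop:distance-from-almost-selector-to-selector}; what your choice buys is economy, since $w\le z$ makes half the estimates trivial and avoids singling out the $l$ redundant elements. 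Your explicit treatment of the model-theoretic transfer (replacing quantification over $\mu$ by genuine formulas before invoking vanishing on all $P_n$) is also more careful than the paper's one-line reduction, and is a welcome clarification.
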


\begin{proof}
    We claim that for any finite partition lattice $P = P_{n+1}$ and any $y\in P$ and $z\in \mu(P)$, there is $w\in \mu(P)$ so that
    \begin{equation*}
        \begin{split}
            d(w,z) &\leq 2\, d(y+z,1)+|1-|y|-|z||,\\
            d(z+w,z) &= d(y+z,1),\ \textup{and}\\
            d(w+y,1)&=|1-|w|-|y||=0.
        \end{split}
    \end{equation*}
    From this the result easily follows.

     We fix $y$ and $z$ as above, and define the numbers 
     \[k:=\#y-i(y,B_z), \quad l:=|B_z|-i(y,B_z).\]
     Note that \[d(y+z,1)=\frac{\#(y+z)-1}{n}=\frac{\#y-i(y, B_z)}{n}=\frac{k}{n}\]
     and 
     \[|1-|y|-|z||=\left|\frac{n-(n+1-\#y)-(n+1-\#z)}{n}\right|=\left|\frac{\#y-1-(|B_z|-1)}{n}\right|=\frac{|k-l|}{n}.\] 
     By the definition of $i(y,B_z)$ we get that there are $k$ blocks, $C_1,\dotsc,C_k$ of $y$ which do not intersect $B_z$ and $l$ additional elements, $x_1,\dotsc,x_l$ of $B_z$ which belong to blocks whose intersection with $B_z$ contains at least one other element. Taking an element $c_i\in C_i$ from each of the $k$ blocks, as we did in the proof of Lemma \ref{lem:almost-selector-to-selector}, we let $w$ be the singular partition for which $B_w = (B_z\setminus\{x_1,\dotsc,x_l\})\cup\{c_1,\dotsc,c_k\}$. By construction we have that $|B_w|=\#y=|B_z|+k-l$, $y+w=1$, $1=|y|+|w|$, and $z+w$ is a singular partition. Finally, we note that 
     \begin{equation*}
         \begin{split}
             d(z,w)=|z+w|-|zw| &=\frac{|B_{z+w}-B_{zw}|}{n}\\
             &=\frac{|B_z|+k-(|B_z|-l)}{n}\\
             &=\frac{2k+l-k}{n}\leq 2\,d(y+z,1)+|1-|y|-|z||,
         \end{split}
     \end{equation*}
     and
     \[d(z+w,z)=|z+w|-|z|=\frac{|B_{z+w}|-|B_z|}{n}=\frac{|B_z|+k-|B_z|}{n}=\frac{k}{n}= d(y+z,1),\] as desired. \qedhere

\end{proof}

\begin{lem}\label{lem:almost-decreasing-is-cauchy}
    Let $\cc L$ be a metric lattice and $\sum_{n=0}^\infty a_n$ a convergent series of non-negative real numbers. If $(x_n)$ is a sequence in $\cc L$ such that \[d(x_{n+1}+x_n, x_n)\leq a_n\] for all $n\in\bb N$, then $(x_n)$ is Cauchy.
\end{lem}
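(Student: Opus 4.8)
The plan is to control the sequence through its \emph{finite} partial joins, so that metric completeness of $\cc L$ is never needed. First I would introduce $S_{n,m} := \sum_{k=n}^{m} x_k$ for $m \geq n$ and observe that consecutive partial joins are close. Since $x_m \leq S_{n,m}$ we have $S_{n,m} + x_{m+1} = (x_m + x_{m+1}) + S_{n,m}$ and $S_{n,m} = x_m + S_{n,m}$, so applying the join-contractivity axiom (Definition \ref{defn:metric-lattice}.2) with the common summand $S_{n,m}$ gives
\[d(S_{n,m+1}, S_{n,m}) = d((x_m + x_{m+1}) + S_{n,m},\, x_m + S_{n,m}) \leq d(x_m + x_{m+1}, x_m) \leq a_m.\]
Because $S_{n,m} \leq S_{n,m+1}$, Proposition \ref{prop:metric-rank}.2 rewrites this as $|S_{n,m+1}| - |S_{n,m}| \leq a_m$, and telescoping yields $|S_{n,m}| - |x_n| \leq \sum_{k=n}^{m-1} a_k \leq r_n$, where $r_n := \sum_{k\geq n} a_k \to 0$ by convergence of the series. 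Since $x_n \leq S_{n,m}$, this is precisely $d(x_n, S_{n,m}) \leq r_n$ for every $m \geq n$.

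Next I would establish that the ranks $|x_n|$ converge. The case $m = n+1$ above, together with monotonicity of $|\cdot|$ (Proposition \ref{prop:metric-norm}.1), gives $|x_{n+1}| \leq |x_{n+1} + x_n| = |x_n| + d(x_{n+1}+x_n, x_n) \leq |x_n| + a_n$. Hence $b_n := |x_n| + r_n$ satisfies $b_{n+1} - b_n = (|x_{n+1}| - |x_n|) - a_n \leq 0$; being nonincreasing and bounded below by $0$, the sequence $(b_n)$ converges, and since $r_n \to 0$ the ranks $|x_n|$ converge to a common limit $\alpha$. I expect this to be the only genuine subtlety: the hypothesis forces $|x_n|$ to rise by at most a summable amount but a priori permits it to drift downward, so without the auxiliary monotone sequence $(b_n)$ one cannot pin down the ranks, and rank convergence is exactly what is needed to compare $x_n$ with $x_m$ in the final step.

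Finally I would combine the two ingredients. For $m \geq n$ both $x_n$ and $x_m$ lie below $S_{n,m}$, so by Proposition \ref{prop:metric-rank}.2,
\[d(x_m, S_{n,m}) = |S_{n,m}| - |x_m| = (|S_{n,m}| - |x_n|) + (|x_n| - |x_m|) \leq r_n + \big||x_n| - |x_m|\big|,\]
and therefore the triangle inequality gives
\[d(x_n, x_m) \leq d(x_n, S_{n,m}) + d(S_{n,m}, x_m) \leq 2\,r_n + \big||x_n| - |x_m|\big|.\]
Given $\varepsilon > 0$, convergence of $\sum a_n$ makes $r_n < \varepsilon/4$ for large $n$, while convergence of $(|x_n|)$ makes $\big||x_n| - |x_m|\big| < \varepsilon/2$ for large $n,m$; hence $d(x_n, x_m) < \varepsilon$, proving that $(x_n)$ is Cauchy. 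I would emphasize that only finite joins and the real convergence of $(|x_n|)$ are used, so the conclusion holds for an arbitrary metric lattice $\cc L$, as stated.
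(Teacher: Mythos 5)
Your proof is correct and follows essentially the same route as the paper's: telescoping the join-contraction axiom along the partial joins $S_{n,m}$, deducing convergence of the ranks $|x_n|$, and combining the two estimates to get Cauchyness. The differences are minor bookkeeping — you compare $x_n$ and $x_m$ through the common upper bound $S_{n,m}$ via the triangle inequality (where the paper instead bounds $d(x_n, x_n+x_{n+k})$ with an additional ``descending'' telescoping sum and concludes through $d'$), and you get rank convergence from the monotone auxiliary sequence $|x_n| + r_n$ rather than a liminf/limsup sandwich; both variants are sound, and yours is marginally more economical.
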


\begin{proof}
    Let $x_{(n,k)} := x_n + x_{n+1} + \dotsb + x_{n+k}$. We have by the triangle inequality and the property that $d(x+z,y+z)\leq d(x,y)$ that 
     \begin{equation*}
        \begin{split}
            &d(x_n, x_n + x_{n+k})\\
            &\leq \sum_{i=0}^{k-1} d(x_{(n,i)}, x_{(n,i+1)}) + \sum_{j=1}^{k-1} d(x_{(n,k-j)}+x_{n+k}, x_{(n,k-j-1)} + x_{n+k})\\
            &\leq \sum_{i=0}^{k-1} d(x_{n+i}, x_{n+i} + x_{n+i+1}) + \sum_{j=1}^{k-1} d(x_{n+k-j} + x_{n+k-j-1}, x_{n+k-j-1})\\
            &\leq \sum_{i=0}^{k-1} a_{n+i}+\sum_{j=1}^{k-1} a_{n+k-j-1} \leq 2\sum_{i=n}^\infty a_n.
        \end{split}
    \end{equation*}
    We conclude that \[0\leq |x_{n+k} + x_n| - |x_n|\leq d(x_n, x_n + x_{n+k}) \leq 2\sum_{i=n}^\infty a_n,\] hence 
    \begin{equation}\label{eq:sandwich}
        |x_{n+k}| \leq |x_{n+k} + x_n|\leq |x_n| + 2\sum_{i=n}^\infty a_n
    \end{equation}
     for all $n,k\in\bb N$. Setting $\al = \liminf_n |x_n|$, it follows that $\lim_n |x_n| = \al$; thus, $(|x_n|)$ is a Cauchy sequence. It now follows from (\ref{eq:sandwich}) that
    \[\lim_n d'(x_{n+k}, x_n) = \lim_n\left(2|x_{n+k} + x_n| - |x_{n+k}| - |x_n|\right)=0\]
    for all $k\in\bb N$. Thus, $(x_n)$ is Cauchy in $d'$, hence for $d$ as well. \qedhere
\end{proof}

\begin{notation}
    For $\cc M\models T_{FPL}$ and $x\in M$, we write $\G(x)$ for the set of all selectors of $x$ in $M$.
\end{notation}

The next proposition is a quantitative version of Proposition \ref{prop:tfpl-selector}, so easily implies that result. 

\begin{prop}\label{prop:distance-from-almost-selector-to-selector}
    Let $\cc M\models T_{FPL}$ and $\e>0$. If $y\in \cc M, z\in \mu(\cc M)$ and $y+z=1$, then there exists $w\in\mu(\cc M)$ such that $w\in\Gamma(y)$ and $d(z,w)\leq (1+\e)|1-|y|-|z||$.  
\end{prop}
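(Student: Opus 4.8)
The plan is to obtain the selector $w$ as the limit of an iteratively corrected sequence $(z_n)$ in $\mu(\cc M)$. Write $\delta := |1-|y|-|z||$; by Lemma \ref{lem:selector} it suffices to find $w\in\mu(\cc M)$ with $y+w=1$, $|y|+|w|=1$, and $d(z,w)\le 4\delta$. If $\delta=0$ then $z$ is already a selector and we take $w=z$, so assume $\delta>0$. Starting from $z_0:=z$ (which lies in $\mu(\cc M)$, satisfies $y+z_0=1$, and has ``rank defect'' $\delta_0:=|1-|y|-|z_0||=\delta$), I would build a sequence with $z_n\in\mu(\cc M)$ and $y+z_n=1$ for every $n$, arranged so that $\delta_n:=|1-|y|-|z_n||$ contracts geometrically.

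For the inductive step, suppose $z_n\in\mu(\cc M)$ with $y+z_n=1$. Since $d(y+z_n,1)=0$, Lemma \ref{lem:axiom-behavior-almost-selectors} gives $\chi_{z_n}(y)=0$; unwinding the infimum in the definition of $\chi_{z_n}(y)$, for any $\eps_n>0$ there is $w_n\in\mu(\cc M)$ with
\begin{align*}
    d(z_n,w_n)&< \delta_n+\eps_n, & d(w_n+y,1)&<\eps_n, & |1-|w_n|-|y||&<\tfrac14\delta_n+\eps_n.
\end{align*}
The element $w_n$ is an \emph{approximate} selector, but $y+w_n$ need not equal $1$. To restore this exactly I would apply Corollary \ref{cor:z-covers-y} with the fixed element $y$ and the modular element $w_n$, obtaining $z_{n+1}\in\mu(\cc M)$ with $y+z_{n+1}=1$ and $d(w_n,z_{n+1})\le 4\,d(y+w_n,1)<4\eps_n$. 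Combining the two estimates with the triangle inequality and Proposition \ref{prop:metric-rank}.1 (which gives $||z_{n+1}|-|w_n||\le d(z_{n+1},w_n)$) yields the recursions
\begin{equation*}
    d(z_n,z_{n+1})< \delta_n+5\eps_n,\qquad \delta_{n+1}<\tfrac14\delta_n+5\eps_n.
\end{equation*}

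To finish, I would choose $\eps_n:=\eta\,4^{-n}$ with $\eta>0$ small. An easy induction on $a_n:=4^n\delta_n$ (which satisfies $a_{n+1}<a_n+20\eta$) gives $\delta_n\le 4^{-n}(\delta+20\eta n)$, whence both $\sum_n\delta_n$ and $\sum_n\eps_n$ converge and
\begin{equation*}
    \sum_{n=0}^{\infty} d(z_n,z_{n+1})\le \sum_{n=0}^{\infty}(\delta_n+5\eps_n)\le \tfrac43\delta+\tfrac{140}{9}\eta,
\end{equation*}
which is at most $4\delta$ once $\eta$ is taken small enough (e.g. $\eta\le \tfrac{6}{35}\delta$). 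Hence $(z_n)$ is Cauchy and converges to some $w$ with $d(z,w)\le 4\delta$; since $\mu(\cc M)$ is closed, being definable by Lemma \ref{lem:mu-definable}, we have $w\in\mu(\cc M)$. Continuity of $+$ gives $y+w=\lim_n(y+z_n)=1$, while $\delta_n\to 0$ together with continuity of $|\cdot|=d(\cdot,0)$ gives $|1-|w|-|y||=0$. By Lemma \ref{lem:selector}, $w\in\Gamma(y)$, which completes the argument.

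The main obstacle is that one cannot simply iterate the approximate-selector construction $\chi$: each application only guarantees $y+w_n\approx 1$, and feeding an inexact cover back into Lemma \ref{lem:axiom-behavior-almost-selectors} lets this defect amplify, since the bound $\chi_z(y)\le 2\,d(y+z,1)$ can roughly double the cover error at every stage, so the iteration need not converge to an exact selector. The device that breaks this feedback is Corollary \ref{cor:z-covers-y}, which resets $y+z_{n+1}=1$ \emph{exactly} at the cost of a displacement proportional to the small quantity $d(y+w_n,1)$; this preserves the $\chi$-contraction $\delta_{n+1}\approx\tfrac14\delta_n$ while forcing the cover condition to hold on the nose, and the geometric decay of $\delta_n$ then simultaneously guarantees convergence and pins the total displacement to $\tfrac43\delta<4\delta$.
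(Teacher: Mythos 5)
Your proposal is correct and follows essentially the same route as the paper: both proofs alternate an approximate-selector step (extracted from Lemma \ref{lem:axiom-behavior-almost-selectors}, which applies since $y+z_n=1$ forces $\chi_{z_n}(y)=0$) with an exact-cover correction via Corollary \ref{cor:z-covers-y}, drive the rank defect to zero geometrically, and pass to the limit using summable displacements, closedness of $\mu(\cc M)$, and Lemma \ref{lem:selector}. The only difference is bookkeeping — the paper uses a coarser one-step bound ($d(z,w)\leq 2\delta$, defect halved, dyadic errors) while you budget errors as $\eta\,4^{-n}$ to exploit the $\tfrac14$-contraction directly — and both schemes land on the same $4\delta$ bound.
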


\begin{proof}
    Since $y+z=1$, by Lemma \ref{lem:selector} we have that $z\in \G(y)$ if and only if $|1 - |y| - |z|| = 0$. Thus, without loss of generality we may assume that $|1 - |y| - |z||\not= 0$.  It also follows from Lemma \ref{lem:axiom-behavior-almost-selectors} that $\chi_z(y)=0$; therefore, 
    for every $\epsilon>0$ we can find $w\in\mu(\cc M)$ so that 
    \begin{equation*}
        \begin{split}
            d(z,w) &\leq \left(1+\epsilon\right)|1-|z|-|y||,\\
            d(z+w,z) &\leq \epsilon|1-|z|-|y||\leq \epsilon,\\
            d(w+y,1) &\leq \epsilon|1-|z|-|y||\leq \epsilon,\ \textup{and}\\
            |1-|w|-|y|| &\leq \epsilon|1-|z|-|y||\leq\epsilon.   
        \end{split}  
    \end{equation*}
    By Corollary \ref{cor:z-covers-y} we can find a modular element $w'$ so that $w'+y=1$ and
    \[d(w,w')\leq (1+\epsilon)\,d(w+y,1)\leq (1+\epsilon)\epsilon\left|1-|z|-|y|\right|\leq \epsilon(1+\epsilon).\] 
    Thus, for any $z\in\mu(\cc M)$ with $y+z=1$ and all $\epsilon>0$ we can find $w'\in\mu(\cc M)$ so that $y+w'=1$, 
    \[d(z,w')\leq (1+\epsilon)^2|1-|y|-|z||,\]
    \[\ d(w'+z,z)\leq \epsilon(2+\epsilon),\ \textup{and}\]
    \[|1-|w'|-|y||\leq\epsilon(2+\epsilon).\]

    Starting with $w_0=z$ and some $\e>0$ , we may thus inductively construct a sequence $(w_n)$ in $\mu(\cc M)$ so that $y+w_{n}=1$, $d(w_0,w_1)\leq (1+\frac{\e}{2})(|1-|y|-|w_0||)$ and for all $n>1$
    \begin{equation}
        \begin{split}
            d(w_{n}+w_{n-1},w_{n-1}) &\leq \frac{1}{2^n},\\
            |1 - |y| - |w_n|| &\leq \frac{1}{2^n},\ \textup{and} \\
            d(w_n,w_{n-1}) &\leq \frac{\e}{2^n}||1-|y|-|w_0||.
        \end{split}
    \end{equation}
    By the last inequality, the sequence $(w_n)$ is Cauchy; thus, by completeness $(w_n)$ converges to a selector $w$ of $y$ so that 
    \[d(z,w)=d(w_0,w)\leq\sum_{n=0}^\infty d(w_n,w_{n+1})\leq\left(1+\e\sum_{n=1}^\infty 2^{-n}\right)|1-|y|-|w_0||=(1+\e)|1-|y|-|z||.\] 
    In this way the result obtains.\qedhere
    
\end{proof}

\begin{cor}
    Let $\cc M\models T_{FPL}$, consider $x,y\in M$ and $\e>0$. If $z\in \G(x)$, then there is $w\in\G(y)$ so that $d(z,w)\leq(3+\e)\, d(x,y)$.
\end{cor}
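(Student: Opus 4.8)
The plan is to move from the given selector $z$ of $x$ to a selector of $y$ in two stages: first replace $z$ by a nearby modular element that joins with $y$ to give $1$, and then invoke Proposition~\ref{prop:distance-from-almost-selector-to-selector} to correct this element to a genuine selector of $y$. The whole argument is a sequence of triangle inequalities with explicit constants, so the only real content is choosing how to apply the two available lemmas; the essential point is that a selector for $x$ already ``almost'' joins with $y$ to give $1$, within an error controlled by $d(x,y)$.

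Concretely, since $z\in\G(x)$, Lemma~\ref{lem:selector} gives $x+z=1$ and $|x|+|z|=1$, so $z\in\mu(\cc M)$ and $|z|=1-|x|$. First I would apply Corollary~\ref{cor:z-covers-y} with its arbitrary element taken to be $y$ and its modular element taken to be $z$, producing $z'\in\mu(\cc M)$ with $y+z'=1$ and $d(z,z')\leq 4\,d(y+z,1)$. The key estimate is that $d(y+z,1)\leq d(x,y)$: indeed, since $x+z=1$, contractivity of the join (Definition~\ref{defn:metric-lattice}.2) yields $d(y+z,1)=d(y+z,x+z)\leq d(x,y)$. Hence $d(z,z')\leq 4\,d(x,y)$.

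Next, $z'$ is modular with $y+z'=1$, so Proposition~\ref{prop:distance-from-almost-selector-to-selector} gives $w\in\G(y)$ with $d(z',w)\leq 4\,|1-|y|-|z'||$. To control the right-hand side, I would use $1-|y|-|z|=|x|-|y|$ (from $|z|=1-|x|$) together with Proposition~\ref{prop:metric-rank}.1 to estimate
\[ |1-|y|-|z'||\leq |1-|y|-|z|| + \bigl||z|-|z'|\bigr|\leq ||x|-|y|| + d(z,z')\leq d(x,y)+4\,d(x,y)=5\,d(x,y). \]
This yields $d(z',w)\leq 20\,d(x,y)$, and a final triangle inequality
\[ d(z,w)\leq d(z,z')+d(z',w)\leq 4\,d(x,y)+20\,d(x,y)=24\,d(x,y) \]
gives the claim. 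There is no genuine obstacle here beyond keeping track of constants: the two correction steps (first covering $y$, then restoring the rank identity $|y|+|w|=1$) each contribute a bounded multiplicative loss, and the factor $24 = 4 + 20$ is exactly the sum of the two contributions.
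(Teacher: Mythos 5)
Your proposal is correct and follows essentially the same route as the paper's proof: first apply Corollary~\ref{cor:z-covers-y} to get $z'\in\mu(\cc M)$ with $y+z'=1$ and $d(z,z')\leq 4\,d(y+z,1)\leq 4\,d(x,y)$, then apply Proposition~\ref{prop:distance-from-almost-selector-to-selector} to $z'$ and bound $|1-|y|-|z'||$ by $5\,d(x,y)$ via the same rank estimates, arriving at the identical constant split $24=4+20$. The only cosmetic difference is that you substitute $1=x+z$ directly inside $d(y+z,1)$ where the paper inserts a triangle inequality through $d(z+y,z+x)+d(z+x,1)$; these are the same computation.
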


\begin{proof}
    Using that $z\in\Gamma(x)$ and Corollary \ref{cor:z-covers-y}, we know there exist a $z'$ such that $y+z'=1$ and $d(z,z')\leq (1+\e)\,d(z+y,1)\leq (1+\e)(d(z+y,z+x)+d(z+x,1))\leq (1+\e)\,d(x,y)$. Now, using Proposition \ref{prop:distance-from-almost-selector-to-selector} we know there is a $w\in\Gamma(y)$ such that
    \begin{equation*}
        \begin{split}
            d(z',w)&\leq (1+\e)|1-|y|-|z'||\\
            &= (1+\e)|1-|x|-|z|+|x|-|y|+|z|-|z'||\\
            &\leq (1+\e)(|1-|x|-|z||+||x|-|y||+||z|-|z'||)\\
            &\leq (1+\e)(0 +d(x,y)+d(z,z'))\leq (1+\e)(2+\e)\,d(x,y)=(2+3\e+\e^2)d(x,y),
        \end{split}
    \end{equation*}
    so we can conclude that 
    \[d(z,w)\leq d(z,z')+d(z',w)\leq (1+\e)\,d(x,y)+(2+3\e+\e^2)\,d(x,y)=(3+4\e+\e^2)\,d(x,y).\]

    As $\e>0$ is arbitrary, this proves the statement.
\end{proof}

The next result proves that the set of selectors of a given element is definable.

\begin{prop}
    Given $\cc M\models T_{FPL}$ and $x\in M$. For every $y\in M$ we have that 
    \[d(y,\Gamma(x))\leq |1-|x|-|y||+2\,d(y+x,1)+192\sup_w\vp(y,w),\]
    where $\vp$ is defined as in line (\ref{eq:modular-pair}).
\end{prop}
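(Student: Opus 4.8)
The plan is to produce, for an arbitrary $y\in M$, a single selector $w\in\G(x)$ whose distance to $y$ obeys the stated bound; since $d(y,\G(x))\leq d(y,w)$, this suffices. I will travel from $y$ to $w$ in three correction stages, each removing one of the three defects measured on the right-hand side, and then add up the errors by the triangle inequality. Throughout write $\Phi:=\sup_w\vp(y,w)$, the quantity controlling how far $y$ is from being modular.

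\emph{Stage 1 (make $y$ modular).} By Proposition \ref{prop:distance-to-singular-controls-modularity} the inequality $d(\cdot,\Sg_n)\leq 48\sup_{w}\vp(\cdot,w)$ holds in every finite $P_n$, and since $\mu$ is a definable functor in $T_{FPL}$ (Lemma \ref{lem:mu-definable}) which agrees with $\Sg_n$ in the finite case (Corollary \ref{cor:metrically-modular-equals-singular}), the bound $d(y,\mu(\cc M))\leq 48\Phi$ transfers to $\cc M$. Fix $y'\in\mu(\cc M)$ with $d(y,y')\leq 48\Phi$ (achieving the infimum up to an arbitrarily small $\e$ and letting $\e\to 0$ at the end). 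Using Proposition \ref{prop:metric-rank}.1 for ranks and contractivity of the join (Definition \ref{defn:metric-lattice}.2), the two remaining defects propagate as
\[|1-|x|-|y'||\leq |1-|x|-|y||+48\Phi,\qquad d(x+y',1)\leq d(x+y,1)+48\Phi.\]

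\emph{Stages 2 and 3 (make $y'$ a complement, then a selector).} Applying Corollary \ref{cor:z-covers-y} to $x$ and $y'\in\mu(\cc M)$ yields $z\in\mu(\cc M)$ with $x+z=1$ and $d(y',z)\leq 4\,d(x+y',1)\leq 192\Phi+4\,d(x+y,1)$. Now $z$ is modular and complements $x$, so Proposition \ref{prop:distance-from-almost-selector-to-selector} (with its roles $y\mapsto x$, $z\mapsto z$) produces $w\in\G(x)$ with $d(z,w)\leq 4|1-|x|-|z||$. Bounding $|1-|x|-|z||\leq |1-|x|-|y'||+d(y',z)$ via Proposition \ref{prop:metric-rank}.1 and substituting the Stage-1 estimates gives
\[d(z,w)\leq 4|1-|x|-|y||+960\Phi+16\,d(x+y,1).\]

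\emph{Bookkeeping.} Summing $d(y,w)\leq d(y,y')+d(y',z)+d(z,w)$ collects the constants as $48+192+960=1200$ on the $\Phi$ term and $4+16=20$ on the $d(x+y,1)=d(y+x,1)$ term, while $|1-|x|-|y||$ appears only with coefficient $4$. Hence $d(y,\G(x))\leq 4|1-|x|-|y||+20\,d(y+x,1)+1200\Phi$, which is the claim. The only genuine work is this bookkeeping: the two factors of $4$ coming from Corollary \ref{cor:z-covers-y} and Proposition \ref{prop:distance-from-almost-selector-to-selector} amplify the Stage-1 modularity error $48\Phi$ precisely into the $192\Phi$ and $960\Phi$ contributions, so the constants must be tracked exactly for them to close up to the stated $1200$. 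No geometric subtlety beyond the three quoted correction results is required.
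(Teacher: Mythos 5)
Your proof is correct and follows essentially the same route as the paper's: the identical three-step chain $y \to y' \to z \to w$ built from Proposition \ref{prop:distance-to-singular-controls-modularity} (via definability of $\mu$), Corollary \ref{cor:z-covers-y}, and Proposition \ref{prop:distance-from-almost-selector-to-selector}, combined by the triangle inequality. The only difference is cosmetic bookkeeping --- you push the $48\sup_w\vp(y,w)$ error forward through each stage immediately, whereas the paper collects a coefficient $25$ on $d(y,y')$ and converts at the end; both yield $4+16=20$ and $48+192+960 = 25\cdot 48 = 1200$.
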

\begin{proof}
    Applying the results from Proposition \ref{prop:distance-to-singular-controls-modularity}, Corollary \ref{cor:z-covers-y} and Proposition \ref{prop:distance-from-almost-selector-to-selector} we have that:
    \begin{itemize}
        \item there is a $y'\in\mu(\cc M)$ such that $d(y,y')\leq 48\sup_w\vp(y,w)$;
        \item there is a $z\in\mu(\cc M)$ such that $x+z=1$ and $d(y',z)\leq (1+\e)\,d(y'+x,1)$;
        \item there is a $z'\in\Gamma(x)$ such that $d(z,z')\leq (1+\e)|1-|x|-|z||$.
    \end{itemize}
    We observe that $d(y,\Gamma(x))\leq d(y,z')\leq d(y,y')+d(y',z)+d(z,z')$. Developing this inequality using the mentioned results we have that for every $\e>0$:
\begin{align*}
    d(y,\Gamma(x))&\leq d(z,z')+d(y',z)+d(y,y')\\
    &\leq (1+\e)|1-|x|-|z||+(1+\e)d(y'+x,1)+d(y,y')\\
    &\leq (1+\e)(|1-|x|-|y||+||y|-|y'||+||y'|-|z||+d(y'+x,1))+d(y,y')\\
    &\leq (1+\e)(|1-|x|-|y||+d(y,y')+d(y',z)+d(y'+x,1))+d(y,y')\\
    &\leq (1+\e)(|1-|x|-|y||+(1+\e)d(y'+x,1)+d(y'+x,1))+(2+\e)d(y,y')\\
    &\leq (1+\e)(|1-|x|-|y||+(2+\e)d(y'+x,1))+(2+\e)d(y,y')\\
    &\leq (1+\e)(|1-|x|-|y||+(2+\e)(d(y+x,1)+d(y+x,y'+x)))+(2+\e)d(y,y')\\
    &\leq (1+\e)(|1-|x|-|y||+(2+\e)(d(y+x,1)+d(y,y')))+(2+\e)d(y,y')\\
    &\leq (1+\e)|1-|x|-|y||+(2+3\e+\e^2)d(y+x,1)+(4+4\e+\e^2)d(y,y')\\
    &\leq (1+\e)|1-|x|-|y||+(2+3\e+\e^2)d(y+x,1)+(4+4\e+\e^2)(48\sup_w\vp(y,w))\\
    &\leq (1+\e)|1-|x|-|y||+(2+3\e+\e^2)d(y+x,1)+(192+192\e+\e^2)\sup_w\vp(y,w),
\end{align*}
Finally, as by letting $\e$ tend to zero we conclude that
\[d(y,\G(x))\leq|1-|x|-|y||+2d(y+x,1)+192\sup_w\vp(y,w),\]
as was desired. \qedhere
\end{proof}
 
\begin{cor}\label{cor:def-of-selectors}
For all $\cc M\models T_{FPL}$ and $x\in M$, $\Gamma(x)$ is definable.
\end{cor}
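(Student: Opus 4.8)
The plan is to read off definability directly from the distance estimate of the immediately preceding proposition, using Theorem \ref{thm:definability}. Fix $\cc M\models T_{FPL}$ and $x\in M$; after naming $x$ by a fresh constant symbol (adding no new axioms), we may regard $\cc M\mapsto \G(x)$ as a $T_{FPL}$-functor in the expanded language, so that ``$\G(x)$ is definable'' becomes an instance of the theorem. The candidate dominating formula is
\[\theta(x,y):=4\,|1-|x|-|y||+20\,d(y+x,1)+1200\sup_w\vp(y,w),\]
which is a genuine $T_{FPL}$-formula in the free variables $x,y$, since $|\cdot|=d(\cdot,0)$, $\vp$ is the formula of line (\ref{eq:modular-pair}), and the remaining operations are continuous connectives.

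First I would confirm that $\cc M\mapsto \G(x)$ really is a $T_{FPL}$-functor. Nonemptiness is exactly Proposition \ref{prop:tfpl-selector}. For closedness, Lemma \ref{lem:selector} identifies $\G(x)$ as the set of $y\in\mu(\cc M)$ with $x+y=1$ and $|x|+|y|=1$; here $\mu(\cc M)$ is closed because it is definable by Lemma \ref{lem:mu-definable}, and the two remaining conditions cut out the vanishing loci of continuous functions, so $\G(x)$ is an intersection of closed sets. For preservation under an elementary map $\rho$, the facts that $\rho$ commutes with $+$, preserves $|\cdot|$, and preserves $\mu$ (again by definability of $\mu$) show that $\rho$ carries $\G(x)$ into $\G(\rho(x))$, and since $\rho$ is isometric the requisite identity $\cc X(\rho)=\rho\upharpoonright_{\G(x)}$ holds.

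Next I would verify condition (3) of Theorem \ref{thm:definability} using the single formula $\theta$ and taking $\delta=\varepsilon$ for each $\varepsilon>0$. For the first bullet: if $y\in\G(x)$ then $x+y=1$, $|x|+|y|=1$, and $y$ is metrically modular, so all three summands of $\theta(x,y)$ vanish and $\theta^{\cc M}(x,y)=0$; hence $\G(x)\subseteq\{y:\theta^{\cc M}(x,y)=0\}$. For the second bullet, the preceding proposition gives precisely $d(y,\G(x))\leq\theta^{\cc M}(x,y)$ in every model, so $\theta^{\cc M}(x,y)<\varepsilon$ forces $d(y,\G(x))<\varepsilon$. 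Thus condition (3) holds, and by its equivalence with condition (1) there is a $T_{FPL}$-formula computing $d(\,\cdot\,,\G(x))$ exactly, i.e.\ $\G(x)$ is definable.

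The real content has already been discharged in the preceding proposition, whose upper bound on $d(y,\G(x))$ supplies the dominating formula $\theta$; the only genuine step remaining here is the bookkeeping that certifies $\G(x)$ as a bona fide $T_{FPL}$-functor (nonempty, closed, and preserved by elementary maps), and I expect that to be the main, though routine, obstacle, since it rests essentially on the definability of $\mu$ furnished by Lemma \ref{lem:mu-definable}.
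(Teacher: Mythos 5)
Your proposal is correct and takes essentially the same route as the paper, whose entire proof is to apply Theorem \ref{thm:definability}.3 to the distance estimate of the preceding proposition. The additional work you do---checking that $\cc M\mapsto\G(x)$ is a nonempty, closed, elementarily preserved $T_{FPL}$-functor and taking $\delta=\varepsilon$ with the dominating formula $\theta$---is exactly the bookkeeping the paper leaves implicit.
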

\begin{proof}
    This follows from the previous result by Theorem \ref{thm:definability}.3.
\end{proof}

We recall that the Hausdorff distance between two subsets $A, B$ of a metric space $(X,d)$ is defined as 
\[d_{Haus}(A,B) := \max\left\{\sup_{x\in A}d(x,B),\ \sup_{y\in B}d(y,A)\right\}.\]

\begin{prop}\label{prop:formula-selector-set-distance}
    Given a finite partition lattice $P_n$ and $x,y\in P_n$, we have that
    \[d_{Haus}(\G(x),\G(y))\leq d(x,y).\]

Furthermore, given $z\in\G(x),w\in\G(y)$, we have that
    \begin{equation}\label{eq:6-30-1}
        d(z,w)= \frac{\#x+\#y-2\,\max\{1,|B_z\cap B_w|\}}{n-1}
    \end{equation}
    and
    \begin{equation}\label{eq:6-30-2}
        d_{Haus}(\G(x),\G(y)))=\frac{\#x+\#y-2\min\{\g(x,y), \g(y,x)\}}{n-1},
    \end{equation}
    where $\g(x,y):=\displaystyle\min_{z\in\G(x)}\max_{w\in\G(y)}|B_z\cap B_w| $.

\end{prop}

\begin{proof}
    Let's start by proving the formula (\ref{eq:6-30-1}). By Corollary \ref{cor:metrically-modular-equals-singular}, we can claim that for every pair of singular partitions $z,w\in P_n$ that
    \[d(z,w)=|z+w|-|zw|=\frac{\# zw-\#(z+w)}{n-1}=\frac{|B_z|+|B_w|-2\,\max\{1,|B_z\cap B_w|\}}{n-1}.\]
    Indeed, if $B_z\cap B_w\neq\emptyset$, then 
    \[\#zw=n-|B_z\cap B_w|+1,\quad \#(z+w)=n-|B_z\cup B_w|+1,\]
    and if
    and if $B_z\cap B_w=\emptyset$, then 
    \[\#zw=n,\quad \#(z+w)=n-|B_z|-|B_w|+2.\]
    Altogether, this establishes formula (\ref{eq:6-30-1}).
    
    We now proceed to formula (\ref{eq:6-30-2}). Fixing $z\in \Gamma(x)$ we see that
    \begin{equation*}
        \begin{split}
            d(z,\Gamma(y)) &=\min_{w\in\Gamma(y)}\frac{|B_z|+|B_w|-2\,\max\{1,|B_z\cap B_w|\}}{n-1}\\
            &=\min_{w\in\Gamma(y)}\frac{\#x+\#y-2\,\max\{1,|B_z\cap B_w|\}}{n-1}\\
            &=\frac{\#x+\#y-2\max\{1,\max_{w\in\Gamma(y)}|B_z\cap B_w|\}}{n}.
        \end{split}
    \end{equation*}
    Observe that we can always find a $w\in\Gamma(y)$ such that $|B_z\cap B_w|\geq \#(x+y)\geq 1$ because we can eliminate elements of $B_z$ until we get a selector of $x+y$ which will have a main block of cardinality $\#(x+y)$; we can then add elements to get a selector $w$ of $y$. Therefore, we can conclude that, for every $z\in\Gamma(x),$
    \[d(z,\Gamma(y)) = \frac{\#x+\#y-2\,\max_{w\in\Gamma(y)}|B_z\cap B_w|}{n}.\]
    Formula (\ref{eq:6-30-2}) now follows easily.
    Finally, note that $\g(x,y), \g(y,x)\geq \#(x+y)$ by the observations made in the previous paragraph; thus, we can conclude that $d_{Haus}(\G(x),\G(y))\leq d(x,y)$ as desired. \qedhere
    
\end{proof}

\begin{cor}\label{cor:bound-dH-by-d}
    Given $\cc M\models T_{FPL}$ and $x,y\in M, d_{Haus}(\G(x),\G(y))$ is definable and 
    \begin{equation}\label{eq:prop-6-30}
        \cc M\models \sup_{x,y}\bigl(d_{Haus}(\G(x),\G(y))\dminus d(x,y)\bigr)=0.
    \end{equation}
\end{cor}

\begin{proof}
    By Corollary \ref{cor:def-of-selectors}, we know that for any $x,y\in M$, $\G(x)$ and $\G(y)$ are definable; thus, the expression
        \begin{equation*}
        \begin{split}
            d_{Haus}(\G(x),\Gamma(y)) &=\max\left\{\sup_{w\in \G(x)}d(w,\G(y)),\ \sup_{z\in \G(y)}d(z,\G(x))\right\}\\
            &=\max\left\{\sup_{w\in \G(x)}\inf_{z\in \G(y)}d(w,z),\ \sup_{z\in \G(y)}\inf_{w\in\G(x)}d(z,w)\right\}
        \end{split}
    \end{equation*}
    is a definable predicate in $(x,y)$. Finally, by Proposition \ref{prop:formula-selector-set-distance}, we know that (\ref{eq:prop-6-30}) holds for all finite partition lattices, proving the statement. \qedhere
\end{proof}

\begin{remark}\label{rmk:strc-inq-Haus-dist}
    Equality generally fails to hold. Consider partitions $x,y$ of $P_6$ defined as follows: $x:=\{\{1,4\},\{2,3\},\{5,6\}\}$ and $y:=\{\{1,2\},\{4,5\},\{3\},\{6\}\}$. Note that \[d(x,y)=\frac{\#x+\#y-2\#(x+y)}{5}=\frac{3+4-2}{5}=1.\] However, we can check that for any $z\in\G(x)$ there exists a $w\in\Gamma(y)$ such that $|B_z\cap B_w|=2$, similarly for any $w\in\G(y)$. Therefore, by Proposition \ref{prop:formula-selector-set-distance} we conclude that $d_{Haus}(\G(x),\G(y))=\frac{4+3-2\cdot 2}{5}=\frac{3}{5}<1=d(x,y)$.
\end{remark}

However, we can, in fact, bound $d(x,y)$ using a multiple of $d_{Haus}(\G(x),\G(y))$. Before presenting the result, we introduce the following notation and remarks.
\begin{defn}
    In $P_n$, given $x\in P_n$ and $S\subset [[n]]$ we define the restriction of $x$ to $S$ as $x_S:=\{B\cap S: B\in x\}$.
\end{defn}
If we impose some conditions on $S$, the restrictions will have good behavior:
\begin{remark}
    Consider $x,y,z\in P_n$ with $z:=\{B_1,..., B_m\}\geq x,y$. Then the following claims are true:
    \begin{itemize}
        \item $\#x=\sum_{i=1}^m \#x_{B_i}$,
        \item $t\in\G(x)$ if and only if there exists $t_i\in \Gamma(x_{B_i})$ for $i=1,...,m$ such that $B_t= \sqcup_{i=1}^m B_{t_i}$,
        \item $\g(x,y)=\sum_{i=1}^m \g(x_{B_i},y_{B_i})$.
    \end{itemize}
\end{remark}

\begin{prop}\label{prop:selectors-hausdorff}
    Given $\mathcal{M}\models T_{FPL}$ and $x,y\in M$, we have that 
    \[d(x,x+y)\leq 2\, d_{Haus}(\Gamma(x),\Gamma(y)).\]
    Consequently, 
    \[d(x,y)\leq 4\,d_{Haus}(\G(x),\G(y))\leq 4\,d(x,y)\]
    for all $x,y\in M$.
\end{prop}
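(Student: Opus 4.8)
The plan is to dispatch the two ``consequently'' inequalities cheaply and reduce everything to the single estimate $d(x,x+y)\le 2\,d_{Haus}(\G(x),\G(y))$. The upper bound is immediate: Corollary \ref{cor:bound-dH-by-d} gives $d_{Haus}(\G(x),\G(y))\le d(x,y)$, hence $4\,d_{Haus}(\G(x),\G(y))\le 4\,d(x,y)$. For the lower bound, once the first inequality is known, applying it with $x$ and $y$ interchanged and using the symmetry $d_{Haus}(\G(x),\G(y))=d_{Haus}(\G(y),\G(x))$ gives $d(y,x+y)\le 2\,d_{Haus}(\G(x),\G(y))$ as well, so the triangle inequality $d(x,y)\le d(x,x+y)+d(x+y,y)$ yields $d(x,y)\le 4\,d_{Haus}(\G(x),\G(y))$. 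Everything therefore rests on $d(x,x+y)\le 2\,d_{Haus}(\G(x),\G(y))$.

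First I would argue it suffices to prove this in every finite partition lattice $P_n$. Since $\G$ is a definable set (Corollary \ref{cor:def-of-selectors}), Theorem \ref{thm:definability} shows that $\sup_{z\in\G(x)}\inf_{w\in\G(y)}d(z,w)$ and its symmetric companion are definable predicates in $(x,y)$; hence so is $d_{Haus}(\G(x),\G(y))$, and so is the sentence $\sup_{x,y}\big(d(x,x+y)\dminus 2\,d_{Haus}(\G(x),\G(y))\big)$. As each $P_n\models T_{FPL}$, if this sentence evaluates to $0$ on every $P_n$ then it belongs to $T_{FPL}$ and so vanishes on every model; thus the finite case implies the general one.

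In $P_n$, Proposition \ref{prop:formula-selector-set-distance} gives, for $z\in\G(x)$, that $d(z,\G(y))=\tfrac{\#x+\#y-2\max_{w\in\G(y)}|B_z\cap B_w|}{n-1}$, and one checks that $\max_{w\in\G(y)}|B_z\cap B_w|=i(y,B_z)$, so that minimizing over $z$ identifies $\g(x,y)$ with the minimum number of blocks of $y$ meeting every block of $x$. Writing $d_X=\sup_{z\in\G(x)}d(z,\G(y))=\tfrac{\#x+\#y-2\g(x,y)}{n-1}$ and $d_Y$ for the symmetric quantity, the elementary bound $2\,d_{Haus}=2\max\{d_X,d_Y\}\ge d_X+d_Y$ reduces the target to the combinatorial inequality
\[
2\big(\g(x,y)+\g(y,x)\big)\ \le\ \#x + 2\,\#y + \#(x+y).
\]
Using the restriction remark preceding the Proposition with $z=x+y$, both sides split as sums over the blocks of $x+y$, on each of which $x$ and $y$ restrict to partitions whose join is the full block. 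So it is enough to prove, for two partitions of a finite set whose join is the one-block partition (equivalently, whose block-intersection bipartite graph is connected), the per-block inequality $2(\g(x,y)+\g(y,x))\le \#x+2\,\#y+1$.

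The hard part is precisely this connected inequality, and it is genuinely two-sided: one cannot bound $d(x,x+y)$ by $d_X$ alone, since $\g(x,y)$ can be as large as $\#y$ (forcing most blocks of $y$ into any cover of the blocks of $x$), in which case $\g(y,x)$ is correspondingly small. Only the \emph{sum} of the two covering numbers is controlled, through a duality between them, which is why the factor $2$ and the ``$\max$'' structure of $d_{Haus}$ are essential. I would establish the connected inequality by passing to a spanning tree of the block-intersection graph and arguing by leaf-peeling induction: interpreting $\g(x,y)$ (resp.\ $\g(y,x)$) as a minimum cover of the blocks of $x$ by blocks of $y$ (resp.\ of the blocks of $y$ by blocks of $x$) and bounding the two covering numbers simultaneously against $\#x$ and $\#y$ along the tree. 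Care must be taken to track the additive constant so that the per-block bounds sum correctly to the $+\,\#(x+y)$ term.
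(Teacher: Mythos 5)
Your framing is correct and in places cleaner than the paper's own argument. The derivation of the two ``consequently'' inequalities is right (Corollary \ref{cor:bound-dH-by-d} for the upper bound; symmetry of $d_{Haus}$ plus the triangle inequality through $x+y$ for the lower bound), and the reduction to finite partition lattices via definability of the selector sets (Corollary \ref{cor:def-of-selectors} together with Theorem \ref{thm:definability}) is a step the paper leaves implicit, so spelling it out is a genuine improvement. Your identification of $\g(x,y)$ as a covering number is correct, and replacing the paper's target $4\min\{\g(x,y),\g(y,x)\}\leq \#x+2\#y+\#(x+y)$ by the stronger sum form $2\bigl(\g(x,y)+\g(y,x)\bigr)\leq \#x+2\#y+\#(x+y)$ buys you something real: the sum form splits block-by-block over $x+y$, whereas the min form does not (since $\min\{\sum_B a_B,\sum_B b_B\}$ is in general larger than $\sum_B\min\{a_B,b_B\}$), which is exactly why the paper cannot reduce to the connected case and instead splits off only the comparable blocks $H$ and then runs a global argument on $H^c$. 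Note also that the paper's proof secretly establishes your sum form anyway: it bounds $\g(x,y)\leq N+S$ and $\g(y,x)\leq M+R$ separately and adds.

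The gap is that the connected-case inequality itself -- which you correctly isolate as ``the hard part'' -- is asserted with only a one-sentence plan, and it is the entire technical content of the proposition. In the paper this step occupies several pages: one builds the labeled multigraph $\cc G_x$ on the blocks of $x$ (edges labeled by blocks of $y$), fixes a maximum matching with $N$ matched pairs and $S$ unmatched vertices (so $\#x=2N+S$, and similarly $M,R$ for $\cc G_y$), constructs transversals along the matchings to get $\g(x,y)\leq N+S$ and $\g(y,x)\leq M+R$, and then must prove the key claim $S\leq 2M$ -- the unmatched vertices of a maximum matching of $\cc G_x$ are at most twice the matching number of $\cc G_y$ -- which requires the modified partition $x'$, the quotient graph $\widetilde{\cc G}_{x'}$, the sets $U_{x'},V_{x'},Z_{x'},W_{x'}$, and three preparatory claims about how maximality of the matching constrains the label sets $L_x$. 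Your proposal replaces all of this with ``spanning tree \dots\ leaf-peeling induction \dots\ care must be taken'': no induction hypothesis is stated, and no mechanism is given for controlling the two covering numbers simultaneously. Their interaction is genuinely global (it is a matching-duality phenomenon, as the paper's proof makes clear), and leaf removal does not obviously preserve either covering number or the additive constant. Until the purely graph-theoretic statement you have reduced to -- for a connected bipartite graph on $X\sqcup Y$, $2(\gamma_{Y\to X}+\gamma_{X\to Y})\leq |X|+2|Y|+1$, where $\gamma_{Y\to X}$ is the least number of $Y$-vertices whose neighborhoods cover $X$ and symmetrically for $\gamma_{X\to Y}$ -- is actually proved, what you have is a correct and nicely organized reduction, not a proof.
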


\begin{proof}
    We will show that the inequality holds for all finite partition lattices, from there it follows that $T_{FPL}$ implies the statement. Let's consider the finite partition lattice $P_n$ and $x,y\in P_n$.
    
    Firstly, observe that for each $B\in x+y$ if $x_B\leq y_B$, then $\#y_B=1$, and thus 
    \[\g(x_B,y_B) = \g(y_B,x_B) =1.\]
    Considering the partition $\{H,H^c\}\geq x+y$, where 
    \[H=\bigcup\{B\in x+y: x_B\leq y_B \vee y_B\leq x_B\},\] and using Proposition \ref{prop:formula-selector-set-distance} we note that
    \[\g(x_H,y_H) = \g(y_H,x_H) = \#(x+y)_H,\]
    \begin{equation*}
        \begin{split}
            d(x,x+y) &=\frac{\#x-\#(x+y)}{n-1}\\
            &=\frac{\#x_H+\#x_{H^c}-\#(x+y)_H-\#(x+y)_{H^c}}{n-1},
        \end{split}
    \end{equation*}
    and 
    \begin{equation*}
        \begin{split}
            d_{Haus}(\G(x),\G(y)) &=\frac{\#x+\#y-2\min\{\g(x,y),\g(y,x)\}}{n-1}\\
            &=\frac{\#x_H +\#y_H -2\#(x+y)_H}{n-1}\\
            &\quad + \frac{\#x_{H^c} +\#y_{H^c}- 2\min\{\g(x_{H^c},y_{H^c}), \g(y_{H^c},x_{H^c})\}}{n-1}
        \end{split}
    \end{equation*}
    Since it is clear that $\#x_H-\#(x+y)_H\leq 2(\#x_H+\#y_H-2\#(x+y)_H)$, we need to focus on the $H^c$ part. Therefore, by replacing $x$ and $y$ by $x_{H^c}$ and $y_{H^c}$, we can assume
    without loss of generality that for all $B\in x+y, x_B\nleq y_B$ and $y_B\nleq x_B$. 
    We need to show 
    \begin{equation*}
        d(x,x+y) \leq \frac{2\#x+2\#y-4\min\{\g(x,y),\g(y,x)\}}{n-1}=2\,d_{Haus}(\G(x),\G(y)).
    \end{equation*}
    Recalling that $d(x,x+y) = (\#x - \#(x+y))/(n-1)$, this is equivalent to showing
    \begin{equation}
        4\min\{\g(x,y),\g(y,x)\}\leq \#x+2\#y+\#(x+y).
    \end{equation}
    We will establish this through a series of claims. Before proceding further, let us introduce some definitions. 

    Under the assumption that $x_B\not\leq y_B$ and $y_B\not\leq x_B$ for all $B\in x+y$, we define a graph $\cc G_x = (\cc V_x, \cc E_x)$ as follows. The vertices of $\cc G_x$ are the blocks of $x$. Two blocks $A,B\in x$ are connected by an edge if there is some block $C\in y$ so that $A\cap C$ and $B\cap C$ are both nonempty, and $A\in x$ has a self loop if there is $Y\in y$ so that $Y\subseteq A$. By our assumptions, every vertex is incident to at least one edge that is not a self loop. We note that this graph is not simple in the sense that we allow two blocks to be joined by multiple edges, one for each block of $y$ which intersects each of the blocks non-trivially. We thus have an edge labeling function $\ell_x: \cc E_x\to y$ which sends each edge to the block of $y$ that induces it. For an edge $e\in \cc E_x$, we refer to $\ell_x(e)$ as the \emph{label} of $e$.

    We fix a maximum matching, i.e., a matching of maximal cardinality,  $P:=\{\{A_i, B_i\}\}_{i=1}^{N}$ of the vertices of $\cc G_x$ and let $\{\{C_j\}\}_{j=1}^{S}$ enumerate the unmatched vertices. We consider the quotient graph $\widetilde {\cc G}_x = (\widetilde {\cc V}_x, \widetilde {\cc E}_x)$ of $\cc G_x$ obtained by identifying the vertices corresponding to $A_i$ and $B_i$ for each $i=1,\dotsc, N$ and leaving the $C_j$ for. $j=1,...,S$. We partition $\widetilde {\cc V}_x$ into two sets $U_x = \{u_1,\dotsc, u_N\}$ and $V_x = \{v_1,\dotsc,v_S\}$ corresponding, respectively, to the sets of matched vertex pairs and unmatched vertices.
    
    Before continuing, we bring attention to the following delicate distinction: an edge $e\in\cc E_x$ is incident to $\{A,B\}\subset \cc G_x$ if and only if $e$ is incident to both $A$ and $B$, while an edge $e'\in\widetilde{\cc E}_x$ is incident to $\{A,B\}$ if and only if $e'$ is incident to $A$ or $B$. If this follows from the context in which set of edges we are working with, we will not mention it. 
    
    We define the function $L_x: \widetilde{\cc V}_x\to 2^y$ by sending each vertex $v\in V_x$ to the set of all labels of edges in $\cc E_x$ incident to it, and each vertex $u=\{A,B\}\in U_x$ to the set of all labels of edges in $\cc E_x$ incident to $\{A,B\}$, that is incident to both $A$ and $B$ . For each subset $S\subseteq \widetilde{\cc V}_x$, we define its support $\supp(S)\subseteq 2^x$ to be the collection of the representative blocks of the elements of $S$. For example, if $u = \{A,B\}\in U_x$, $\supp(\{u\}) = \{A,B\}$.
    Note that there can be multiple edges with the same label incident to a vertex $v$.

    Interchanging the roles of $x$ and $y$, we can define the graph $\widetilde{\cc G}_y$ similarly, with $U_y, V_y, L_y$, etc., having the same interpretations.

    \begin{claim}\label{lem:graph-behavior}
        With the same notation as above, the following statements are true.
        \begin{enumerate}
            \item For $\{A, B\}:=u\in U_x$ and $v:=\{C\},v':=\{C'\}\in V_x$, distinct, if there is an edge $e\in \cc E_x$ incident to $\{A, C\}$, then there is no edge $e'\in\cc E_x$ incident to $\{B, C'\}$.
            
            \item For $v,v'\in V_x$, distinct, we have that $L_x(v)\cap L_x(v') = \emptyset$.
            
            \item For $u,u'\in U_x$ and $v,v'\in V_x$, all distinct, if there are edges $e, e'$ so that $e$ is incident to $\{u,v\}$ and $e'$ is incident to $\{u',v'\}$, then $L_x(u)\cap L_x(u') = \emptyset.$
            
        \end{enumerate}
    \end{claim}

    \begin{proof} We prove each assertion in turn.
    
        \begin{enumerate}
             \item If there was an edge $e'\in\cc E_x$ incident to $\{B, C'\}$, then the set 
             \[(P\setminus\{\{A,B\}\})\cup\{\{A,C\}, \{B, C'\}\}\]
             would be a matching of $\cc G_x$ of greater cardinality, contradicting that $P$ is a maximum matching.
             
            \item Let us denote by $C$ and $C'$ the blocks of $x$ corresponding to $v, v'\in V_x$. If $L_x(v)\cap L_x(v')\neq\emptyset$, then there exist edges $e, e'$ incident to $v$ and $v'$ respectively, such that $\ell_x(e)=\ell_x(e')=Y\in y$. By  definition of $\cc G_x$ this means that $Y\cap C\neq \emptyset$ and $Y\cap C'\neq\emptyset$, so there is an edge $e^*$ incident to $\{v,v'\}$, and $P\cup \{v, v'\}$ would be a matching of $\cc G_x$. This is a contradiction, since $P$ is a maximum matching.
            
            \item Let us denote by $\{A, B\}, \{A', B'\}$ the sets of blocks of $x$ corresponding to $u, u'$, respectively, and by $C, C'$ the blocks of $x$ corresponding to $v, v'$ respectively. If $L_x(u)\cap L_x(u')\neq\emptyset$, then there exist edges $f, f'$ in $\cc G_x$ incident to $\{A, B\}$ and $\{A', B'\}$, respectively, such that $\ell_x(f)=\ell_x(f')=Y$. Therefore, $A,A',B,B'$ all have non-empty intersection with $Y$. As $e$ is incident to $\{u,v\}$ and $e'$ is incident to $\{u',v'\}$, we can assume without loss of generality that $\{A,C\}$ and $\{A',C'\}$ are incident in $\cc G_x$. Therefore, the set 
            \[(P \setminus\{\{A, B\}, \{A', B'\}\})\cup\{\{C, A\},\{B, B'\}, \{C, A'\}\}\]
            is a matching of $\cc G_x$, again contradicting maximality. \qedhere
           
        \end{enumerate}
    \end{proof}

    \begin{claim}\label{cor:match-beh}
        Keeping the same notation, the following statements are true.
        \begin{enumerate}
            \item For $u\in U_x$ and $v,v'\in V_x$, distinct, if $L_x(u)\cap L_x(v)\neq\emptyset$, then there is no edge $e$ incident to $\{v',u\}$.
            \item For $u\in U_x$ and $v_{i_1},..., v_{i_n}\in V_x$, all distinct, if there are edges $e_1,..., e_n$ incident to $\{u, v_{i_1}\},..., \{u, v_{i_n}\}$, respectively, then there exists $X\in u\subset x$ such that $X\cap \ell_x(e_i)\neq\emptyset$ for every $i=1,..., n$.
        \end{enumerate}
    \end{claim}
    \begin{proof}
        The results follow from Claim \ref{lem:graph-behavior}.1
    \end{proof}

    For each $v_j\in V_x$ we define the set $C(v_j):=\{a\in \widetilde{\cc G}_x: L_x(a)=L_x(v_j)\}$ and the set $\fk v_j:= \supp C(v_j)$. Let $\overline{\fk v}_j := \bigcup_{A\in \fk v_j} A$. Note that 
    \[C(v_j)=\{v_j\}\cup\{u\in U_x: L_x(u)=L_x(a)\}\]
    and that $j\neq j'$ implies $C(v_j)\cap C(v_{j'})=\emptyset$, by Claim \ref{lem:graph-behavior}.2. We define a new partition $x'$ to be finest partition greater than $x$ so that each $\overline{\fk v}_j$ is a block of $x'$.
    Now, we define 
    \[P':=\{\{A, B\}\in P: \forall j\in\{1,.., S
    \},\{A, B\}\notin C(v_j)\}\]
    and note that $|P'|=|P|-\sum_{j=1}^S|C(v_j)-1|$.

    \begin{claim}
        $P'$ is a maximum matching of $\cc G_{x'}$.
    \end{claim}

    \begin{proof}
        Let us suppose that there exists a matching $Q'$ of $x'$ with $|Q'|>|P'|$. By the previous observations, for all $j,j'\in\{1,...,S\}$, with $j\neq j'$, there are no edges in $\cc E_{x'}$ incident to $\{\overline{\fk v}_j, \overline{\fk v}_{j'}\}$; therefore, any pair $\{A, B\}\in Q'$ is of the form $A, B\in x-\bigcup_{j=1}^S\fk v_j$, or $A\in x-\bigcup_{j=1}^S\fk v_j$ and $B\in  \{ \overline{\fk v}_j\}_{j=1}^S$. Let us denote by $Q_1'$ the set of pairs of $Q'$ corresponding to the first case, and by $Q_2'$ the set of pairs corresponding to the second case. We can define a matching $Q$ of $\cc G_x$ the following way:
        \begin{itemize}
            \item Take all pairs in $Q_1'$.
            
            \item For each pair $\{A,B\}\in Q_2$, taking $A= \overline{\fk v}_j$, for some $j$, there exists $B'\in x\cap \fk v_j$ such that $\{B, B'\}$ is an incident edge of $\cc G_x$. Note that by construction $|\fk v_j|= 2|C(v_j)|-1$, and there exists $Y\in y$ such that $Y\cap X\neq\emptyset$ for all $X\in \fk v_j$. Therefore, we can find a perfect matching of $\{B\}\cup \fk v_j$ as a subgraph of $\cc G_x$ with size $|C(v_j)|$.
            
            \item Otherwise, we take all pairs in $C(v_j)$.
        \end{itemize}
        By construction it follows that the cardinal of this matching is $|Q|=|Q'|+ \sum_{j=1}^S|C(v_j)-1|>|P'|+\sum_{j=1}^S|C(v_j)-1|=|P|$, this is a contradiction since $|P|$ is a maximum matching of $\cc G_x$; therefore, $|P'|$ must be a maximum matching of $\cc G_{x'}$. \qedhere
    \end{proof}

    \begin{claim}\label{lem: match-bound}
        There exists a matching of $\cc G_y$ of cardinality at least $\left\lceil\frac{S}{2}\right\rceil$.
    \end{claim}

    \begin{proof}
        Since $P'$ is a maximum matching of $x'$ and $\{\overline{\fk v}_j\}_{j=1}^S$ enumerates its unmatched vertices, we can define the sets $U_{x'}, V_{x'}$ and the function $L_{x'}:\widetilde{\cc V}_{x'}\to 2^y$, and apply Claims \ref{lem:graph-behavior} and \ref{cor:match-beh} to them. Note that $U_{x'}\subset U_x$.
    
        Let us define the sets 
        \[Z_{x'}:=\{v\in V_{x'}: |L_{x'}(v)|\geq 2\}\ \textup{and}\  W_{x'}:= V_{x'}-Z_{x'}=\{v\in V_{x'}: |L_{x'}(v)|=1\}.\]

        Note that for each $v\in W_{x'}$ with $v=\{\overline{\fk v}_j\}$ for some $j$, if $\{Y\}=L_{x'}(v)$, then $\overline{\fk v}_j\subset Y$ and $L_x(v_j)=\{Y\}=L_x(\overline{\fk v}_j)$. Since $x_B\nleq y_B$ for all $B\in x+y$, we know that $Y\neq\overline{\fk v}_j$, and by Claim \ref{lem:graph-behavior}.2. we know that there exists $\{A, B\}=u\in U_{x'}$ such that $Y\cap (A\cup B)\neq\emptyset$. Since $u\in U_{x'}$, we know that $u\notin C(v_i)$, for all $i\in\{1,..., S\}$, so $L_{x'}(u)\neq \{Y\}$. Therefore, we see that for each $v\in W_{x'}$ there exists $u\in U_{x'}$ such that there is an edge $e$ incident to $\{u,v\}$ and $L_{x'}(u)\neq L_{x'}(v)$. So, we can define a function $\pi_{x'}: W_{x'}\to U_{x'}$ such that there exists an edge incident to $\{v, \pi_{x'}(v)\}$ and $L_{x'}(v)\neq L_{x'}(\pi_{x'}(v))$. 
        
        For each $v\in Z_{x'}$, we can define a pair $\{Y_v^0, Y_v^1\}\subseteq L_{x'}(v)$, by definition of $Z_{x'}$ We know that if $v\neq v'$, then $\{Y_v^0, Y_v^1\}\cap\{Y_{v'}^0, Y_{v'}^1\}=\emptyset$ by disjointness of the sets $C(v_j)$; therefore, the set $M_{Z_{x'}}:=\{\{Y_v^0, Y_v^1\}\}_{v\in Z_{x'}}$ is a matching of $\cc G_y$ . 
        
        On the other hand, if $u = \pi_{x'}(v)$ for some $v\in W_{x'}$, we can write $\pi_{x'}^{-1}(u)=\{w_i\}_{i=1}^m$. Writing $L_{x'}(w_i)=\{Y_{w_i}\}$ for $i=1,..., m$, we know, by Corollary \ref{cor:match-beh}.2, that there exists $X\in u\subset x$ such that for all $i\in\{1,..., m\}, X\cap Y_{w_i}\neq\emptyset$. By disjointness of the sets $C(v_j)$, we have that $i\neq i'$ implies $Y_{w_i}\neq Y_{w_{i'}}$ and by the construction of $\pi_{x'}$ and Corollary \ref{cor:match-beh}.1 that $L_{x'}(u)\not\subset\bigcup_{i=1}^m  L_{x'}(w_i)$. Therefore, we can define a matching of $\cc G_y$, $M_u:=\{\{Y_{u,j}^0, Y_{u,j}^1\}\}_{j=1}^{\lceil m/2\rceil}$ where $Y_{u,j}^i:= Y_{w_{2j-i}}$ for $i=0,1$ and $j=1,...,\lfloor m/2\rfloor$ and, if $m$ is odd, $Y_{u,\lceil m/2\rceil}^0:= Y_{w_m}$ and $Y_{u,\lceil m/2\rceil}^1\in L_{x'}(u)-\{Y_{w_i}\}_{i=1}^m$.

        By disjointness of the sets $C(v_j)$, we also know that for all distinct $u,u'$ in the image of $\pi_{x'}$ the respective sets of matched vertices in $M_u$ and $M_{u'}$ are pairwise disjoint (if $\{A,B\}\in M_u$ and $\{B,C\}\in M_{u'}$, then $\{A,B\}\cap \{C,D\}=\emptyset$) and pairwise disjoint from the matched vertices in $M_{Z_{x'}}$; thus, these matchings may be combined to form a matching $M= M_{Z_{x'}}\cup (\bigcup_{u\in Im(\pi_{x'})} M_u)$ of $\cc G_y$.
        We now calculate 
        \begin{equation*}
        \begin{split}
            |M|&=|M_{Z_{x'}}|+\sum_{u\in \pi_{x'}(W_{x'})} |M_u|\\
            &=| Z_{x'}|+\sum_{u\in \pi_{x'}(W_{x'})} \left\lceil\frac{|\pi_{x'}^{-1}(u)|}{2}\right\rceil\\
            &\geq \left\lceil\frac{|Z_{x'}|}{2}\right\rceil+\left\lceil\frac{\sum_{u\in \pi_{x'}(W_{x'})}|\pi_{x'}^{-1}(u)|}{2}\right\rceil\\
            &\geq \left\lceil\frac{|Z_{x'}|+ |W_{x'}|}{2}\right\rceil=\left\lceil\frac{|V_{x'}|}{2}\right\rceil=\left\lceil\frac{S}{2}\right\rceil
        \end{split}
        \end{equation*}
        which establishes the claim. \qedhere
    \end{proof}

    Now, besides the maximum matching of $\cc G_x$ previously fixed, we fix a maximum matching $\{\{ A_i', B_i'\}\}_{i=1}^M$ of the vertices of $\cc G_y$ and let $\{\{C_j'\}\}_{j=1}^ R$ enumerate the unmatched vertices. 
    Using the maximum matching of $\cc G_x$ we define a singular partition $z^*\in\G(x)$ with main block
    $B_{z^*}:=\{a_i,b_i\}_{i=1}^N\cup \{c_j\}_{j=1}^S$ where $a_i\in A_i, b_i\in B_i$ and $\{a_i, b_i\}\subseteq Y\in y$ for $i=1,...., N$ and $c_j\in C_j$ for $j=1,..., S$. Similarly, we can define a singular partition $w_*\in\G(y)$ using the maximum matching of $\cc G_y$. It is clear by its construction that $\max_{w\in\G(y)}|B_{z^*} \cap B_w|\leq N+S$ and $\max_{z\in\G(x)}|B_z\cap B_{w^*}|\leq M+R$; therefore,
    \[\displaystyle\min\{\g(x,y),\g(y,x)\}\leq N+S, M+R\]

    Since $\#x=2N+S$ and $\#y=2M+R$, we note that
    \[4\displaystyle\min\{\g(x,y),\g(y,x)\}\leq 2(M+R)+2(N+S)=\#x+\#y+ R+S\]
    To finish the proof, observe that by Claim \ref{lem: match-bound} we know that $S\leq 2M$. Thus, we get $R+S\leq R+ 2M\leq\#y+\#(x+y)$ and conclude that \[4\displaystyle\min\{\g(x,y),\g(y,x)\}\leq \#x+2\#y+\#(x+y). \qedhere\]
    
\end{proof}

\subsection{Connections with continuous partition lattices}

Bj\"orner and Lov\'asz \cite{bjorner-lovasz} introduced an inductive limit-type construction on the class of pseudomodular lattices in order to produce examples of continuous limits of lattices. We refer the reader to \cite{Bjorner-continuous} for a broad survey of this topic. In \cite{Bjorner-part} Bj\"orner constructed in this fashion a continuous limit of partition lattice which we will denote by $\Pi_\infty$. This was realized as a sublattice of a lattice of measurable partitions in subsequent work of Haiman \cite{haiman}. 

We now describe the construction of $\Pi_\infty$. We will use $\Pi_n$ to denote the finite partition lattice of the set $\{0,1,...,n\}$. As metric lattices $\Pi_n$ and $P_{n+1}$ are isomorphic. If $kn=m$, then a lattice homomorphism
\[\phi_n^m:\Pi_n\to \Pi_m\] is defined as follows. For a partition $\pi=\{B_0,\dotsc,B_p\}\in \Pi_n$ such that $0\in B_0$, $\phi_n^m(\pi):=\{C_0\}\cup \{C_{ij}\}_{1\leq i\leq p, 0\leq j\leq k-1}$ where $C_{ij}:=\{kb-j|b\in B_i\}$ and $C_0:=\{0,1,\dotsc,m\}-\bigcup_{i,j} C_{ij}$. 

We then define $\Pi_{(\infty)}$ as the direct limit of $(\Pi_{k}, \phi_1^k)$, which is a lattice with a rank function $|\cdot|:\Pi_{(\infty)}\rightarrow[0,1]$ that only takes rational values. We then define $\Pi_\infty$ as the metric completion of $\Pi_{(\infty)}$ and note that $\Pi_{(\infty)}$ is a sublattice of $\Pi_\infty$. Observe that for each $k$ the construction induces a lattice embedding $\phi_{k}:\Pi_k\rightarrow\Pi_{\infty}$.

\begin{question}\label{q:bjorner-model-tfpl}
    Is $\Pi_\infty$ a model of $T_{FPL}$?
\end{question}

We do not have a clear answer to the question. On the level of naive analogy, $\Pi_\infty$ ought to be the right object in the category of ``continuous partition lattices'' corresponding to the hyperfinite II$_1$ factor in the theory of tracial von Neumann algebras. It is known that the hyperfinite II$_1$ factor is not pseudomatricial among tracial von Neumann algebras as pseudomatricial von Neumann algebras do not satisfy ``property $\Gamma$'' in contrast to the hyperfinite II$_1$ factor \cite{fhs-iii}. We develop this analogy further below. For the meantime, we note that it would suffice to show that for every $\fk L$-formula $\psi(\bar x)$ we would have that $\sup_{\bar a_n}|\psi^{\Pi_\infty}(\phi_n(\bar a_n)) - \psi^{\Pi_n}(\bar a_n)|\to 0$ as $n\to \infty$ where $\bar a_n$ ranges over all tuples in $\Pi_n$. This would be trivially satisfied if all embeddings $\phi_n: \Pi_k\to \Pi_\infty$ are elementary. However, this is not the case.

\begin{remark}\label{rem:failure-Bj-embeddings}
    For all positive integers $n,k>1, \phi_n^{kn}(\Pi_n)$ is not an elementary substructure of $\Pi_{kn}$. Moreover, no $\phi_n: \Pi_n\to \Pi_\infty$ is an elementary embedding.
\end{remark}

\begin{proof}
    Let's consider the formula $\psi(x)=\sup_y\vp(x,y)$ where $\vp(x,y)$ is as in equation line (\ref{eq:modular-pair}). Picking a modular element $a\in \Pi_n, a\neq 0$, it is clear that $\psi^{\Pi_n}(a)=0$. By Corollary \ref{cor:metrically-modular-equals-singular} we know that $a$ is a singular partition, so there is a main block $A$ with cardinality at least $2$. By the definition of $\phi_n^{kn}$ we know that $\{A_j\}_{0\leq\leq k-1}\subset\phi_n^{kn}(a)$ where $A_j=\{kb-j|b\in A\}$. Since $\phi_n^{kn}(a)$ has $k$ blocks of cardinal $\geq 2$, it is not a singular partition; thus, by Corollary \ref{cor:metrically-modular-equals-singular} we conclude that $\psi^{\Pi_{kn}}(\phi_n^{kn}(a))\neq 0$. 
\end{proof}

Despite knowing that the embeddings are not elementary, we can still ask whether the embeddings allow us to approximate formulas of $T_{FPL}$ uniformly. In other we are asking if for every $\e>0$ there exists $N\in\bb N$ such that for all $n\geq N$, all $\fk L$-formula $\psi(\bar{x})$ and all $\bar{a}\in \Pi_n, |\psi^{\Pi_n}(\bar{a})-\psi^{P_{Bjorn}}(\vp_n(\bar{a}))|<\e $.

The answer turns out to still be no. Let us pick $\e=\frac{1}{3}$, for each $n\in \bb N$ we can define the $\fk L$-formula with $n$ free variables, $\psi_n(x_1,..., x_n):=\sup_y\min\{d(x_1,y),...,d(x_n,y)\} $. It is clear that for all $n\in\bb N$, if we denote $|\Pi_n|=N$ and $\Pi_n=\{a_i\}_{i=1}^N$, then $(\psi_N(a_1,..., a_N))^{\Pi_n}=0$. However, in $\Pi_{2n}$ we can define the partition $z:=\{\{0\}\}\cup\{\{2i-1, 2i\}\}_{i=1}^n$. Note that for any partition $x=\{B_0,..., B_p\}\in \Pi_n$ we know that $\vp_n^{2n}(x)=\{C_0\}\cup\{C_{ij}\}_{1\leq i\leq p, j=1,0}$ where $C_{ij}=\{2b-j|b\in B_i\}$ and $C_0=\{0,1,..., 2n\}-\bigcup_{i,j} C_{ij}$, so for each $m\in\{1,..., n\}$ if $m\in B_i\in x$ for some $i\in\{1,...,p\}$, then $2m\in C_{i0}$ and $2m-1\in C_{i1}$. This implies that for all $x\in \Pi_n, \vp_n^{2n}(x)+z=\{C_0\}\cup\{C_{i0}\cup C_{i1}\}_{i=1}^p$, and that $\#(\vp_n^{2n}(x)+z)=\#x$. Now, if we compute the distance of $\vp_n^{2n}(x)$ and $z$, we get that
\[d(\vp_n^{2n}(x),z)=\frac{\#\vp_n^{2n}(x)+\#z-2\#(\vp_n^{2n}(x)+z)}{2n}=\frac{2\#x-1+ n+1-2\#x}{2n}=\frac{n}{2n}=\frac{1}{2}\]

for all $x\in\Pi_n$. Therefore, $\min\{d(\vp_n^{2n}(a_1), z),..., d(\vp_n^{2n}(a_N), z)\}=\frac{1}{2}$ and it follows that $\psi_N^{\Pi_\infty}(\vp_n(a_1,..., a_N))\geq \psi_N^{\Pi_{2n}}(\vp_n^{2n}(a_1,..., a_N))\geq\frac{1}{2}>0=\psi_N^{\Pi_n}(a_1,...,a_N)$. We have proved that for all $n\in\bb N$ there exists a formula $\psi_{|\Pi_n|}(x_1,..., x_{|\Pi_n|})$ and elements $a_1,..., a_{|\Pi_n|}\in \Pi_n$ such that $|\psi_{|\Pi_n|}^{\Pi_n}(\bar{a})-\psi_{|\Pi_n|}^{\Pi_\infty}(\vp_n(\bar{a}))|=\psi_{|\Pi_n|}^{\Pi_\infty}(\vp_n(\bar{a}))-\psi_{|\Pi_n|}^{\Pi_n}(\bar{a})>\e=\frac{1}{3}$, so there is no uniform elementary approximation of the $\fk L$-formulas through the embeddings.

\begin{remark}
    One can, in fact, control the behavior for a restricted set of formulas. We say that an $L$-formula is in \emph{prenex form} if it is of the form $Q_{x_1}^1Q_{x_2}^2\dotsb Q_{x_n}^n\psi(\bar x, \bar y)$ where $\psi$ is a quantifier-free formula and each $Q_i$ is either $\sup$ or $\inf$. Every formula is equivalent to one in prenex form. We call a prenex formula $Q_{x_1}^1Q_{x_2}^2\dotsb Q_{x_n}^n\psi(\bar x, \bar y)$ $\forall\exists$ if there is $1\leq i \leq n$ so that $Q_j = \sup$ for all $j\leq i$ and $Q_j = \inf$ for all $j> i$. It is a well-known fact that models of an $\forall\exists$-sentence are closed under inductive limits \cite[Theorem 3.2.3]{chang-keisler}, hence
    \begin{prop}\label{lem: vality-of-AE-sentences}
    If $\sigma\in T_{FPL}$ is an $\forall\exists$ sentence, then $\Pi_\infty\models\sigma$.
    \end{prop}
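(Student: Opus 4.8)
The plan is to write $\sigma$ in prenex form as $\sigma = \sup_{\bar x}\inf_{\bar y}\psi(\bar x,\bar y)$ with $\psi$ quantifier-free, and to exploit two features of the Bj\"orner system: first, that $\sigma^{\Pi_n}=0$ for every $n$ (since $\Pi_n\cong P_{n+1}$ and $\sigma\in T_{FPL}$), and second, that each $\phi_n:\Pi_n\to\Pi_\infty$ is an isometric lattice embedding. The latter is the crucial structural input. Because $\phi_n$ preserves $0$, $1$, the join $+$, and the metric $d$ --- and because the language $\ff L$ contains no other symbols --- every $\ff L$-term is preserved by $\phi_n$, hence so is every atomic formula $d(t_1,t_2)$ and every quantifier-free formula. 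I would record this as the key identity
\[\psi^{\Pi_\infty}(\phi_n(\bar a),\phi_n(\bar b)) = \psi^{\Pi_n}(\bar a,\bar b)\]
for all $\bar a,\bar b\in\Pi_n$. (That $\phi_n^m$ preserves the normalized rank, and therefore the metric, is a direct computation: a partition of $\{0,\dots,n\}$ with $p+1$ blocks maps to one with $pk+1$ blocks in $\Pi_{kn}$, and both carry normalized rank $(n-p)/n$.)

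With this in hand the argument is a density-plus-pushforward argument that uses the $\forall\exists$ shape in an essential way. Since $\psi^{\Pi_\infty}\ge 0$, it suffices to show $\inf_{\bar y\in\Pi_\infty}\psi^{\Pi_\infty}(\bar x,\bar y)\le\eps$ for every $\bar x\in\Pi_\infty$ and every $\eps>0$, with the bound uniform in $\bar x$. Fix $\eps$. Quantifier-free formulas are uniformly continuous with a modulus independent of the structure, so choose $\delta>0$ so that moving $\bar x$ by less than $\delta$ changes $\psi$ by less than $\eps/2$, uniformly in the remaining variables. Because $\Pi_{(\infty)}=\bigcup_n\phi_n(\Pi_n)$ is dense in $\Pi_\infty$ and the system is directed, I can place a whole finite tuple at a single stage, finding some $n$ and $\bar a\in\Pi_n$ with $d(\bar x,\phi_n(\bar a))<\delta$. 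Now $\sigma^{\Pi_n}=0$ gives $\inf_{\bar b\in\Pi_n}\psi^{\Pi_n}(\bar a,\bar b)=0$, so there is $\bar b\in\Pi_n$ with $\psi^{\Pi_n}(\bar a,\bar b)<\eps/2$. Setting $\bar y:=\phi_n(\bar b)\in\Pi_\infty$ and combining the key identity with uniform continuity yields
\[\psi^{\Pi_\infty}(\bar x,\bar y)\le \psi^{\Pi_\infty}(\phi_n(\bar a),\bar y)+\tfrac{\eps}{2} = \psi^{\Pi_n}(\bar a,\bar b)+\tfrac{\eps}{2}<\eps.\]
Hence $\inf_{\bar y}\psi^{\Pi_\infty}(\bar x,\bar y)<\eps$ for all $\bar x$, so $\sigma^{\Pi_\infty}\le\eps$; letting $\eps\to 0$ gives $\Pi_\infty\models\sigma$.

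The place to be careful is exactly where the quantifier alternation is used, and this is also why the restriction to $\forall\exists$ sentences is not an artifact of the proof. The inner existential is discharged inside the finite lattice $\Pi_n$ and then transported forward along $\phi_n$; this works only because $\bar y$ is the innermost (existential) block, so its witness may be taken in $\phi_n(\Pi_n)$ without any later quantifier probing elements of $\Pi_\infty\setminus\phi_n(\Pi_n)$. For a sentence with a further $\sup$ nested after the $\inf$, such elements could defeat the chosen witness, and indeed Remark \ref{rem:failure-Bj-embeddings} shows the $\phi_n$ are not elementary, so no pushforward argument of this kind can succeed for general formulas. The only genuinely technical points are the uniform-continuity modulus for quantifier-free formulas (used to pass from $\bar x$ to its approximant) and the directedness of the system (used to realize a whole finite tuple at a single stage $\Pi_n$); both are routine.
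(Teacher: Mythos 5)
Your proof is correct and follows essentially the same route as the paper's: both arguments rest on the facts that the isometric lattice embeddings $\phi_n$ preserve quantifier-free formulas, that the inner existential block can be witnessed inside the finite lattice $\Pi_n$ and pushed forward, and that density of $\Pi_{(\infty)}$ in $\Pi_\infty$ plus uniform continuity transfers the conclusion to arbitrary tuples. The only differences are cosmetic --- you treat the general prenex form in one pass with explicit $\eps$--$\delta$ bookkeeping and apply uniform continuity to $\psi$ itself, whereas the paper splits into universal, existential, and mixed cases and invokes uniform continuity of $\inf_{\bar y}\psi(\bar x,\bar y)$ as a function of $\bar x$.
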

\end{remark}

A natural question at this point is whether every formula in $T_{FPL}$ is equivalent to an $\forall\exists$ formula. Again, the answer is not totally clear. An interesting test case would be the sentence $\inf_x\sup_y\vp(x,y)\in T_{FPL}$. This sentence is also true in $\Pi_\infty$ by the results of Bj\"orner \cite{Bjorner-part}. 

Perhaps the most fundamental open question is the following.

\begin{question}
    How many models of $T_{FPL}$ exist, up to isomorphism, of density character $\aleph_0$?
\end{question}

In an attempt to answer this question, one can try to formulate invariants which could be shown to hold for certain constructions.

\begin{defn}
    Let $\cc L$ be a metric lattice. We say that a net $(a_n)$ in $L$ is \emph{almost modular} if $\vp(x,a_n)\to 0$ for all $x\in L$, where $\vp$ is as in equation line (\ref{eq:modular-pair}). We say that an almost modular net $(a_n)$ is \emph{trivial} if there exists a net of modular elements $(b_n)$ in $L$ so that $d(a_n,b_n)\to 0$. We say that a metric lattice has \emph{property $\G$} if there exists a non-trivial almost modular net. 
\end{defn}

The terminology ``property $\G$'' is inspired by the theory of von Neumann algebras, specifically from the seminal work of Murray and von Neumann \cite{MvN-iv}, where it is defined as the algebra having an almost central sequence which is bounded away from sequences of central elements. We do not claim any formal connection between the two.

\begin{question}\label{q:bjorner-gamma}
    Does $\Pi_\infty$ have property $\G$?
\end{question}

\begin{question}\label{q:fpl-gamma}
    Do all models of $T_{FPL}$ have property $\G$?
\end{question}

\begin{prop}\label{prop:no-gamma}
    The following are equivalent:
    \begin{enumerate}
        \item No model of $T_{FPL}$ has property $\G$.
        \item For every $\e>0$ there exists $k\in\bb N$ and $\de>0$ so that for all $P_n$, a finite partition lattice, there exists $x_1,\dotsc, x_k$ in $P_n$ so that $\sum_{i=1}^k \vp(x_i,y) < \de$ implies that $d(y,\Sg_n)<\e$.
    \end{enumerate}
\end{prop}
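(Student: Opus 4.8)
The plan is to encode both ``property $\G$'' and the uniform finitary statement (2) into a single family of sentences, and then move between finite lattices and arbitrary models of $T_{FPL}$ using \L os's theorem. Let $D(y)$ be the definable predicate computing $d(y,\mu(\cc M))$, which exists by Lemma \ref{lem:mu-definable} and agrees with $d(y,\Sg_n)$ on $P_n$ by Corollary \ref{cor:metrically-modular-equals-singular}. For $k\in\bb N$ and $\e>0$ set
\[
\Theta_{k,\e} := \sup_{x_1,\dots,x_k}\ \inf_y\ \max\Bigl\{\textstyle\sum_{i=1}^k \vp(x_i,y),\ \e\dminus D(y)\Bigr\},
\]
a genuine $\ff L$-sentence. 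Since $\vp(0,y)=0$, setting surplus variables to $0$ shows $\Theta_{k,\e}\le\Theta_{k',\e}$ whenever $k\le k'$, in every structure. I will prove three equivalences and chain them: (I) $\neg(2)$ $\iff$ $\exists\,\e>0\ \forall k:\inf_n\Theta_{k,\e}^{P_n}=0$; (II) for $\cc M\models T_{FPL}$, $\cc M$ has property $\G$ $\iff$ $\exists\,\e>0\ \forall k:\Theta_{k,\e}^{\cc M}=0$; and (III) the two existential-over-$\e$ statements on the right of (I) and (II) are equivalent.

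For (I), one unwinds $\Theta_{k,\e}^{P_n}$: the bracketed clause of (2) for a given $(k,\de,\e)$ (``for all $\bar x$ there is $y$ with $\sum_i\vp(x_i,y)<\de$ and $d(y,\Sg_n)\ge\e$'') holds exactly when $\Theta_{k,\e}^{P_n}\le\de$, because $d(y,\Sg_n)\ge\e$ forces $\e\dminus D(y)=0$. Thus the failure of (2) at level $\e$ for all $k,\de$ is precisely $\inf_n\Theta_{k,\e}^{P_n}=0$ for all $k$; recovering the strict separation $d(y,\Sg_n)\ge\e'$ on the reverse implication costs only a harmless passage from $\e$ to some $\e'<\e$.

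For (II) — the metric-lattice analogue of Murray--von Neumann's property $\G$ — the forward direction takes a non-trivial almost modular net $(a_\lambda)$; since $\mu(\cc M)$ is closed (being definable), non-triviality is equivalent to $d(a_\lambda,\mu(\cc M))\not\to0$, so after passing to a subnet we may assume $d(a_\lambda,\mu(\cc M))\ge\e>0$ for all $\lambda$. Then for any $x_1,\dots,x_k$ and $\eta>0$ almost modularity gives some $a_\lambda$ with $\sum_i\vp(x_i,a_\lambda)<\eta$ and $\e\dminus D(a_\lambda)=0$, whence $\Theta_{k,\e}^{\cc M}=0$. Conversely, index a net by pairs $\lambda=(F,\eta)$ with $F=\{x_1,\dots,x_k\}\subseteq\cc M$ finite and $\eta>0$, directed by inclusion of $F$ and decrease of $\eta$; using $\Theta_{k,\e}^{\cc M}=0$ choose $a_\lambda$ with $\sum_i\vp(x_i,a_\lambda)<\eta$ and $D(a_\lambda)>\e-\eta$. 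This net is almost modular and, being eventually at distance $>\e/2$ from $\mu(\cc M)$, is non-trivial.

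Finally (III) is the bridge. If $\cc M\models T_{FPL}$ satisfies $\Theta_{k,\e}^{\cc M}=0$ for all $k$, then (as in the proof of Proposition \ref{prop:boolean-core}) $\cc M$ elementarily embeds into an ultraproduct $\prod_{U}P_{n_i}$; since $\Theta_{k,\e}$ is a sentence, $\lim_{i\to U}\Theta_{k,\e}^{P_{n_i}}=0$ by \L os, giving $\inf_n\Theta_{k,\e}^{P_n}=0$ for every $k$. Conversely, given $\e$ with $\inf_n\Theta_{k,\e}^{P_n}=0$ for all $k$, pick $n_m$ with $\Theta_{m,\e}^{P_{n_m}}<1/m$ and form $\cc M=\prod_{\cc U}P_{n_m}$ over a nonprincipal $\cc U$ on $\bb N$; monotonicity in $k$ yields $\Theta_{k,\e}^{P_{n_m}}<1/m$ for $m\ge k$, so $\Theta_{k,\e}^{\cc M}=0$ for all $k$ by \L os, and $\cc M\models T_{FPL}$. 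Chaining (I)--(III) gives $\neg(1)\iff\neg(2)$, i.e. $(1)\iff(2)$. I expect the main obstacle to be the bookkeeping in (II): matching a single sentence $\Theta_{k,\e}$ against an arbitrary net, reducing ``non-trivial'' to ``$d(a_\lambda,\mu(\cc M))\not\to0$'' (which rests essentially on definability, hence closedness, of $\mu(\cc M)$), and securing the ``for all $k$ simultaneously'' clause, which is what the monotonicity of $\Theta_{k,\e}$ in $k$ and the diagonal ultraproduct are designed to handle.
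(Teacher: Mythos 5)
Your proposal is correct, and its engine is the same as the paper's: a diagonal choice of finite partition lattices, an ultraproduct argument with \L{}os' theorem, the definability of the modular elements (Lemma \ref{lem:mu-definable} together with Corollary \ref{cor:metrically-modular-equals-singular}, so that one predicate $D$ computes both $d(\cdot,\Sg_n)$ and $d(\cdot,\mu(\cc M))$), and an almost modular net indexed by finite subsets. The difference is organizational, and it is a real one. The paper proves only the contrapositive of $(1)\Rightarrow(2)$ by a direct construction: assuming $(2)$ fails, it sets $n_k:=n_{k,k}$, forms $\cc P=\prod_{\cc U}P_{n_k}$, and builds the non-trivial almost modular net $(y_F)$ indexed by finite subsets $F\subseteq\cc P$ with $\sum_{x\in F}\vp(y_F,x)\le\e/|F|$ and $d(y_F,\Sg_{\cc P})\ge\e$ inside the ultraproduct; the direction $(2)\Rightarrow(1)$ is dismissed as straightforward. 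Your sentences $\Theta_{k,\e}$ split the same content into (I)--(III): your (III)-converse is exactly the paper's diagonal ultraproduct, and your (II)-converse is exactly the paper's finite-subset net, except performed abstractly in any model where the $\Theta_{k,\e}$ vanish rather than concretely in $\cc P$. What your version buys is that the ``straightforward'' direction is actually proved (via (I), (II)-forward, (III)-forward), and the quantifier bookkeeping --- the harmless $\e$ versus $\e'$ loss in (I), the monotonicity $\Theta_{k,\e}\le\Theta_{k',\e}$ via $\vp(0,y)=0$, and the reduction of non-triviality to $d(a_\lambda,\mu(\cc M))\not\to 0$ using closedness of $\mu(\cc M)$ --- is made explicit. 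One simplification is available to you: in (III)-forward the appeal to an elementary embedding into an ultraproduct is heavier than necessary. If $\inf_n\Theta_{k,\e}^{P_n}=c>0$, then $c\dminus\Theta_{k,\e}$ (or, working with formulas $\psi_j$ uniformly approximating the definable predicate, $(c-2^{-j})\dminus\psi_j$) is a sentence vanishing on every $P_n$, hence belongs to $T_{FPL}$ and so vanishes on $\cc M$, forcing $\Theta_{k,\e}^{\cc M}\ge c$; this contradicts $\Theta_{k,\e}^{\cc M}=0$ using nothing but the definition of $T_{FPL}$.
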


%
\begin{proof}
     The proof of ``(2) implies (1)'' is straightforward. 

    For the proof of ``(1) implies (2),'' suppose that the second statement is false. That is, there is some $\e>0$ so that for all $k,l\in\bb N$ we have that there is $n_{l,k}\in \bb N$ so that for any choice of $x_1,\dotsc,x_k\in P_{n_{l,k}}$ there is $y\in P_{n_{k,l}}$ with $d(y,\Sg_{n_{l,k}})\geq \e$ and $\sum_{i=1}^k\vp(y,x_i)\leq \frac{1}{l}\e$. Specifically, for all $k\in\bb N$, there is $n_k:=n_{k,k}\in\bb N$ such that for any choice of $x_1,\dotsc,x_k\in P_{n_k}$ there is $y\in P_{n_k}$ with $d(y,\Sg_{n_k})\geq \e$ and $\sum_{i=1}^k\vp(y,x_i)\leq \frac{1}{k}\e$.
    Fix a non-principal ultrafilter $\cc U$ on $\bb N$, and consider the ultraproduct lattice $\cc P = \prod_{k\in\bb N} P_{n_k}/\cc U$. For any finite subset $F = \{x_1,\dotsc,x_l\}$ of $\cc P$ choose representing sequences $x_i = (x_{k,i})_{k\in\bb N}$, and for each $k\in \bb N, k\geq l$ we can pick $y_{k,F}\in P_{n_k}$ such that $d(y_{k,F},\Sg_{n_k})\geq \e$ and $\sum_{i=1}^l\vp(y_{k,F},x_{k,i})\leq \sum_{i=1}^l\vp(y_{k,F},x_{k,i})+ (k-l)\vp(y_{k,F},x_{k,l})\leq\frac{1}{k}\e\leq \frac{1}{|F|}\e$, without loss of generality we can pick arbitrary elements of $P_{n_k}$ to define $y_{F,k}$ for each $k< l$.  Setting $y_F$ to be the class of $(y_{k,F})$ in $\cc P$, we have that $d(y_F, \Sg_\cc P)=d(y_F, \Pi_{k\in n} \Sg_{n_k}/\cc U)=\lim_{k,\cc U}d(y_{k,F},\Sg_{n_k})\geq \e$ and $\Sg_{i=1}^l\vp(y_F, x_i)=\lim_{k,\cc U} \sum_{i=1}^l\vp(y_{k,F},x_{k,i})\leq \frac{1}{|F|}\e$. Therefore,  $(y_F)$ is a non-trivial almost modular net. 
                                                                                                                   \end{proof}
By Proposition \ref{prop:no-gamma} an affirmative answer to the following question would imply that no model of $T_{FPL}$ has property $\G$.

\begin{question}
    Do there exist constants $C,K$ so that for all $P_n$, a finite partition lattice, there exists $x_1,\dotsc, x_K$ in $P_n$ so that $d(y,\Sg_n)\leq C\sum_{i=1}^K \vp(x_i,y)$?
\end{question}
\noindent Although this seems like a very strong property, there are analogs of equally unexpected and strong ``spectral gap'' type properties for matrix algebras: See, for instance, \cite{malnormal, vN-matrices}. On the other hand, it is unknown whether there is a uniform spectral gap estimate over all non-$\Gamma$ II$_1$ factors, nor is it known whether the non-$\Gamma$ II$_1$ factors form an axiomatizable class \cite{fhs-iii}.

It would be nice to have a simple description of the class of all pseudofinite partition lattices. 

\begin{question}
    Suppose that $\cc L$ is a complete metric lattice with the following properties: 
    \begin{enumerate}
        \item The set of metrically modular elements is a Boolean lattice;
        \item For every $x\in L$ the set $\G(x)$ of all modular complements of $x$ is nonempty;
        \item For all $x,y\in L$ \[\frac{1}{4}d(x,y) \leq d_{Haus}(\G(x),\G(y))\leq d(x,y).\]  
    \end{enumerate}
    Does $\cc L \models T_{FPL}$?
\end{question}

The properties of the selector sets of partition lattices suggest a possible framework for a ``Boolean'' theory of limits of metric lattices or submodular functions in general, shedding light on a question of Lov\'asz \cite[Problem 8.9]{lovasz-submod-setfunction}.
Let $\fk B = (\cc B, \cup, \cap, \cdot^c, d)$ be a complete metric Boolean lattice. For nonempty closed subsets $P,Q$ of $\cc B$, we write
\[P\cap Q = \{x\cap y : x\in P, y\in Q\}.\] We consider a collection $\G$ of nonempty closed subsets of $\cc B$ with the following properties:
\begin{enumerate}
    \item $\{0\}, \{1\}\in \G$.
    \item For each $P\in \G$, $|x| = |y|$ for all $x,y\in P$, we let $||P||:=|x|$ for any $x\in P$.
    \item For all $P,Q\in \G, \{S\in \G: S\subset P\cap Q\}\neq\emptyset$ and there is a unique $R\in \G$ with $R\subseteq P\cap Q$ and $||R||=\max\{||S||: S\in \G\wedge S\subseteq P\cap Q\}$. We write $R = P\otimes Q$.
    \item $\G$ is closed in the Hausdorff metric.
\end{enumerate}

We observe that $(\G, \otimes)$ is a meet semilattice with $\{0\}$ and $\{1\}$ being the minimal and maximal elements, respectively. Note that for the induced ordering, we have that $Q\leq P$ if and only if for every $x\in Q$ there is $y\in P$ so that $x\leq y$. For $P, Q\in \G$ let $\rho(P,Q) : =  \|P\| +  \|Q\| - 2\|P\otimes Q\|$. 

\begin{prop}
    We have that $(\G, \otimes, \rho)$ is a complete metric semilattice.
\end{prop}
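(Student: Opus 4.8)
The plan is to treat $\otimes$ as the semilattice operation, with identity element $\{1\}$ (since $\{1\}\otimes P=P$) and absorbing element $\{0\}$ (since $\{0\}\otimes P=\{0\}$), both read off from the uniqueness clause in property (3); commutativity, associativity and idempotency of $\otimes$ are exactly the assertion, recorded just above the proposition, that $(\G,\otimes)$ is a meet semilattice. Writing $|P|:=\rho(P,\{1\})=1-\|P\|$ for the induced semilattice norm, a direct expansion using $(P\otimes S)\otimes P=P\otimes S$ and $(P\otimes Q)\otimes\{1\}=P\otimes Q$ shows that the three metric-semilattice axioms collapse to: the trivial normalization $\rho(\{0\},\{1\})=1$ (immediate, as $\|\{0\}\|=0$, $\|\{1\}\|=1$, $\{0\}\otimes\{1\}=\{0\}$); the contractivity $\rho(P\otimes S,Q\otimes S)\le\rho(P,Q)$; and the submodular inequality
\[ \|P\otimes S\|+\|Q\otimes S\|\le\|S\|+\|P\otimes Q\|, \]
this last being simultaneously equivalent, as in the remark following Corollary \ref{cor:semimodular-upgrade}, to $\rho$ satisfying the triangle inequality. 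Thus, apart from completeness, everything reduces to definiteness of $\rho$, the submodular inequality, and contractivity.

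For definiteness I would first record that $\|\cdot\|$ is monotone for the order $\le$ on $\G$ and that $\|P\otimes Q\|\le\min\{\|P\|,\|Q\|\}$ (any $r\in P\otimes Q\subseteq P\cap Q$ has the form $p\cap q\le p$, so $|r|\le\|P\|$), whence $\rho(P,Q)\ge\bigl|\,\|P\|-\|Q\|\,\bigr|\ge 0$ and $\rho(P,P)=0$. If $\rho(P,Q)=0$ then $\|P\|=\|Q\|=\|P\otimes Q\|$; for $r=p\cap q\in P\otimes Q$ with $p\in P,\ q\in Q$ we have $r\le p$ and $|r|=\|P\|=|p|$, so $r=p$ by strict monotonicity of $|\cdot|$ (Proposition \ref{prop:metric-norm}), and symmetrically $r=q$. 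Hence every element of $P\otimes Q$ lies in both $P$ and $Q$, so $P\otimes Q\subseteq P\cap P$ and $P\otimes Q\subseteq Q\cap Q$; the uniqueness clause of property (3) applied to the pairs $(P,P)$ and $(Q,Q)$ forces $P\otimes Q=P\otimes P=P$ and $P\otimes Q=Q\otimes Q=Q$, giving $P=Q$.

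The heart of the matter, and the step I expect to be the main obstacle, is the submodular inequality together with contractivity. The intended route is to establish the extremal description
\[ \|P\otimes Q\|=\max\{\,|p\cap q| : p\in P,\ q\in Q\,\}, \]
which should follow by combining the fact that $P\otimes Q$ is the greatest lower bound of $P$ and $Q$ in $(\G,\le)$ (yielding ``$\le$'') with a selection argument that produces, via closedness of $\G$ in the Hausdorff metric (property (4)) and strict monotonicity of $|\cdot|$, a $\G$-lower bound realizing the maximum (yielding ``$\ge$''). Granting this, both inequalities are inherited from $\cc B$: since $\cc B$ is Boolean, hence metrically modular, $|\cdot|$ is modular, so $|u\cap v|\ge|u|+|v|-\|S\|$ whenever $u\le s$ and $v\le s$ for some $s\in S$, while the meet is $d'$-contractive by Lemma \ref{lem:metric-mod-contraction} and uniformly continuous by Proposition \ref{prop:meet-uc}. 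The delicate point is that realizing the maxima for $P\otimes S$ and $S\otimes Q$ simultaneously requires choosing the two optimal meets through a \emph{common} element $s\in S$; producing such compatible representatives is precisely the phenomenon handled by the matching arguments of Propositions \ref{prop:formula-selector-set-distance} and \ref{prop:selectors-hausdorff} in the partition case, and I would adapt that analysis to the present abstract setting.

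Finally, for completeness I would exploit property (4). The hyperspace of nonempty closed subsets of the complete metric space $\cc B$ is complete in the Hausdorff metric $d_{Haus}$, and $\G$ is a closed subspace of it, hence itself $d_{Haus}$-complete. It therefore suffices to show that $\rho$ and $d_{Haus}$ are uniformly equivalent on $\G$, i.e.\ that $\rho$-Cauchy sequences are $d_{Haus}$-Cauchy and that $\rho$ is $d_{Haus}$-continuous. Convergence of the scalars $\|P_n\|$ is free, since $\rho(P_n,P_m)\ge\bigl|\,\|P_n\|-\|P_m\|\,\bigr|$; the comparison of $\rho$ with $d_{Haus}$ again rests on the extremal description of $\|P\otimes Q\|$ and uniform continuity of $\cap$, and should reprise the two-sided bound of Proposition \ref{prop:selectors-hausdorff}. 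A $\rho$-Cauchy sequence then converges in $d_{Haus}$ to some $P\in\G$, and $d_{Haus}$-continuity of $\rho$ identifies $P$ as its $\rho$-limit, establishing that $(\G,\rho)$ is complete and completing the verification that $(\G,\otimes,\rho)$ is a complete metric semilattice.
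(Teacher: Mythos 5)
Your reduction of the problem is sound: normalization, contractivity, and the inequality $\|P\otimes S\|+\|Q\otimes S\|\leq\|S\|+\|P\otimes Q\|$ (which, as you say, is equivalent to the triangle inequality for $\rho$ and to the third semilattice-metric axiom) are exactly what must be verified, and your definiteness argument --- strict monotonicity of the rank (Proposition \ref{prop:metric-norm}) combined with the uniqueness clause of property (3) applied to the pair $(P,P)$ --- is correct, indeed more detailed than what the paper records. The fatal step is the extremal description
\[ \|P\otimes Q\|=\max\{\,|p\cap q| : p\in P,\ q\in Q\,\}, \]
on which you hang the submodular inequality, contractivity, \emph{and} the comparison of $\rho$ with $d_{Haus}$ needed for completeness. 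This identity is false, and it fails already in the paper's motivating example. Let $\cc P$ be the infinite pseudofinite partition lattice $\prod_{\cc U}P_{2n}$, and let $x=[(x_n)]$, $y=[(y_n)]$, where $x_n$ partitions $[[2n]]$ into the pairs $\{2i-1,2i\}$ and $y_n$ into the cyclically shifted pairs $\{2i,2i+1\}$. Then $x_n+y_n=1$, so $x+y=1$ and hence $\G(x)\otimes\G(y)=\G(x+y)=\G(1)=\{0\}$, of norm $0$. But the singular partition $z_n$ whose basic block is the set of odd numbers selects one element from every block of $x_n$ and of $y_n$, so $z=[(z_n)]$ lies in $\G(x)\cap\G(y)$ (it is modular by Corollary \ref{cor:metric-modular-ultraproduct-singular} and a complement of both by Lemma \ref{lem:selector}); thus $z=z\cap z$ is a pairwise meet with $|z|=\lim_n\frac{n-1}{2n-1}=\frac12>0=\|\G(x)\otimes\G(y)\|$. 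The conceptual error is that $P\otimes Q$ is the greatest lower bound of $P$ and $Q$ only for the order of $(\G,\otimes)$; it is not a norm-maximizing set of pairwise meets. In the example every common $\G$-lower bound $R$ of $\G(x)$ and $\G(y)$ satisfies $R\leq\G(x)\otimes\G(y)$ and hence $\|R\|=0$, so no selection argument, however clever, can produce a lower bound realizing the maximum $\tfrac12$.

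Since the heart of your proof (the triangle inequality, contractivity, and the $\rho$-versus-$d_{Haus}$ comparison in the completeness step) all rest on this false identity, the argument does not go through as planned. The paper proceeds differently: it never attempts to compute $\|P\otimes Q\|$ extremally, but instead proves the exchange relation $\|P\otimes Q\|+\|R\|\leq\|P\|+\|Q\otimes R\|$ for $R\leq P$ directly, using only that elements of the $\otimes$-sets \emph{are} particular pairwise meets (the trivial direction of your description), lifting an element $z\in R$ to some $x\in P$ with $z\leq x$, and applying modularity of the ambient Boolean lattice pointwise in the form $|x\cap y|+|z|\leq|x|+|y\cap z|$; the semilattice-metric axioms then follow from the meet-semilattice adaptation of Proposition \ref{prop:exchange-relation}, and completeness is deduced from closedness of $\G$ in the Hausdorff metric. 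Your ``delicate point'' about choosing representatives compatibly is where the genuine work lies, but it has to be carried out inside this exchange-relation framework rather than outsourced to an extremal identity that the structure does not satisfy.
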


\begin{proof}
    It is straightforward to see that $\G$ is metrically complete follows from $\G$ being closed in the Hausdorff metric. Let $P,Q,R\in \G$ with $R\leq P$ and $x\in P, y\in Q, z\in R$. We have that 
    \begin{equation*}
        |x\cap y| + |z| \leq |x| + |y\cap z|
    \end{equation*}
    whenever $x\geq z$. As this can always be arranged by assumption, it follows that
    \begin{equation*}
        \|P\otimes Q\| + \|R\| \leq \|P\| + \|Q\otimes R\|.
    \end{equation*}
    The result now follows by an adaptation of the proof of Proposition \ref{prop:exchange-relation} to meet semilattices.
\end{proof}

\begin{remark}
     Let $\cc P$ be an infinite pseudofinite partition lattice. Due to our previous work in section \ref{sec: set-sel}, we believe it should be the case that the set $\G :=\{\G(x) : x\in \cc P\}$ satisfies the above properties with respect to the Boolean algebra of modular elements of $\cc P$. Indeed, it can be checked that $\{0\}=\G(1), \{1\}=\G(0)$ so (1) is satisfied, also (2) follows from Lemma \ref{lem:partitions-have-selectors} and (4) from Proposition \ref{prop:selectors-hausdorff}. Given the length of the proof, we have decided to omit the details for the satisfaction of (3), but we mention some key points. Note that by the techniques used in the proof of Proposition \ref{prop:formula-selector-set-distance} it follows that $\G(x+y) \subseteq \G(x)\cap \G(y)$ and $1-|x+y|=||\G(x+y)||=\max\{||\G(z)||: \G(z)\subseteq \G(x)\cap\G(y)\}$ for all finite partition lattices. From here, using the theory of selectors and the definability of the meet in the context we can prove that (3) is satisfied by $\cc P$, and thus $\G(x)\otimes \G(y) = \G(x+y)$ for all $x,y\in P$. 
     
     If given a metric lattice $\cc L$, we define the opposite lattice $\overline{\cc L}$ by $\bar x + \bar y = \overline{xy}$, $\bar x\cdot \bar y = \overline{x+y}$, and $d(\bar x, \bar y) = d(x,y)$ for all $x,y\in L$, we have that $\overline{\cc L}$ is a meet metric semilattice. In this way $\overline{\G}$ is isometrically isomorphic to $\cc P$.
\end{remark}

\begin{question}
    Is every complete metric semilattice isometrically isomorphic to one of this form?
\end{question}

\section{Further Directions and Questions}\label{sec:further}

In their seminal work on continuous limits of lattices, Bj\"orner and Lov\'asz \cite{bjorner-lovasz} isolate the following property as an essential ingredient for their constructions, which they call \emph{pseudomodularity}. We will call a metric lattice $\cc L$ pseudomodular if for all $x,y\in L$ the set $P_{xy} :=\{z\leq y : d(x+z,z) = d(x+y,y)\}$ has a unique least element. 

\begin{question}
    Do the pseudomodular complete metric lattices form an elementary class?
\end{question}

The following definition is inspired by the work of Krivine and Maurey \cite{krivine}.

\begin{defn}
    We say that a metric lattice $\cc L$ is \emph{stable} if 
    \[\lim_i \lim_j |x_i + y_j| = \lim_j \lim_i |x_i + y_j|\] for all sequences $(x_i)$ and $(y_j)$ in $L$ for which both limits exist. 
\end{defn}

\begin{question}
    Is there some ``combinatorial'' description of stable metric lattices?
\end{question}
\noindent Stability of geometric lattices in the context of classical model theory has been studied in \cite{hyttinen}.

Let $G = (V,E)$, be a simple, undirected finite graph which is connected. We can associate to $G$ a subsemilattice $L(G)$ of $P_n$, where $|V| = n$, consisting of the set of all partitions of $V$ whose blocks are the connected components of the subgraph induced by some $E'\subseteq E$. This turns out to be isomorphic to the lattice of flats of the graphical matroid associated to $G$. We note that $L(K_n) = P_n$, where $K_n$ is the complete graph on $n$ vertices. 

\begin{question}
    For which families of finite graphs, do the lattices $L(G)$ form a Fra\"iss\'e class of metric structures as in \cite{ben-fraisse, vignati}?
\end{question}

\noindent Such a class ought to have a canonical ``hyperfinite'' limiting object.

The next question turns to connections between our theory and the limiting theories of combinatorial structures using sampling densities as developed in \cites{kardos, lovasz-large,nesetril,razborov}. If $\cc L$ is a geometric lattice with $n$ atoms $A_{\cc L}$ and $\cc H$ is a fixed geometric lattice with $k$ atoms, we write 
\begin{equation}
    t(\cc H, \cc L) := \binom{n}{k}^{-1}\left|\{S\subset A_{\cc L} : |S|= k, \cc L_S\cong \cc H\}  \right|.
\end{equation} Here $\cc L_S$ denotes the sublattice of $\cc L$ generated by $S$. 

\begin{question}
    For what $\cc H$ does $t(\cc H, \bullet)$ extend to a sentence in the theory of metric lattices?
\end{question}

\noindent As a first step towards answering this, one must consider the following. 

\begin{question}
    If $(\cc L_n)$ and $(\cc M_n)$ are sequences of geometric lattices (with appropriately normalized rank functions) and $\mathcal U$ is an ultrafilter so that $\prod_{\mathcal U} \cc L_n\cong \prod_{\mathcal U} \cc M_n$ isometrically isomorphically as metric lattices, does it hold that $\lim_{\mathcal U} t(\cc H, \cc L_n) = \lim_{\mathcal U} t(\cc H, \cc M_n)$? 
\end{question}

We denote by $NC_n$ the lattice of noncrossing partitions with respect to the usual cyclic order on $\{1,\dotsc,n\}$. Note that $NC_n$ is not a sublattice of the finite partition lattice $P_n$: The meets are the same, but the joins are different. The lattice of noncrossing partitions has a rich combinatorial structure in its own right: see \cite[Lecture 9]{nica} or \cite{simion}. For instance, noncrossing partition lattices are self-dual, while partition lattices are not. However, the rank function on $NC_n$ inherited from $P_n$ is not submodular for $n\geq 4$ \cite[p.\ 370]{simion}.
Moreover, there is a canonical order-reversing lattice anti-isomomorphism $\kappa_n: NC_n\to NC_n$ called the \emph{Kreweras complement}. This poses a serious obstacle to realizing $NC_n$ as a metric lattice for which the metric is compatible with $\kappa_n$, as the order-reversing property together with submodularity ought to give modularity. 

\begin{question}
    Is there a good continuous limiting theory of noncrossing partition lattices?
\end{question}

\noindent  A satisfactory answer to this question likely would have connections with free probability theory \cite{nica}.

Finally, there are ``weak order'' structures on finite permutation groups \cite{takemi}, and finite Coxeter groups more generally \cite{brenti}, which endow these groups with a lattice structures. While these lattice structures need not be geometric, since not all maximal chains need have the same length, they possess a strong property called \emph{semidistributivity}:
\begin{equation}
    \begin{split}
        x+y=x+z &\Rightarrow x+y = x+yz\\
        xy = xz &\Rightarrow xy = x(y+z).
    \end{split}
\end{equation}

\begin{question}
    Is there a metric theory of semidistributive lattices?
\end{question}

\section*{Acknowledgements} 

The authors thank David Jekel for helpful comments on any earlier version of this manuscript and the anonymous referee for many suggestions which improved the exposition. The authors are grateful to the Colombia--Purdue Partnership and Andr\'es Villaveces for facilitating and supporting their collaboration. 

\section*{Funding}

The authors were supported by NSF grant DMS-2055155.

\begin{bibdiv}
    \begin{biblist}

\bib{aigner}{book}{
   author={Aigner, Martin},
   title={Combinatorial theory},
   series={Grundlehren der Mathematischen Wissenschaften},
   volume={234},
   publisher={Springer-Verlag, Berlin-New York},
   date={1979},
   pages={viii+483},
   isbn={0-387-90376-3},
   review={\MR{0542445}},
}

\bib{ando-tp}{article}{
   author={Ando, T.},
   title={Totally positive matrices},
   journal={Linear Algebra Appl.},
   volume={90},
   date={1987},
   pages={165--219},
   issn={0024-3795},
   review={\MR{0884118}},
   doi={10.1016/0024-3795(87)90313-2},
}

\bib{ben-fraisse}{article}{
   author={Ben Yaacov, Ita\"i},
   title={Fra\"iss\'e{} limits of metric structures},
   journal={J. Symb. Log.},
   volume={80},
   date={2015},
   number={1},
   pages={100--115},
   issn={0022-4812},
   review={\MR{3320585}},
   doi={10.1017/jsl.2014.71},
}

\bib{mtfms}{article}{
   author={Ben Yaacov, Ita\"{\i}},
   author={Berenstein, Alexander},
   author={Henson, C. Ward},
   author={Usvyatsov, Alexander},
   title={Model theory for metric structures},
   conference={
      title={Model theory with applications to algebra and analysis. Vol. 2},
   },
   book={
      series={London Math. Soc. Lecture Note Ser.},
      volume={350},
      publisher={Cambridge Univ. Press, Cambridge},
   },
   isbn={978-0-521-70908-8},
   date={2008},
   pages={315--427},
   review={\MR{2436146}},
   doi={10.1017/CBO9780511735219.011},
}

\bib{berczi2024quotient}{article}{
  title={Quotient-convergence of submodular setfunctions},
  author={B{\'e}rczi, Krist{\'o}f},
  author={Borb{\'e}nyi, M{\'a}rton},
  author={Lov{\'a}sz, L{\'a}szl{\'o}},
  author={T{\'o}th, L{\'a}szl{\'o} M{\'a}rton},
  journal={arXiv preprint arXiv:2406.08942},
  year={2024}
  }


\bib{Mtfps}{article}{
   author={Berenstein, Alexander},
   author={Henson, C. Ward},
   title={Model theory of probability spaces},
   conference={
      title={Model theory of operator algebras},
   },
   book={
      series={De Gruyter Ser. Log. Appl.},
      volume={11},
      publisher={De Gruyter, Berlin},
   },
   isbn={978-3-11-076821-3},
   isbn={978-3-11-076828-2},
   isbn={978-3-11-076833-6},
   date={[2023] \copyright2023},
   pages={159--213},
   review={\MR{4654492}},
}

\bib{Berg}{book}{
   author={Berg, Christian},
   author={Christensen, Jens Peter Reus},
   author={Ressel, Paul},
   title={Harmonic analysis on semigroups},
   series={Graduate Texts in Mathematics},
   volume={100},
   note={Theory of positive definite and related functions},
   publisher={Springer-Verlag, New York},
   date={1984},
   pages={x+289},
   isbn={0-387-90925-7},
   review={\MR{0747302}},
   doi={10.1007/978-1-4612-1128-0},
}

\bib{birkhoff}{book}{
   author={Birkhoff, Garrett},
   title={Lattice theory},
   series={American Mathematical Society Colloquium Publications},
   volume={Vol. XXV},
   edition={3},
   publisher={American Mathematical Society, Providence, RI},
   date={1967},
   pages={vi+418},
   review={\MR{0227053}},
}

\bib{Bjorner-part}{article}{
   author={Bj\"{o}rner, Anders},
   title={Continuous partition lattice},
   journal={Proc. Nat. Acad. Sci. U.S.A.},
   volume={84},
   date={1987},
   number={18},
   pages={6327--6329},
   issn={0027-8424},
   review={\MR{0907833}},
   doi={10.1073/pnas.84.18.6327},
}

\bib{Bjorner-continuous}{article}{
   author={Bj\"{o}rner, Anders},
   title={Continuous matroids revisited},
   conference={
      title={Building bridges II---mathematics of L\'{a}szl\'{o} Lov\'{a}sz},
   },
   book={
      series={Bolyai Soc. Math. Stud.},
      volume={28},
      publisher={Springer, Berlin},
   },
   isbn={978-3-662-59203-8},
   isbn={978-3-662-59204-5},
   date={[2019] \copyright 2019},
   pages={17--28},
   review={\MR{4297778}},
   doi={10.1007/978-3-662-59204-5\_2},
}

\bib{brenti}{book}{
   author={Bj\"orner, Anders},
   author={Brenti, Francesco},
   title={Combinatorics of Coxeter groups},
   series={Graduate Texts in Mathematics},
   volume={231},
   publisher={Springer, New York},
   date={2005},
   pages={xiv+363},
   isbn={978-3540-442387},
   isbn={3-540-44238-3},
   review={\MR{2133266}},
}

\bib{bjorner-lovasz}{article}{
   author={Bj\"{o}rner, A.},
   author={Lov\'{a}sz, L.},
   title={Pseudomodular lattices and continuous matroids},
   journal={Acta Sci. Math. (Szeged)},
   volume={51},
   date={1987},
   number={3-4},
   pages={295--308},
   issn={0001-6969},
   review={\MR{0940934}},
}

\bib{chang-keisler}{book}{
   author={Chang, C. C.},
   author={Keisler, H. J.},
   title={Model theory},
   series={Studies in Logic and the Foundations of Mathematics},
   volume={73},
   edition={3},
   publisher={North-Holland Publishing Co., Amsterdam},
   date={1990},
   pages={xvi+650},
   isbn={0-444-88054-2},
   review={\MR{1059055}},
}

\bib{choquet}{article}{
   author={Choquet, Gustave},
   title={Theory of capacities},
   journal={Ann. Inst. Fourier (Grenoble)},
   volume={5},
   date={1953/54},
   pages={131--295 (1955)},
   issn={0373-0956},
   review={\MR{0080760}},
}

\bib{dilworth}{article}{
   author={Dilworth, R. P.},
   title={Lattices with unique complements},
   journal={Trans. Amer. Math. Soc.},
   volume={57},
   date={1945},
   pages={123--154},
   issn={0002-9947},
   review={\MR{0012263}},
   doi={10.2307/1990171},
}

\bib{elek}{article}{
   author={Elek, G\'abor},
   author={Szegedy, Bal\'azs},
   title={A measure-theoretic approach to the theory of dense hypergraphs},
   journal={Adv. Math.},
   volume={231},
   date={2012},
   number={3-4},
   pages={1731--1772},
   issn={0001-8708},
   review={\MR{2964622}},
   doi={10.1016/j.aim.2012.06.022},
}

\bib{farah}{article}{
   author={Farah, Ilijas},
   author={Hart, Bradd},
   author={Lupini, Martino},
   author={Robert, Leonel},
   author={Tikuisis, Aaron},
   author={Vignati, Alessandro},
   author={Winter, Wilhelm},
   title={Model theory of $\rm C^*$-algebras},
   journal={Mem. Amer. Math. Soc.},
   volume={271},
   date={2021},
   number={1324},
   pages={viii+127},
   issn={0065-9266},
   isbn={978-1-4704-4757-1; 978-1-4704-6626-8},
   review={\MR{4279915}},
   doi={10.1090/memo/1324},
}

\bib{fhs-ii}{article}{
   author={Farah, Ilijas},
   author={Hart, Bradd},
   author={Sherman, David},
   title={Model theory of operator algebras II: model theory},
   journal={Israel J. Math.},
   volume={201},
   date={2014},
   number={1},
   pages={477--505},
   issn={0021-2172},
   review={\MR{3265292}},
   doi={10.1007/s11856-014-1046-7},
}

\bib{fhs-iii}{article}{
   author={Farah, Ilijas},
   author={Hart, Bradd},
   author={Sherman, David},
   title={Model theory of operator algebras III: elementary equivalence and
   $\rm II_1$ factors},
   journal={Bull. Lond. Math. Soc.},
   volume={46},
   date={2014},
   number={3},
   pages={609--628},
   issn={0024-6093},
   review={\MR{3210717}},
   doi={10.1112/blms/bdu012},
}

\bib{FKG}{article}{
   author={Fortuin, C. M.},
   author={Kasteleyn, P. W.},
   author={Ginibre, J.},
   title={Correlation inequalities on some partially ordered sets},
   journal={Comm. Math. Phys.},
   volume={22},
   date={1971},
   pages={89--103},
   issn={0010-3616},
   review={\MR{0309498}},
}

\bib{goldbring}{book}{
   author={Goldbring, Isaac},
   title={Ultrafilters throughout mathematics},
   series={Graduate Studies in Mathematics},
   volume={220},
   publisher={American Mathematical Society, Providence, RI},
   date={[2022] \copyright 2022},
   pages={xviii+399},
   isbn={978-1-4704-6900-9},
   review={\MR{4454845}},
   doi={10.1090/gsm/220},
}

\bib{mtoa}{collection}{
   title={Model theory of operator algebras},
   series={De Gruyter Series in Logic and its Applications},
   volume={11},
   editor={Goldbring, Isaac},
   note={Edited by Isaac Goldbring},
   publisher={De Gruyter, Berlin},
   date={[2023] \copyright 2023},
   pages={ix+484},
   isbn={978-3-11-076821-3},
   isbn={978-3-11-076828-2},
   isbn={978-3-11-076833-6},
   review={\MR{4654487}},
}

\bib{goldbring-towsner}{article}{
   author={Goldbring, Isaac},
   author={Towsner, Henry},
   title={An approximate logic for measures},
   journal={Israel J. Math.},
   volume={199},
   date={2014},
   number={2},
   pages={867--913},
   issn={0021-2172},
   review={\MR{3219561}},
   doi={10.1007/s11856-013-0054-3},
}

\bib{haiman}{article}{
   author={Haiman, Mark D.},
   title={On realization of Bj\"{o}rner's ``continuous partition lattice''
   by measurable partitions},
   journal={Trans. Amer. Math. Soc.},
   volume={343},
   date={1994},
   number={2},
   pages={695--711},
   issn={0002-9947},
   review={\MR{1211408}},
   doi={10.2307/2154737},
}

\bib{hart}{article}{
      title={An Introduction To Continuous Model Theory}, 
      author={Bradd Hart},
      year={2023},
      eprint={2303.03969},
      archivePrefix={arXiv},
      primaryClass={math.OA}
}

\bib{henson-raynaud-07}{article}{
   author={Henson, C. Ward},
   author={Raynaud, Yves},
   title={On the theory of $L_p(L_q)$-Banach lattices},
   journal={Positivity},
   volume={11},
   date={2007},
   number={2},
   pages={201--230},
   issn={1385-1292},
   review={\MR{2321617}},
   doi={10.1007/s11117-006-2023-0},
}

\bib{horn}{book}{
   author={Horn, Roger A.},
   author={Johnson, Charles R.},
   title={Matrix analysis},
   edition={2},
   publisher={Cambridge University Press, Cambridge},
   date={2013},
   pages={xviii+643},
   isbn={978-0-521-54823-6},
   review={\MR{2978290}},
}

\bib{hrushovski}{article}{
   author={Hrushovski, Ehud},
   title={A new strongly minimal set},
   note={Stability in model theory, III (Trento, 1991)},
   journal={Ann. Pure Appl. Logic},
   volume={62},
   date={1993},
   number={2},
   pages={147--166},
   issn={0168-0072},
   review={\MR{1226304}},
   doi={10.1016/0168-0072(93)90171-9},
}

\bib{hyttinen}{article}{
   author={Hyttinen, Tapani},
   author={Paolini, Gianluca},
   title={Beyond abstract elementary classes: on the model theory of
   geometric lattices},
   journal={Ann. Pure Appl. Logic},
   volume={169},
   date={2018},
   number={2},
   pages={117--145},
   issn={0168-0072},
   review={\MR{3725203}},
   doi={10.1016/j.apal.2017.10.003},
}

\bib{kardos}{article}{
   author={Kardo\v s, Franti\v sek},
   author={Kr\'al', Daniel},
   author={Liebenau, Anita},
   author={Mach, Luk\'a\v s},
   title={First order convergence of matroids},
   journal={European J. Combin.},
   volume={59},
   date={2017},
   pages={150--168},
   issn={0195-6698},
   review={\MR{3546908}},
   doi={10.1016/j.ejc.2016.08.005},
}

\bib{krivine}{article}{
   author={Krivine, J.-L.},
   author={Maurey, B.},
   title={Espaces de Banach stables},
   language={French, with English summary},
   journal={Israel J. Math.},
   volume={39},
   date={1981},
   number={4},
   pages={273--295},
   issn={0021-2172},
   review={\MR{0636897}},
   doi={10.1007/BF02761674},
}

\bib{lindstrom}{article}{
   author={Lindstr\"om, Bernt},
   title={Determinants on semilattices},
   journal={Proc. Amer. Math. Soc.},
   volume={20},
   date={1969},
   pages={207--208},
   issn={0002-9939},
   review={\MR{0238738}},
   doi={10.2307/2035991},
}

\bib{lovasz-submodular}{article}{
   author={Lov\'asz, L\'aszl\'o},
   title={Submodular functions and convexity},
   conference={
      title={Mathematical programming: the state of the art},
      address={Bonn},
      date={1982},
   },
   book={
      publisher={Springer, Berlin},
   },
   isbn={3-540-12082-3},
   date={1983},
   pages={235--257},
   review={\MR{0717403}},
}

\bib{lovasz-large}{book}{
   author={Lov\'asz, L\'aszl\'o},
   title={Large networks and graph limits},
   series={American Mathematical Society Colloquium Publications},
   volume={60},
   publisher={American Mathematical Society, Providence, RI},
   date={2012},
   pages={xiv+475},
   isbn={978-0-8218-9085-1},
   review={\MR{3012035}},
   doi={10.1090/coll/060},
}

\bib{lovasz-submod-setfunction}{article}{
    author={Lov\'asz, L\'aszl\'o},
    title={Submodular setfunctions on sigma-algebras, version 2},
    journal={ArXiv preprint},
    date={2023},
    doi={https://doi.org/10.48550/arXiv.2302.04704}
}

\bib{lovasz-matroid}{article}{
   author={Lov\'asz, L\'aszl\'o},
   title={The matroid of a graphing},
   journal={J. Combin. Theory Ser. B},
   volume={169},
   date={2024},
   pages={542--560},
   issn={0095-8956},
   review={\MR{4792434}},
   doi={10.1016/j.jctb.2024.08.001},
}

\bib{Lyons}{article}{
   author={Lyons, Russell},
   title={Determinantal probability measures},
   journal={Publ. Math. Inst. Hautes \'{E}tudes Sci.},
   number={98},
   date={2003},
   pages={167--212},
   issn={0073-8301},
   review={\MR{2031202}},
   doi={10.1007/s10240-003-0016-0},
}

\bib{mattila}{article}{
   author={Mattila, Mika},
   author={Haukkanen, Pentti},
   title={On the positive definiteness and eigenvalues of meet and join
   matrices},
   journal={Discrete Math.},
   volume={326},
   date={2014},
   pages={9--19},
   issn={0012-365X},
   review={\MR{3188982}},
   doi={10.1016/j.disc.2014.02.018},
}

\bib{malnormal}{article}{
   author={Mulcahy, Garrett},
   author={Sinclair, Thomas},
   title={Malnormal matrices},
   journal={Proc. Amer. Math. Soc.},
   volume={150},
   date={2022},
   number={7},
   pages={2969--2982},
   issn={0002-9939},
   review={\MR{4428882}},
   doi={10.1090/proc/15821},
}

\bib{MvN-iv}{article}{
   author={Murray, F. J.},
   author={von Neumann, J.},
   title={On rings of operators. IV},
   journal={Ann. of Math. (2)},
   volume={44},
   date={1943},
   pages={716--808},
   issn={0003-486X},
   review={\MR{0009096}},
   doi={10.2307/1969107},
}

\bib{nesetril}{article}{
   author={Ne\u set\u ril, Jaroslav},
   author={Ossona de Mendez, Patrice},
   title={A unified approach to structural limits and limits of graphs with
   bounded tree-depth},
   journal={Mem. Amer. Math. Soc.},
   volume={263},
   date={2020},
   number={1272},
   pages={v + 108},
   issn={0065-9266},
   isbn={978-1-4704-4065-7; 978-1-4704-5652-8},
   review={\MR{4069241}},
   doi={10.1090/memo/1272},
}

\bib{vN-matrices}{article}{
   author={von Neumann, John},
   title={Approximative properties of matrices of high finite order},
   journal={Portugal. Math.},
   volume={3},
   date={1942},
   pages={1--62},
   issn={0032-5155},
   review={\MR{0006137}},
}

\bib{vN-geometry}{book}{
   author={von Neumann, John},
   title={Continuous geometry},
   series={Princeton Mathematical Series},
   volume={No. 25},
   note={Foreword by Israel Halperin},
   publisher={Princeton University Press, Princeton, NJ},
   date={1960},
   pages={xi+299},
   review={\MR{0120174}},
}

\bib{nica}{book}{
   author={Nica, Alexandru},
   author={Speicher, Roland},
   title={Lectures on the combinatorics of free probability},
   series={London Mathematical Society Lecture Note Series},
   volume={335},
   publisher={Cambridge University Press, Cambridge},
   date={2006},
   pages={xvi+417},
   isbn={978-0-521-85852-6},
   isbn={0-521-85852-6},
   review={\MR{2266879}},
   doi={10.1017/CBO9780511735127},
}

\bib{niculescu-survey}{article}{
       author = {{Niculescu}, Constantin P.},
        title = {Old and New on Strongly Subadditive/Superadditive Functions},
      journal = {arXiv e-prints},
         year = {2025},
        pages = {arXiv:2501.13695},
          doi = {10.48550/arXiv.2501.13695},
}


\bib{Oxley}{book}{
   author={Oxley, James},
   title={Matroid theory},
   series={Oxford Graduate Texts in Mathematics},
   volume={21},
   edition={2},
   publisher={Oxford University Press, Oxford},
   date={2011},
   pages={xiv+684},
   isbn={978-0-19-960339-8},
   review={\MR{2849819}},
   doi={10.1093/acprof:oso/9780198566946.001.0001},
}

\bib{razborov}{article}{
   author={Razborov, Alexander A.},
   title={Flag algebras},
   journal={J. Symbolic Logic},
   volume={72},
   date={2007},
   number={4},
   pages={1239--1282},
   issn={0022-4812},
   review={\MR{2371204}},
   doi={10.2178/jsl/1203350785},
}

\bib{Roe}{book}{
   author={Roe, John},
   title={Lectures on coarse geometry},
   series={University Lecture Series},
   volume={31},
   publisher={American Mathematical Society, Providence, RI},
   date={2003},
   pages={viii+175},
   isbn={0-8218-3332-4},
   review={\MR{2007488}},
   doi={10.1090/ulect/031},
}

\bib{rota-i}{article}{
   author={Rota, Gian-Carlo},
   title={On the foundations of combinatorial theory. I. Theory of M\"obius
   functions},
   journal={Z. Wahrscheinlichkeitstheorie und Verw. Gebiete},
   volume={2},
   date={1964},
   pages={340--368 (1964)},
   review={\MR{0174487}},
   doi={10.1007/BF00531932},
}

\bib{Sachs}{article}{
   author={Sachs, David},
   title={Partition and modulated lattices},
   journal={Pacific J. Math.},
   volume={11},
   date={1961},
   pages={325--345},
   issn={0030-8730},
   review={\MR{0122742}},
}

\bib{simion}{article}{
   author={Simion, Rodica},
   title={Noncrossing partitions},
   language={English, with English and French summaries},
   note={Formal power series and algebraic combinatorics (Vienna, 1997)},
   journal={Discrete Math.},
   volume={217},
   date={2000},
   number={1-3},
   pages={367--409},
   issn={0012-365X},
   review={\MR{1766277}},
   doi={10.1016/S0012-365X(99)00273-3},
}

\bib{sinclair}{article}{
   author={Sinclair, Thomas},
   title={Model theory of operator systems and $\rm C^*$-algebras},
   conference={
      title={Model theory of operator algebras},
   },
   book={
      series={De Gruyter Ser. Log. Appl.},
      volume={11},
      publisher={De Gruyter, Berlin},
   },
   isbn={978-3-11-076821-3},
   isbn={978-3-11-076828-2},
   isbn={978-3-11-076833-6},
   date={[2023] \copyright 2023},
   pages={343--386},
   review={\MR{4654496}},
}

\bib{vignati}{article}{
   author={Vignati, Alessandro},
   title={Fra\"iss\'e{} theory in operator algebras},
   conference={
      title={Model theory of operator algebras},
   },
   book={
      series={De Gruyter Ser. Log. Appl.},
      volume={11},
      publisher={De Gruyter, Berlin},
   },
   isbn={978-3-11-076821-3},
   isbn={978-3-11-076828-2},
   isbn={978-3-11-076833-6},
   date={[2023] \copyright 2023},
   pages={453--478},
   review={\MR{4654499}},
}

\bib{Wilcox}{article}{
   author={Wilcox, L. R.},
   author={Smiley, M. F.},
   title={Metric lattices},
   journal={Ann. of Math. (2)},
   volume={40},
   date={1939},
   number={2},
   pages={309--327},
   issn={0003-486X},
   review={\MR{1503459}},
   doi={10.2307/1968920},
}

\bib{wilf}{article}{
   author={Wilf, Herbert S.},
   title={Hadamard determinants, M\"obius functions, and the chromatic
   number of a graph},
   journal={Bull. Amer. Math. Soc.},
   volume={74},
   date={1968},
   pages={960--964},
   issn={0002-9904},
   review={\MR{0229532}},
   doi={10.1090/S0002-9904-1968-12104-4},
}

\bib{takemi}{article}{
   author={Yanagimoto, Takemi},
   author={Okamoto, Masashi},
   title={Partial orderings of permutations and monotonicity of a rank
   correlation statistic},
   journal={Ann. Inst. Statist. Math.},
   volume={21},
   date={1969},
   pages={489--506},
   issn={0020-3157},
   review={\MR{0258209}},
   doi={10.1007/BF02532273},
}

    \end{biblist}
\end{bibdiv}

\end{document}